\newcommand{\RR}{\mathcal{R}}
\newcommand{\BB}{\mathbb{B}}
\newcommand{\FF}{\mathbb{F}}
\newcommand{\PP}{\mathcal{P}}
\newcommand{\TT}{\mathcal{T}}
\newcommand{\TS}{\mathcal{S}}
\newcommand{\TF}{\mathcal{F}}
\newcommand{\Aa}{\mathcal{A}}
\newcommand{\CC}{\mathcal{C}}
\newcommand{\pow}{\mathcal{P}}
\newcommand{\ol}[1]{\overline{#1}}
\newcommand{\abs}[1]{|#1|}
\newcommand{\uh}{\upharpoonright}
\newcommand{\pto}{\dot{\to}}
\newcommand{\addr}{\,@\,}
\newcommand{\Pp}{\textsc{Pp}}
\newcommand{\im}{\mathrm{im}} 
\newcommand{\dom}{\mathrm{dom}}
\newcommand{\cod}{\mathrm{cod}}
\newcommand{\Ob}{\mathrm{Ob}}
\newcommand{\var}{\mathrm{Var}}
\newcommand{\prop}{\mathrm{Prop}}
\newcommand{\form}{\textsc{Form}}
\newcommand{\term}{\textsc{Term}}
\newcommand{\type}{\textsc{Type}}
\newcommand{\Hom}{\textsc{Hom}}
\newcommand{\Leaf}{\mathrm{Leaf}}
\newcommand{\Inner}{\mathrm{Inner}}
\newcommand{\Chld}{\mathrm{Chld}}
\newcommand{\Inf}{\mathrm{Inf}}
\newcommand{\Occ}{\mathrm{Occ}}
\newcommand{\Up}{\mathrm{Up}}
\newcommand{\Down}{\mathrm{Down}}
\newcommand{\FV}{\mathrm{FV}}
\newcommand{\merge}{\mathrm{merge}}
\newcommand{\Sb}{\textsc{Sb}}
\newcommand{\Seq}{\textsc{Seq}}
\newcommand{\RA}{\mathit{RA}}
\newcommand{\CA}{\mathit{CA}}
\newcommand{\PA}{\mathit{PA}}
\newcommand{\CGT}{\mathit{CGT}}
\newcommand{\RGT}{\mathit{RGT}}
\newcommand{\Rrr}{\mathrm{R}}
\newcommand{\Sss}{\mathrm{S}}
\newcommand{\mML}{\mathit{K}_\mu}
\newcommand{\BRmML}{\mathit{RK}_\mu^\BB}
\newcommand{\FRmML}{\mathit{RK}_\mu^\FF}
\newcommand{\JS}{\mathit{JS}}
\newcommand{\sdash}{\Rightarrow}
\newcommand{\strip}{\mathit{strip}}
\newcommand{\elab}{\mathit{expand}}
\newcommand{\embed}{\mathit{embed}}
\newcommand{\search}{\mathit{search}}
\newcommand{\SA}{\mathfrak{A}}
\newcommand{\SB}{\mathfrak{B}}
\newcommand{\SSB}{\mathfrak{S}}
\newcommand{\SC}{\textsc{Pfs}}
\newcommand{\step}[1]{\stackrel{#1}{\leadsto}}
\newcommand{\gstep}[1]{\stackrel{#1}{\leadsto_g}}
\newcommand{\sreset}[2]{#1 \uh #2}
\newcommand{\rsep}{\,|\,}
\newcommand{\RAx}{\textsc{Ax}}
\newcommand{\RCtr}{\textsc{Ctr}}
\newcommand{\REx}{\textsc{Ex}}
\newcommand{\RThin}{\textsc{Thin}}
\newcommand{\RWk}{\textsc{Wk}}
\newcommand{\RWeak}{\textsc{Weak}}
\newcommand{\RCut}{\textsc{Cut}}
\newcommand{\RSub}{\textsc{Sub}}
\newcommand{\RFocus}{\textsc{Focus}}
\newcommand{\RPop}{\textsc{Pop}}
\newcommand{\RReset}{\textsc{Reset}}
\newcommand{\RCond}{\textsc{Cond}}
\newcommand{\RRec}{\textsc{Rec}}
\newcommand{\RId}{\textsc{Id}}
\newcommand{\RMerge}{\textsc{Merge}}
\newcommand{\RMod}{\textsc{Mod}}
\newcommand*{\langeq}{\mathrel{\vcenter{\baselineskip0.5ex \lineskiplimit0pt
      \hbox{\scriptsize.}\hbox{\scriptsize.}}}%
  \mathrel{\vcenter{\baselineskip0.5ex \lineskiplimit0pt
      \hbox{\scriptsize.}\hbox{\scriptsize.}}}%
  =}
\newsavebox{\mypt}
\numberwithin{lemma}{section} 
\colorlet{dwnote}{wpurple}
\colorlet{gelnote}{cyan}
\colorlet{insertcolour}{cyan}
\colorlet{deletecolour}{red}
\newcommand{\dreplace}[2]{\textcolor{dwnote}{{#2}}\textcolor{deletecolour}{\sout{#1}}} 
\newcommand{\dadden}[1]{\dreplace{}{#1}}
\newcommand\blfootnote[1]{%
  \begingroup
  \renewcommand\thefootnote{}\footnote{#1}%
  \addtocounter{footnote}{-1}%
  \endgroup
}
\title{From GTC to $\RReset$: Generating Reset Proof Systems from Cyclic Proof Systems}
\author{Graham E. Leigh, University of Gothenburg \\ Dominik Wehr, University of Gothenburg}
\begin{document}

\maketitle

\blfootnote{This work was supported by the Knut and Alice Wallenberg Foundation [2020.0199,
  2015.0179].}

\begin{abstract}
  We consider cyclic proof systems in which derivations are graphs rather than
  trees.
  Such systems typically come with a condition that isolates which derivations are admitted as `proofs', known as a the \emph{soundness condition}.
  This soundness condition frequently takes the form of either a \emph{global trace} condition, a property dependent on all infinite paths in the proof-graph, or a \emph{reset}
  condition, a `local' condition depending on the simple cycles only which, as a result, is typically stable under more proof transformations.

  In this article we present a general method for constructing cyclic proof systems with reset condition from cyclic proof with global trace conditions.
  In contrast to previous approaches, this method of generation is entirely
  independent of logic's semantics, only relying on
  combinatorial aspects of the notion of `trace' and `progress'.
  We apply this method to present reset proof systems for three cyclic proof
  systems from the literature: cyclic
  arithmetic~\cite{simpsonCyclicArithmeticEquivalent2017}, cyclic Gödel's
  T~\cite{dasCircularVersionGodel2021} and cyclic tableaux for the modal
  $\mu$-calculus~\cite{niwinskiGamesMcalculus1996}.
\end{abstract}


\tableofcontents


\section{Introduction}
\label{sec:intro}

In cyclic proofs, leaves may be annotated with `recursive' references to previous deduction 
steps instead of axioms, yielding derivations shaped like finite graphs rather
than merely (finite) trees. Proof systems which allow for such proofs have proven particularly
well-suited to logics which feature fixed-points or 
(co-)inductively defined concepts (see e.g.~\cite{sprengerGlobalInductionMechanisms2003,niwinskiGamesMcalculus1996,simpsonCyclicArithmeticEquivalent2017,brotherstonSequentCalculusProof2006}). Because cyclic proofs
may have infinite
branches, their soundness usually cannot be reduced to the soundness of the
system's axioms and truth-preservation of its rules.
In such cases, a further soundness condition must be imposed.
The most common such condition is known as the \emph{global trace condition}: A cyclic
derivation is sound if every infinite branch through it has an infinitely
progressing trace. The precise notion of trace and progress vary
between proof systems. In Simpson's Cyclic Arithmetic~\cite{simpsonCyclicArithmeticEquivalent2017} traces are sequences of
terms following a branch of the derivation and are considered to have progressed when the term decreases in value.
By contrast, in cyclic proof systems for logics featuring fixed points, traces are typically sequences of
formulas and progress is based on fixed point unfoldings.

The global trace condition is widespread in the literature
because it is often very simple to adapt to new cyclic proof
systems and allows for direct proofs of soundness. However, it also brings with
it some disadvantages. Verifying whether a cyclic derivation satisfies the
global trace condition, and thus verifying whether it constitutes a proof, is in
general PSPACE-complete~\cite{afshariAbstractCyclicProofs2022}. Furthermore, the
`global' nature of the global trace condition -- in which `local' changes in a
proof can interfere with `global' soundness -- often makes it ill-suited to
proof theoretic investigations. Indeed, most results
of cyclic proof theory besides soundness and completeness
are derived for systems with alternative soundness conditions, such as
reset
proofs~\cite{afshariCutfreeCompletenessModal2017,afshariLyndonInterpolationModal2022,afshariUniformInterpolationCyclic2021,martiFocusSystemAlternationFree2021},
induction orders~\cite{sprengerStructureInductiveReasoning2003a} and bouncing
threads~\cite{baeldeBouncingThreadsCircular2022}.

Reset proofs are cyclic proofs that, in place of a global trace
  condition, employ a mechanism of annotating sequents and a
specific `\RReset{}' rule of inference marking `progress'. Such a reset condition is local to each simple cycle of the proof's underlying graph. This
eases both proof checking, simplifying the problem to polynomial time (and frequently linear time),
as well as making it easier to reason about proof transformations.
The first instance of a reset proof system in the literature is the tableau
system for the modal $\mu$-calculus put forward by
Jungteerapanich~\cite{jungteerapanichTableauSystemsModal2010}. Since then,
similar\footnote{Strictly speaking, not all of these systems are reset proof
  systems in our sense of the term. See the discussion of related work in \Cref{sec:conclusion} for more detail.}
proof systems have been
designed for the alternation-free fragment of the
modal $\mu$-calculus~\cite{martiFocusSystemAlternationFree2021}, the first-order
$\mu$-calculus~\cite{afshariCyclicProofsFirstorder2022a}, modal logics with master modalities~\cite{rooduijnCyclicHypersequentCalculi2021} and
full Computation Tree Logic $\text{CTL}^*$~\cite{afshariCyclicProofSystem2023}.
While it has been observed that reset proof systems can be obtained from
global-trace-based cyclic proof systems by annotating sequents with Safra
automata~\cite{safraComplexityOmegaAutomata1988} (see e.g.
\cite{jungteerapanichTableauSystemsModal2010,martiFocusSystemAlternationFree2021}),
all aforementioned reset proof systems were designed as variations of
Jungteerapanich's system, rather than `directly' via the Safra construction.

In this article, we make the connection between Safra automata and reset
proof systems formal. By adopting suitable abstract notions of trace conditions and
their induced cyclic proof systems, we show that any
cyclic proof system given by a global trace
condition naturally induces an equivalent reset proof system.
This result is not solely abstract but also provides a `recipe' for deriving
a corresponding reset proof system from any suitable cyclic proof system, even
those unrelated to the modal $\mu$-calculus.
The abstract notions of trace condition and cyclic proof system we employ are
general enough to cover the majority of proof systems studied in the literature on cyclic
proof systems.
We demonstrate the method on two cyclic proof systems from the literature:
Cyclic Arithmetic~\cite{simpsonCyclicArithmeticEquivalent2017} and Cyclic Gödel's
T~\cite{dasCircularVersionGodel2021}, obtaining in each case an equivalent reset-style proof system. 
Applying the construction to cyclic proofs for the modal \( \mu \)-calculus induces
a different reset proof system depending on the notion of trace employed.
These latter systems serve to illustrate the difference
between our method and that employed by Jungteerapanich and Stirling.

\section{Outline of the article}
\label{sec:overview}

In \Cref{sec:prelims} we give definitions that underpin and
motivate the remainder of the article. The two central concepts we rely on are an abstract
rendition of cyclic proof systems (\Cref{sec:abs-ps}) and a method of specifying
a global trace condition for a cyclic proof system in terms of so-called
activation algebras (\Cref{sec:gtc}). We recall some
definitions and results related to infinite word and tree automata in
\Cref{sec:automata} which we rely on throughout the article.

A \emph{cyclic proof system} consists of two components: a set of
\emph{derivation rules} and a \emph{soundness condition}. Given a set of derivation rules, a
pre-proof is obtained by annotating a `tree-shaped' cyclic graph with instances of said
rules. The soundness condition is a condition on such pre-proofs which
distinguishes proofs in the cyclic proof system from mere pre-proofs. Another
important concept is the cyclic proof system \emph{homomorphism}: Given two cyclic proof
systems $\RR$ and $\TS$, a homomorphism $f \colon \RR \to \TS$ roughly consists of a
translation of $\RR$-sequents to $\TS$-sequents such that, under this
translation, the derivation rules of $\RR$ are admissible in $\TS$. Crucially,
such a homomorphism $f \colon \RR \to \TS$ allows $\RR$-proofs to be translated into
$\TS$-proofs, thereby relating the two proof systems. The central conceit of
this article is\dadden{,} for certain cyclic proof systems $\RR$\dadden{,} to construct a reset
proof system $\Rrr(\RR)$ `corresponding to' $\RR$. We formalize this notion of
`correspondence' by constructing suitable homomorphisms in both directions.

The second central concept of this article -- a method of
  specifying global trace conditions for abstract cyclic proofs -- is recalled
  in \Cref{sec:gtc}.
A trace condition on pre-proofs
identifies a collection of \emph{trace objects} in each sequent
and a collection of \emph{traces} connecting trace objects along each infinite
branch of a cyclic proof. Along these traces there exists an accumulative
notion of `progress', specified via \emph{activation algebras}. A pre-proof is
considered a proof if every infinite branch carries a trace which progresses infinitely often.
The vast majority of soundness conditions for cyclic proof systems
found in the literature are global trace conditions of this kind.

In this article,
we show that every cyclic proof system whose soundness condition can be realised
as such a global trace condition can be associated a reset proof system, i.e., a cyclic proof system in which the soundness condition is wholly determined by simple cycles.
Given a cyclic proof system $\RR$ whose soundness condition is specified by an activation algebra $\Aa$, the reset system $\Rrr(\RR)$ is obtained by annotating the deduction elements of the cyclic proof system (the `sequents') with \emph{Safra boards}.
Introduced in
\Cref{sec:safra-boards}, Safra boards are inspired by the Safra construction~\cite{safraComplexityOmegaAutomata1988} used
in determinising infinite word automata, specifically their presentation in~\cite{kozenSafraConstruction2006}.
Roughly, a Safra board for a given sequent
with trace objects $X$ consists of `squares' $(x, a) \in X \times \Aa$ on which
stacks of playing chips are resting, `tracking' the progress of the trace
values. Given a $\RR$-derivation rule with conclusion $\Gamma$ and $\Gamma'$ as
one of its premises, there are rules describing how to move and extend the
stacks on a board for $\Gamma$ to obtain a board for $\Gamma'$ which takes into
account the `progress' made in the trace step from $\Gamma$ to $\Gamma'$.
Additionally, there are certain bookkeeping operations that may be performed on
such Safra boards, including a reset operation which `resets' some of the
progress tallied on a Safra board. This machinery allows for a simpler trace
condition: A pre-proof is a proof if along every infinite branch, infinitely
many reset-steps take place, indicating that a trace value has progressed
infinitely along said branch. 
Differing from the global trace condition, this property can be
  established by simply verifying that appropriate resets are part of each
  individual simple cycle of a pre-proof, yielding a `local' soundness condition.

For cyclic proof systems $\RR$ with a global
trace condition specified in terms of an activation algebra $\Aa$, the reset system $\Rrr(\RR)$ is defined in
\Cref{sec:acd_pwn}. There are two key properties we prove relating $\RR$ and
$\Rrr(\RR)$: \emph{soundness} and \emph{completeness}. 
Soundness
states that any annotated sequent provable in the reset system $\Rrr(\RR)$ is provable, without annotations, in the original system $\RR$. Conversely, completeness is the property that any
sequent provable in $\RR$ is provable in $\Rrr(\RR)$.
The names of these two properties are apt because they allow soundness and
completeness of $\RR$, relative to some semantics, to be `lifted' to
$\Rrr(\RR)$.
Both results are established by providing suitable cyclic
proof system homomorphisms. In \Cref{sec:sound}, we consider the translation
$\strip \colon \Rrr(\RR) \to \RR$ which simply strips an $\Rrr(\RR)$-proof of its Safra
board annotations and removes the derivation steps corresponding to the various
bookkeeping operations on Safra boards. By showing this to be a homomorphism, we can
conclude that every $\Rrr(\RR)$-proof induces a naturally corresponding $\RR$-proof
of the `same' sequent, yielding soundness.
The proof of completeness in \Cref{sec:complete} is less direct: For every finite subsystem $\TF$ of
$\RR$ we define a finite subsystem $\Sss(\TF)$ of
$\Rrr(\RR)$ which enjoys the proof search property: For every sequent $\Gamma$
provable in $\TF$ via cyclic proof $\Pi$, there exists an annotated finite
unfolding of $\Pi$ in $\Sss(\TF)$ which is a proof of $\Gamma$. As $\Sss(\TF)$ can be
embedded into $\Rrr(\RR)$ via a homomorphism $\embed \colon \Sss(\TF) \to \Rrr(\RR)$ this
yields a proof of $\Gamma$ in the reset system.

\Cref{sec:concrete} applies the above results to obtain reset systems for
various cyclic proof systems from the literature: Peano arithmetic
(\Cref{sec:pa}), Gödel's T (\Cref{sec:godel-t})
and the modal $\mu$-calculus (\Cref{sec:modal-mu}). While the system $\Rrr(\RR)$ is
sound and complete for any suitable cyclic system, it tends to not be very
pleasant to `use'. This state of affairs can usually be assuaged with a few
ergonomic adjustments. This is precisely what we do in \Cref{sec:concrete}: For
each of the concrete cyclic systems $\RR$ above we design a bespoke reset
system $\TS$ `inspired' by $\Rrr(\RR)$.
Soundness and completeness of \( \TS \) is obtained via a pair of homomorphisms $\embed \colon \TS \to \Rrr(\RR)$ and $\elab \colon \Sss(\TF) \to \TS$.
Importantly,
the construction of these bespoke systems $\TS$ and the homomorphisms $\embed$
and $\elab$ requires very little work when relying on the results of the
previous sections. We hope these examples prove illuminating enough for readers
to be able to do the same with any suitable cyclic proof system of their
choosing.

We close in \Cref{sec:conclusion} with a short conclusion, an overview of
related work and an outlook of future investigations.


\section{Preliminaries}
\label{sec:prelims}

\subsection{Cyclic Proof Systems}
\label{sec:abs-ps}

We begin by giving a suitable abstract account of cyclic proof systems. Because
we employ a very broad notion of soundness condition, every cyclic proof system
we are aware of is an instance of this notion of cyclic proof system.

Cyclic proofs and preproofs are certain finite graphs whose nodes are labeled by
sequents, according to a derivation system. Instead of general graphs, we use
cyclic trees as the data structure underlying our notion of cyclic proof. They
have proven slightly more convenient in some of our definitions and proofs.
A \emph{tree} is a
non-empty set $T \subseteq \omega^*$ which is closed under taking prefixes. Each
$t \in T$ is called a \emph{node} and the nodes in $\Chld(t) \coloneq \{ti \in T
~|~ i \in \omega\}$ are called its \emph{children}. A node $t$ is a \emph{leaf} of $T$ if
$\Chld(t) = \emptyset$ and an \emph{inner node} otherwise.
A \emph{cyclic tree} is a pair $(T, \beta)$ of a finite tree $T$ and
a partial function $\beta \colon \Leaf(T) \pto \Inner(T)$ mapping some leaves of $T$
onto inner nodes of $T$ such that $\beta(t) < t$ by the prefix ordering for
every $t \in \dom(\beta)$. 
If $t \in \dom(\beta)$ one calls it a \emph{bud} and $\beta(t)$ its \emph{companion}.

\begin{definition}
  A \emph{derivation system} is a triple $(\Seq, \RR, \rho)$ consisting of a pair of
  sequents $\Seq$ and a set $\RR$ of derivation rules and a rule-interpretation
  $\rho \colon \RR \to \Seq^*$ such that for each $R \in \RR$, $\rho(R) = 
  (\Gamma, \Delta_1, \ldots, \Delta_{n - 1}) \in \Seq^n$ for $n > 0$. 
  The sequent $\Gamma$ is \emph{conclusion} of $R$ and the $\Delta_i$ its
  \emph{premises}. Henceforth, we refer to a derivation system $(\Seq, \RR,
  \rho)$ simply by $\RR$.

  An \emph{$\RR$-preproof} is a triple
  $\Pi = (C, \lambda, \delta)$ consisting of a cyclic
  tree $C = (T, \beta)$ together with a labeling $\lambda \colon T \to \Seq$ such
  that for every $t \in \dom(\beta)$ one has $\lambda(t) = \lambda(\beta(t))$ and
  a partial function $\delta \colon (T \setminus \dom(\beta)) \pto \RR$ such that for
  each $t \in T \setminus \dom(\beta)$
  \begin{itemize}
  \item either $t \in \dom(\delta)$ with $\rho(\delta(t)) = (\Gamma, \Delta_1, \ldots,
    \Delta_n)$ and $\lambda(t) = \Gamma$ and furthermore $\Chld(t) = \{t1,
    \ldots, tn\}$ and $\lambda(ti) = \Delta_i$ 
  \item or $t \in \Leaf(T)$.
  \end{itemize}
  Denote by $\Pp(\RR)$ for the set of $\RR$-preproofs.
  The sequent $\lambda(\varepsilon)$ is called the \emph{endsequent} of $\Pi$. Each leaf $o \in
  \Leaf(T) \setminus \dom(\delta)$ is called \emph{open} and its associated
  sequent $\lambda(o)$ is a \emph{assumption} of $\Pi$.

  A \emph{cyclic proof system} is a tuple $(\Seq, \RR, \rho, \SC)$ consisting of a
  derivation system $(\Seq, \RR, \rho)$ and the set $\SC \subseteq \Pp(\RR)$ of $\RR$-preproofs
  without assumptions called \emph{$\RR$-proofs}. Any $\Pi \in \SC$ with endsequent $\Gamma$ is called
  \emph{a proof of ~$\Gamma$}. Such a preproof is said to satisfy the
  \emph{soundness condition} of $\RR$. We extend the naming convention for
  derivation systems to cyclic derivation systems, referring to $(\Seq, \RR,
  \rho, \SC)$ by $\RR$.
\end{definition}

Proof- and preproof morphisms
between cyclic proof systems play a key role in the results of
this article. In essence, a preproof morphism $f \colon \RR \to \RR'$ witnesses that the
derivation rules of $\RR$ are admissible in $\RR'$. This gives rise to a method for
translating $\RR$-preproofs into $\RR'$-preproofs: Simply replace each application of
a derivation rule in the $\RR$-proof by the $\RR'$-preproof witnessing its
admissibility to obtain a $\RR'$-preproof. If this method translates all
$\RR$-proofs into $\RR'$-proofs, $f \colon \RR \to \RR'$ is considered a
proof morphism.

\begin{definition}
  Let $(\Seq, \RR, \rho, \SC)$ and $(\Seq', \RR', \rho', \SC')$ be cyclic proof
  systems. A \emph{preproof morphism} $f \colon \RR \to \RR'$ consists of a
  function $f_0 \colon \Seq \to \Seq'$ mapping $\RR$-sequents to $\RR'$-sequents and
  a function $f_1 \colon \RR \to \Pp(\RR')$ assigning to each $\RR$-rule a
  $\RR'$-preproof. Furthermore, these two functions must agree: For $R \in
  \RR$ with $\rho(R) = (\Gamma, \Delta_1, \ldots, \Delta_n)$, the preproof
  $f_1(R)$ must have $f_0(\Gamma)$ as its endsequent and $f_0(\Delta_1),
  \ldots, f_0(\Delta_n)$ as its assumptions. Henceforth, we denote both $f_0 \colon
  \Seq \to \Seq'$ and $f_1 \colon \RR \to \Pp(\RR')$ by $f$.
\end{definition}

It is easiest formally describe the method for translating $\RR$-preproofs into
$\RR'$-preproofs induced by $f \colon \RR \to \RR'$ in terms of preproof composition.
Thus, suppose $\RR$ was a cyclic derivation system and $\Pi = ((C, \beta), \lambda,
\delta)$ was an $\RR$-preproof with open leaves $o_1, \ldots, o_n$. Furthermore,
suppose there were $\RR$-preproofs $\Pi_1=((T_1, \beta_1), \lambda_1, \rho_1),
\ldots, \Pi_n = ((T_n, \beta_n), \lambda_n, \rho_n)$ such that the endsequent of
$\Pi_i$ is $\lambda(o_i)$ and its assumptions are $\Xi^i_1, \ldots, \Xi^i_{m_i}$.
Then one may compose this material into a preproof $\Pi[\Pi_1, \ldots, \Pi_n] = ((T_c,
\beta_c), \lambda_c, \delta_c)$ with endsequent $\Gamma$ and assumptions 
$\Xi^1_1, \ldots \Xi^1_{m_1}, \Xi^2_1, \ldots,
\Xi^2_{m_2}, \ldots, \Xi^n_{1}, \ldots, \Xi^n_{m_n}$ as follows:
\begin{align*}
  T_c & \coloneq T \cup \bigcup_{i = 1}^n \{l_i t ~|~ t \in T_i\} \\
  \lambda_c(t) & \coloneq
                 \begin{cases}
                   \lambda(t) & t \in T \\
                   \lambda_i(s) & t = o_i s \text{ for } s \in T_i
                 \end{cases} \\
  \delta_c(t) & \coloneq
                \begin{cases}
                  \delta(t) & t \in \dom(\delta) \\
                  \delta_i(s) & t = o_i s \text{ for } s \in T_i \text{ and } s \in \dom(\delta_i)
                \end{cases} \\
  \beta_c(t) & \coloneq
                \begin{cases}
                  \beta(t) & t \in \dom(\beta) \\
                  o_i \beta_i(s) & t = o_i s \text{ for } s \in T_i \text{ and } s \in \dom(\beta_i)
                \end{cases}
\end{align*}

Suppose there was a preproof morphism $f \colon \RR \to \RR'$ and a $\RR$-preproof $\Pi =
((T, \beta), \lambda, \delta)$ with endsequent $\Gamma$ and assumptions $\Delta_1, \ldots,
\Delta_n$. This induces an $\RR'$-preproof $f(\Pi)$ with endsequent $f(\Gamma)$ and
assumptions $f(\Delta_1), \ldots, f(\Delta_n)$.
It is defined recursively on
$T$ as by associating to each
node $t \in T$ a preproof $\Pi_t$ of $\lambda(t)$. Start setting for each $t \in
\Leaf(T) \setminus \dom(\delta)$ the preproof $\Pi_t \coloneq
(\{\varepsilon\}, \varepsilon \mapsto \lambda(t), \emptyset)$ i.e. the
preproof deriving $\lambda(t)$ as an open leaf. Now for each $t \in \dom(\delta)$ such
that all $\{t_1, \ldots, t_n\} = \Chld(t)$ have associated preproofs
$\Pi_{t_i}$, define $\Pi'_t \coloneq \Pi_{\delta(t)}[\Pi_{t_1}, \ldots, \Pi_{t_n}]$
(where $\Pi_{\delta(t)}$ is given by the morphism $f \colon \RR \to \RR'$). If $t
\not\in \im(\beta)$ then $\Pi_t = \Pi'_t$. Otherwise, $\Pi_t$ is obtained from
$\Pi_t'$ by adding $\beta$-cycles from each open leaf of $\Pi_t'$ corresponding to
a leaf in $\beta^{-1}(t)$ to $\varepsilon$. Then
$f(\Pi) \coloneq \Pi_{\varepsilon}$.

\begin{definition}
  Let $(\Seq, \RR, \rho, \SC)$ and $(\Seq', \RR', \rho', \SC')$ be cyclic proof
  systems. A preproof morphism $f \colon \RR \to \RR'$ is a \emph{proof morphism} if it
  preserves the soundness condition of $\RR$. That is, if for every $\Pi \in \SC$
  one has $f(\Pi) \in \SC'$.
\end{definition}

\subsection{Trace Categories}
\label{sec:gtc}

In the previous section, we left the soundness conditions quite vague. In this
section we describe one kind of soundness condition: the global trace condition.
More specifically, we describe a generic way of specifying the global trace
condition in terms of certain categories. We then go on to define a family of
such categories that are sufficient to specify most global trace conditions from
the literature. The definitions we give in this
section are adapted from~\cite{afshariAbstractCyclicProofs2022}
and~\cite{wehrAbstractFrameworkAnalysis2021}.

Denote by $\omega$ preorder category (semi-category) induced by $\omega$
ordered by $\leq$ ($\omega$ ordered by $<$), writing $n < m$ for their
(non-identity) morphisms. Fix a category $\TT$.
A \emph{path} through $\TT$ is a functor $P \colon \omega \to \TT$.
Given paths $P, P' \colon \omega \to \TT$ one calls $P$ a \emph{subpath} of $P'$,
written  $P \subseteq P'$, if there is a semi-functor $S \colon \omega \to
\omega$ (i.e. a strictly monotone map $S \colon \omega \to \omega$) such that $P = P'
\circ S$. In other words, a subpath $P$ of $P'$ may (a) `drop' a finite prefix
of $P'$ (e.g. $P(0) = P'(k)$) and (b) compose multiple `steps' of $P'$ (e.g.
$P(i < i + 1) = P'(j + n - 1 < j + n) \circ \ldots \circ P'(j < j + 1)$).
The transitive, symmetric closure of $\subseteq$ is denoted $\sim$.

A trace category is a category with a condition on paths which is invariant
under subpaths. This general notion captures most notions of `trace' found in
the literature of cyclic proof theory.

\begin{definition}\label{def:paths}
  A \emph{trace category} is a category $\TT$ together with a condition \( C_\TT
  \) on paths, called the \emph{trace condition}, invariant under $\sim$, i.e.
  if $P \sim P'$ then $C_\TT(P)$ if and only if $C_\TT(P')$ holds.
\end{definition}

A trace interpretation specifies notion of trace and progress of a derivation
system $\RR$ in terms of a trace category $\TT$. The sequents of $\RR$ are
identified with objects of $\TT$ and each `step' from a conclusion to a premise
in a derivation rule of $\RR$ is associated with a morphism between the objects
associated to said conclusion and premise. Under this interpretation, every
branch through a preproof induces a path $\omega \to \TT$ which allows for a
general specification of the global trace condition in terms of the trace
condition of $\TT$. Multiple examples of such trace interpretations for
cyclic proof systems from the literature can be found in \Cref{sec:concrete} and
in \cite[Chapter 6]{wehrAbstractFrameworkAnalysis2021}.

\begin{definition}
  Let $(\Seq, \RR, \rho)$ be a derivation system and $\TT$ a trace category. A
  \emph{trace interpretation} $\iota \colon \RR \to \TT$ consists of a map $\iota \colon \Seq
  \to \Ob(\TT)$ and for each rule $r \in \RR$ with $\rho(r) = (\Gamma, \Delta_1,
  \ldots, \Delta_n)$ morphisms $r_i \colon \iota(\Gamma) \to \iota(\Delta_i)$ for each \( 1 \le i \le n \).
\end{definition}

Fix a cyclic tree $C = (T, \beta)$. A sequence $\pi \in T^\omega$ is a
\emph{branch} through $C$ if $\pi_0 = \varepsilon$ and it satisfies the
following properties at every index $i \in \omega$: (a) if $\pi_i \not\in
\Leaf(T)$ then $\pi_{i + 1} \in \Chld(\pi_i)$ and (b) if $\pi_i \in \Leaf(T)$
then $\pi_i \in \dom(\beta)$ and $\pi_{i + 1} = \beta(\pi_i)$.

\begin{definition}\label{def:tc-induced}
  Let $(\Seq, \RR, \rho)$ be a derivation system with a trace interpretation 
  \( \iota \colon \RR \to \TT\). Let $\Pi = (C, \lambda, \delta)$ be a preproof and $\pi$ be a a path
  through $C$. This induces a path $\hat{\pi} \colon \omega \to \TT$ given by
  \[\hat{\pi}(i) \coloneq \iota(\lambda(\pi_i)) \qquad \hat{\pi}(i < i + 1) \coloneq
    \begin{cases}
      \delta(\pi_i)_j \colon \iota(\lambda(\pi_i)) \to \iota(\lambda(\pi_{i + 1})) & \pi_i
      \not\in \Leaf(T) \text{ and } \pi_{i + 1} = \pi_{i + 1} j \\
      1_{\hat{\pi}(i)} & \pi_i \in \dom(\beta)
    \end{cases}
  \]

  This induces a cyclic proof system $\iota(\RR) \coloneq (\Seq, \RR, \rho,
  \SC)$ with
  \[\SC \coloneq \{\Pi \in \Pp(\RR) ~|~ \text{ for every path } \pi \text{ through
    } \Pi \text{ the trace condition } C_\TT(\hat{\pi}) \text{ holds}\}\]
\end{definition}

The kind of soundness condition described in the previous definition is a
\emph{global trace condition}: Prima facie, one needs to check whether \emph{every}
infinite branch --- of which there are continuum many in nontrivial cases ---
satisfies the trace condition of $\TT$. In `sufficiently finitary' instances,
including essentially all those in the cyclic proof theory literature, the scope
of this verification can be restricted to a finite set of periodic paths via
Ramsey's theorem (see e.g.~\cite[Theorem 3]{afshariAbstractCyclicProofs2022}). Even then, this is a complex
verification process whose complexity quickly exceeds the human capabilities for
checking proofs. \emph{Reset proof systems} are cyclic proof systems with a
different kind of soundness condition. While we are not aware of an abstract
account of the reset soundness condition analogous to \Cref{def:tc-induced},
reset proof systems nonetheless share recognizable features. Reset conditions
generally work by `tracking progress' using an annotation mechanism for
sequents. The condition requires that a \emph{reset rule}, which resets some of
this tracked progress, is applied on each simple cycle, i.e. along the path
between $\beta(s)$ and $s$ for every $s \in \dom(\beta)$ in a preproof. To
verify that a preproof satisfies such a reset condition, it thus suffices to
analyze each such simple cycle individually, giving rise to a `local' soundness
condition as opposed to the global trace condition defined above. However, reset
proof systems often require much larger proof than proof systems with a global
trace condition (see \Cref{lem:complexity} for a somewhat general account of this).



We continue by giving a family of trace categories $\TT_\Aa$ induced by
activation algebras. Most global trace conditions found in the literature can be
specified in terms of $\TT_\Aa$ for a suitable algebra $\Aa$.

\begin{definition}\label{def:act-algeb}
  An \emph{activation algebra} $\Aa = (A, \leq, \vee, 0, \alpha)$ is a
  finite semilattice $(A, \leq, \vee, 0)$ together with a fixed \emph{activation
    element} $\alpha \in A$ where $0 \neq \alpha$.

  The \emph{$\Aa$-activated trace \emph{category $\TT_\Aa$}} has the finite sets
  as its objects. The
  morphisms between sets $X, Y$ are all relations $R \subseteq X \times \Aa
  \times Y$. Given morphisms $R \colon X \to Y, R' \colon Y \to Z$ their composition is specified by
  \[(x, c, z) \in R' \circ R \qquad \text{ iff } \qquad \exists y \in Y. \exists a, b
    \in A.~(x, a, y)
    \in R,~ (y, b, z) \in R' \text{ and } a \vee b = c\]
  The identity morphisms are $1_X \coloneq \{(x, 0, x) ~|~ x \in X\}$. We often write $x R^a y$ to mean $(x, a, y) \in R$.

  The trace condition of \( \TT_\Aa \) is defined as follows:
  A path $P \colon \omega \to \TT_\Aa$
  satisfies the \emph{trace condition} if there exists a subpath $P' \subseteq
  P$ and an infinite sequence $\sigma$, with $\sigma(i) \in P'(i)$ for each $i
  \in \omega$, along it such that $\sigma_i P'(i < i + 1)^\alpha \sigma_{i + 1}$
  for all $i \in \omega$.
\end{definition}

\begin{example}
  The \emph{booleans} $\BB \coloneq \{0, 1\}$ form an activation algebra with the usual
  join-operation and $\alpha \coloneq 1$. They correspond to the to the most
  common style of global trace conditions in the literature: traces have
  progress points (represented by triples $(x, 1, y)$ in maps of $\TT_\BB$) and a
  path satisfies the trace condition if it has infinitely many progress points.
  It is easily verified that the trace condition of $\TT_\BB$ is precisely this.

  Another example of an activation algebra is the three value \emph{failure
    algebra} $\FF \coloneq \{0, 1, 2\}$ with $\max$ as its join operation and
  $\alpha \coloneq 1$. The trace condition of $\TT_\FF$ corresponds to global
  trace conditions under which traces satisfy the trace condition if they have
  infinitely many progress points (triples $(x, 1, y)$ in maps of $\TT_\FF$) and
  \emph{no} failure points (triples $(x, 2, y)$ in maps of $\TT_\FF$). Again,
  the trace condition of $\TT_\FF$ ensures precisely this condition. The failure
  algebra appears in the literature as one of the common trace conditions for
  the modal $\mu$-calculus (see \Cref{def:ff-mu-tc}).
\end{example}

The trace categories $\TT_\Aa$ are a natural medium for the study of cyclic
proof theory. They are abstract enough to capture many trace conditions from the
literature but also concrete enough to allow various theorems of cyclic proof
theory to be derived for them, such as the decidability result below. In this
article, we show how to construct reset proof systems for precisely the cyclic
proof systems $\iota(\RR)$ for trace interpretations $\iota \colon \RR \to \TT_\Aa$.

\begin{proposition}\label{lem:tc-dec}
  Fix a derivation system $\RR$ and an $\RR$-preproof $\Pi$. Given a trace
  interpretation $\iota \colon \RR \to \TT_\Aa$, it is decidable whether $\Pi$ is a
  proof in $\iota(\RR)$.
\end{proposition}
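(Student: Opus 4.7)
The plan is to reduce the decision problem to a standard decidability question about infinite words over finite alphabets. First, since $\Pi$ is a finite cyclic tree, its infinite branches are in bijection with the infinite paths (starting at the root) in a finite directed graph $G_\Pi$ whose edges come from the child relation together with back-edges from each bud to its companion. Labeling each edge of $G_\Pi$ by the corresponding morphism of $\TT_\Aa$ -- the appropriate rule-component for a tree edge and the identity for a back-edge -- an infinite branch $\pi$ satisfies the trace condition exactly when the induced $\omega$-sequence of edge-labels does.

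Next, I would argue that the trace condition $C_{\TT_\Aa}$, restricted to sequences drawn from the finitely many morphisms appearing in $\Pi$, is $\omega$-regular. The condition asks for a subpath $P'$ and a sequence $\sigma_i$ of trace elements in the finite objects along the path such that $\sigma_i P'(i < i+1)^\alpha \sigma_{i+1}$ for all $i$. A nondeterministic B\"uchi automaton can witness this by maintaining as state the current trace element together with the cumulative $\vee$-join of activations accumulated since the last committed $\alpha$-split, nondeterministically choosing when to close a segment (requiring the join at that moment to equal $\alpha$), and accepting exactly when infinitely many segments are closed. Since $\Aa$ and every object of $\TT_\Aa$ appearing in the induced paths are finite, the state space is finite.

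Given these two pieces, decidability follows by reduction to the standard inclusion problem for $\omega$-regular languages: $\Pi$ is a proof iff the language of infinite paths in $G_\Pi$ is contained in the language accepted by the B\"uchi automaton, which is well known to be decidable. Alternatively, I could invoke the Ramsey-style reduction alluded to in the preceding discussion, which restricts the verification to the finitely many periodic branches determined by the simple cycles of $G_\Pi$. The main technical point requiring care is the handling of the subpath clause in Definition~\ref{def:paths}: I must verify that allowing arbitrary compositions of consecutive morphisms -- and the dropping of a finite prefix -- does not push the trace condition beyond $\omega$-regular, which is secured precisely by the finiteness of $\Aa$ and of the objects encountered along any induced path.
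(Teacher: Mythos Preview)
Your proposal is correct and follows essentially the same approach as the paper: the paper's proof merely cites external references for the automata-theoretic and Ramsey-style arguments, and your sketch spells out precisely the former---indeed, the B\"uchi automaton you describe is the automaton $\SB(\Aa, M)$ the paper constructs explicitly just after this proposition (see \Cref{lem:bam}). The reduction to $\omega$-regular language inclusion and the alternative Ramsey route are exactly the two options the paper names.
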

\begin{proof}
  There are various ways of proving this. For example by appealing to infinite
  word automata~\cite[Theorem 4.4]{wehrAbstractFrameworkAnalysis2021} or to
  Ramsey's theorem~\cite[Theorem 4.14]{wehrAbstractFrameworkAnalysis2021}.
\end{proof}

\subsection{Automata Theory}
\label{sec:automata}

The theory of infinite word and tree automata has always served the role of an
important tool in cyclic proof theory. In this regard, this article is no
exception: The notion of Safra boards (\Cref{sec:safra-boards}) central to our
construction of reset systems is based on the Safra tree
construction~\cite{kozenSafraConstruction2006} developed for the efficient
determinisation of certain infinite word automata. The completeness proofs we
give in this article also crucially rely on a theorem about the inhabitation of
languages described by infinite tree automata (\Cref{lem:rabin-det}).

We begin by recalling some notions of infinite word automata.
  A \emph{Büchi automaton} is a tuple $\SB = (Q, \Sigma, \Delta, S, F)$
  where $Q$ is a finite set of \emph{states}, $\Sigma$ is a finite \emph{alphabet}, $S
  \subseteq Q$ is the set of \emph{starting states}, the relation $\Delta \subseteq Q \times \Sigma
  \times Q$ is the \emph{transition relation} and $F \subseteq Q$ is
  the \emph{acceptance condition}.
  Given a word $\sigma \in \Sigma^\omega$, the sequence $\rho \in Q^\omega$
  is called a \emph{\emph{run of $\SB$} on $\sigma$} if $\rho_0 \in S$ and for each $i \in \omega$ one
  has $(\rho_i, \sigma_i, \rho_{i + 1}) \in \Delta$. A run $\rho$ is
  \emph{accepting} if there is some $q \in F$ such that $\rho_i = q$ infinitely
  often. A word $\sigma$ \emph{is accepted by $\SB$} if
  there exists an accepting run of $\SB$ on $\sigma$.
  The set $L(\SB) \coloneq
  \{\sigma \in \Sigma^\omega ~|~ \sigma \text{ is accepted by } \SB\}$ is the \emph{language of
    $\SB$}.

An important result connecting the theories of cyclic proof theory and infinite
word automata is that the branches satisfying the trace conditions of many
cyclic proof systems from the literature can be recognized by certain infinite
word automata.
  Fix any trace category $\TT$. Its trace condition is
  \emph{Büchi-recognizable} if, for any finite set $M$ of morphisms of
  $\TT$, there exists a Büchi-automaton
  $\SB$ such that $L(\SB)$ is the set $T(M)$ below.
  \[T(M) \coloneq \{\tau \in M^\omega ~|~ P(i < i + 1) \coloneq \tau_i
    \text{ is a valid path and satisfies the trace condition of } \TT\}\]
A general construction for such recognizing automata can be given in the setting
of $\TT_\Aa$. This construction can be used to prove \Cref{lem:tc-dec}. The
construction below is a variant of that given in \cite[Proposition
5.11]{wehrAbstractFrameworkAnalysis2021} which also given a proof of
\Cref{lem:bam}.

\begin{definition}
  Let $\Aa$ be an activation algebra, $M$ be a finite set of morphisms of
  $\TT_\Aa$ and fix $O \coloneq \bigcup_{\tau \in M}\{\dom(\tau), \cod(\tau)\}$. The Büchi-automaton
  $\SB(\Aa, M) = (M, Q, \Delta, O, F)$ is defined below,
  fixing some arbitrary $0^* \not\in \Aa$.
  \begin{align*}
    Q &\coloneq O \cup \{(X, x, a) ~|~ X \in O, x \in X, a \in \Aa \cup \{0^*\} \} \\
    \Delta &\coloneq \{(X, R \colon X \to Y, Y) ~|~ R \colon X \to Y \in M\} \\
    &\;\;\cup \{(X, R \colon X \to Y, (Y, y, 0)) ~|~ Y \in O, x \in Y\} \\
    &\;\;\cup \{((X, x, a), R \colon X \to Y, (Y, y, a \vee b)) ~|~ x R^{b} y, a \vee b \neq \alpha, R \in M \} \\
    &\;\;\cup \{((X, x, a), R \colon X \to Y, (Y, y, 0^*)) ~|~ x R^{b} y, a \vee b = \alpha, R \in M \} \\
    &\;\;\cup \{((X, x, 0^*), R \colon X \to Y, (Y, y, a)) ~|~ x R^{a} y, R \in M, a \neq \alpha \} \\
    &\;\;\cup \{((X, x, 0^*), R \colon X \to Y, (Y, y, 0^*)) ~|~ x R^{a} y, R \in M, a = \alpha \} \\
    F &\coloneq \{(X, x, 0^*) ~|~ X \in O, x \in X\}
  \end{align*}
\end{definition}

\begin{proposition}\label{lem:bam}
  For any $\Aa$, the trace condition of $\TT_\Aa$ is Büchi-recognizable.
  Moreover, for any set $M$ of morphisms of $\TT_\Aa$ one has $L(\SB(\Aa, M)) = T(M)$.
\end{proposition}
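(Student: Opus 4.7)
The plan is to prove $L(\SB(\Aa, M)) = T(M)$ by double containment; Büchi-recognizability of the trace condition of $\TT_\Aa$ then follows immediately from the explicit construction of $\SB(\Aa, M)$.

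For $T(M) \subseteq L(\SB(\Aa, M))$, I would start from a trace witness: a valid path $P$ with $P(i < i+1) = \tau_i$, a subpath $P' = P \circ S$, and a sequence $\sigma$ with $\sigma_i\, (P'(i<i+1))^\alpha\, \sigma_{i+1}$. The first step is to unfold composition in $\TT_\Aa$, yielding, for each $i$, intermediate elements $z^i_0 = \sigma_i, \ldots, z^i_{k_i} = \sigma_{i+1}$ and single-step activations $b^i_j$ along $\tau_{S(i)+j-1}$ whose join equals $\alpha$. Invoking subpath-invariance of the trace condition to assume $S(0) \geq 1$, I would construct an accepting run that first stalls in object states $\dom(\tau_0), \dom(\tau_1), \ldots$ using the first transition rule, then enters tracking via the second rule at $(\cod(\tau_{S(0)-1}), \sigma_0, 0)$. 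From there, the run follows the concatenated sequence $(z^i_j)$ as its $y$-coordinate, using the third or fifth rule when the accumulated activation is below $\alpha$ and the fourth or sixth when it reaches $\alpha$. Since every $b^i_j \leq \alpha$ (as their join equals $\alpha$), the accumulation never overshoots, and because each chunk's activations join to exactly $\alpha$, an $F$-state is reached within every chunk, giving infinitely many $F$-visits.

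Conversely, for $L(\SB(\Aa, M)) \subseteq T(M)$, I would begin with an accepting run $\rho$ on a word $\tau$. Path-validity of $\tau$ is immediate from the transition rules requiring compositional compatibility between the morphism label and the current state. Since tracking states are absorbing, entering them requires the second rule, and $F$ consists only of states of the form $(X, x, 0^*)$, the run has a well-defined first entry into tracking and the acceptance condition forces infinitely many $F$-visits $t_1 < t_2 < \ldots$. I would define the subpath by $S(i) := t_{i+1}$ and set $\sigma_i$ to be the $y$-coordinate of $\rho_{t_{i+1}}$. Inspection of the tracking rules shows that between $\rho_{t_{i+1}}$ and $\rho_{t_{i+2}}$, the superscript resets on leaving $0^*$ (fifth or sixth rules), stays strictly non-$\alpha$ at intermediate non-$F$ states (third rule), and equals $\alpha$ exactly when re-entering $F$ (fourth or sixth rule). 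Hence the single-step activations along the $y$-coordinate trace from $\sigma_i$ to $\sigma_{i+1}$ must join to precisely $\alpha$, which by composition in $\TT_\Aa$ yields $\sigma_i\, (P'(i<i+1))^\alpha\, \sigma_{i+1}$.

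The main obstacle I expect is the careful bookkeeping around the dual role of the $0^*$ marker — both an accepting state marking the \emph{completion} of a chunk and, via the fifth and sixth rules, the seed of the \emph{next} chunk's accumulation. In particular one must handle the start-up phase cleanly (the initial stall in object states and the switch to tracking at $\sigma_0$) and the edge case of adjacent $F$-visits ($t_{i+2} = t_{i+1} + 1$, forcing the sixth rule and a single-morphism chunk), so that the correspondence between accepting runs and trace-condition witnesses is tight in both directions.
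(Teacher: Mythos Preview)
Your proposal is correct and constitutes the natural double-containment argument. The paper itself does not prove this proposition in the text; it only cites an external source (Proposition~5.11 of~\cite{wehrAbstractFrameworkAnalysis2021}) for the proof, so there is no detailed argument to compare against. Your treatment of the two directions is sound, including the point that all single-step activations in a decomposed $\alpha$-trace satisfy $b^i_j \leq \alpha$ (since $\alpha$ is their join in a semilattice), which ensures the accumulator never overshoots and must hit $\alpha$ in every chunk regardless of the carry-over $c \leq \alpha$ from the previous chunk.

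One small remark on the backward direction: when you say ``infinitely many $F$-visits $t_1 < t_2 < \ldots$'', you should make explicit that you take \emph{all} $F$-visits (or at least consecutive ones), since your subsequent argument that the intermediate states have third coordinate $\neq 0^*$ relies on there being no $F$-visit strictly between $t_{i+1}$ and $t_{i+2}$. The B\"uchi acceptance condition as stated in the paper only guarantees that some fixed $q \in F$ recurs infinitely often, but as $F$ is finite this is equivalent to $F$ being visited infinitely often, and so the enumeration of all $F$-visits is well-defined and infinite. With that clarification, your join-to-$\alpha$ calculation between consecutive $F$-visits goes through exactly as you describe.
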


For an alphabet $\Sigma$, a \emph{$\Sigma$-labeled tree} is a pair $(T,
\lambda \colon T \to \Sigma)$ for a, possibly infinite, tree $T$. A $\Sigma$-labeled
tree $(T, \lambda)$ is a \emph{subtree} of $\Sigma$-labeled $(T', \lambda')$ if
is a `suffix' of $T'$, i.e.
there exists some $t \in T'$ such that $T = \{ts \in T' ~|~ s \in T'\}$ and
$\lambda(s) = \lambda'(ts)$.
A \emph{Rabin tree automaton} is a tuple $\SA = (\Sigma, Q, \Delta, s, R)$ consisting
of a finite alphabet $\Sigma$, a set of \emph{states} $Q$, a set of \emph{transitions}
$\Delta \subseteq Q \times \Sigma \times Q^*$, a \emph{starting state} $s \in Q$ and an \emph{acceptance condition}
  $R = \{(G_0, B_0), \ldots (G_n, B_n)\}$ where $G_i \cap B_i = \emptyset$
  and $G_i \cup B_i \subseteq Q$.
Let $(T, \lambda)$ be a $\Sigma$-labeled tree. A \emph{run} of $\SA$ on $(T,
\lambda)$ is a $Q$-labeling $\rho \colon T \to Q$ of $T$ such that $\rho(\varepsilon)
= s$ and for each $t \in T$ with $\Chld(t) = \{t0, \ldots, tn\}$ the transition
$(\rho(t), \lambda(t), \rho(t0), \ldots, \rho(tn)) \in \Delta$. A run is
\emph{accepting} if for every infinite branch $b \in T^\omega$ of $T$ there
exists $(G, B) \in R$ such that $\rho(b_i) \in G$ for infinitely many $i \in
\omega$ and $\rho(b_i) \in B$ for only finitely many $i \in \omega$. A
$\Sigma$-labeled tree $(T, \lambda)$ is \emph{accepted} by $\SA$ if there is an
accepting run of $\SA$ on it. The set $L(\SA) \coloneq \{(T, \lambda \colon T \to
\Sigma) ~|~ (T, \lambda) \text{ is accepted by } \SA\}$ is the \emph{language of
$\SA$}.

The following a corollary of the memoryless determinacy of Rabin games; see,
e.g.~\cite{pitermanFasterSolutionsRabin2006}.
\begin{proposition}\label{lem:rabin-det}
  If $\SA$ is a Rabin tree automaton with non-empty language then
  \( \SA \) accepts a regular tree via a regular run.
\end{proposition}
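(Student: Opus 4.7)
The plan is to recast tree acceptance by $\SA$ as a two-player infinite game on a finite arena and then read the required regular tree and regular run off a memoryless winning strategy provided by the determinacy result cited in the statement.

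First, I would define the \emph{acceptance game} $G(\SA)$ played on positions $Q$. From position $q$, the ``Automaton'' player chooses a transition $(q, a, q_0, \ldots, q_n) \in \Delta$; the ``Pathfinder'' player then chooses one of the successor states $q_i$, which becomes the new position. A play is an infinite sequence in $Q^\omega$ and is declared winning for Automaton if it satisfies the Rabin condition $R$ carried over from $\SA$. A direct unfolding argument shows that Automaton has a winning strategy from $s$ if and only if $L(\SA) \neq \emptyset$: an Automaton strategy is precisely a recipe for labelling each node of a candidate $\Sigma$-labelled tree together with a state to assign to each of its children, and Pathfinder's choices represent an arbitrary infinite branch along which the Rabin condition must be witnessed for the run to be accepting.

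Second, $G(\SA)$ is a Rabin game on the finite arena $Q$, so by memoryless determinacy of Rabin games (see~\cite{pitermanFasterSolutionsRabin2006}) Automaton's winning from $s$ can be realised by a memoryless strategy, i.e.\ a function $\sigma$ assigning to each $q \in Q$ a single transition $\sigma(q) \in \Delta$ to be played every time $q$ is visited. Finally, unfolding $\sigma$ starting at $s$ produces a $\Sigma$-labelled tree $(T, \lambda)$ together with a canonical run $\rho \colon T \to Q$: each node $t$ carries a state $\rho(t)$ whose $\sigma$-transition determines both $\lambda(t)$ and the states $\rho(ti)$ at the children of $t$. Since $Q$ is finite, both $(T, \lambda)$ and $\rho$ admit only finitely many distinct subtree-classes (at most one per reachable state), so both are regular. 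Every infinite branch of $T$ corresponds to a play consistent with $\sigma$ and is therefore winning for Automaton, so $\rho$ satisfies the Rabin condition on every branch, proving that $\SA$ accepts the regular tree $(T, \lambda)$ via the regular run $\rho$.

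The substantive step is the appeal to memoryless determinacy of Rabin games; the game-theoretic reformulation and the passage from a memoryless strategy to a regular tree with a regular run are routine, provided the acceptance game is set up so that Automaton's strategies correspond exactly to ``labelled-tree-plus-run'' data.
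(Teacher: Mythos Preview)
Your proposal is correct and is precisely the argument the paper has in mind: the paper states the proposition as a corollary of the memoryless determinacy of Rabin games (citing \cite{pitermanFasterSolutionsRabin2006}) without spelling out the details, and you have supplied exactly those details via the standard acceptance-game reformulation. The only cosmetic point is that the arena is not literally $Q$ but alternates between Automaton positions (states) and Pathfinder positions (chosen transitions); this does not affect the argument, since memoryless determinacy for the Rabin player still yields a strategy depending only on the current state.
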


\section{Safra Boards}
\label{sec:safra-boards}

This section introduces Safra boards, a variant of the tree construction
introduced by Safra~\cite{safraComplexityOmegaAutomata1988} to determinise Büchi
automata. Our presentation of Safra boards has been adapted specifically to the
automata $\SB(\Aa, M)$ or, equivalently, the trace condition of $\TT_\Aa$.
Inspired by Kozen's account of Safra automata~\cite{kozenSafraConstruction2006}, we
present the construction in terms of boards with stacks of chips on them rather
than trees. Safra boards can recognize whether a
sequence $\tau$ of morphisms is a path satisfying the trace condition, similar
to the automata $\SB(\Aa, M)$. They serve as building blocks of the abstract
cyclic reset proofs presented in \Cref{sec:acd_pwn}.

For the following definitions fix some
countable set $\CC$ with $\omega \subseteq \CC$, which we call the set of \emph{chips}.

\begin{definition}\label{def:safra-board}
  A \emph{Safra board} on an activation algebra $\Aa$ and a set $X \in \Ob(\TT_\Aa)$ is a tuple
  $(\Theta, \sigma)$ consisting of a \emph{control} $\Theta$, a finite linear order
  $(\Theta, \leq)$ on a set $\Theta \subset \CC$, and a map $\sigma \colon X \times \Aa \to
  \pow(\pow(\Theta))$. Furthermore, it is required that for
  every $\gamma \in \Theta$ there are $a \in \Aa$ and $x \in X$ such that
  $\gamma \in S \in \sigma(x, a)$. 
  Elements of \( \Theta \) are called \emph{chips}.
The sets $S \in \sigma(x, a)$ represent
  \emph{stacks} of chips with their $\leq$-least element the bottom
  and their $\leq$-greatest on top.

  A chip $\gamma \in \Theta$ is
  \emph{covered} if for all $x \in X$ and $a \in A$, \( \gamma \) is not on top of any $S \in
  \sigma(x, a)$.

  The stacks of chips in any given control $\Theta$ are \emph{linearly ordered}
  by the relation $S <_\Theta S'$ which holds iff $S$
  contains the $\leq$-least element of the symmetric difference $S \Delta S' = ( S \setminus S' ) \cup ( S' \setminus S )$.

  We write \emph{$\Sb(\Aa, X)$} for the set of Safra boards on $\Aa$ and $X \in \Ob(\TT_\Aa)$.
\end{definition}

Similar to the automaton $\SB(\Aa, M)$, Safra boards give rise to a state
transition system with a notion of `accepting run' which recognizes sequences
$\tau \in M^\omega$ describing paths through $\TT_\Aa$ which satisfy the trace
condition. The transitions the shape $(\Theta,
\sigma) \step{X} (\Theta', \sigma')$: from Safra board to Safra board. Here, the
letter $X$ denotes the type of transition, of which there are five:
$\tau$-successors, weakenings, thinnings, $\gamma$-resets and populations. We
proceed by defining each kind of transition.

\begin{definition}\label{def:t-succ}
  Let $(\Theta, \sigma)$ be a Safra board on $X \in \Ob(\TT_\Aa)$ and let $\tau
  \colon X \to Y$ be a morphism of $\TT_\Aa$. The \emph{$\tau$-successor} of
  $(\Theta, \sigma)$ is a Safra board $(\Theta', \sigma')$ on $Y$, which is
  obtained in two steps:
  \begin{description}
  \item[Move] Move all of the stacks around the board according to $\tau$ to
    obtain the intermediate board $(\Theta^*, \sigma^*)$ on $Y$ as follows:
    \[\sigma^*(y, a) \coloneq \{S \in \sigma(x, b) ~|~ \exists c \in \Aa.~(x, c,
      y) \in \tau \text{ and } a = b \vee c\}\]
    where $\Theta^* \coloneq \{\gamma \in \Theta ~|~ \exists y \in Y, a \in \Aa,
    S \in \sigma^*(y, a) ~|~\gamma \in S\}$ is a suborder of $\Theta$.
  \item[Cover] Cover all stacks that have landed on $\alpha$. First, fix some
    linearly ordered set $\Theta^\circ \subset \CC \setminus \Theta$ and bijection $\iota \colon \{y \in Y ~|~
    \sigma^*(y, \alpha) \neq \emptyset\} \simeq \Theta^\circ$. Then set
    \[\sigma'(y, a) \coloneq
      \begin{cases}
        \emptyset & a = \alpha \\
        \sigma^*(y, a) \cup \{S \cup \{\iota(y)\} ~|~ S \in \sigma^*(y, 
        \alpha)\} & a = 0 \\
        \sigma^*(y, a) & \text{otherwise}
      \end{cases}
    \]
    Now fix $\Theta' \coloneq \Theta^* \oplus \Theta^\circ$ where $\oplus$ denotes
    the concatenation of linear orders.
  \end{description}
  We write $(\Theta, \sigma)
  \step{\tau} (\Theta', \sigma')$ to signal that $(\Theta', \sigma')$ is a
  $\tau$-successor of $(\Theta, \sigma)$.
\end{definition}

\begin{example}\label{ex:t-succ}
  For an example, denote by $\FF$ the three-value \textit{failure algebra}
   $(\{0, 1, 2\}, \leq, \vee, 0, 1)$ and the set
  $\{w, x, y, z\} \in \Ob(\TT_\FF)$. A Safra board in $\Sb(\FF, X)$ may be
  thought of as a square game board, akin to a chess board, as pictured in
  \Cref{fig:t-succ}. Indeed, \Cref{fig:t-succ} gives an example of a
  $\tau$-successor transition for $\tau \coloneq \{(x, 1, x), (x, 1, y), (y, 0, y), (y, 1, y) (z, 2,
  z)\}$ and $\Theta \coloneq \{a, b, c, d, e\}$ (ordered alphabetically) and
  \[\sigma(w, 0) \coloneq \{\{a\}\} \quad
    \sigma(x, 0) \coloneq \{\{b\}\} \quad
    \sigma(y, 0) \coloneq \{\{c, d\}\} \quad
    \sigma(z, 0) \coloneq \{\{e\}\}
  \]
  where $\sigma(u, v) \coloneq \emptyset$ for $u \in X$ and $v \in \{1, 2\}$, as
  pictured in \Cref{fig:t-succ-a}. To obtain one of its $\tau$-successors, one
  first needs to carry out the \textsc{Move}-step, moving the stacks on the
  board according to $\tau$. The board $(\Theta^*, \sigma^*)$ resulting from
  this step is pictured in \Cref{fig:t-succ-b}. Note that the stack from $(y,
  0)$ was both moved to $(y, 1)$ and stayed on $(y, 0)$ and furthermore that the stack on $(w, 0)$
  was removed, as there is no trace triplet for $w$ in $\tau$. Furthermore,
  observe that the \textsc{Move}-step is fully deterministic for a fixed board
  $(\Theta, \sigma)$ and morphism $\tau$. To obtain $(\Theta', \sigma')$, one
  needs to carry out the \textsc{Cover}-step: The stack on each $(u, \alpha)$ in
  $(\Theta^*, \sigma^*)$ need to be moved back to $(u, 0)$. To mark that
  $\alpha$ has been attained, a new chip (which was not present in $\Theta$) is
  placed on each such stack that was moved back. If multiple stacks are moved
  from $(v, \alpha)$ to $(v, 0)$, the same chip is placed on top of each.
  In this case, as $\alpha = 1$, one moves the stacks on $(x, 1)$ and $(y, 1)$,
  introducing the new chips $g$ and $h$.
  \begin{figure}[h]
    \centering
    \begin{subfigure}[b]{0.3\textwidth}
      \centering
      \begin{tabular}{|c|c|c|c|}
        \hline
        $X \setminus \Aa$& 0 & 1 & 2 \\
        \hline
        w & a & & \\
        \hline
        x & b & & \\
        \hline
        y & cd & & \\
        \hline
        z & e & & \\
        \hline
      \end{tabular}
      \caption{Board $(\Theta, \sigma)$}
      \label{fig:t-succ-a}
    \end{subfigure}
    \begin{subfigure}[b]{0.3\textwidth}
      \centering
      \begin{tabular}{|c|c|c|c|}
        \hline
        $X \setminus \Aa$& 0 & 1 & 2 \\
        \hline
        w & & & \\
        \hline
        x & & b & \\
        \hline
        y & c & b, cd & \\
        \hline
        z & & & e \\
        \hline
      \end{tabular}
      \caption{Board $(\Theta^*, \sigma^*)$}
      \label{fig:t-succ-b}
    \end{subfigure}
    \begin{subfigure}[b]{0.3\textwidth}
      \centering
      \begin{tabular}{|c|c|c|c|}
        \hline
        $X \setminus \Aa$& 0 & 1 & 2 \\
        \hline
        w & & & \\
        \hline
        x & bg & & \\
        \hline
        y & c, bh, cdh & & \\
        \hline
        z & & & e \\
        \hline
      \end{tabular}
      \caption{Board $(\Theta', \sigma')$}
      \label{fig:t-succ-c}
    \end{subfigure}
    
    \caption{Example of $(\Theta, \sigma) \step{\tau} (\Theta', \sigma')$}
    \label{fig:t-succ}
  \end{figure}
\end{example}

\begin{definition}\label{def:weakening}
  Let $(\Theta, \sigma)$ be a Safra board on $X \in \Ob(\TT_\Aa)$. Another
  Safra board $(\Theta', \sigma')$ on $X$ is a \emph{weakening} of $(\Theta, \sigma)$
  if $\sigma'(x, a) \subseteq \sigma(x, a)$ for every $x \in X$ and $a \in \Aa$.
  Furthermore, it is required that $\Theta' \subseteq \Theta$ is such that every $\gamma
  \in \Theta'$ occurs in some $S \in \sigma'(x, a)$ for some $x \in X, a \in \Aa$.
  We write $(\Theta, \sigma)
  \step{W} (\Theta', \sigma')$ to express that the latter board is a weakening
  of the former.

  The \emph{thinning} of $(\Theta, \sigma)$ is the special weakening $(\Theta',
  \sigma')$ induced by
  \(\sigma'(x, a) \coloneq \{\min_{<_\Theta}\,\sigma(x, a)\}\) if $\sigma(x, a)
  \neq \emptyset$ and $\sigma'(x, a) = \emptyset$ otherwise.
  We write $(\Theta, \sigma) \step{T} (\Theta',
  \sigma')$ to express that the latter board is the thinning of the former.
\end{definition}

\begin{example}\label{ex:thin}
  \Cref{fig:thin} pictures the result of a thinning transition starting from the
  result of \Cref{ex:t-succ}.
  When multiple stacks are present on a space on the board, a thinning removes
  all but the $<_\Theta$-least.
  On the board in \Cref{fig:thin-a}, the thinning thus modifies only $\sigma(y, 0)$.
  Observe that $c <_\Theta cdh$ as $\{c\} \Delta \{c, d, h\} = \{d, h\}$, meaning $cdh$
  contains the $\Theta$-least element of the symmetric difference. Indeed,
  whenever $S \subset S'$ for two stacks on $\Theta$, one has $S' <_\Theta S$.
  Secondly, $bh <_\Theta cdh$ as $bh$ contains $b$ and $cdh$ does not.
  \begin{figure}[h]
    \centering
    
    \begin{subfigure}[b]{0.45\textwidth}
      \centering
      \begin{tabular}{|c|c|c|c|}
        \hline
        $X \setminus \Aa$& 0 & 1 & 2 \\
        \hline
        w & & & \\
        \hline
        x & bg & & \\
        \hline
        y & c, bh, cdh & & \\
        \hline
        z & & & e \\
        \hline
      \end{tabular}
      \caption{Board $(\Theta, \sigma)$}
      \label{fig:thin-a}
    \end{subfigure}
    \begin{subfigure}[b]{0.45\textwidth}
      \centering
      \begin{tabular}{|c|c|c|c|}
        \hline
        $X \setminus \Aa$& 0 & 1 & 2 \\
        \hline
        w & & & \\
        \hline
        x & bg & & \\
        \hline
        y & bh & & \\
        \hline
        z & & & e \\
        \hline
      \end{tabular}
      \caption{Board $(\Theta', \sigma')$}
      \label{fig:thin-b}
    \end{subfigure}
    \caption{Example of $(\Theta, \sigma) \step{T} (\Theta', \sigma')$}
    \label{fig:thin}
  \end{figure}
\end{example}

\begin{definition}\label{def:reset}
  Let $(\Theta, \sigma)$ be a Safra board on $X \in \Ob(\TT_\Aa)$ and let
  $\gamma \in \Theta$ be covered. The \emph{$\gamma$-reset} \emph{$\sreset{S}{\gamma}$} of a stack
  $S$ is defined as 
  \[
    \sreset{S}{\gamma} \coloneq
    \begin{cases}
      \{z \in S ~|~ z \leq \gamma\} & \text{ if } \gamma \in S \\
      S & \text{ otherwise}
    \end{cases}
  \]
  Then the \emph{$\gamma$-reset} of $(\Theta, \sigma)$ is $(\Theta', \sigma')$
  where
  \[\sigma'(x, a) \coloneq \{\sreset{S}{\gamma} ~|~ S \in \sigma(x, a)\} \qquad
    \Theta' \coloneq \{\gamma \in \Theta ~|~ \exists x \in X, a \in \Aa, S \in
    \sigma'(x, a).~\gamma \in S\}\]
  and we write $(\Theta, \sigma) \step{R_\gamma} (\Theta', \sigma')$ to express this.
\end{definition}
\begin{example}\label{ex:reset}
  Pictured in \Cref{fig:reset} is the result of a $b$-reset applied to the
  resulting board from \Cref{ex:thin}. Note that a reset on $e, g$ or $h$ would
  not be possible on that board as none of them are covered.

  \begin{figure}[h]
    \centering
    
    \begin{subfigure}[b]{0.45\textwidth}
      \centering
      \begin{tabular}{|c|c|c|c|}
        \hline
        $X \setminus \Aa$& 0 & 1 & 2 \\
        \hline
        w & & & \\
        \hline
        x & bg & & \\
        \hline
        y & bh & & \\
        \hline
        z & & & e \\
        \hline
      \end{tabular}
      \caption{Board $(\Theta, \sigma)$}
      \label{fig:reset-a}
    \end{subfigure}
    \begin{subfigure}[b]{0.45\textwidth}
      \centering
      \begin{tabular}{|c|c|c|c|}
        \hline
        $X \setminus \Aa$& 0 & 1 & 2 \\
        \hline
        w & & & \\
        \hline
        x & b & & \\
        \hline
        y & b & & \\
        \hline
        z & & & e \\
        \hline
      \end{tabular}
      \caption{Board $(\Theta', \sigma')$}
      \label{fig:reset-b}
    \end{subfigure}
    \caption{Example of $(\Theta, \sigma) \step{R_b} (\Theta', \sigma')$}
    \label{fig:reset}
  \end{figure}
\end{example}

\begin{definition}\label{def:populate}
  Let $(\Theta, \sigma)$ be a Safra board on $X \in \Ob(\TT_\Aa)$.
  The board $(\Theta, \sigma')$ is a \emph{population} of
  $(\Theta, \sigma)$, denoted by $(\Theta, \sigma)
  \step{P} (\Theta, \sigma')$, if for each $x \in X$ one has
  $\sigma(x, 0) \subseteq \sigma'(x, 0) \subseteq \sigma(x, 0) \cup
  \{\emptyset \}$ and $\sigma'(x, a) = \sigma(x, a)$ for all $a \in \Aa
  \setminus \{0\}$.
\end{definition}

\begin{example}\label{ex:populate}
  Pictured in \Cref{fig:populate} is the result of a population transition on
  the board resulting from \Cref{ex:reset}. Here, the new chip $f$ has been
  added to $(w, 0)$ and $(x, 0)$. It would also have been legal to additionally
  add it to $(y, 0)$ and $(z, 0)$.
  \begin{figure}[h]
    \centering
    
    \begin{subfigure}[b]{0.45\textwidth}
      \centering
      \begin{tabular}{|c|c|c|c|}
        \hline
        $X \setminus \Aa$& 0 & 1 & 2 \\
        \hline
        w & & & \\
        \hline
        x & b & & \\
        \hline
        y & b & & \\
        \hline
        z & & & e \\
        \hline
      \end{tabular}
      \caption{Board $(\Theta, \sigma)$}
      \label{fig:populate-a}
    \end{subfigure}
    \begin{subfigure}[b]{0.45\textwidth}
      \centering
      \begin{tabular}{|c|c|c|c|}
        \hline
        $X \setminus \Aa$& 0 & 1 & 2 \\
        \hline
        w & f & & \\
        \hline
        x & b, f & & \\
        \hline
        y & b & & \\
        \hline
        z & & & e \\
        \hline
      \end{tabular}
      \caption{Board $(\Theta', \sigma')$}
      \label{fig:populate-b}
    \end{subfigure}
    \caption{Example of $(\Theta, \sigma) \step{P} (\Theta', \sigma')$}
    \label{fig:populate}
  \end{figure}
\end{example}

Safra board runs are sequences of the different kinds of transitions we have
defined. Importantly, for such a sequence to be considered a run on some $\tau
\in M^\omega$ it is crucial that it `consumes' all `letters' of $\tau$.

\begin{definition}
  Fix a set $M$ of morphisms of $\TT_\Aa$ and some $\tau \in M^\omega$.
  A sequence $(\Theta_i, \sigma_i)_{i \in \omega}$ of Safra boards is called a
  \emph{run of $\tau$} if there exists a strictly monotone function $\iota :
  \omega \to \omega$ and for every $i \in \omega$ either
  \begin{itemize}
  \item $i = \iota(n)$ for some $n \in \omega$ and $(\Theta_i, \sigma_i)
    \step{\tau_n} (\Theta_{i + im}, \sigma_{i + 1})$ or
  \item $i \neq \iota(n)$ and $(\Theta_i, \sigma_i) \step{W} (\Theta_{i + 1}, \sigma_{i + 1})$ or
  $(\Theta_i, \sigma_i) \step{P} (\Theta_{i + 1}, \sigma_{i + 1})$ or
  $(\Theta_i, \sigma_i) \step{R_\gamma} (\Theta_{i + 1}, \sigma_{i + 1})$
    for some $\gamma \in \Theta_i$
  \end{itemize}
  A run $(\Theta_i, \sigma_i)_{i \in \omega}$ is \emph{accepting} if there exists some $N$ and some $\gamma \in
  \bigcap_{N \leq n} \Theta_n$ such that infinitely many $\gamma$-resets take
  place along $(\Theta_i, \sigma_i)_{i \in \omega}$.
\end{definition}

There is a lot of leeway when constructing a Safra board run because of the many
different kinds of transitions that may be taken at any point it time (for
example, it is always possible to take a weakening transition which leaves
$\sigma$ unchanged). For some proofs in this article, it will prove useful
to be stricter about the ordering of transitions a long a run. This is
accomplished by the concept of greedy runs, runs whose ordering of transitions
is deterministic for any given $\tau$. Such runs are called
\textit{greedy} because it can be shown that whenever there exists an accepting
Safra board run of $\tau$, the greedy run of $\tau$ is accepting as well. In
many cases, it thus suffices to restrict ones attention to greedy runs. Dually,
when constructing runs, one may always follow the greedy construction strategy.
The concept of greedy runs is also closely linked to the runs on determinised
$\SB(\Aa, M)$ (see \Cref{def:safra-aut}).

\begin{definition}\label{def:greedy-step}
  Fix $X, Y \in \Ob(\TT_\Aa)$, some morphism $\tau \colon X \to Y$ and a Safra board
  $(\Theta, \sigma)$ on $X$. Then $(\Theta', \sigma')$ is the result of
  a \emph{greedy $\tau$-transition} from $(\Theta, \sigma)$, denoted by
  $(\Theta, \sigma) \gstep{\tau} (\Theta', \sigma')$, if
  \[(\Theta, \sigma) = (\Theta_0, \sigma_0)
    \step{R_{\gamma_k}} \ldots
    \step{R_{\gamma_1}} (\Theta_k, \sigma_k)
    \step{P} (\Theta_{k + 1}, \sigma_{k + 1})
    \step{\tau} (\Theta_{k + 2}, \sigma_{k + 2})
    \step{T} (\Theta_{k + 3}, \sigma_{k + 3})
    = (\Theta', \sigma')\]
  is the transition sequence produced according to the following
  instructions, starting at step 1.
  \begin{enumerate}
  \item If there exist covered chips $\gamma_1 < \ldots < \gamma_k$ in
    $(\Theta_0, \sigma_0)$ then perform $\gamma_i$-resets in descending order,
    that is:
    \[(\Theta_0, \sigma_0) \step{R_{\gamma_k}} (\Theta_1, \sigma_1)
      \step{R_{\gamma_{k - 1}}} \ldots
      \step{R_{\gamma_1}} (\Theta_k, \sigma_k)\]
    then continue with step 2.
  \item Continue with a population $(\Theta_k,
    \sigma_k) \step{P} (\Theta_{k + 1}, \sigma_{k + 1})$ in such a way that
    every $\sigma_i(x, 0) = \emptyset$ is populated to $\abs{\sigma_{i + 1}(x,
      0)} = 1$ and all other $\sigma_{i + 1}(x, a)$ remain unchanged.
    Continue with step 3.
  \item Carry out the $\tau$-transition $(\Theta_{k + 1}, \sigma_{k + 1}) \step{\tau}
    (\Theta_{k + 1}, \sigma_{k + 1})$ then continue with step 4.
  \item Carry out a thinning $(\Theta_{k + 2},
    \sigma_{k + 2}) \step{T} (\Theta_{k + 3}, \sigma_{k + 3})$.
  \end{enumerate}
  We write $(\Theta, \sigma)
  \gstep{\tau} (\Theta', \sigma')$ to denote the full transition sequence
  described above.
\end{definition}

\begin{definition}\label{def:greedy}
  A run $(\Theta_i, \sigma_i)_{i \in \omega}$ of some $\tau$ is \emph{greedy} if
  $\Theta_0 = \emptyset$ and $\sigma_0(x, a) = \emptyset$ and furthermore the
  run is a sequence of greedy $\tau_i$-transitions, i.e.
  \[(\Theta_0, \sigma_0) \gstep{\tau_0} (\Theta_{\iota(0) + 2}, \sigma_{\iota(0)
    + 2}) \gstep{\tau_1} (\Theta_{\iota(1) + 2}, \sigma_{\iota(1) + 2})
  \gstep{\tau_2} \ldots\]
\end{definition}
\begin{fact}\label{lem:greedy-unique}
  If $\tau \in M^\omega$ describes a path through $\TT_\Aa$, there exists
  a greedy run of $\tau$ which is unique up-to the choice of chips for the
  $\Theta_i$. If $\tau$ does not describe such a path, no greedy run of $\tau$
  exists.
\end{fact}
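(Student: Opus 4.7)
The plan is to proceed by induction on the number of greedy $\tau_i$-transitions performed, maintaining the invariant that after $n$ such transitions, the current board $(\Theta_{\iota(n-1)+2}, \sigma_{\iota(n-1)+2})$ sits on $\cod(\tau_{n-1})$ (or on $\dom(\tau_0)$ initially). Since a greedy $\tau_i$-transition is defined as a fixed sequence of four sub-steps (resets, population, $\tau_i$-transition, thinning), the main work is to verify for each sub-step that (a) it is applicable under the invariants established so far and (b) its output is determined up to the choice of fresh chips.

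For the existence direction, assume $\tau$ describes a path. I would check each sub-step of Definition \ref{def:greedy-step} in order. The reset stage is always applicable: the covered chips form a finite subset of $\Theta_0$ and are linearly ordered by the control, so the enumeration $\gamma_1 < \cdots < \gamma_k$ is uniquely determined, and applying successive $\gamma_i$-resets leaves the board on $\dom(\tau_n)$. The population stage is always applicable, and the cells $(x,0)$ to populate are exactly those with $\sigma(x,0) = \emptyset$; only the names of the introduced chips are free. The $\tau_n$-transition requires the board to sit on $\dom(\tau_n)$, which follows from the induction hypothesis together with the fact that $\tau$ is a path, so $\cod(\tau_{n-1}) = \dom(\tau_n)$; the move step is fully deterministic (as remarked in \Cref{ex:t-succ}) and the cover step is determined up to the names of the freshly introduced chips in $\Theta^\circ$. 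Finally, thinning is always applicable because $<_\Theta$ is a linear order on the finite set of stacks in each cell, so $\min_{<_\Theta} \sigma(x,a)$ exists and is unique whenever $\sigma(x,a) \neq \emptyset$.

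For uniqueness up to chip names, I would observe by inspection of the four sub-steps above that the only non-determinism lies in (i) naming the fresh chips introduced by the cover sub-step of the $\tau_n$-transition and (ii) naming the fresh chips added during the population step. Every other decision — which chips are reset and in what order, which cells are populated, how stacks are moved by $\tau_n$, which stack survives thinning — is forced by the data of $\tau_n$ and the prior board. An induction then yields uniqueness of the greedy run up to a bijective renaming of chips at each population/cover stage.

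For non-existence when $\tau$ does not describe a path, let $n$ be least such that $\cod(\tau_{n-1}) \neq \dom(\tau_n)$ (or $\dom(\tau_0)$ is incompatible with the empty starting board, which is ruled out trivially). By the existence argument applied up to step $n-1$, any greedy run must reach a board on $\cod(\tau_{n-1})$. The $\tau_n$-transition sub-step of the next greedy transition requires the board to sit on $\dom(\tau_n)$, which is impossible since the reset and population sub-steps preserve the underlying object in $\Ob(\TT_\Aa)$. Hence no greedy run of $\tau$ can exist. The main obstacle is bookkeeping: careful verification that reset and population indeed preserve the underlying set $X$ of the board, and that the thinning step's $\min_{<_\Theta}$ is well-defined, both of which reduce to inspection of \Cref{def:safra-board,def:weakening,def:reset,def:populate}.
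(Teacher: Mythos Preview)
Your proof is correct and takes essentially the same approach as the paper, which argues more tersely that steps 1, 2 and 4 of \Cref{def:greedy-step} are always applicable while step 3 requires the current board to sit on $\dom(\tau_i)$, which holds precisely when $\tau$ is a path. One small correction: by the formal statement of \Cref{def:populate}, the population transition leaves $\Theta$ unchanged and only adds the empty stack $\emptyset$ to selected cells $(x,0)$, so your non-determinism source (ii) does not actually arise; the only genuine chip-naming freedom is in the cover sub-step of the $\tau$-successor, which if anything makes your uniqueness claim cleaner.
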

\begin{proof}
  For the existence of the greedy run, observe that the transitions as prescribed by clauses 1., 2.~and 4.~of
  \Cref{def:greedy-step} can always be taken. The only reason why constructing
  such a run might thus fail is if some prescribed $\tau_i$-successor transition could not
  be taken. The only reason for this would be that the current Safra board is on
  a set different from $\dom(\tau_i)$. But if $(\Theta_0, \sigma_0) \in \Sb(\Aa,
  \dom(\tau_0))$, it is easily observed that this problem will not arise as $\tau$
  is assumed to describe a path through $\TT_\Aa$. Hence a greedy run can be
  constructed and it indeed is a run because all `letters' of $\tau$ are read
  eventually. As the clauses of \Cref{def:greedy-step} always prescribe a unique
  transition to be taken next, the order of transitions along the greedy runs of
  $\tau$ is always fixed, meaning they can only differ by the choice of chips as
  claimed.
  
  For the second claim, observe that if $\tau$ does not describe a path then
  there must exist $\tau_i$ and $\tau_{i + 1}$ such that $\cod(\tau_{i})
  \neq \dom(\tau_{i + 1})$. In such a cases, the $\tau_{i + 1}$-successor step
  cannot be taken as elaborated above, meaning no run (and thus no greedy run)
  on $\tau$ can exist.
\end{proof}


We continue by proving that the definitions we have given above
are correct in the following sense: Any $\tau \in M^\omega$ describes a path
satisfying the trace condition if and only if there exists an accepting Safra
board run on $\tau$. Our arguments rely on the correspondence between Safra
board runs and runs on $\SB(\Aa, M)$. Thus, the results only hold in $\TT_\Aa$
with $\Aa$ finite.

\begin{lemma}\label{lem:tc-to-greedy}
  Fix a finite set $M$ of morphisms of $\TT_\Aa$ for some finite $\Aa$. If $\tau
  \in M^\omega$ describes a path which satisfies the trace condition then the
  greedy run on $\tau$ exists and is accepting.
\end{lemma}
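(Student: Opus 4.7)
The existence of the greedy run on $\tau$ is immediate from Fact \ref{lem:greedy-unique}, since $\tau$ is assumed to describe a path through $\TT_\Aa$. The real content of the lemma is that this run is accepting, i.e., that some chip $\gamma$ eventually lies in $\bigcap_{N \leq n} \Theta_n$ and undergoes infinitely many $\gamma$-resets. The plan is to extract such a chip by following the infinitely $\alpha$-progressing trace guaranteed by the trace condition. Concretely, unfold the trace condition: there is a subpath witness $S \colon \omega \to \omega$ and an infinite sequence $\sigma$ with $\sigma_i \in P'(i) = P(S(i))$ such that each composition $\tau_{S(i+1)-1} \circ \cdots \circ \tau_{S(i)}$, restricted along $\sigma$, accumulates to exactly $\alpha$.

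I would next maintain, by induction on $i$, the following invariant along the greedy run: after the greedy $\tau_{S(i+1)-1}$-transition has been executed, the current board contains a stack $T_i$ rooted at position $\sigma_{i+1}$ whose top chip $\gamma_i$ was freshly introduced by the \textsc{Cover} phase of that transition (because the cumulative activation along $\sigma$ from $S(i)$ to $S(i+1)$ equals $\alpha$). The \textsc{Move} phase transports the stack from $\sigma_i$ to $\sigma_{i+1}$ correctly, and the subsequent \textsc{Cover} places $\iota(\sigma_{i+1})$ on top, supplying $\gamma_i$. I would additionally check that each $\gamma_i$ persists through all later \textsc{Thinning}, \textsc{Population}, and intermediate $\tau$-transitions until it is eventually covered: thinning keeps only the $<_\Theta$-least stack per cell, but all stacks through the bottom chip of $T_i$ are $\leq$-nested, so thinning preserves the descendant of $T_i$ that still carries $\gamma_i$ near the bottom.

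From here a pigeonhole step finishes the argument. Any Safra board has uniformly bounded size (in terms of $|X| \cdot |\Aa|$), so only finitely many chips can sit at the bottom of the trace-tracking stacks across the whole run. Thus some chip $\gamma$ appears as $\gamma_i$ (or as a persistent bottom chip of $T_i$) for infinitely many $i$; once introduced, this $\gamma$ never leaves $\Theta_n$. Each successive $\alpha$-activation along $\sigma$ then stacks a further fresh chip above $\gamma$, eventually covering $\gamma$, and clause~1 of the greedy transition enforces a $\gamma$-reset as soon as $\gamma$ is covered. This produces infinitely many $\gamma$-resets, establishing that the greedy run is accepting.

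The main obstacle I anticipate is the bookkeeping in the middle paragraph: verifying that the stack $T_i$ really survives thinning and subsequent greedy resets, and that $\gamma_i$ is not prematurely removed when higher chips on other stacks get reset. This requires exploiting the linear order on chips together with the fact that resets in a greedy transition proceed in descending order of $\Theta$, so a reset on a chip $\delta > \gamma_i$ never affects the portion of $T_i$ at or below $\gamma_i$. Alternatively, if this direct analysis becomes cumbersome, one can short-circuit it by invoking Proposition \ref{lem:bam} and viewing greedy Safra board runs as implementing a Safra-style determinisation of $\SB(\Aa, M)$, thereby reducing acceptance of the greedy run to acceptance of $\tau$ by $\SB(\Aa, M)$.
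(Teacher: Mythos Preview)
Your primary approach has a genuine gap at the thinning step. You assert that ``thinning preserves the descendant of $T_i$ that still carries $\gamma_i$ near the bottom'' because the relevant stacks are ``$\leq$-nested''. This fails in general: after the \textsc{Move} phase of a $\tau$-successor, the stacks landing on a given cell $(y,a)$ come from arbitrary cells $(x,b)$ with $(x,c,y)\in\tau$ and need not share any prefix with $T_i$; thinning then keeps the $<_\Theta$-least of these, which may well be a stack originating from a completely unrelated trace. The paper's proof flags exactly this possibility (``the unique stack on it may not be the one moved over\dots because of a thinning step''). This also undermines your pigeonhole conclusion: even with a finite chip supply, ``$\gamma$ appears as a bottom chip of $T_i$ infinitely often'' does not give $\gamma\in\bigcap_{N\leq n}\Theta_n$, since thinning may discard every stack containing $\gamma$ between two such occurrences.

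The paper closes this gap with a \emph{stabilisation} argument rather than a survival argument. It uses an accepting run $\rho$ of $\SB(\Aa,M)$ only to name the board position $\rho_i$, then shows: the stack at $\rho_i$ is always nonempty, its height is bounded by $\abs{X}\cdot\abs{\Aa}$, hence there is a maximal height $k$ that recurs cofinally; from some point on the height never drops below $k$, so the $k$th chip is never removed by a reset; and any change of the $k$th chip must come from a thinning replacement, which strictly decreases the first $k$ entries in the lexicographic order induced by $\Theta$ and therefore can occur only finitely often. The eventually-constant $k$th chip $\gamma$ is then the one reset infinitely often. Your fallback suggestion (appeal to the Safra determinisation of $\SB(\Aa,M)$) would also work and is close in spirit, but note that the paper does not use determinisation as a black box---it carries out the stabilisation argument explicitly on the greedy run.
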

\begin{proof}
  The greedy run $(\Theta_i, \sigma_i)_{i \in \omega}$ exists by
  \Cref{lem:greedy-unique}. Recall that there exists a function $\iota \colon \omega
  \to \omega$ indicating the index at which $\tau_i$ is read, i.e.
  $(\Theta_{\iota(i)}, \sigma_{\iota(i)}) \step{\tau_i} (\Theta_{\iota(i) + 1},
  \sigma_{\iota(i) + 1})$. Furthermore, observe that, as $\tau$ satisfies the
  trace condition, there
  exists an accepting run $\rho \in Q^\omega$ of $\tau$ on $\SB(\Aa, M)$. 
  As the run $\rho$ is accepting, it must, from some point $R$ onwards, `track' a trace along
  $\tau$ through the states $\Sigma X \in O.~ X \times (\Aa \cup 0^*) \subseteq
  Q$. Such a state $(X, x, a)$ corresponds to the spot $(x, a)$ on a Safra
  board on $X$ and this connection is vital to this proof. For any $R \leq i$ we thus
  write $\sigma_{\iota(i)}(\rho_i)$ to mean $\sigma_{\iota(i)}(x, a)$ where
  $\rho_i = (X, x, a)$, treating $0^*$ as $0 \in \Aa$ (it is easily observed
  that the object $X_{\iota(i)} \in \Ob(\TT_\Aa)$ on which $\Theta_{\iota(i)}$ is defined
  must always be identical with $X$).

  We make a few observations about the Safra boards along greedy runs just
  before the next letter of $\tau$ is read, i.e. the boards $(\Theta_{\iota(i)},
  \sigma_{\iota(i)})$. For this, fix $\abs{X} \coloneq
  \sup_{i \in \omega} \abs{X_i}$ which is finite as $M$ is a finite set
  of morphisms and there thus exist only finitely many distinct $X_i$.
  \begin{enumerate}
  \item 
    $\abs{\sigma_{\iota(i)}(x, a)} \leq 1$ at every $\iota(i)$: This is ensured
    by the thinning after the $\tau_{i - 1}$-successor (or the fact that the
    greedy run starts on the empty board).
    We thus
    treat the $\sigma_{\iota(i)}$ as functions $X_i \times \Aa \to
    \pow(\Theta_{\iota(i)})$ with $\sigma_{\iota(i)}(x, a) =
    \emptyset$ iff $\sigma_{\iota(i)}(x, a) = \emptyset$ under the original
    interpretation.
  \item 
    $\abs{\Theta_{\iota(i)}} \leq \abs{X} \cdot \abs{\Aa}$: If there were
    more than $\abs{X} \cdot \abs{\Aa}$ chips, one would have to be covered on
    $(\Theta_{\iota(i) - 2}, \sigma_{\iota(i) - 2})$ (the board resulting from
    the last reset which is part of $(\Theta_{\iota(i - 1) + 2}, \sigma_{\iota(i
      - 1) + 2}) \gstep{\tau_i} (\Theta_{\iota(i) + 2}, \sigma_{\iota(i) + 2})$) as there is only one
    top-most chip on each $(x, a)$, contradicting the fact that the last reset
    was applied at $(\Theta_{\iota(i) - 3}, \sigma_{\iota(i) - 3})$.
  \item $\sigma_{\iota(i)}(\rho_i) \neq \emptyset$ for any $R \leq i$: We argue
    per induction on $i$. First, suppose $R = i$: Then $\rho_i = (X_i,
    x, 0)$ for
    some $x \in X_i$. Because any spot $(x, 0)$ on the Safra board without a
    stack is populated in a greedy run before the next morphism is read, it
    follows
    that $\sigma_{\iota(i)}(\rho_i) = \sigma_{\iota(i)}(x, 0)$ is not empty.
    Now, because $\sigma_{\iota(i)}(\rho_i)$ is not empty, it is easily
    observed that the stack on $\sigma_{\iota(i)}(\rho_i)$ will be moved onto
    $\sigma_{\iota(i) + 1}(\rho_{i + 1})$ when computing the $\tau_i$-successor.
    As any steps that could occur between $(\Theta_{\iota(i) + 1},
    \sigma_{\iota(i) + 1})$ and $(\Theta_{\iota(i + 1)}, \sigma_{\iota(i + 1)})$
    never clear away all stacks on any space on the board which has at least one
    stack on it, it thus follows that $\sigma_{\iota(i + 1)}(\rho_{i + 1})
    \neq \emptyset$ (although the unique stack on it may not be the one
    moved over from $\sigma_{\iota(i)}(\rho_i)$ because of a thinning step).
  \item There must be a maximal height $1 \leq k \leq \abs{X} \cdot \abs{\Aa}$ such
    that from some $R < K$ onwards, $\abs{\sigma_{\iota(i)}(\rho_i)}
    \geq k$ of the height of the stack on $\rho_i$ 
    for every $\iota(i) > K$: This follows from the fact that $\abs{\sigma_{\iota(i)}(\rho_i)} \leq
    \abs{X} \cdot \abs{\Aa}$
    (as a consequence of 2.) and 
    $\abs{\sigma_{\iota(i)}(\rho_i)} \geq 1$ (as a consequence of 3.).
  \item 
    From some $K < N$ onwards, the $k$th chip of all
    $\sigma_{{\iota(i)}}(\rho_{i})$ with $N \leq i$ needs to remain the
    same: As
    $\abs{\sigma_{\iota(i)}(\rho_i)}$ is never less than $k$ again, meaning the $k$th chip is
    never cleared as part of a reset, the only way that the color
    of the $k$th chip could change would be if the stack on $\sigma_{\iota(i)}(\rho_i)$ was `switched' for some
    $<_{\Theta}$-smaller stack with a different $k$th chip as part of a
    thinning. Such a stack will also always be smaller according to the
    lexicographic ordering on the first $k$ elements. But this lexicographic
    ordering is well-founded on arbitrary finite linear orders, as it is always
    embeddable into the well-founded $\omega^k$. Thus, such replacements can
    only take place finitely often.
  \end{enumerate}

  Thus the $k$th value of $\sigma_{\iota(i)}(\rho_i)$, call it
  $\gamma$, stays constant for
  any $N \leq i$, meaning also $\gamma \in \Theta_i$ for all $i$ with $N
  \leq i$. It suffices to
  to prove that infinitely many $\gamma$-resets take place to conclude the run
  $(\Theta_i, \sigma_i)_{i \in \omega}$
  accepting. As $\rho$ is an accepting run, it passes through states $(x,
  0^*) \in F$ infinitely
  many times. Observe that whenever the run enters $(x, 0^*)$, the trace it
  follows has attained $\alpha$, meaning a new chip is placed on top of
  the stack on $\sigma_{\iota(i)}(\rho_{i})$ which is the
  stack containing $\gamma$ from $N$ onwards. As $k$ was chosen as the
  greatest infinitely recurring stack height, it also follows that
  $\abs{\sigma_{\iota(i)}(\rho_i)} = k$, and thus
  $\max\,\sigma_{\iota(i)}(\rho_i) = \gamma$,
  infinitely often. After $N$, this can only happen if the new chips added by
  the trace tracked by $\rho$ attaining $\alpha$ are removed from above $\gamma$ via
  an $\gamma$-reset. Thus, infinitely many $\gamma$-resets have to take place
  along $(\Theta_i, \sigma_i)_{i \in \omega}$, making it an accepting run on $\tau$.
\end{proof}

\begin{lemma}\label{lem:run-to-tc}
  Fix a finite set $M$ of morphisms of $\TT_\Aa$ for some finite $\Aa$. Now
  suppose some $\tau \in M^\omega$ had an accepting run $(\Theta_i, \sigma_i)_{i
  \in \omega}$. Then $\tau$ describes a path through $\TT_\Aa$ which satisfies the
  trace condition of $\TT_\Aa$.
\end{lemma}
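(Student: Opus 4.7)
I would argue that any accepting Safra board run on $\tau$ unfolds, via König's lemma, into a trace witnessing that $\tau$ satisfies the trace condition of $\TT_\Aa$. First, the existence of any run forces consecutive morphisms to compose: each $\tau_n$-successor transition can only be taken when the current Safra board lives on $\dom(\tau_n) = \cod(\tau_{n-1})$, so $\tau$ describes a valid path through $\TT_\Aa$. It remains to extract a suitable subpath and trace witness.

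Let $\gamma$ and $N$ witness acceptance, and enumerate the $\gamma$-resets after time $N$ as $t_1 < t_2 < \ldots$. The key combinatorial observation is: at each $t_k$, every stack containing $\gamma$ must carry at least one chip above $\gamma$ (else $\gamma$ would fail to be covered and the reset could not fire). The $\gamma$-reset at $t_{k-1}$ removed every chip above $\gamma$, and chips above $\gamma$ can only be introduced by Cover steps, which act on a stack that has just landed at an $\alpha$-position via a $\tau$-successor. Hence for every stack containing $\gamma$ at time $t_k$, some ancestor attained $\alpha$ via a $\tau$-successor at some intermediate time in $(t_{k-1}, t_k]$. I then introduce the family graph $T$ whose nodes at level $i \geq N$ are the $\gamma$-containing stacks at time $i$, with edges tracking descendancy under each Safra transition: thinnings, weakenings, populations, and $\gamma'$-resets all contribute at most one descendant per node, while each $\tau_n$-successor contributes at most $|\tau_n|$. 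Since $M$ is finite, $T$ is finitely branching; since $\gamma \in \Theta_i$ for all $i \geq N$, $T$ is infinite. König's lemma therefore supplies an infinite branch $(S_i)_{i \geq N}$, and by the preceding observation this branch hits an $\alpha$-attaining $\tau$-successor in each window $(t_{k-1}, t_k]$.

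To assemble the witness, let $(x_i, a_i)$ denote the position of $S_i$. Only $\tau$-successors alter positions, and at such a step the branch picks a triplet $(x_i, c, x_{i+1}) \in \tau_n$. Let $n_1 < n_2 < \ldots$ enumerate the $\tau$-indices at which the branch's stack triggers Cover. Because Cover at $n_k$ resets the cumulative activation to $0$ and Cover at $n_{k+1}$ fires precisely when the accumulated join of triplet-activations since $n_k$ equals $\alpha$, the composed morphism $\tau_{n_{k+1}} \circ \ldots \circ \tau_{n_k + 1}$ contains the triplet $(x_{n_k + 1}, \alpha, x_{n_{k+1} + 1})$. Selecting the subpath $P' \subseteq P$ determined by the indices $\{n_k + 1\}_k$ together with the trace $\varsigma(k) := x_{n_k + 1}$ yields the desired witness.

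The main subtlety, to be verified carefully in the actual proof, is that any König branch cannot become `dead' (i.e.\ reach an activation strictly exceeding $\alpha$), for otherwise Cover could never fire on it again, no fresh chips above $\gamma$ could be produced on its descendants, and the $\gamma$-resets prescribed by the acceptance condition would be blocked at all sufficiently large $t_k$---contradicting the hypothesis.
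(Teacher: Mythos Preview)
Your argument is correct and reaches the conclusion by a route genuinely different from the paper's. The paper does not construct the trace-condition witness directly; instead it assembles an accepting run of the B\"uchi automaton $\SB(\Aa, M)$. It first proves an inductive backward-reachability fact---if $\gamma$ lies in some stack at position $(x,a)$ at time $j$, then at any earlier time $i \geq N$ there is a position $(x',b)$ with a $\gamma$-stack and a $\SB(\Aa,M)$-run segment from $(X_i,x',b)$ to $(X_j,x,a)$ reading $\ol{\tau}[i,j]$---and then applies K\"onig's lemma only at the reset times $r_n$, to the finite sets $S_n$ of $\gamma$-carrying positions, obtaining a sequence $((x_n,a_n))_n$ connected by B\"uchi transitions. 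It argues each segment crosses $F$ (because $\gamma$ goes from uncovered to covered between resets, forcing an $\alpha$-attainment), attaches a prefix from the start of $\tau$, and invokes $L(\SB(\Aa,M)) = T(M)$.

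Your approach instead tracks individual $\gamma$-stacks (more precisely, position--stack pairs) at \emph{every} time step, applies K\"onig's lemma to that finer family graph, enumerates the Cover events along the resulting branch, and reads off the subpath and $\alpha$-trace directly. Both proofs rest on the same combinatorial core---between consecutive $\gamma$-resets every surviving $\gamma$-stack must have triggered a Cover---but yours avoids the B\"uchi-automaton detour at the cost of handling the ``dead branch'' subtlety explicitly. The paper's framing absorbs that subtlety into the automaton structure: states with activation not $\le \alpha$ simply cannot transition into $F$, so the constructed run's repeated $F$-crossings already preclude them. One small imprecision in your sketch: the reason a dead branch is impossible is not that it ``blocks'' the $\gamma$-resets globally, but that \emph{on that branch} $\gamma$ would remain the top chip after the next $\gamma$-reset and forever thereafter, so $\gamma$ could not be covered at the following reset time---contradicting the fact that the reset does fire.
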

\begin{proof}
  We prove this by showing that there must exist an accepting run of $\tau$ on
  $\SB(\Aa, M)$. As the run $(\Theta_i, \sigma_i)_{i \in \omega}$ is accepting,
  there exist $N \in \omega$ and $\gamma \in \bigcap_{N \leq i} \Theta_i$ such
  that infinitely many $\gamma$-resets take place along the run. Now denote by
  $\ol{\tau}[i, j] \in M^\omega$ the letters of $\tau$ read between the indexes
  $i$ and $j$, i.e. if $\iota(k - 1) < i \leq \iota(k) < \iota(k + n) < j \leq
  \iota(k + n + 1)$ then $\ol{\tau}[i, j] = \tau_k\tau_{k + 1} \ldots \tau_{k +
    n}$. We begin by proving a crucial fact: For $N \leq i \leq j$ if $\gamma \in \bigcup \sigma_j(x,
  a)$ then there must exist $x' \in X_i$ and $b \in \Aa$ such that $\gamma \in
  \bigcup \sigma_i(x', b)$ and $(X_i, x', b) \xrightarrow{\ol{\tau}[i, j]} (X_j,
  x, a)$ on
  $\SB(\Aa, M)$ (for this, we identify $0$ and $0^*$). We prove this per
  induction on $j$. Clearly, if $i = j$ then one
  may choose $x' \coloneq x$ and $b \coloneq a$ as $(X_i, x, a)
  \xrightarrow{\varepsilon} (X_i, x, a)$ in $\SB(\Aa, M)$. For the inductive step, we
  proceed per case distinction on the transition step between $(\Theta_j,
  \sigma_j)$ and $(\Theta_{j + 1}, \sigma_{j + 1})$:
  \begin{itemize}
  \item $(\Theta_j, \sigma_j) \step{\tau_k} (\Theta_{j + 1}, \sigma_{j + 1})$:
    Suppose $\gamma \in \bigcup \sigma_{j + i}(x, a)$. As $\gamma \in \Theta_j$,
    it is easily observed that a stack containing $\gamma$ can only have arrived
    on $\gamma$ if it was `moved' there by the previous transition. More
    formally, that means there have to be a $x' \in X_j$ and $b \in \Aa$ with
    $\gamma \in \bigcup \sigma_j(x', b)$ and $(X_j, x', b) \xrightarrow{\tau_k}
    (X_{j + 1}, x, a)$. Per inductive hypothesis, there furthermore have to be
    $x'' \in X_i$ and $c \in \Aa$ such that $\gamma \in \bigcup\sigma_i(x'', c)$
    and $(X_i, x'', c) \xrightarrow{\ol{\tau}[i, j]} (X_j, x', b)$. As
    $\ol{\tau}[i, j + 1] = \ol{\tau}[i, j]\tau_k$, this yields $(X_i, x'', c)
    \xrightarrow{\ol{\tau}[i, j + 1]} (X_{j + 1}, x, a)$ as desired.
  \item $(\Theta_j, \sigma_j) \step{W} (\Theta_{j + 1}, \sigma_{j + 1})$: If
    $\gamma \in \bigcup \sigma_{j + 1}(x, a)$ then also $\gamma \in \bigcup
    \sigma_{j}(x, a)$ because weakening may only remove stacks. Then the claim
    readily follows from the inductive hypothesis because $\ol{\tau}[i, j + 1] =
    \ol{\tau}[i, j]$.
  \item $(\Theta_j, \sigma_j) \step{R_{\gamma'}} (\Theta_{j + 1}, \sigma_{j + 1})$:
    As such a reset only removes chips from some stacks, $\gamma \in \bigcup
    \sigma_{j + 1}(x, a)$ means that also $\gamma \in \bigcup \sigma_{j}(x, a)$.
    Thus simply proceed per inductive hypothesis.
  \item $(\Theta_j, \sigma_j) \step{P} (\Theta_{j + 1}, \sigma_{j + 1})$:
    Again, $\gamma \in \bigcup \sigma_{j + 1}(x, a)$ entails $\gamma \in \bigcup
    \sigma_{j}(x, a)$ because $\sigma_{j + 1}$ differs from $\sigma_j$ only by
    the addition of some empty stacks (which is thus cannot contain $\gamma$).
    Proceed per inductive hypothesis.
  \end{itemize}

  Now let $(r_n)_{n \in \omega}$ be a sequence of indexes of $\gamma$-resets
  after $N$, i.e. a monotone increasing sequence with $N <
  r_0$ and $(\Theta_{r_n}, \sigma_{r_n}) \step{R_\gamma} (\Theta_{r_n + 1},
  \sigma_{r_n + 1})$. Define the sets $S_n \coloneq \{(x, a) ~|~ \gamma \in
  \bigcup \sigma_{r_n}(x, a)\}$. The previous result means that for any $(x, a)
  \in S_{n + 1}$ there exist $(x', b) \in S_n$ such that $(X_{r_n}, x', b)
  \xrightarrow{\ol{\tau}[r_n, r_{n + 1}]} (X_{r_{n + 1}}, x, a)$ in $\SB(\Aa,
  M)$ (in which we identify $0 \in \Aa$ with $0^*$ in the automata states). An
  application of König's Lemma yields a sequence $((x_n, a_n) \in S_n)_{n \in
    \omega}$ such that $(X_{r_n}, x_n, a_n) \xrightarrow{\ol{\tau}[r_n, r_{n +
      1}]} (X_{r_{n + 1}}, x_{n + 1}, a_{n + 1})$ for every $n \in \omega$.
  Notably, each run segment $(X_{r_n}, x_n, a_n) \xrightarrow{\ol{\tau}[r_n, r_{n +
      1}]} (X_{r_{n + 1}}, x_{n + 1}, a_{n + 1})$ crosses the set $F$ of
  accepting states of $\SB(\Aa, M)$ at least once: In $(\Theta_{r_n + 1},
  \sigma_{n + 1})$, each instance of $\gamma$ is the top-most chip on its
  respective stack. In $(\Theta_{r_{n + 1}}, \sigma_{r_{n + 1}})$, on the other
  hand, every instance of $\gamma$ is covered. This means that each stack
  $\gamma \in S \in \sigma(x, a)$ with $(x, a) \in S_{n + 1}$ must have
  `attained $\alpha$' at least once between $r_n$ and $r_{n + 1}$. In
  $\SB(\Aa, M)$, this corresponds to crossing $F$. The run segments 
  $(X_{r_n}, x_n, a_n) \xrightarrow{\ol{\tau}[r_n, r_{n +
      1}]} (X_{r_{n + 1}}, x_{n + 1}, a_{n + 1})$ thus already provide the
  suffix of an accepting run on $\tau$ as $F$ is crossed infinitely often. All that remains is to show that there
  is a run segment $X_0 \xrightarrow{\ol{\tau}[0, r_0]} (X_{r_0}, x_0, a_0)$ to
  assemble an accepting run of $\tau$ on $\SB(\Aa, M)$. 
  It follows from another application of the previous result that there has to be an $x \in X_N$ and an $a \in \Aa$
  such that $(X_N, x, a) \xrightarrow{\ol{\tau}[N, r_0]} (X_{r_0}, x_0, a_0)$.
  Now examine the step $(\Theta_{N - 1}, \sigma_{N - 1}) \step{} (\Theta_{N},
  \sigma_N)$ which one may assume, without loss of generality, introduces the
  chip $\gamma$ to $\Theta_N$, i.e. $\gamma \not\in \Theta_{N - 1}$. New chips can
  only be introduced by the covering phase of a
  $\tau_k$-step. Thus, new chips can only appear on $(x, 0)$, meaning
  the run segment above is actually $(X_n, x, 0) \xrightarrow{\ol{\tau}[N, r_0]}
  (X_{r_0}, x_0, a_0)$. Lastly, observe that the existence of the run
  $(\Theta_i, \sigma_i)_{i \in \omega}$ already guarantees that $\cod(\tau_k) =
  \dom(\tau_{k + 1})$ as $(\Theta_{\iota(k + 1)}, \sigma_{\iota(k + 1)})
  \step{\tau_{k + 1}} (\Theta_{\iota(k + 1) + 1}, \sigma_{\iota(k + 1) + 1})$
  for each $k \in \omega$.
  That means that $X_0 \xrightarrow{\ol{\tau}[0, N]} (X_n, x, 0)$ is a run
  segment on $\SB(\Aa, M)$. Thus, one may assemble the accepting run of
  $\tau$ on
  $\SB(\Aa, M)$ pictured below and an conclude that $\tau$ indeed describes a
  path through $\TT_\Aa$ which satisfies the trace condition.
  \[X_0 \xrightarrow{\ol{\tau}[0, N]} (X_n, x, 0) \xrightarrow{\ol{\tau}[N,
      r_0]} (X_{r_0}, x_0, a_0) \xrightarrow{\ol{\tau}[r_0, r_1]} (X_{r_1}, x_1,
    a_1) \xrightarrow{\ol{\tau}[r_1, r_2]} \ldots\]
\end{proof}

\begin{theorem}
  Fix a finite set $M$ of morphisms of $\TT_\Aa$ for some finite $\Aa$. Then
  there exists an accepting Safra board run on $\tau \in M^\omega$ if and only
  if $\tau$ describes a path through $\TT_\Aa$ which satisfies the trace condition.
\end{theorem}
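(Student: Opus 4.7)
The plan is to derive the theorem as a direct corollary of the two preceding lemmas, \Cref{lem:tc-to-greedy} and \Cref{lem:run-to-tc}, which between them already establish both directions of the biconditional. No new technical machinery should be needed beyond the observation that a greedy run is, by definition, a run.

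For the ``if'' direction, I would suppose that $\tau \in M^\omega$ describes a path through $\TT_\Aa$ satisfying the trace condition and appeal to \Cref{lem:tc-to-greedy} to obtain the greedy run on $\tau$, which that lemma guarantees to be accepting. Since \Cref{def:greedy} identifies greedy runs as a distinguished subclass of runs --- namely those starting on the empty board whose transitions are sequenced according to \Cref{def:greedy-step} --- the existence of an accepting greedy run on $\tau$ in particular yields an accepting Safra board run on $\tau$, as required.

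For the converse direction, I would suppose that $(\Theta_i, \sigma_i)_{i \in \omega}$ is an accepting Safra board run on $\tau \in M^\omega$ and invoke \Cref{lem:run-to-tc} verbatim to conclude that $\tau$ describes a path through $\TT_\Aa$ which satisfies the trace condition of $\TT_\Aa$.

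I do not anticipate any genuine obstacle at this point in the argument. The substantive content --- namely the careful tracking of stack heights via the lexicographic well-ordering on finite linear orders in one direction, and the reconstruction of an accepting run of $\SB(\Aa, M)$ via König's Lemma in the other, both routed through the Büchi-recognisability of \Cref{lem:bam} --- has already been discharged in the two lemmas. The theorem statement simply packages these as a clean correctness result for Safra boards, which will be the form used when they are deployed as sequent annotations in \Cref{sec:acd_pwn}.
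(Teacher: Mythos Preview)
Your proposal is correct and matches the paper's approach exactly: the theorem is stated without a separate proof in the paper, as it is an immediate corollary of \Cref{lem:tc-to-greedy} and \Cref{lem:run-to-tc} combined via the observation that every greedy run is a run.
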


To close the section, we illustrate the connection between Safra boards and the
determinisation of Büchi automata (more concretely of $\SB(\Aa, M)$) via Safra's
construction~\cite{safraComplexityOmegaAutomata1988}. We do this by defining
a determinised variant of $\SB(\Aa, M)$ in terms of Safra boards.

To ensure that the automaton we construct has a finite state space, we first
prove that one can `make do' with a finite supply of chips when carrying out
greedy transition steps. The last condition asserted in \Cref{lem:greedy-ceil}
is crucial for the acceptance condition of the constructed automaton.

\begin{definition}
  Fix a finite $\Aa$, $X \in \TT_\Aa$ and a number $K \geq \abs{X}$. 
  Fixing the supply of chips $\ol{K} = \{n \in \omega ~|~ n < K \cdot (\abs{\Aa} +
  1)\}$, write 
  $\Sb(\Aa, X, K) \subseteq \Sb(\Aa, X)$ for the
  set of \emph{$K$-sparse} Safra boards. A board $(\Theta, \sigma)$ is $K$-sparse if
  \begin{itemize}
  \item $\Theta \subseteq \ol{K}$ and $\abs{\Theta} \leq K \cdot
    \abs{\Aa}$
  \item There is at most one stack on each board
    position in $(\Theta, \sigma)$
  \item There are no stacks on any position $(x, \alpha)$ on $(\Theta, \sigma)$
  \end{itemize} 
\end{definition}

\begin{lemma}\label{lem:greedy-ceil}
  For $X, Y \in \Ob(\TT_\Aa)$ and a $K \geq \abs{X}, \abs{Y}$
  let $(\Theta_0, \sigma_0) \in \Sb(\Aa, X, K)$.
  Then there exists, for any $\tau \colon X \to Y$, a Safra board $(\Theta_n,
  \sigma_n) \in \Sb(\Aa, Y, K)$ and a transition sequence $(\Theta_0, \sigma_0)
  \step{?} \ldots \step{?} (\Theta_n, \sigma_n)$ such that $(\Theta_0, \sigma_0)
  \gstep{\tau} (\Theta_n, \sigma_n)$. Furthermore, $\Theta_n \cap
  \Theta_0 = \bigcap_{i \leq n} \Theta_i $.
\end{lemma}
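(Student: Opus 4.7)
\smallskip
\noindent\textbf{Proof proposal.} The plan is to construct the sequence explicitly by following the four clauses of the greedy $\tau$-step (\Cref{def:greedy-step}) in order, and to keep a careful ledger of which chip names from $\ol{K}$ are used at each stage. Write $\Theta^P$ for the set of chips introduced during Step 2 (population) and $\Theta^\circ$ for those introduced during the cover phase of Step 3. The strategy is: draw $\Theta^P$ from $\ol{K} \setminus \Theta_0$ and $\Theta^\circ$ from $\ol{K} \setminus (\Theta_0 \cup \Theta^P)$. Once this works, the three conclusions of the lemma follow from a single invariant: chips removed during the greedy step are never reintroduced, because the chip names we ever introduce afresh lie outside~$\Theta_0$.

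First I would show the resets can be performed: covered chips are determined by $(\Theta_0, \sigma_0)$, they are pairwise distinct, and resetting does not add any chips. Then I would argue that the population step is realisable with names from $\ol{K} \setminus \Theta_0$: at most $|X| \leq K$ new chips are required, and $|\ol{K} \setminus \Theta_0| \geq K(|\Aa|+1) - K|\Aa| = K$, which suffices. Next I would handle the $\tau$-successor: the move phase only transports existing chips, while the cover phase adds one new chip $\iota(y)$ for each $y$ with $\sigma^*(y, \alpha) \neq \emptyset$. The key counting step is to show that the pool $\ol{K} \setminus (\Theta_0 \cup \Theta^P)$ contains at least $|\Theta^\circ|$ available names; this rests on the structural observation that, in a $K$-sparse board, $|\Theta_0|$ being close to its maximum $K \cdot |\Aa|$ forces most $(x,0)$-positions to already carry a stack and hence $|\Theta^P|$ to be correspondingly small, so that $|\Theta_0| + |\Theta^P| + |\Theta^\circ| \leq K(|\Aa|+1)$ throughout. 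Finally, the thinning step is purely deterministic and removes stacks, hence chips.

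To conclude $K$-sparseness of $(\Theta_n, \sigma_n)$: the thinning in Step 4 leaves at most one stack per position, the cover phase clears $(y, \alpha)$-positions, and no later step can repopulate them, so $\sigma_n(y, \alpha) = \emptyset$ for all $y \in Y$; the chip bound $|\Theta_n| \leq K \cdot |\Aa|$ follows because the surviving stacks occupy at most $|Y| \cdot (|\Aa|-1) \leq K \cdot |\Aa|$ positions and, under the laminar ordering $<_\Theta$, each chip is the top of at least one surviving stack.

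For the identity $\Theta_n \cap \Theta_0 = \bigcap_{i \leq n} \Theta_i$, the inclusion $\supseteq$ is immediate. For $\subseteq$, suppose $\gamma \in \Theta_n \cap \Theta_0$. By construction every chip introduced during Steps 2 and 3 is chosen from $\ol{K} \setminus \Theta_0$, so $\gamma$ is never introduced afresh during the greedy step. Since each intermediate transition either leaves a chip in place, removes it (reset, move, thinning), or introduces a fresh name (population, cover), the chip $\gamma$ must have been present at every intermediate stage — otherwise it would be absent from $\Theta_n$. Hence $\gamma \in \Theta_i$ for all $i \leq n$. I expect the main obstacle to be the counting argument for cover-chip availability, which requires a genuinely structural use of the one-stack-per-position and no-$\alpha$-stack constraints of $K$-sparseness to rule out the worst-case scenario $|\Theta^P| + |\Theta^\circ| > K$.
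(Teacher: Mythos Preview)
Your overall plan matches the paper's proof---follow the four clauses of the greedy step, account for chip usage, and deduce the final intersection identity from ``fresh chips come from outside $\Theta_0$''. Two points, however, are genuinely off.

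\medskip
\noindent\textbf{Population adds no chips.} Re-read \Cref{def:populate}: a population step may only add the \emph{empty stack} $\emptyset$ to some $\sigma(x,0)$, and the control is unchanged ($\Theta' = \Theta$). Hence your set $\Theta^P$ is always empty. This dissolves your ``main obstacle'': the only fresh chips are the at most $\abs{Y}\le K$ cover chips $\Theta^\circ$, and since $\abs{\ol K\setminus\Theta_0}\ge K(\abs\Aa+1)-K\abs\Aa=K$, the supply $\ol K\setminus\Theta_0$ always suffices. No structural argument is needed here.

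\medskip
\noindent\textbf{The chip bound $\abs{\Theta_n}\le K\cdot\abs\Aa$ is established at the wrong stage.} Your claim that ``under the laminar ordering $<_\Theta$, each chip is the top of at least one surviving stack'' after thinning is false. A single stack $\{a\}$ that passes through $(y,\alpha)$ during the $\tau$-step becomes $\{a,c\}$ for a fresh cover chip $c$; if this is the only stack containing $a$, then after thinning $a$ is covered everywhere it appears. The paper obtains the ``every chip is on top'' property \emph{after the reset sequence} (Step~1), not after thinning: if some $\gamma$ were still covered in $(\Theta_k,\sigma_k)$, then (since resets only remove chips from the tops of stacks) it was already covered in $(\Theta_0,\sigma_0)$, hence among the $\gamma_i$, hence reset---contradiction. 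This yields $\abs{\Theta_k}\le\abs{\sigma_k}=\abs{\sigma_0}\le\abs X\cdot(\abs\Aa-1)\le K(\abs\Aa-1)$. Population leaves $\Theta$ unchanged, the cover phase adds at most $\abs Y\le K$ chips, and thinning only removes, giving $\abs{\Theta_n}\le K\abs\Aa$.

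\medskip
A smaller omission: you assert the resets can be performed but do not say why $\gamma_{k-i}$ is still covered after $\gamma_k,\dots,\gamma_{k-i+1}$ have been reset. The paper checks this explicitly: resetting a chip $\gamma>\gamma'$ cannot uncover $\gamma'$, because either $\gamma\notin S$ (so $S$ is unchanged) or $\gamma\in S$ (so $\gamma$ itself remains above $\gamma'$ in $\sreset{S}{\gamma}$).

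Your argument for $\Theta_n\cap\Theta_0=\bigcap_i\Theta_i$ is correct and is exactly the paper's.
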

\begin{proof}
  For a board $(\Theta, \sigma)$ define $\abs{\sigma} \coloneq
  \abs{\{(x, a) \in \dom(\sigma) ~|~ \sigma(x, a) \neq \emptyset\}}$, i.e. the
  number of board positions with stacks on them.
  We shall argue that the steps 1. through 4. from \Cref{def:greedy-step} can be taken
  from $(\Theta_0, \sigma_0)$ in such a way that the resulting sequence $(\Theta_i,
  \sigma_i)_{i \leq n}$ is such that $\Theta_i \subseteq \ol{K}$. To ensure
  that $\Theta_n \cap \Theta_0 = \bigcap_{i \leq n} \Theta_i$, one requires the
  transitions will only introduce chips from $\Theta' \coloneq \ol{K}
  \setminus \Theta_0$. Observe that from the first sparseness condition on $(\Theta_0,
  \sigma_0)$, it follows that $\abs{\Theta_0} \leq K \cdot \abs{\Aa}$ and
  thus $K \leq \abs{\Theta'}$.
  \begin{enumerate}
  \item[1.] Suppose $\gamma_1 < \ldots < \gamma_n \in \Theta_0$ were covered in
    $(\Theta_0, \sigma_0)$. We begin by showing that each transition of the sequence
    \[(\Theta_0, \sigma_0) \step{R_{\gamma_k}} (\Theta_1, \sigma_1)
      \step{R_{\gamma_{k - 1}}} \ldots
      \step{R_{\gamma_1}} (\Theta_k, \sigma_k)\]
    can be taken. The only thing which could prevent a transition $(\Theta_i,
    \sigma_i) \step{R_{\gamma_{k - i}}} (\Theta_{ i + 1}, \sigma_{i + 1})$ along this
    sequence from being legal would be that $\gamma_{k - i}$ was not covered in
    $(\Theta_i, \sigma_i)$. This can only happen if some earlier reset along
    this sequence `uncovered' $\gamma_{k - i}$. But this cannot happen:
    Suppose $\gamma > \gamma'$ and consider some stack with $S \ni \gamma'$ and
    $\gamma'$ covered, i.e. $\max S \neq \gamma'$. Then there are two
    possibilities for $\sreset{S}{\gamma}$: If $\gamma \not\in S$ then
    $\sreset{S}{\gamma} = S$ and $\gamma'$ thus remains covered. If $\gamma \in
    S$ then $\gamma \in \sreset{S}{\gamma}$, meaning a chip $\gamma > \gamma'$
    remains in $S$ and $\gamma'$ remains covered. Thus, a reset on some
    $\gamma_j > \gamma_{k - i}$ cannot uncover $\gamma_{k - i}$, meaning the
    $\gamma_{k - i}$-reset may be carried out.

    As resets only ever remove chips, it is easily observed that $\Theta_k \subseteq
    \Theta_0 \subseteq \ol{K}$. Because resets never add any new stacks, it follows
    that $\abs{\sigma_k} = \abs{\sigma_0}$.
    Now observe the following: If a chip is covered in $(\Theta_k, \sigma_k)$ it
    is also covered in $(\Theta_0, \sigma_0)$, as resets only ever remove chips from
    the tops of stacks. Thus, every chip $\gamma \in \Theta_k$ must be at the
    top of at least one stack on $(\Theta_k, \sigma_k)$: Suppose, towards
    contradiction, that there was a covered chip in $(\Theta_k, \sigma_k)$. But
    then it would also have been covered in $(\Theta_0, \sigma_0)$, meaning it would
    have been among the $\gamma_i$ and would have been reset by the sequence of
    resets. But then it cannot be covered in $(\Theta_k, \sigma_k)$ as any reset
    chip is uncovered by the reset. If each chip of $\Theta_k$ must occur on top
    of at least one stack, it is easily observed that $\abs{\Theta_k} \leq
    \abs{\sigma_k}$.
  \item[2.] Carry out the population $(\Theta_k, \sigma_k)
    \step{P} (\Theta_{k + 1}, \sigma_{k + 1})$. That means if $\sigma_k(x, 0) =
    \emptyset$ then $\sigma_{k + 1}(x, 0) = \{\emptyset\}$. As $\Theta_{k + 1} = \Theta_k$,
    it follows $\Theta_{k + 1} \subseteq \ol{K}$.
  \item[3.] Now carry out $(\Theta_{k + 1}, \sigma_{k + 1}) \step{\tau} (\Theta_{k + 2},
    \sigma_{k + 2})$ with $\tau \colon X \to Y$. During the transformation, the stacks are
    moved from positions $(x, a) \in X \times \Aa$ to positions $(y, a') \in Y
    \times \Aa$ according to $\tau$. If a stack has landed on some $(y,
    \alpha)$, it is then moved to $(y, 0)$ and has a new chip added to its top.
    If multiple stacks landed on $(y, \alpha)$, the same chip is added to all of
    them. During this step, only at most $\abs{\{(y, \alpha) ~|~ y \in
      Y\}} = \abs{Y}$ new chips are introduced to $\Theta_{k + 2}$, meaning the
    supply of chips $\Theta'$ is sufficient. Thus,
    $\Theta_{l + 1} \subseteq \ol{K}$.
  \item[4.] In this step $(\Theta_{k + 2}, \sigma_{k + 2}) \step{T} (\Theta_{k +
    3}, \sigma_{k + 3})$,
    some of the stacks, and possibly some of the chips that used to be in them,
    are removed and no new chips are added. Thus $\Theta_{k + 3} \subseteq \ol{K}$.
  \end{enumerate}
  It is easily observed that after step 4.~there is at most one stack on each
  position of $(\Theta_{k + 3}, \sigma_{k + 3})$. Furthermore, after step 3.~there are no
  stacks on positions $(y, \alpha)$, a fact which remains unchanged by step 4.
  It remains to show that $\abs{\Theta_{k + 3}} \subseteq K \cdot \abs{\Aa}$.
  We have shown that after step 1.~ $\abs{\Theta_k} = \abs{\sigma_0}$. As
  there are no stacks on $(x, \alpha)$ in $(\Theta_0, \sigma_0)$, that
  means that $\abs{\Theta_k} \leq \abs{X} \cdot (\abs{\Aa} - 1) \leq K \cdot
  (\abs{\Aa} - 1)$. It is easily
  observed that after step 2.~$\abs{\Theta_{k + 1}} = \abs{\Theta_k}$. As
  already argued, step 3.~adds at most $\abs{Y} \leq K$ new chips, meaning
  $\abs{\Theta_{k + 2}} \leq \abs{\Theta_{k + 1}} + K \leq K \cdot \abs{\Aa}$. As
  step 4.~only removes chips, this means $\abs{\Theta_{k + 3}} \leq \abs{\Theta_{k +
      2}} \leq K \cdot \abs{\Aa}$ as desired.
\end{proof}

We can thus construct a deterministic Rabin automaton which recognizes the trace
condition of $\TT_\Aa$, similarly to the non-deterministic Büchi-automaton $\SB(\Aa, M)$.
For the construction of the automaton, as well as some of the arguments in
\Cref{sec:complete}, it would be helpful if for each $(\Theta, \sigma) \in
\Sb(\Aa, X, K)$ and $\tau \colon X \to Y$ there was some unique $(\Theta', \sigma')
\in \Sb(\Aa, Y, K)$ such that $(\Theta, \sigma) \gstep{\tau} (\Theta',
\sigma')$. We thus simply assume that for each such
$(\Theta, \sigma)$ and $\tau$ where this is applicable, such a choice has been
made, for example via an application of the axiom of choice or some other means,
and treat $\gstep{\tau}$ as an injective function on $K$-sparse $(\Theta,
\sigma)$ for suitable $K$ such that $(\Theta, \sigma) \gstep{\tau} (\Theta',
\sigma')$ was always derived according to \Cref{lem:greedy-ceil}.

\begin{definition}\label{def:safra-aut}
  Let $\Aa$ be finite and fix a finite set of objects $O \subset \Ob(\TT_\Aa)$ and
  a set of morphisms $M \subseteq \bigcup_{X, Y \in O} \Hom(X, Y)$. Furthermore,
  set $K \coloneq \max_{X \in O} \abs{X}$. The \emph{Safra
    automaton $\SSB(\Aa, S, M)$} for a starting object $S \in O$ is the Rabin automaton $(M, Q,
  \delta, (S, (\Theta_0, \sigma_0)), R)$ where
  \begin{align*}
    Q & \coloneq \Sigma X \in O.~\Sb(\Aa, X, K) \\
    \delta & \coloneq ((X, (\Theta, \sigma)), \tau \colon X \to Y ) \mapsto \text{some } (Y, (\Theta', \sigma')) \stackrel{\tau}{\leftsquigarrow_g} (X, (\Theta, \sigma)) \\
    R & \coloneq \{(\{(X, (\Theta, \sigma)) \in Q ~|~ \gamma \in \Theta, \gamma \text{ covered}\}, \{(X, (\Theta, \sigma)) \in Q ~|~ \gamma \not\in \Theta\}) ~|~ \gamma \in \ol{K}\}
  \end{align*}
  and $(\Theta_0, \sigma_0) = (\emptyset, (s, a) \mapsto \emptyset)$.
\end{definition}

\begin{lemma}\label{lem:safra-automaton-correct}
  For a finite $\Aa$, $O \subset \Ob(\TT_\Aa)$ and $M \subseteq \bigcup_{X, Y
    \in O} \Hom(X, Y)$, the Safra automaton $\SSB(\Aa, S, M)$ accepts a sequence
  $\tau \in M^\omega$ if and only if $\dom(\tau_0) = S$ and $\tau$ describes a
  path through $\TT_\Aa$ which satisfies the trace condition.
\end{lemma}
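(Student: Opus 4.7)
The plan is to reduce this statement to the preceding characterisation (Theorem after \Cref{lem:run-to-tc}) by exhibiting a tight correspondence between runs of $\SSB(\Aa, S, M)$ on $\tau$ and greedy Safra board runs on $\tau$. The Safra automaton is essentially a determinised encoding of the greedy transition strategy on $K$-sparse boards, with the Rabin condition designed to mirror the acceptance condition on Safra board runs.

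First I would observe that $\SSB(\Aa, S, M)$ is deterministic because $\delta$ is a function (via the fixed choice of greedy successor). Hence there is at most one run $\rho$ on any $\tau \in M^\omega$, and this run exists if and only if $\dom(\tau_0) = S$ (which matches the starting state) and every prescribed greedy transition step can be carried out. By \Cref{lem:greedy-ceil} one can always remain within $\Sb(\Aa, X, K)$, so the only way the run can fail to exist is if $\cod(\tau_i) \neq \dom(\tau_{i+1})$ for some $i$, i.e. $\tau$ does not describe a path through $\TT_\Aa$. This already handles the case where $\tau$ is not a valid path: no run exists, so the automaton does not accept; and by \Cref{lem:greedy-unique} there is no greedy Safra board run either, so in particular no accepting one. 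Combined with the preceding theorem this shows that $\tau$ cannot satisfy the trace condition.

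Next, when $\dom(\tau_0) = S$ and $\tau$ is a valid path, I would argue that $\rho$ coincides with the recorded states of the greedy Safra board run on $\tau$ from the empty initial board. The states of $\rho$ are the boards $(X_i, (\Theta_i, \sigma_i))$ sampled between consecutive greedy $\tau_i$-transitions (i.e.\ after the thinning step). Under this identification the Rabin acceptance condition unpacks as follows: for some $\gamma$, the state visits $G_\gamma = \{\gamma \in \Theta, \gamma \text{ covered}\}$ infinitely often and $B_\gamma = \{\gamma \notin \Theta\}$ only finitely often. The latter condition is exactly `$\gamma \in \bigcap_{N \leq n} \Theta_n$ for some $N$'. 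For the former, I would use that under the greedy protocol a chip that is covered at a recorded state $(X_i, (\Theta_i, \sigma_i))$ is immediately reset as step 1 of the subsequent greedy transition, and conversely every $\gamma$-reset performed by the greedy protocol is triggered by $\gamma$ being covered at the preceding recorded state. Hence `$\gamma$ covered infinitely often among recorded states' is equivalent to `infinitely many $\gamma$-resets in the greedy Safra board run'.

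Putting these correspondences together, the run of $\SSB(\Aa, S, M)$ on $\tau$ is Rabin-accepting exactly when the underlying greedy Safra board run on $\tau$ satisfies the acceptance condition of Safra board runs. By \Cref{lem:tc-to-greedy}, if $\tau$ satisfies the trace condition then the greedy run is accepting, hence so is $\rho$; conversely, an accepting $\rho$ yields an accepting Safra board run on $\tau$, and \Cref{lem:run-to-tc} supplies the trace condition for $\tau$. The main obstacle is to verify cleanly the `covered-at-recorded-state' $\Leftrightarrow$ `reset-at-next-step' correspondence and to check that the chip-reuse permitted by the deterministic choice in \Cref{def:safra-aut} (ensured by $\Theta_n \cap \Theta_0 = \bigcap_{i \leq n} \Theta_i$ in \Cref{lem:greedy-ceil}) does not cause a chip $\gamma$ to be `recycled' and thus spuriously leave and re-enter $\Theta$; this is precisely the content of the last clause of \Cref{lem:greedy-ceil}, which guarantees that chip identities are preserved across greedy transitions in the way the Rabin condition requires.
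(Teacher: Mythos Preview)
Your proposal is correct and follows essentially the same approach as the paper: expand the automaton run into the greedy Safra board run, argue existence of the run is equivalent to $\dom(\tau_0)=S$ together with $\tau$ being a path, translate the Rabin pair $(G_\gamma,B_\gamma)$ into the Safra board acceptance condition via the ``covered at recorded state $\Leftrightarrow$ reset in next greedy step'' observation, and close with \Cref{lem:tc-to-greedy} and \Cref{lem:run-to-tc}. Your explicit appeal to the last clause of \Cref{lem:greedy-ceil} to rule out spurious chip recycling is exactly the point the paper also invokes, and you articulate the two directions of the covered/reset correspondence a bit more carefully than the paper does.
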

\begin{proof}
  First, consider any run $(S, (\Theta_0, \sigma_0)) \xrightarrow{\tau_0} (X_1,
  (\Theta_1, \sigma_1)) \xrightarrow{\tau_1} \ldots$ of $\tau \in M^\omega$ on
  $\SSB(\Aa, S, M)$. Each state
  transition $(X_i, (\Theta_i, \sigma_i)) \xrightarrow{\tau_i} (X_{i + 1},
  (\Theta_{i + 1}, \sigma_{i + 1}))$ corresponds to a greedy transition
  $(\Theta_i, \sigma_i) \gstep{\tau_i} (\Theta_{i + 1}, \sigma_{i + 1})$
  obtained via \Cref{lem:greedy-ceil}. Thus, one may `expand' the run into a
  Safra board run of the following shape
  \[
    (\Theta_0, \sigma_0) \step{?} (\Theta_0^1, \sigma_0^1) \step{?} \ldots
    \step{?} (\Theta_0^{n_0}, \sigma_0^{n_0}) \step{?}
    (\Theta_1, \sigma_1) \step{?} (\Theta_1^1, \sigma_1^1) \step{?} \ldots
    \step{?} (\Theta_1^{n_1}, \sigma_1^{n_1}) \step{?} \ldots
  \]
  By comparing \Cref{def:greedy,def:greedy-step}, it is easy to see that the run
  above must be a greedy run. Analogously to \Cref{lem:greedy-unique}, this greedy
  Safra board run, and by extension the run on $\SSB(\Aa, S, M)$, exists if and
  only if $\tau \in M^\omega$ describes a path through $\TT_\Aa$ with
  $\dom(\tau_0) = S$. The latter condition is caused by the fact that
  $(\Theta_0, \sigma_0)$ always is a Safra board on $S \in \Ob(\TT_\Aa)$. From
  \Cref{lem:tc-to-greedy,lem:run-to-tc} it follows that the Safra board run above
  is accepting if and only if $\tau$ satisfies the trace condition. It thus
  suffices to argue that the Rabin condition $R$ holds on a run on $\SSB(\Aa, S,
  M)$ iff the expanded Safra board run is accepting. For this, observe that $R$
  holds on a run if there exists some chip $\gamma \in \ol{K}$ and some 
  $N \in \omega$ such that $\gamma \in \Theta_i$ for ever $N \leq i$ and
  $\gamma$ is covered in infinitely many $(\Theta_i, \sigma_i)$. By scrutinizing
  \Cref{lem:greedy-ceil}, one can see that the former condition means that
  $\gamma$ is present on every Safra board along the greedy Safra board run from
  some point onwards. \Cref{def:greedy-step} dictates that whenever $\gamma$ is
  covered in $(\Theta_i, \sigma_i)$, a $\gamma$-reset takes place between
  $(\Theta_i, \sigma_i)$ and $(\Theta_{i + 1}, \sigma_{i + 1})$ in the greedy
  run. The Rabin condition is thus completely analogous to the acceptance
  condition on Safra board runs: It holds if and only if there is some chip
  $\gamma$ which is eventually never removed again and reset infinitely often.
\end{proof}

We close this section by deriving some bounds on the sizes of various components
of $\SSB(\Aa, S, M)$. Note that these are not optimal bounds for Safra
constructions. The reader may consult
\cite{pitermanNondeterministicBuchiStreett}, for example, for a more space-efficient
construction.

\begin{lemma}\label{lem:complexity}
  Fix finite $\Aa$, $O \subset \Ob(\TT_\Aa)$ and $M \subseteq \bigcup_{X, Y
    \in O} \Hom(X, Y)$. Denote the Safra automaton $\SSB(\Aa, S, M) = (M, Q,
  \delta, s, R)$ and let $K \coloneq \max_{X \in O} \abs{X}$.
  \begin{enumerate}
  \item For any $X \in O$ one has $\abs{\Sb(\Aa, X, K)} \leq \sum_{C = 1}^{K \cdot \abs{A}} {
    K \cdot (\abs{\Aa} + 1) \choose C} \cdot C! \cdot 2^{C \cdot \abs{X} \cdot
    (\abs{\Aa} - 1)} = O(K!)$.
\item $\abs{Q} \leq \sum_{X \in O} \abs{\Sb(\Aa, X, K)} = O(\abs{O} \cdot K!)$.
\item $\abs{R} = K \cdot (\abs{\Aa} + 1) = O(K)$.
  \end{enumerate}
\end{lemma}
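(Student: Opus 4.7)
The result packages three counting bounds, with essentially all of the content concentrated in the first.

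Bound (3) is immediate from the definition of $R$ in \Cref{def:safra-aut}: $R$ contains exactly one Rabin pair per chip $\gamma \in \ol{K}$, so $\abs{R} = \abs{\ol{K}} = K \cdot (\abs{\Aa} + 1)$.  Bound (2) is equally immediate from the definition $Q = \Sigma X \in O.\, \Sb(\Aa, X, K)$, which expresses $Q$ as a disjoint union; hence $\abs{Q} = \sum_{X \in O} \abs{\Sb(\Aa, X, K)}$, and the $O(\abs{O} \cdot K!)$ estimate follows from (1) applied uniformly to each $X \in O$.

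For (1), the plan is to stratify $\Sb(\Aa, X, K)$ according to $C \coloneq \abs{\Theta}$. The first $K$-sparseness condition fixes $\Theta \subseteq \ol{K}$ with $\abs{\Theta} \leq K \cdot \abs{\Aa}$, so the relevant range is $1 \leq C \leq K \cdot \abs{\Aa}$. For each fixed $C$, I would specify a board $(\Theta, \sigma)$ by three independent ingredients: first, choose the set $\Theta$ as a $C$-element subset of $\ol{K}$, contributing $\binom{K(\abs{\Aa}+1)}{C}$ options; second, pick a linear order on $\Theta$, contributing $C!$ options; third, specify the stacking function $\sigma$, contributing at most $2^{C \cdot \abs{X} \cdot (\abs{\Aa}-1)}$ options.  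The exponent in the third factor is justified by the third $K$-sparseness condition, which forbids stacks on the $\abs{X}$ positions of the form $(x, \alpha)$ and so leaves only $\abs{X}(\abs{\Aa}-1)$ stack-eligible positions, together with the second $K$-sparseness condition, which permits at most one stack per position; identifying the empty subset with absence of a stack, each such position contributes $2^{\abs{\Theta}} = 2^C$ options. Multiplying these three factors and summing over $C$ yields the displayed inequality, and the $O(K!)$ asymptotic is obtained by absorbing the polynomial and $2^{O(K)}$ factors into the dominant factorial growth.

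The one subtlety is bookkeeping rather than mathematical depth: the three $K$-sparseness conditions must each be used exactly once, corresponding to the three factors above, so as to neither overcount (which would happen without the at-most-one-stack restriction, since then each position would admit multisets of subsets) nor undercount (which would happen if one forgot that $\Theta$ ranges over subsets of $\ol{K}$ rather than of $\omega$). Once this accounting is correctly set up, the remaining calculation is a direct combinatorial estimate.
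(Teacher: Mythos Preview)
Your proposal is correct and follows essentially the same approach as the paper: both stratify $\Sb(\Aa, X, K)$ by $C = \abs{\Theta}$ and count each stratum as a product of the three factors $\binom{K(\abs{\Aa}+1)}{C}$, $C!$, and $2^{C \cdot \abs{X} \cdot (\abs{\Aa}-1)}$, with bounds (2) and (3) read off directly from the definitions of $Q$ and $R$. Your explicit identification of which $K$-sparseness condition justifies which factor is slightly more detailed than the paper's account but otherwise identical in substance.
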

\begin{proof}
  2.~readily follows from 1. Furthermore, 3.~holds as $\abs{R} = \abs{\ol{K}} =
  K \cdot (\abs{\Aa} + 1)$. To understand the bound in 1., observe that
  a board $(\Theta, \sigma) \in \Sb(\Aa, X, K)$ with $\abs{\Theta} = C$ consists
  of three components: A choice $\Theta \subseteq \ol{K}$ (of which there are
  ${\ol{K} \choose C} = {K \cdot (\abs{\Aa} + 1) \choose C}$ different ones
  if $\abs{\Theta} = C$), a linear order imposed on $\Theta$ (of which there are
  $\abs{\Theta}! = C!$ many) and one stack $S
  \subseteq \PP(\Theta)$ on each position $(x, a)$ with $a \neq \alpha$, i.e. a
  function $\sigma \colon X \times (\Aa \setminus \{\alpha\}) \to \PP(\Theta)$ (of
  which there are ${\bigl(2^{C}\bigr)}{}^{\abs{X} \cdot (\abs{\Aa} - 1)} = 2^{C \cdot \abs{X}
    \cdot (\abs{\Aa} - 1)}$ many).
    Furthermore taking into account that
  $1 \leq \abs{\Theta} = C \leq K \cdot \abs{\Aa}$, one observes the bound
  stated in 1.
\end{proof}

\section{Reset Proof Systems}
\label{sec:acd_pwn}

Fix an activation algebra \( \Aa \) and an activation trace category \( \TT_\Aa \).
In this section, we show that every cyclic proof system induced by a trace
interpretation into $\TT_\Aa$ gives rise to a cyclic proof system whose soundness
condition is based on Safra boards. It serves as a starting point
 for deriving
concrete \textsc{Reset}-based proof systems based on concrete cyclic proof
systems, as we do in \Cref{sec:concrete}.

Given a cyclic proof system $\RR$ induced by a trace interpretation $\iota \colon \RR
\to \TT_\Aa$, the reset proof system $\Rrr(\RR)$ is obtained by annotating the
sequents of $\RR$ with $\Aa$-Safra boards. More specifically, an $\RR$-sequent
$\Gamma$ is annotated with a Safra board $(\Theta, \sigma) \in \Sb(\Aa,
\iota(\Gamma))$. Each derivation rule $R \in \RR$ is `lifted' to a corresponding derivation
rule in $\Rrr(\RR)$, the Safra boards annotating the $i$th premise being the result
of the transition of the trace interpretation map $r_i$ on the conclusion's
Safra board. Furthermore, the system $\Rrr(\RR)$ also contains structural rules
corresponding to the three types `bookkeeping transitions' on Safra boards:
Weakening, reset and population. The soundness condition of $\Rrr(\RR)$ requires
each simple cycle $\pi(t)$, i.e. path between a bud $t$ and its companion $\beta(t)$, to
contain an application of the $\RReset$-rule which leaves an invariant
$\theta$, a prefix of the control which remains unchanged along the whole simple
cycle.

\begin{definition}\label{def:acr}
  Fix a cyclic proof system $(\Seq, \RR, \rho, \SC)$ induced by a trace
  interpretation $\iota$ into $\TT_\Aa$. The \emph{reset proof system for
    $\RR$} is the cyclic proof system $\Rrr(\RR) \coloneq (\Rrr(\Seq), \Rrr(\RR), \Rrr(\rho), \Rrr(\SC))$ specified as follows.
    Sequents in \( \Rrr(\RR) \) are expressions \( \Gamma ; (\Theta, \sigma) \)
    where \( \Gamma \in \Seq \) is an \( \RR \)-sequent and $ (\Theta, \sigma)
    \in \Sb(\Aa, \iota(\Gamma))$ is a Safra board on $\Gamma$'s trace object $\iota(\Gamma)$.
  The order \( \Theta \) is called the \emph{control} of $\Gamma ; (\Theta, \sigma)$. The
  derivation rules of $\Rrr(\RR)$ consist annotated versions of rules from \( \RR \) and additional \emph{structural rules}. The structural rules are given by the following three rule schemas.
  \begin{mathpar}
    \inference[\textsc{Weak}]{\Gamma ; (\Theta', \sigma') \text{ where } (\Theta, \sigma) \step{W}
      (\Theta', \sigma')}{\Gamma ; (\Theta, \sigma)}

    \inference[$\textsc{Reset}_\gamma$]{\Gamma ; (\Theta', \sigma') \text{ where }
      (\Theta, \sigma) \step{R_\gamma}
      (\Theta', \sigma')}{\Gamma ; (\Theta, \sigma)}

    \inference[\RPop]{\Gamma ; (\Theta', \sigma') \text{ where } (\Theta, \sigma) \step{P}
      (\Theta', \sigma')}{\Gamma ; (\Theta, \sigma)}
  \end{mathpar}
  For each rule $R \in \RR$ with $\rho(R) = (\Gamma, \Delta_1, \ldots, \Delta_n)$
  and maps $r_i \colon \iota(\Gamma) \to \iota(\Delta_i)$ given by the trace interpretation, the
  following schema gives rules for each $(\Theta, \sigma) \in \Sb(\Aa, \iota(\Gamma))$:
  \[
    \inference[$R$]{
      \Delta_1 ; (\Theta_1, \sigma_1) \text{ where } (\Theta, \sigma) \step{r_1} (\Theta_1, \sigma_1)  \quad
      \ldots \quad
      \Delta_n ; (\Theta_n, \sigma_n) \text{ where } (\Theta, \sigma) \step{r_n} (\Theta_n, \sigma_n)
    }
    {\Gamma ; (\Theta, \sigma)}
  \]
  Let $D = (C, \lambda, \delta)$ be a preproof of $\Rrr(\RR)$.
  Pick some $t \in \dom(\beta)$ and let $\pi(t) = (\Gamma_i ; (\Theta_i, \sigma_i))_{i < n}$
  be sequents along the path from $\beta(t)$ to $t$. Let $\Theta$ be the longest common prefix
  of all of the $\Theta_i$. An \emph{invariant} of $\pi(t)$ is any prefix $\theta$
  of $\Theta$ such that an application of a $\max(\theta)$-reset occurs between
  $\beta(t)$ and $t$. Sometimes we
  speak of \emph{the} invariant of $\pi(t)$, in which case we refer to the longest
  such. 
  An $\Rrr(\RR)$-preproof satisfies the soundness condition $\Rrr(\SC)$ iff for every $t \in
  \dom(\beta)$ the path $\pi(t)$ between $\beta(t)$ and $t$ has an invariant.
\end{definition}

A \emph{reset proof} for \( \RR \) is a cyclic proof in $\Rrr(\RR)$ . This is essentially a cyclic proof
in $\RR$ with additional structure in the form of annotations. Any application of a
rule corresponding to $R \in \RR$ directly impacts the traces running through a
preproof while the structural rules perform `bookkeeping' for the
control $(\Theta, \sigma)$. This intuition can be made more formal: There exists
a proof morphism from $\Rrr(\RR)$ into $\RR$ arising from stripping away the annotations
$(\Theta, \sigma)$.

Fix a cyclic proof system $\RR$ induced by a trace interpretation on
$\TT_\Aa$. The function $\strip \colon \Rrr(\Seq) \to \Seq$ is defined by
$\strip(\Gamma ; (\Theta, \sigma)) \coloneq \Gamma$ on sequents.

\begin{lemma}\label{lem:strip-simple-morph}
  For a cyclic proof system $\RR$ induced by a trace interpretation on $\TT_\Aa$, the
  function $\strip \colon \Rrr(\Seq) \to \Seq$ can be extended to a preproof morphism
  $\strip \colon \Rrr(\RR) \to \RR$.
\end{lemma}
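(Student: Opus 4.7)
The plan is to extend $\strip$ from its action on sequents to a preproof morphism by prescribing, for each derivation rule of $\Rrr(\RR)$, a corresponding $\RR$-preproof, and then verifying the agreement condition from the definition of preproof morphism. Since the derivation rules of $\Rrr(\RR)$ fall into two families --- annotated instances of $\RR$-rules and the three structural schemas \textsc{Weak}, $\textsc{Reset}_\gamma$ and \RPop --- I would proceed by case distinction on these families.

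For the first case, consider an annotated instance of some $R \in \RR$ with $\rho(R) = (\Gamma, \Delta_1, \ldots, \Delta_n)$ and interpretation morphisms $r_i \colon \iota(\Gamma) \to \iota(\Delta_i)$. Its $\Rrr(\RR)$-conclusion has the form $\Gamma;(\Theta,\sigma)$ and its premises are $\Delta_i;(\Theta_i,\sigma_i)$ with $(\Theta,\sigma) \step{r_i} (\Theta_i,\sigma_i)$. I would map this rule to the one-step $\RR$-preproof consisting of a single application of $R$, whose endsequent is $\Gamma$ and whose assumptions are $\Delta_1, \ldots, \Delta_n$. Under $\strip$ the conclusion becomes $\Gamma$ and the premises become $\Delta_1,\ldots,\Delta_n$, so the agreement condition holds on the nose.

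For the second case, each structural rule has a conclusion $\Gamma;(\Theta,\sigma)$ and a single premise $\Gamma;(\Theta',\sigma')$ sharing the same underlying $\RR$-sequent. Under $\strip$, both collapse to $\Gamma$, so the witnessing $\RR$-preproof must derive $\Gamma$ from $\Gamma$ as an open assumption. I would therefore assign to each structural rule the trivial preproof consisting of a single node labelled $\Gamma$ with empty rule-assignment, i.e.\ the open-leaf preproof $(\{\varepsilon\}, \varepsilon \mapsto \Gamma, \emptyset)$ used as the base case in the recursive construction of $f(\Pi)$. This preproof trivially has endsequent $\Gamma$ and sole assumption $\Gamma$, again matching the requirement.

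There is no real obstacle here beyond the bookkeeping of these two cases: the lemma is an essentially definitional unwinding of the notions involved. The only point worth flagging is that structural rules are sent to the trivial open-leaf preproof rather than an actual derivation step, which is precisely what formalises the intuition that $\strip$ \emph{removes} the Safra-board bookkeeping rules from a preproof while preserving the underlying $\RR$-derivation structure. This observation will presumably be needed when later upgrading $\strip$ from a preproof morphism to a proof morphism, since one must then argue that infinite branches of $\Rrr(\RR)$-proofs induce infinite branches of the associated $\RR$-preproof that are traceable; but for the present statement no such argument is required.
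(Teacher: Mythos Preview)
Your proposal is correct and mirrors the paper's own proof almost exactly: both handle the two families of $\Rrr(\RR)$-rules by sending an annotated $R$-instance to the one-step $\RR$-preproof applying $R$, and each structural rule to the identity open-leaf preproof $(\{\varepsilon\}, \varepsilon \mapsto \Gamma, \emptyset)$. There is nothing to add or correct.
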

\begin{proof}
  We need to assign to every rule $\hat{R} \in \Rrr(\RR)$ a
  corresponding preproof $\strip(\hat{R})$ in $\RR$. There are only two cases to
  consider:
  \begin{itemize}
  \item \emph{$\hat{R}$ is a structural rule:} Then $\hat{R}$ is of shape
    \[
      \inference[$\hat{R}$]{\Gamma ; (\Theta', \sigma') \text{ where } (\Theta, \sigma) \step{X}
        (\Theta', \sigma')}{\Gamma ; (\Theta, \sigma)}
    \]
    where $X$ is $W$, $P$ or $R_\gamma$ for some $\gamma \in \Theta$. In any
    case, we need to find a preproof with assumption $\strip(\Gamma ; (\Theta,
    \sigma)) = \Gamma$ and premise $\strip(\Gamma ; (\Theta, \sigma)) =
    \Gamma$. Such a preproof is given by the identity preproof of $\Gamma$,
    i.e.~the triple $(\{\varepsilon\}, \varepsilon \mapsto \Gamma, \emptyset)$. 
  \item \emph{$\hat{R}$ corresponds to a rule $R \in \RR$:} That is, there is $R
    \in \RR$ with $\rho(R) = (\Gamma, \Delta_1, \ldots, \Delta_n)$ and and maps
    $r_i \colon \iota(\Gamma) \to \iota(\Delta_i)$ given by the trace interpretation
    and $\hat{R}$ is of the form
    \[
      \inference[$\hat{R}$]{
        \Delta_1 ; (\Theta_1, \sigma_1) \text{ where } (\Theta, \sigma) \step{r_1} (\Theta_1, \sigma_1)  \quad
        \ldots \quad
        \Delta_n ; (\Theta_n, \sigma_n) \text{ where } (\Theta, \sigma) \step{r_n} (\Theta_n, \sigma_n)
      }
      {\Gamma ; (\Theta, \sigma)}
    \]
    Then, analogously to the first case, we need to find a preproof of
    $\Gamma$ with open leaves $\Delta_1, \ldots, \Delta_n$ in $\RR$. The
    preproof consisting of exactly one application of $R$ is as desired.
  \end{itemize}
\end{proof}

\Cref{lem:strip-simple-morph} merely establishes that $\strip$ is a preproof
morphism not a proof morphism.
Showing the latter is more involved. 
That $\strip$ constitutes a proof morphism between $\Rrr(\RR)$
and $\RR$ can be understood as a relative soundness result: Suppose $\RR$ is
sound, i.e., the system proves only true sequents. As $\strip$ is a proof morphism, if a sequent $\Gamma ; (\Theta, \sigma)$ is provable in \( \Rrr(\RR) \),
then there is a cyclic proof of $\strip(\Gamma ; (\Theta, \sigma))$ in $\RR$ obtained
via the morphism, and so \( \Rrr(\RR) \) is sound.

\Cref{sec:sound} below concerns showing that \( \strip \) is a proof morphism.
In \Cref{sec:complete} we prove a completeness theorem for $\Rrr(\RR)$ relative
to $\RR$: If there is a cyclic proof $\Pi$ of $\Gamma$ in $\RR$, there
exists a cyclic proof $\hat{\Pi}$ of $\Gamma ; (\emptyset, (s, a) \mapsto
\emptyset)$. Furthermore, $\strip(\hat{\Pi})$ is a finite unfolding of
$\Pi$.

\subsection{Soundness}
\label{sec:sound}

The soundness proof relies on the concept of connected subgraphs of a cyclic
proof. In cyclic proofs, each connected subgraph can be
identified with a subset $\eta \subseteq \dom(\beta)$ which we call a connected
cycle.
Given a cyclic tree $C$ in cyclic normal form, a \emph{connected cycle} is a
set $\eta \subseteq \dom(\beta)$ of buds of
$C$ such that
\begin{enumerate}[(i)]
\item there exists some \emph{base element} $b(\eta) \in \eta$ such that $\beta(b(\eta)) \leq \beta(t)$ for
  every $t \in \eta$
\item for every $t_0 \in \eta$ there exist $t_1, \ldots, t_n \in \eta$ (where possibly
  $n = 0$) such that for each $i < n$, $\beta(t_i) \leq t_{i + 1}$ and $t_n = b(\eta)$
\end{enumerate}
For a cyclic tree $C = (T, \beta)$, a \emph{subtree} is a set $T' \subseteq T$
such that if $s, t \in T'$ and $s < u < t$ by the prefix ordering then $u \in
T'$ and furthermore if $s \in T' \cap \dom(\beta)$ then $\beta(s) \in T'$.
Any connected cycle $\eta$ of $C$ describes a subtree $C[\eta] = \{s \in T ~|~
\exists t \in \eta.~\beta(t) \leq s \leq t\}$ of $C$.

The connected cycles of a preproof are closely linked to their infinite
branches: For any infinite path through a cyclic tree, the nodes visited
infinitely often by it form a subtree described by a connected cycle. In the
following, we represent \emph{infinite paths} through a cyclic tree $C = (T, \beta)$ by
sequences $\pi \in T^\omega$ such that $\pi_{i + 1} \in \Chld(\pi_i)$ or $\pi_{i + 1}
= \beta(\pi_i)$ for each $i \in \omega$. Furthermore, denote $\Inf(\pi) \coloneq \{s
\in T ~|~ \pi_i = s \text{ infinitely often}\}$ and $\Occ(\pi) \coloneq
\{s \in T ~|~ \pi_i = s \text{ for some } i \in \omega\}$ and write $s <_+ t$ to
mean $t \in \Chld(s)$.

\begin{lemma}\label{lem:inf-cs}
  Let $\pi \in T^\omega$ be an infinite path through a cyclic tree $C = (T, \beta)$ in cyclic normal form.
  Then there exists a connected cycle $\eta$ of $C$ such that $\text{Inf}(\pi) =
  C[\eta]$.
\end{lemma}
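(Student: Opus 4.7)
The plan is to set $\eta \coloneq \dom(\beta) \cap \Inf(\pi)$, the set of buds visited infinitely often, and verify both that $\eta$ is a connected cycle and that $C[\eta] = \Inf(\pi)$. Write $V \coloneq \Inf(\pi)$; since $T$ is finite there is some $N$ with $\pi_i \in V$ for all $i \geq N$. Two preliminary facts will be used repeatedly: (i) if a bud $t$ lies in $V$ then $\beta(t) \in V$, because $\pi_{i+1} = \beta(t)$ whenever $\pi_i = t$; and (ii) for every non-bud $v$, any visit $\pi_i = v$ is preceded either by the parent of $v$ in $T$ or by a bud with companion $v$, these being the only in-edges of $v$ in the graph traversed by $\pi$.

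The first part of the argument constructs the base element. I would first show that $V$ has a unique $\leq$-minimal element $r^*$: two incomparable minimal elements would force the path travelling between them to use a $\beta$-jump landing on their common ancestor, which by (i) would lie in $V$ strictly below them, contradicting minimality. Because the parent of $r^*$ cannot lie in $V$, every sufficiently late visit of $r^*$ arrives via (ii) from some bud with companion $r^*$, and by finiteness at least one such bud $t^*$ lies in $\eta$. A parallel ``no escape'' argument shows that $V$ is contained in the subtree of $r^*$: any $\beta$-jump escaping that subtree would land at a point of $V$ strictly below $r^*$. Hence $r^* \leq \beta(t)$ for every $t \in \eta$, so $t^*$ serves as a base. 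Condition (ii) of a connected cycle is then verified by tracking $\pi$ from a visit of an arbitrary $t_0 \in \eta$ (at time $\geq N$) to its next visit of $t^*$: the buds encountered form a sequence $t_0, t_1, \ldots, t_n = t^*$ in $\eta$ with $\beta(t_k) \leq t_{k+1}$, since after the jump to $\beta(t_k)$ the path must descend within $T$ to the next bud $t_{k+1}$.

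For the inclusion $\Inf(\pi) \subseteq C[\eta]$ I would fix $v \in V$; if $v$ is itself a bud then $v \in \eta \subseteq C[\eta]$. Otherwise consider two consecutive visits of $v$, both after $N$: by (ii) the return either happens by a $\beta$-jump from some bud $b' \in \eta$ with $\beta(b') = v$, giving $\beta(b') \leq v \leq b'$, or by descent through the parent of $v$, which forces the path to have escaped the subtree of $v$ via some bud $b \in \eta$ with $\beta(b) < v \leq b$. In either case $v \in C[\eta]$.

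The converse inclusion $C[\eta] \subseteq \Inf(\pi)$ is the step I expect to require the most care, because an intermediate node on the ancestor chain from $\beta(t)$ to $t$ may itself be a companion and so be reachable by $\beta$-jumps that bypass its parent. Fixing $t \in \eta$ and the unique chain $\beta(t) = s_0 < s_1 < \cdots < s_m = t$ in $T$, I would prove by downward induction on $k$ that each $s_k \in V$. For the inductive step, assume $s_{k+1} \in V$ while $s_k \notin V$: past some $N'' \geq N$ the node $s_k$ is never visited, so by (ii) every visit of $s_{k+1}$ after $N''$ arrives from some bud with companion $s_{k+1}$, and every such bud lies in the subtree of $s_{k+1}$. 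I would then split on whether the path stays in this subtree forever after $N''$. If it does, then by (i) $\beta(t) \in V$ lies strictly above $s_{k+1}$, an outright contradiction. If it escapes, it must do so via a bud with companion strictly below $s_{k+1}$; to then return to $s_{k+1}$ the path must either descend through $s_k$, contradicting $s_k \notin V$, or re-enter via a $\beta$-jump, which is impossible since any bud–companion pair lies entirely inside or entirely outside the subtree of $s_{k+1}$. Both alternatives contradict $s_{k+1} \in V$, closing the induction and the proof.
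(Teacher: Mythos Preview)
Your proposal is correct and follows essentially the same approach as the paper: both take $\eta = \dom(\beta) \cap \Inf(\pi)$ and verify the two defining conditions of a connected cycle together with the two inclusions $\Inf(\pi) \subseteq C[\eta]$ and $C[\eta] \subseteq \Inf(\pi)$. The implementations differ in detail---the paper proceeds via a sequence of auxiliary claims about finite path segments proved by induction on their length, whereas you argue more directly using subtree entry/exit reasoning and a downward induction along the ancestor chain for the inclusion $C[\eta] \subseteq \Inf(\pi)$---but the overall structure and key idea are the same.
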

\begin{proof}
  Without loss of generality, assume that $\text{Occ}(\pi) = \Inf(\pi)$. Now consider $\eta
  \coloneq \Occ(\pi) \cap \dom(\beta)$. We show that $\eta$ is a connected
  cycle and that indeed $\Inf(\pi) = C[\eta]$ via multiple intermediary steps.
  \begin{enumerate}
  \item \emph{For $t \in \beta$, if $s \leq t$ and $s \not\leq \beta(t)$ then
      $\beta(t) < s$:} Follows as $\leq$ is the prefix relation.
  \item \emph{Let $s\alpha$ be a finite path. Then for every $\alpha_i$ it follows
    that $s \leq \alpha_i$ or there exists some $\alpha_j$ such that $j < i$,
    $\beta(\alpha_j) \leq \alpha_i$ and $\beta(\alpha_j) < s$.} Proof per
    induction on $\abs{\alpha}$. If $\abs{\alpha} = 1$ then $s <_+ \alpha_0$ or
    $s \in \dom(\beta)$ and $\alpha_0 = \beta(s)$, meaning $\alpha_0 < s$ as
    desired. Now let $\beta(\alpha) = n + 1$, there are four cases to consider
    \begin{itemize}
    \item $s \leq \alpha_{n - 1}$ and $\alpha_{n - 1} <_+ \alpha_n$: Then $s \leq 
      \alpha_n$, trivially.
    \item $s \leq \alpha_{n - 1}$ and $\alpha_{n} = \beta(\alpha_{n - 1})$: Then
      suppose $s \not\leq \beta(\alpha_{n - 1})$. But this means that
      $\beta(\alpha_{n - 1}) < s$ necessarily, satisfying the second clause.
    \item $\beta(\alpha_j) \leq \alpha_{n - 1}$ and $\alpha_{n - 1} <_+ \alpha_n$:
      Again, the second clause trivially holds for $\alpha_n$.
    \item $\beta(\alpha_j) \leq \alpha_{n - 1}$ and $\alpha_n = \beta(\alpha_{n - 1})$:
      If $\beta(\alpha_j) \not\leq \alpha_{n}$ then $\alpha_n = \beta(\alpha_{n - 1}) <
      \beta(\alpha_j) < s$.
    \end{itemize}
  \item \emph{$\eta$ has a base element $b(\eta)$:} We prove that if $X
    \subseteq \eta$ such that there is a $b \in X$ such that $\beta(b) \leq \eta
    \setminus X$ (meaning $\beta(b) \leq t$ for all $t \in \eta \setminus X$)
    then $b(\eta) \in X$ per induction on $\abs{X}$. If $\abs{X} =
    1$ then clearly $X = \{b(\eta)\}$. Now let $\abs{X} > 1$, pick some finite
    segment
    $\beta(b) \alpha$ of $\pi$ such that $\Occ(\alpha) = \Inf(\pi)$. By the
    previous result, either $\beta(b) \leq \beta(t)$ for all $t \in \eta$, meaning $b = b(\eta)$,
    or there is some $b' \in \eta$ with $\beta(b') < \beta(b)$. In the latter case, $b' \in X$ as
    $\beta(b) \leq \beta(b')$ otherwise, and $\beta(b') \leq \eta \setminus X$
    by transitivity. Then continue the argument with $X' \coloneq X \setminus
    \{b\}$ and $b' \in X'$, noting that $\abs{X'} = \abs{X} - 1$.
  \item \emph{For every $t \in \eta$ there exist $l_0 \ldots l_n$ with $l_0 =
      t$, $l_n = b(\eta)$ and $\beta(l_i) < l_{i + 1}$ for all $i < n$:} Follows
    directly by observing that for every $l_0 \in \eta$ there exists a finite
    subpath $l_0 \alpha b(\eta)$ of $\pi$, describing such a sequence of leaves.
  \item \emph{$\Inf(\pi) \subseteq C[\eta]$:} For each $s \in \Inf(\pi)$ we must
    find a $t \in \eta$ with $\beta(t) \leq s \leq t$.
    First, if $s \in \Inf(\pi)$, there must
    be some $s \leq t \in \dom(\beta) \cap \Inf(\pi) = \eta$, as $\pi$ could not
    continue on infinitely from $s$ otherwise. Now suppose $s < \beta(t)$ for
    all $t \in \eta$ with $s \leq t$. Once $\pi$ passes $s$, it can never
    `jump back' below $s$: The `lowest' point it can reach is $\beta(t)$ for
    some $s \leq t$. But then $s$ cannot be reached more than once,
    contradicting $s \in \Inf(\pi)$.
  \item \emph{Let $s \alpha$ be a finite path, $s < t$ and $t \not\in \Occ(\alpha)$ then
      $\Occ(\alpha) \cap \Up(t) = \emptyset$:} Proof per induction on
    $\abs{\alpha}$. If $\abs{\alpha} = 1$ then $s <_+ \alpha_0$ as $s \not\in
    \dom(\beta)$ because $s < t$. In such a situation, $\alpha_0 \in \Up(t)$ is
    only possible if $\alpha_0 = t$, which contradicts the assumption.
    If $\abs{\alpha} = n + 1$ suppose $\alpha_{n} \in \Up(t)$. By the same
    argument as for $\abs{\alpha} = 1$, this means $\alpha_{n - 1} \not<_+
    \alpha_n$. Thus $\alpha_n = \beta(\alpha_{n - 1})$ and $t \leq \alpha_n \leq
    \alpha_{n - 1}$, contradicting the induction hypothesis.
  \item \emph{$C[\eta] \subseteq \Inf(\pi)$:} Let $\beta(t) \leq s \leq t$ for
    some $t \in \eta$. There are infinitely many finite `subsegments'
    $\beta(t) \alpha t$ of $\pi$. Then $s \in
    \Occ(\alpha)$, as by the previous result,
    $t$ cannot be reached from $\beta(t)$ otherwise.
  \end{enumerate}
\end{proof}

The idea behind the soundness argument is rather simple: For every connected
cycle $\eta$ of an $\Rrr(\RR)$-proof, one can find a `shared invariant' which is
common to all cycles in $\eta$. The properties of such invariants allow one to
conclude that reading off the controls $(\Theta, \sigma)$ off any infinite path
through the proof which visits precisely $C[\eta]$ infinitely often must be an
accepting Safra board run and the underlying trace thus must satisfy the trace
condition. The most complicated step of the argument is establishing the
existence of such shared invariants.

For the remainder of this section, fix some cyclic proof system $\RR$ induced
by a trace interpretation $\iota \colon \RR \to \TT_\Aa$.

\begin{proposition}\label{lem:induced-io}
  Let $(C, \lambda, \delta)$ be an $\Rrr(\RR)$ proof and let $\eta$ be a connected cycle of $C$.
  Then there exists some $t \in \eta$ such that the invariant $\theta$ of
  $\beta(t) < t$ is a prefix of the invariant of each $\beta(s) < s$ with $s \in
  \eta$.
\end{proposition}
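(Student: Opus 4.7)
The plan is to identify $t \in \eta$ whose invariant is shortest, and to show that this shortest invariant is indeed a prefix of all others. The strategy reduces to establishing that $\{\theta_s : s \in \eta\}$ is linearly ordered by the prefix relation on sequences of chips; since they are then totally ordered, the one of minimum length is a prefix of each.

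To establish linear comparability, I would exhibit a single sequence that contains each $\theta_s$ as a prefix. A natural candidate is the longest common prefix $\Theta^\eta$ of the controls $\Theta_u$ as $u$ ranges over the finite subtree $C[\eta]$. I would then prove the key claim that every $\theta_s$ is a prefix of $\Theta^\eta$. Because chips can only be introduced at the \emph{top} of the control (via the cover phase of an annotated rule step) and the structural rules $\RReset$, $\RWeak$, $\RPop$ cannot insert chips below existing ones, the bottom segment of any control is highly constrained in how it can evolve. Combined with the bud-companion equality $\Theta_t = \Theta_{\beta(t)}$, this rigidity is what forces invariants of different cycles in $\eta$ to share a common bottom.

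More concretely, the argument propagates the chips of $\theta_s$ along a connectedness chain $s = t_0, t_1, \ldots, t_n = b(\eta)$, using the relation $\beta(t_i) \leq t_{i+1}$ at each step to transfer position-stability from one cycle to the next. By iteratively combining this with the fact that the cycle at $t_i$ begins and ends at the same control, one concludes that the bottom $|\theta_s|$ chips of $\Theta_s$ agree position-by-position with those at every node in $C[\eta]$, and in particular are a prefix of $\Theta^\eta$. The main obstacle I expect is the careful bookkeeping over these chip tracings: in particular, since chips from $\CC$ may be reused in different parts of the preproof, one needs to argue that any occurrence of a common bottom chip across cycles really denotes the same element — something that follows from the constrained evolution described above together with the linear-order structure of controls. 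Once this is in hand, the conclusion follows by selecting $t \in \eta$ to minimize $|\theta_t|$.
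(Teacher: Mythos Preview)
Your central intermediate claim---that every invariant $\theta_s$ is a prefix of $\Theta^\eta$, the longest common prefix of all controls in $C[\eta]$---is false, and this breaks the argument. Consider a connected cycle $\eta = \{s_1, s_2\}$ with $b(\eta) = s_1$, arranged so that $\beta(s_1) < \beta(s_2) \leq s_1$ and $\beta(s_2) < s_2$. Suppose the control at $\beta(s_1)$ (hence also at $s_1$) is $a$, while the control at $\beta(s_2)$ (hence also at $s_2$) is $ab$, and the invariant of the $s_2$-cycle is $\theta_{s_2} = ab$. Since $\beta(s_1) \in C[\eta]$ has control $a$, we have $\Theta^\eta \leq a$, so $\theta_{s_2} = ab$ is \emph{not} a prefix of $\Theta^\eta$. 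Your propagation along the chain $s = t_0, \ldots, t_n = b(\eta)$ moves \emph{towards} the base element, whose cycle can visit strictly shorter controls than $\theta_s$; the relation $\beta(t_i) \leq t_{i+1}$ only tells you that $\beta(t_i)$ lies on the $t_{i+1}$-cycle, so its control has $\theta_{t_{i+1}}$ as a prefix---it says nothing about $\theta_{t_i}$ being a prefix of controls on the $t_{i+1}$-cycle.

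What does survive from your chain idea is a weaker fact: at the node $\beta(t_i)$, both $\theta_{t_i}$ and $\theta_{t_{i+1}}$ are prefixes of the same control, hence comparable. This pairwise comparability along a chain is essentially the engine of the paper's proof, but the paper organises it differently. Rather than seeking a single sequence dominating all invariants, it fixes a linear order $\sqsubset$ on $\eta$ compatible with the chain structure and proves by induction along $\sqsubset$ that each downset $\Down_{\sqsubseteq}(s)$ contains a bud whose invariant is a prefix of all others in the downset. In the inductive step one shows $\beta(s) \in C[\eta']$ for $\eta' = \Down_{\sqsubseteq}(s) \setminus \{s\}$, so the running minimum $\theta'$ is a prefix of the control at $\beta(s)$; since $\theta_s$ is too, the two are comparable, and the smaller becomes the new running minimum. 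The conclusion is only that \emph{one} invariant is a prefix of all others, not that the invariants are totally ordered by prefix---your stronger reduction is not needed and, as the counterexample suggests, may not even hold.
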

\begin{proof}
  Observe that one can impose a linear order $\sqsubset$ on $\eta$ such that for
  any $s_0 \in \eta$ condition (ii) of the definition of connected cycles
  can be fulfilled by taking $s_1, \ldots, s_n$ such that they are
  $\sqsubset$-less than $s_0$. Clearly, every downset $\Down_{\sqsubseteq}(s)$
  for $s \in \eta$ is a connected cycle. We prove per induction on the $\sqsubset$-order
  that for every $s \in \eta$, the connected cycle $\Down_{\sqsubseteq}(s)$
  contains a cycle $\beta(t) < t$ whose invariant is a prefix of all invariants
  in $\Down_{\sqsubseteq}(s)$. The claim then follows
  as $\eta = \Down_{\sqsubseteq}(\max_{\sqsubseteq}\eta)$.
  The case of the $\sqsubset$-least element is trivial. Thus pick some $s \neq
  b(\eta)$ and consider $\eta' \coloneq \Down_{\sqsubseteq}(s) \setminus
  \{s\}$. Clearly $\eta' = \Down_{\sqsubseteq}(s')$ for some $s' \in \eta'$ and
  thus has an element $t' \in \eta'$ with invariant
  $\theta'$ which is a prefix of all invariants of cycles in $\eta'$. We first
  prove that the path $\beta(s) \in C[\eta']$: As
  $\Down_{\sqsubseteq}(s)$ is a connected cycle, 
  there needs to be a shortest possible sequence $s_1, \ldots, s_n \in \eta'$ with $0 < n$
  such that $\beta(s_i) \leq s_{i + 1}$ and $s_n = b(\eta)$. Then $\beta(s_{i +
    1}) < \beta(s_i)$ always as otherwise the `detour' through $s_{i + 1}$ could be
  avoided, shortening the sequence. This means that $\beta(s_1) < \beta(s) \leq s_1$,
  meaning that $\beta(s)$ occurs on the path $\beta(s_1) \leq s_1$. As $\theta'$
  is a prefix of the invariant of $\beta(s_1) \leq s_1$, it must also be a
  prefix of the control at $\beta(s)$.
  There are thus only two possibilities for the
  invariant $\theta$ of $\beta(s) \leq s$: Either $\theta'$ is a prefix of it or it is
  a prefix of $\theta'$. In the former case, $t'$ remains the
  element in $\Down_{\sqsubseteq}(s)$ whose invariant is a prefix of all other
  invariants, in the latter $s$ is the new such element by the transitivity of
  the prefix relation.
\end{proof}

\begin{theorem}\label{lem:strip-cyclic-morph}
  The function $\strip \colon \Rrr(\Seq) \to \Seq$ is a cyclic proof system
  morphism.
\end{theorem}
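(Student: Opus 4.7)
Since \Cref{lem:strip-simple-morph} already establishes that $\strip$ is a preproof morphism, the remaining obligation is that $\strip$ carries $\Rrr(\RR)$-proofs to $\RR$-proofs. Fix an $\Rrr(\RR)$-proof $\Pi$ and an infinite branch $\pi'$ of $\strip(\Pi)$. Unfolding the preproof composition defining $\strip(\Pi)$ shows that each node of $\strip(\Pi)$ arises from an $\RR$-rule node of $\Pi$, so $\pi'$ lifts to an infinite branch $\pi$ of $\Pi$ obtained by reinserting the intermediate structural-rule nodes between consecutive $\RR$-rule steps. The induced morphism sequence $\hat{\pi'} \in M^\omega$ is then precisely the subsequence of transitions along $\pi$ contributed by its $\RR$-rule steps.

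Applying \Cref{lem:inf-cs} to $\pi$ yields a connected cycle $\eta$ with $\Inf(\pi) = C[\eta]$, and \Cref{lem:induced-io} selects some $t^\ast \in \eta$ whose simple cycle has invariant $\theta$ which is a prefix of the invariant of every simple cycle in $\eta$. Set $\gamma \coloneq \max(\theta)$. I claim that the sequence of controls $(\Theta_i, \sigma_i)_{i \in \omega}$ read off along $\pi$ is a Safra board run on $\hat{\pi'}$: the rule schema of $\Rrr(\RR)$ guarantees that each transition is either a $\tau$-successor (for an $\RR$-rule step), a weakening, a $\gamma'$-reset, or a population (for a structural step), or the identity (for a bud step). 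This run is accepting with witness $\gamma$. On one hand, because $\theta$ is a prefix of the longest common prefix of controls along every simple cycle in $\eta$, $\gamma$ belongs to every control annotating a node of $C[\eta]$, so $\gamma \in \bigcap_{N \leq i} \Theta_i$ for any $N$ past which $\pi$ stays in $C[\eta]$. On the other, $\pi$ traverses the simple cycle $\beta(t^\ast) < t^\ast$ infinitely often, and that cycle contains a $\textsc{Reset}_\gamma$-step by construction of $\theta$, so $\gamma$ is reset infinitely often along the run.

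An application of \Cref{lem:run-to-tc} then yields that $\hat{\pi'}$ satisfies the trace condition of $\TT_\Aa$, so $\strip(\Pi)$ satisfies the global trace condition induced by $\iota$. Since $\pi'$ was arbitrary, $\strip(\Pi)$ is an $\RR$-proof. The main obstacle is the lifting $\pi' \mapsto \pi$ together with verifying that the induced control sequence really constitutes a legal Safra board run on the expected alphabet; the algebraic heart of the argument — the existence of an eventual, infinitely-reset chip $\gamma$ — has already been packaged into \Cref{lem:induced-io}, so requires no further work here.
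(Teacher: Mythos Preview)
Your proposal is correct and follows essentially the same approach as the paper's own proof: lift the branch of $\strip(\Pi)$ to a branch of $\Pi$, read off the annotated controls as a Safra board run, use \Cref{lem:inf-cs} and \Cref{lem:induced-io} to extract the eventually-persistent and infinitely-reset chip $\gamma = \max(\theta)$, and then invoke \Cref{lem:run-to-tc}. The paper presents the same steps in slightly different order (it postpones the appeal to \Cref{lem:run-to-tc} but the content is identical), and both arguments leave the branch-lifting and run-legality checks at the same informal level.
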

\begin{proof}
  Part of the claim has already been proven in \Cref{lem:strip-simple-morph}.
  It only remains to show that if $\Pi$ is a proof in $\Rrr(\RR)$ then
  $\strip(\Pi)$ is a proof in $\RR$.
  
  Let $\Pi = (C, \lambda, \delta)$ be a cyclic proof of $\Gamma ; (\Theta,
  \sigma)$ in $\Rrr(\RR)$. For this, it suffices to show that every path
  $\widehat{\pi'}: \omega \to \TT_\Aa$ induced by a path $\pi'$ through 
  $\strip(\Pi) = (C', \lambda', \delta')$ satisfies the trace condition of $\TT_\Aa$.
  There must exist a path
  $\pi$ through $\Pi$
  `following' $\pi'$. Let $(\Theta_i, \sigma_i)_{i \in \omega}$ be such that
  $\lambda(\pi'_i) = \Gamma_i ; (\Theta_i, \sigma_i)$. Clearly, $(\Theta_i, \sigma_i)_{i \in
    \omega}$ is a Safra board run of $\tau$ with $\tau_i \coloneq \widehat{\pi'}(i < i + 1)$. By
  \Cref{lem:run-to-tc}, it thus suffices to show that $(\Theta_i, \sigma_i)_{i
    \in \omega}$ is accepting to prove that $P$ satisfies the global trace
  condition. By \Cref{lem:inf-cs}, there exists a connected cycle $\eta$ of $C$
  such that $\Inf(\pi) = C[\eta]$. Thus $\pi$ remains within $C[\eta]$ from some
  point onwards, say from index $N$ ondwards. By
  \Cref{lem:induced-io}, there furthermore exists $t \in \eta$ such that the invariant
  $\theta$ of $\beta(t) < t$ is a prefix of all controls in the annotations in
  $C[\eta]$. This means that $\theta \leq \Theta_i$ for all $i \geq N$. Consider
  the chip $\gamma \coloneq \max(\theta)$: From the previous observation follows that
  $\gamma \in \Theta_i$ for all $i \geq N$. Furthermore, $\gamma$ is reset on
  the path $\beta(t) < t$ as $\theta$ is an invariant of that path. Thus,
  infinitely many $\gamma$-resets take place along $(\Theta_i, \sigma_i)_{i \in
    \omega}$, making it an accepting run as desired.
\end{proof}

\begin{corollary}[Soundness]\label{lem:sound}
  If $\Pi$ is a proof of $\Gamma ; (\Theta, \sigma)$ in $\Rrr(\RR)$ then
  $\strip(\Pi)$ is a proof of $\Gamma$ in $\RR$.
\end{corollary}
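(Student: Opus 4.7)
The plan is to observe that this corollary is essentially a direct consequence of \Cref{lem:strip-cyclic-morph}, which has just established that $\strip \colon \Rrr(\RR) \to \RR$ is a (full) cyclic proof system morphism. By the definition of proof morphism given in \Cref{sec:abs-ps}, such a morphism sends proofs to proofs, so if $\Pi \in \Rrr(\SC)$ then $\strip(\Pi) \in \SC$. The only remaining task is to check that $\strip(\Pi)$ is a proof \emph{of the sequent $\Gamma$}, and not of some other $\RR$-sequent.

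First I would unpack what $\strip(\Pi)$ looks like at the root. The endsequent of $\Pi$ is $\Gamma ; (\Theta, \sigma)$ by hypothesis. By the recursive definition of $f(\Pi)$ for a preproof morphism $f$ (given in \Cref{sec:abs-ps}), the endsequent of $f(\Pi)$ is $f_0$ applied to the endsequent of $\Pi$. Since $\strip$ acts on sequents by $\strip(\Gamma ; (\Theta, \sigma)) = \Gamma$, the endsequent of $\strip(\Pi)$ is indeed $\Gamma$. Moreover, since $\Pi$ has no open assumptions (it is a proof, not merely a preproof), neither does $\strip(\Pi)$, because each structural rule is replaced by the identity preproof and each $\RR$-rule instance is replaced by a single application of the corresponding rule in $\RR$, neither of which introduces assumptions.

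Combining these two observations yields that $\strip(\Pi)$ is an $\RR$-preproof of $\Gamma$ without assumptions, and by \Cref{lem:strip-cyclic-morph} it satisfies the soundness condition of $\RR$, hence it is a proof of $\Gamma$ in $\RR$. All the genuine work lies in \Cref{lem:strip-cyclic-morph}; this corollary is a direct packaging of that result in terms of concrete endsequents, so no new obstacles arise.
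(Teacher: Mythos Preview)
Your proposal is correct and matches the paper's intended approach: the corollary is stated without proof immediately after \Cref{lem:strip-cyclic-morph}, signalling that it is a direct consequence of $\strip$ being a proof morphism together with the action of $\strip$ on endsequents. Your additional remarks on endsequents and the absence of open assumptions simply spell out what the paper leaves implicit.
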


\subsection{Completeness and proof search}
\label{sec:complete}

In this section prove completeness of $\Rrr(\RR)$ relative to $\RR$, i.e. any
sequent provable in $\RR$ can be provable in $\Rrr(\RR)$. We do this by showing
that proof search can be performed in $\Rrr(\RR)$ if $\RR$ has a finite amount of
derivation rules. Thus let $\RR$ be such that the set $\TF$ of derivation rules
is finite and let its soundness condition be induced by a trace interpretation
in $\TT_\Aa$. Recall that the objects of \( \TT_\Aa \) are finite sets.

We begin by constructing a proof search system $\Sss(\RR)$ for $\RR$. Similarly to $\Rrr(\RR)$ the sequents of
$\Sss(\RR)$ are $\RR$-sequents annotated with Safra boards. However, the
annotations of $\Sss(\RR)$ are restricted to be $K$-sparse for a suitable $K$.
Crucially, the system
$\Sss(\RR)$ has a finite number of derivation rules if $\RR$ does, a difference
from $\Rrr(\RR)$ which eases proof search. More specifically, each rule of $\Sss(\RR)$
is formed by taking a rule $R \in \RR$, annotating its conclusion with a
$K$-sparse Safra board and annotating the $i$th premise with the $K$-sparse Safra
board resulting from the \emph{greedy} transition via the trace interpretation
map $r_i$. The soundness condition is a `global variant' of the acceptance
condition of the Safra automata in \Cref{def:safra-aut}.
\begin{definition}\label{def:search-system}
Fix $K \coloneq \max\{\abs{\iota(\Gamma)} ~|~ \Gamma \in \Seq\}$.
  The \emph{proof search system} of $\RR$ is the system $\mathrm{S}(\RR) =(\Sss(\Seq),
  \Sss(\RR), \Sss(\rho), \Sss(\SC))$ defined below. The sequents of \( \Sss(\RR) \) are
  expressions $\Gamma ; (\Theta, \sigma)$ with $\Gamma \in \Seq$ a $\RR$-sequent
  and $(\Theta, \sigma) \in \Sb(\Aa, \iota(\Gamma), K)$ a $K$-sparse Safra board
  on $\iota(\Gamma)$.
  The rules of $\Sss(\RR)$ comprise, for each $R \in \RR$ with
  $\rho(R) = (\Gamma, \Delta_1, \ldots, \Delta_n)$ and maps $r_i \colon \iota(\Gamma)
  \to \iota(\Delta_i)$, and each $(\Theta, \sigma) \in
  \Sb(\Aa, \iota(\Gamma), K)$ the rule:
  \[
    \inference[$\Rrr(\Theta, \sigma)$]{
      \Delta_1 ; (\Theta_1, \sigma_1) \text{ where } (\Theta, \sigma) \gstep{r_1} (\Theta_1, \sigma_1)  \quad
      \ldots \quad
      \Delta_n ; (\Theta_n, \sigma_n) \text{ where } (\Theta, \sigma) \gstep{r_n} (\Theta_n, \sigma_n)
    }
    {\Gamma ; (\Theta, \sigma)}
  \]

  A $\Sss(\RR)$-preproof $\Pi$ satisfies the soundness condition $\Sss(\SC)$ if along
  every infinite path $(\Gamma_i ; (\Theta_i, \sigma_i))_{i \in \omega}$ through
  $\Pi$ there
  exists some $N \in \omega$ and $\gamma \in \bigcap_{N \leq i} \Theta_i$ such
  that $\gamma$ is covered infinitely often.
\end{definition}
As we fixed $\gstep{\tau}$ to be an injective function on $K$-sparse Safra
boards, the choice of $R$ and $(\Theta, \sigma)$ specifies the rule $\Rrr(\Theta,
\sigma)$ uniquely.

\begin{lemma}\label{lem:elab}
  The function $\elab \colon \Sss(\Seq) \to \Rrr(\Seq)$ with $\elab(\Gamma; (\Theta,
  \sigma)) \coloneq \Gamma; (\Theta, \sigma)$ can be extended to a proof morphism.
\end{lemma}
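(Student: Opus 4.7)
The plan is to extend $\elab$ to a preproof morphism by assigning to each $\Sss(\RR)$-rule $\Rrr(\Theta, \sigma)$ a $\Rrr(\RR)$-preproof that mirrors the greedy transition $(\Theta, \sigma) \gstep{r_i} (\Theta_i, \sigma_i)$ step by step. By \Cref{def:greedy-step}, this transition consists of (i) a descending sequence of $\gamma$-resets on the covered chips of $(\Theta, \sigma)$, (ii) a population step, (iii) an $r_i$-successor step, and (iv) a thinning. Phases (i) and (ii) depend only on $(\Theta, \sigma)$ and are hence shared across all premises, so I would apply the corresponding $\Rrr(\RR)$-rules $\textsc{Reset}_\gamma$ and $\textsc{Pop}$ sequentially to reach a shared intermediate board, then invoke the $\Rrr(\RR)$-version of $R$ to branch into the $n$ premises, and finally apply $\textsc{Weak}$ on each branch to realise the thinning, which is a weakening by definition. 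The open leaves of the resulting preproof are precisely the sequents $\Delta_i ; (\Theta_i, \sigma_i)$, as required for a preproof morphism.

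For the proof morphism part, fix a $\Sss(\RR)$-proof $\Pi$ and a bud $t$ in $\elab(\Pi)$. I need to show that the simple cycle $\pi(t)$ from $\beta(t)$ to $t$ admits an invariant in the sense of \Cref{def:acr}. Let $\pi^\infty = \pi(t)^\omega$ be the infinite path obtained by iterating the cycle; it corresponds to an infinite path $\pi'$ in $\Pi$ formed from the $\Sss(\RR)$-boards appearing along $\pi^\infty$. Applying the soundness condition $\Sss(\SC)$ to $\pi'$ yields an index $N$ and a chip $\gamma$ such that $\gamma$ lies in every $\Sss(\RR)$-board of $\pi'$ beyond $N$ and is covered in infinitely many of them. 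Chaining \Cref{lem:greedy-ceil} across the successive greedy transitions of $\pi^\infty$ shows that $\gamma$ persists across every intermediate board as well; the periodicity of $\pi^\infty$ then forces $\gamma$ to lie in every control along $\pi(t)$. Moreover, each covering of $\gamma$ is immediately resolved by a $\gamma$-reset at the start of the next greedy transition in the elaboration, so $\pi^\infty$, and by periodicity also $\pi(t)$, contain $\gamma$-resets.

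What remains is to build the prefix $\theta$ witnessing the invariant. The claim I would aim for is that the initial segment of the control at $\beta(t)$ up to and including $\gamma$ is a prefix of every control occurring in $\pi(t)$; granting this, setting $\theta$ to be that segment yields an invariant since $\max(\theta) = \gamma$ is reset within the cycle. The underlying reason the claim should hold is that greedy transitions only ever delete chips from the control or append new chips at the end of its order (cf.~the Cover step of \Cref{def:t-succ}), so no chip is ever inserted into the middle of the existing order. Coupled with the cyclic constraint $\lambda(t) = \lambda(\beta(t))$, which forces both the control and the stacks to coincide at the endpoints of $\pi(t)$, chips below $\gamma$ in the order cannot freely disappear and reappear without distorting the final control, so they must persist in the same relative order throughout $\pi(t)$. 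The hard part will be establishing this stability claim rigorously: I anticipate a dedicated structural lemma tracking chip positions along greedy transitions, combining the monotonicity of chip additions, the closing constraint at the bud, and the fact that $\gamma$ itself persists everywhere in $\pi(t)$.
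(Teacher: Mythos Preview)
Your proposal is correct and follows essentially the same approach as the paper: expand each greedy transition into the sequence $\textsc{Reset}_{\gamma_k}, \ldots, \textsc{Reset}_{\gamma_1}, \textsc{Pop}, R, \textsc{Weak}$ (with the reset/population block shared across premises), then for soundness preservation apply $\Sss(\SC)$ to the periodic path, extract a persistent and recurrently covered chip $\gamma$, use \Cref{lem:greedy-ceil} to propagate $\gamma$ through all intermediate boards, and take $\theta = \{\gamma' \in \Theta \mid \gamma' \leq \gamma\}$ as the invariant.

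The only point worth flagging is that you overestimate the difficulty of the final ``prefix stability'' step. The paper dispatches it in a single sentence with precisely the observation you already made: since transitions only append chips to the end of the control order and never insert in the middle, the only way a chip $\gamma' < \gamma$ could be removed and then reinstated below $\gamma$ would be to first remove $\gamma$ itself---contradicting $\gamma$'s persistence along the cycle. Combined with $\lambda(t) = \lambda(\beta(t))$, this forces the prefix up to $\gamma$ to be constant. No dedicated structural lemma is needed; the argument you sketched \emph{is} the proof.
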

\begin{proof}
  Towards this claim, first pick some $\Rrr(\Theta, \sigma) \in \Sss(\RR)$ arranged as follows
  \[
    \inference[$\Rrr(\Theta, \sigma)$]{
      \Delta_1 ; (\Theta_1, \sigma_1) \quad
      \ldots \quad
      \Delta_n ; (\Theta_n, \sigma_n)
    }
    {\Gamma ; (\Theta, \sigma)}
  \]
  Then there is $R \in \RR$ with $\rho(R) = (\Gamma, \Delta_1, \ldots,
  \Delta_n)$ and morphisms $r_i \colon \iota(\Gamma) \to \iota(\Delta_i)$ given by
  the trace interpretation.
  We have to find a corresponding preproof $\elab(\Rrr(\Theta, \sigma))$ in $\Rrr(\RR)$.
  Note that for each $i \leq n$ there is $(\Theta, \sigma)
  \gstep{r_i} (\Theta_i, \sigma_i)$ with the expanded sequence
  the expanded sequence
  \[(\Theta, \sigma) \step{R_{\gamma_1}} (\Theta_r^1, \sigma_r^1) \ldots
    \step{R_{\gamma_k}} (\Theta_r^k, \sigma_r^k)
    \step{P} (\Theta_p, \sigma_p) \step{\tau_i^{\Delta_i}} (\Theta^*_i,
    \sigma^*_i) \step{T} (\Theta_i, \sigma_i) \]
  in which the initial $R_\gamma$- and $P$-steps are shared between all $i \leq
  n$ (see \Cref{lem:greedy-ceil}). Then we may derive $\elab(\Rrr(\Theta, \sigma))$
  as follows:
  \begin{comfproof}
    \AXC{$\Delta_1 ; (\Theta_1, \sigma_1)$}
    \LIC{\textsc{Weak}}
    \UIC{$\Delta_1 ; (\Theta_1^*, \sigma_1^*)$}
    \AXC{$\ldots$}
    \AXC{$\Delta_n ; (\Theta_n, \sigma_n)$}
    \RIC{\textsc{Weak}}
    \UIC{$\Delta_n ; (\Theta_n^*, \sigma_n^*)$}
    \LIC{R}
    \TIC{$\Gamma ; (\Theta_p, \sigma_p)$}
    \LIC{$\textsc{Pop}$}
    \UIC{$\Gamma ; (\Theta_r^k, \sigma_r^k)$}
    \DOC{}
    \LIC{$\textsc{Reset}_{\gamma_2}$}
    \UIC{$\Gamma ; (\Theta_r^1, \sigma_r^1)$}
    \LIC{$\textsc{Reset}_{\gamma_1}$}
    \UIC{$\Gamma ; (\Theta_0, \sigma_0)$}
  \end{comfproof}

  To prove that $\elab$ preserves the soundness condition, let $\Pi = (C,
  \lambda, \delta)$ be a
  $\Sss(\RR)$-proof and let $\elab(\Pi) = (C', \lambda', \delta')$ be its
  $\elab$-translation. Now consider some $t' \in \dom(\beta')$ and the
  associated path $(\Gamma'_i; (\Theta'_i, \sigma'_i))_{i \leq n}$ between $\beta'(t')$ and
  $t'$ in $\elab(\Pi)$. There is a corresponding path $(\Gamma_i ; (\Theta_i,
  \sigma_i))_{i \leq m}$ through $\Pi$. By the soundness condition of $\Sss(\RR)$,
  the path through $\Pi$ which starts at the root and then cycles
  infinitely on $(\Gamma_i ; (\Theta_i, \sigma_i))_{i \leq m}$ must have some
  $\gamma \in \bigcap_{i \leq m} \Theta_i$ which is covered somewhere along
  $(\Gamma_i ; (\Theta_i, \sigma_i))_{i \leq m}$. By \Cref{lem:greedy-ceil} that
  means that $\gamma \in \bigcap_{i \leq n} \Theta'_i$ as well and the fact that
  $\gamma$ is covered somewhere along $(\Gamma_i ; (\Theta_i, \sigma_i))_{i \leq
    m}$ means a $\gamma$-reset must take place somewhere along $(\Gamma'_i;
  (\Theta'_i, \sigma'_i))_{i \leq n}$. It remains to show that the cycle has an
  invariant $\theta$ with $\max(\theta) = \gamma$.
  Taking $\theta \coloneq \{\gamma' \in \Theta ~|~ \gamma' \leq
  \gamma\}$, it remains to show that no
  chip within $\theta$ disappears somewhere along the cycle. But this cannot
  happen, as it cannot be `replaced underneath $\gamma$' before the bud is
  reached, as this would require removing $\gamma$ from the control first.
  Thus, every cycle in $\elab(\Pi)$ has an invariant $\theta$ with an
  accompanying $\max(\theta)$-reset, meaning $\elab(\Pi)$ is a $\Rrr(\RR)$-proof.
\end{proof}

We employ notion of proof search systems to prove completeness of $\Rrr(\RR)$
  relative $\RR$: Every sequent provable in $\RR$ can also be proven in $\Rrr(\RR)$.
  Because the proof is based on a proof search procedure, the result we obtain is
  even stronger: The $\Rrr(\RR)$-proof will essentially be an unfolding of the $\RR$-proof.
  Fix a preproof (of any derivation system $\TS$) $\Pi = ((T, \beta), \lambda,
  \delta)$, \emph{unfolding $\Pi$ at bud $t \in \dom(\beta)$} yields the
  preproof $\Pi' \coloneq ((T', \beta'), \lambda', \delta')$ with
  \[
    T' \coloneq T \cup \{tu ~|~ \beta(t)u \in T\}
    \qquad
    \lambda'(s) \coloneq 
    \begin{cases}
      \lambda(s) & s \in T \\
      \lambda(\beta(t)u) & s = tu 
    \end{cases}
  \]
  $\delta'$ defined analogously to $\lambda'$ and $\beta' \coloneq \beta
  \setminus \{(t, \beta(t))\} \cup \{(tu, c)\}$ where $t = \beta(t)u$ and either
  $c = t$ or $c = \beta(t)$. A preproof $\Pi'$ of $\Pi$ is an \emph{unfolding} of $\Pi$
  if $\Pi'$ can be arrived at by repeatedly unfolding $\Pi$.

\begin{theorem}[Completeness]\label{lem:complete}
  Let $(\Seq, \RR, \rho, \SC)$ be a cyclic proof system induced by a trace
  interpretation on $\TT_\Aa$. If there is a cyclic proof $\Pi$ of $\Gamma \in \Seq$
  in $\RR$ then there is a proof $\Pi'$ of $\Gamma ; (\emptyset, (x, a) \mapsto
  \emptyset)$ in $\Rrr(\RR)$. Furthermore, $\strip(\Pi')$ is an unfolding of $\Pi$.
\end{theorem}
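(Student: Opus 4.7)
The plan is to combine the proof search system $\Sss$ with the proof morphism $\elab \colon \Sss(\TF) \to \Rrr(\RR)$ of \Cref{lem:elab}. As $\Pi$ uses only finitely many rules, one may pass to a finite subsystem $\TF$ of $\RR$ containing them. It then suffices to construct a cyclic $\Sss(\TF)$-proof $\Pi_s$ of $\Gamma ; (\emptyset, (x,a) \mapsto \emptyset)$ whose underlying $\TF$-tree is an unfolding of $\Pi$: $\Pi' \coloneq \elab(\Pi_s)$ is then the desired $\Rrr(\RR)$-proof, and because $\elab$ expands each $\Sss(\TF)$-step into structural $\RWeak$/$\textsc{Reset}$/$\RPop$ steps wrapped around a single $\TF$-rule application, $\strip$ collapses each such block to that one step. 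Hence $\strip(\Pi')$ has the same underlying $\TF$-tree as $\Pi_s$, which will be an unfolding of $\Pi$ by construction.

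To obtain such a $\Pi_s$, first build an \emph{infinite} $\Sss(\TF)$-derivation $\hat\Pi^\infty$ by greedily decorating the tree-unfolding $\Pi^\infty$ of $\Pi$. Label the root by the empty Safra board and, inductively, at a node with board $(\Theta, \sigma)$ and rule $R$, label its $i$-th child by the unique $K$-sparse $(\Theta_i, \sigma_i)$ arising from $(\Theta, \sigma) \gstep{r_i} (\Theta_i, \sigma_i)$, whose existence is guaranteed by \Cref{lem:greedy-ceil}. Each infinite branch of $\hat\Pi^\infty$ projects onto an infinite branch of $\Pi$ whose induced $\TT_\Aa$-path satisfies the trace condition, since $\Pi$ is a proof. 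The sequence of Safra boards along the branch is by construction its greedy run, so \Cref{lem:tc-to-greedy} renders it accepting: some chip $\gamma$ is eventually stable in the controls and covered infinitely often, which is precisely the $\Sss(\TF)$ soundness condition on that branch.

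To collapse $\hat\Pi^\infty$ to a finite cyclic proof I invoke Rabin tree automata. The $\Sss(\TF)$-sequents appearing in $\hat\Pi^\infty$ form a finite set (finitely many $\TF$-sequents, $K$-sparse Safra boards), so one may design a Rabin tree automaton $\SA$ whose states pair $C$-nodes of $\Pi$ with $K$-sparse Safra boards, whose transitions enforce a $\Sss(\TF)$-rule application matching the $\Pi$-rule at the current $C$-node, and whose acceptance is given by one Rabin pair $(G_\gamma, B_\gamma)$ per chip $\gamma \in \ol{K}$: $G_\gamma$ contains those states whose control has $\gamma$ as a covered chip and $B_\gamma$ those whose control omits $\gamma$, exactly mirroring \Cref{def:safra-aut,lem:safra-automaton-correct}. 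Such an $\SA$ accepts precisely trees representing $\Sss(\TF)$-preproofs of $\Gamma ; (\emptyset, (x,a) \mapsto \emptyset)$ meeting the soundness condition whose underlying $\TF$-tree unfolds $\Pi$. The tree $\hat\Pi^\infty$ witnesses $L(\SA) \ne \emptyset$, so by \Cref{lem:rabin-det} $\SA$ accepts a regular tree via a regular run, which descends to a finite cyclic $\Sss(\TF)$-proof $\Pi_s$.

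The main obstacle is the Rabin step: checking that $L(\SA)$ genuinely captures all and only $\Sss(\TF)$-proofs with the prescribed structural constraint, and that a regularly accepted regular tree does descend to a cyclic proof whose every simple cycle is an invariant-bearing one in the sense of \Cref{def:acr}. Both facts reduce to careful tracking of chip lifetimes along branches, paralleling the correctness argument underlying \Cref{lem:safra-automaton-correct}: the Rabin condition controls precisely what the $\Sss(\TF)$-soundness condition requires, and a chip witnessing the Rabin condition on a cycle provides the invariant. Once this is in place, composing $\hat\Pi^\infty$, \Cref{lem:rabin-det} and \Cref{lem:elab} yields $\Pi'$ as required.
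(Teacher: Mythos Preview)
Your proposal is correct and follows essentially the same route as the paper: restrict to a finite subsystem, build a Rabin tree automaton whose states pair nodes of $\Pi$ with $K$-sparse Safra boards and whose acceptance is the per-chip Rabin condition of \Cref{def:safra-aut}, use the (greedily annotated) unfolding of $\Pi$ to witness non-emptiness, invoke \Cref{lem:rabin-det} to extract a regular accepted tree, read it as an $\Sss(\TF)$-proof, and apply \Cref{lem:elab}. One minor slip: the regular run yields a preproof satisfying the branch-based $\Sss(\TF)$ condition, not directly the per-cycle invariant condition of \Cref{def:acr}; the latter is produced only after applying $\elab$, exactly as you indicate in your final sentence.
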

\begin{proof}
  Let $\Pi = (C, \lambda, \delta)$ be a proof of $\Gamma$ in $\RR$. Consider the
  subsystem $\RR' \coloneq (\im(\lambda), \im(\delta), \rho \uh \RR', C \cap
  \Pp(\RR'))$ of $\RR$. As $\Pi$ is finite, so is $\RR'$. Now fix
  \[M \coloneq \{r_i \colon \iota(\Gamma) \to \iota(\Delta_i) ~|~ R \in \RR', \rho(R)
    = (\Gamma, \Delta_1, \ldots, \Delta_n), r_i \text{ given by
      the trace interpretation}\}\]
  and construct the Safra automaton $\SSB(\Aa, i(\Gamma), M) = (M, Q, s,
  \delta_\SSB, R_\SSB)$ according to \Cref{def:safra-aut}.
  We construct the Rabin tree automaton $\SA = (\Seq', Q', \Delta, s', R')$ with
  \begin{align*}
    Q' & \coloneq \{(s, (\Theta, \sigma)) ~|~ s \in C \setminus \dom(\beta), \Gamma \coloneq \lambda(s), (\iota(\Gamma), (\Theta, \sigma)) \in Q\} \\
    \Delta' & \coloneq \{((s, (\Theta, \sigma)), (\Delta_1, \ldots, \Delta_n), ((t_1, (\Theta_1, \sigma_n)), \ldots, (t_n, (\Theta_n, \sigma_n)))) ~|~ \\
       & \qquad \text{if } s \in T \setminus \Leaf(T), \Chld(s) = \{t_1, \ldots, t_n\}, \rho(\delta(s)) = (\Gamma, \Delta_1, \ldots, \Delta_n), \\
       & \qquad r_i \colon \iota(\Gamma) \to \iota(\Delta_i) \text{ given by the trace interpretation } \\
       & \qquad \text{and } \delta_\SSB((\iota(\Gamma), (\Theta, \sigma)), r_i) = (\iota(\Delta_i), (\Theta_i, \sigma_i))\} \\
    s' & \coloneq (\varepsilon, (\Theta_0, \sigma_0)) \text{ where } s = (\iota(\lambda(\varepsilon)), (\Theta_0, \sigma_0)) \\
    R' & \coloneq \{(\{(s, (\Theta, \sigma)) ~|~ (\iota(\lambda(s)), (\Theta, \sigma)) \in G\}, \{(s, (\Theta, \sigma)) ~|~ (\iota(\lambda(s)), (\Theta, \sigma)) \in B\}) ~|~ (G, B) \in R_\SSB\}
  \end{align*}
  It is easily observed that $L(\SA)$ contains precisely the `infinite unfolding' of $\Pi$
  which has a successful run of
  $\Sb(\Aa, i(\Gamma), K)$ along the paths $P \colon \omega \to \TT_\Aa$ of each of
  their branches. In other words, the only tree in $L(\SA)$ corresponds to the unfolding of
  $\Pi$.
  By \Cref{lem:rabin-det}, there exists a regular tree in $L(\SA)$ which has
  a regular run on $\SA$. This run may be turned into an
  $\Sss(\RR')$-preproof $\Pi'$ by
  replacing each step $((s, (\Theta, \sigma)), \langle \Delta_1, \ldots,
  \Delta_n \rangle, ((t_1, (\Theta_1, \sigma_n)), \ldots, (t_n,
  (\Theta_n, \sigma_n))))$ corresponding to the rule $R \coloneq \delta(s)$ with the
  corresponding $\Sss(\RR')$-rule $\Rrr(\Theta, \sigma)$:
  \[\inference[$\Rrr(\Theta, \sigma)$]{\Delta_1; (\Theta_1, \sigma_1) \quad
      \ldots \quad \Delta_n ; (\Theta_n, \sigma_n)}{\Gamma ; (\Theta, \sigma)}\]
  As the run satisfies the Rabin condition $R'$, the preproof $\Pi'$ satisfies the
  soundness condition of $\Sss(\RR')$. The conclusion of $\Pi'$ is $\Gamma ;
  (\emptyset, (x, a) \mapsto \emptyset)$ as this corresponds to the initial
  state $s'$ of $\Aa$. Now, by \Cref{lem:elab}, $\elab(\Pi')$ is an
  $\Rrr(\RR')$-proof (and thus an $\Rrr(\RR)$-proof). As the states of $\SA$ are
  labeled by the nodes of $C$, the regular run on $\SA$ must correspond to an
  unfolding of $\Pi$. Thus, $\strip(\Pi')$ must be an unfolding of $\Pi$ as well.
\end{proof}

\section{Deriving Concrete Reset Proof Systems}
\label{sec:concrete}

In this section, we apply the results do derive reset proof systems for 3 cyclic
proof systems from the literature: cyclic arithmetic~\Cref{sec:pa}, cyclic
Gödel's T~\Cref{sec:godel-t} and the modal $\mu$-calculus~\Cref{sec:modal-mu}.
For each system, the abstract reset system $\Rrr(\RR)$ will serve as a starting
point. However, the reset systems we derive all differ from the `na\"ive' system
$\Rrr(\RR)$ by a few `ergonomic adjustments' and a more syntactic annotation mechanism.
In each of the following sections, we begin by recalling the original cyclic
proof system formulated in terms of a global trace condition before defining our
proposed reset proof systems.

\subsection{Peano Arithmetic}
\label{sec:pa}

Cyclic arithmetic was first proposed by Alex Simpson
in~\cite{simpsonCyclicArithmeticEquivalent2017}. It is a cyclic proof
system which proves the same theorems as Peano arithmetic.

\subsubsection{Cyclic Arithmetic}
\label{sec:ca}

The term and formula
languages of $\CA$ are given below. The formula language is non-standard, treating
inequality $s < t$ as a primitive, rather than a defined notion. As will become
clear below, this eases the definition of the global trace condition of $\CA$.
\begin{align*}
  s,t \in \term \langeq~& x ~|~ 0 ~|~ S s ~|~ s + t ~|~ s \cdot t \\
  \varphi, \psi \in \form \langeq~& s = t ~|~ s < t ~|~ \bot ~|~ \varphi \wedge \psi ~|~ \varphi \vee \psi ~|~ \varphi \to \psi ~|~ \forall x.\varphi ~|~ \exists x. \varphi
\end{align*}
Denote by $[t / x]$ the usual \emph{substitution operation}, substituting the term
$t$ into all free occurrences of the variable $x$ in a term or formula. This is
a partial operation, $\varphi[t / x]$ being undefined when the free variables in
$t$ are not distinct from the bound variables in $\varphi$. Henceforth, writing
$\varphi[t / x]$ will double as an assertion of the resulting formula being defined.

\begin{definition}\label{def:ca-rules}
  The \emph{sequents} of $\CA$ are expressions $\Gamma \sdash \Delta$ where
  $\Gamma, \Delta$ are finite sets of formulas. The set of $\CA$ sequents is
  denoted by $\Seq_\CA$. Write $\Gamma, \varphi$ for
  $\Gamma \cup \{\varphi\}$ and $\Gamma, \Gamma'$ for $\Gamma \cup \Gamma'$.
  The \emph{derivation rules} of $\CA$ comprise of the following choice of standard rules for
  first-order logic,
  \begin{center}
    \begin{tabular}{ccc}
	\inference[\RAx]{}{\Gamma, \varphi \sdash \varphi, \Delta{}}
	&
    \inference[$\to$L]{\Gamma{}, \varphi{} \sdash{} \Delta{} \quad \Gamma{} \sdash{}
      \psi{}, \Delta{}}{\Gamma{}, \varphi{} \to \psi{} \sdash{} \Delta{}}
	&
    \inference[$\to$R]{\Gamma{}, \varphi{} \sdash{} \psi{}, \Delta{}}{\Gamma{} \sdash{} \varphi{} \to \psi{}, \Delta{}}
\\[1.5em]
    \inference[$\wedge$L]{\Gamma{}, \varphi{}, \psi{} \sdash{} \Delta{}}{\Gamma{},
      \varphi{} \wedge{} \psi{} \sdash \Delta{}}
	&
    \inference[$\wedge$R]{\Gamma \sdash \varphi, \Delta \quad \Gamma \sdash \psi, \Delta}{\Gamma \sdash \varphi \wedge
      \psi, \Delta}
	&
    \inference[$\vee$L]{\Gamma, \varphi \sdash \Delta \quad \Gamma, \psi \sdash \Delta}{\Gamma, \varphi \vee{}
      \psi \sdash \Delta}
\\[1.5em]
    \inference[$\vee$R]{\Gamma{} \sdash \varphi{}, \psi{}, \Delta{}}{\Gamma{} \sdash
      \varphi{} \vee{} \psi{}, \Delta{}}
	&
    \inference[$\forall$L]{\Gamma, \varphi[t / x] \sdash \Delta}{\Gamma, \forall x.\varphi \sdash \Delta}
	&
    \inference[$\forall$R]{\Gamma \sdash \varphi, \Delta \quad x \not\in
      \FV(\Gamma, \Delta)}{\Gamma
      \sdash \forall x.\varphi, \Delta}
\\[1.5em]
    \inference[$\exists$L]{\Gamma, \varphi \sdash \Delta \quad x \not\in
      \FV(\Gamma, \Delta)}{\Gamma, \exists x. \varphi \sdash \Delta}
	&
    \inference[$\exists$R]{\Gamma \sdash \varphi[t / x], \Delta}{\Gamma \sdash \exists x. \varphi, \Delta}
	&
    \inference[$\bot$L]{}{\Gamma, \bot \sdash \Delta}
\\[1.5em]
    \multicolumn{2}{c}{\inference[$=$L]{\Gamma[t / x, s / y] \sdash \Delta[t / x, s / y] \quad x,
      y \not\in \FV(s, t)}{\Gamma[s / x, t / y], s = t \sdash \Delta[s / x, t
      / y]}}
	&
    \inference[$=$R]{}{\Gamma \sdash t = t, \Delta}
    \end{tabular}
  \end{center}
  with the following structural rules,
  \begin{mathpar}
    \inference[\RWk]{\Gamma \sdash \Delta}{\Gamma, \Gamma' \sdash \Delta, \Delta'}

    \inference[\RCut]{\Gamma, \varphi \sdash \Delta \quad \Gamma \sdash \varphi,
      \Delta}{\Gamma \sdash \Delta}
    
    \inference[\RSub]{\Gamma \sdash \Delta}{\Gamma[s / x] \sdash \Delta[s / x]}
  \end{mathpar}
  %
  %
  %
  the following arithmetic-specific axioms%
  \begin{align*}
    s < t, t < u & \sdash s < u &
    s < t & \sdash Ss < St &
    & \sdash s + St = \Sss(s + t) \\
    s < t, t < s & \sdash &
    & \sdash s < t, s = t, t < s &
    & \sdash t \cdot 0 = 0 \\
    s < t, t < Ss & \sdash &
    & \sdash t < St &
    & \sdash s \cdot St = (s \cdot t) + s \\
    t < 0 & \sdash &
    & \sdash t + 0 = t
  \end{align*}
  and the arithmetic-specific derivation rule%
  \begin{mathpar}
    \inference[S]{\Gamma, t = Sx \sdash \Delta \quad x \text{ fresh}}{\Gamma, 0 < t \sdash \Delta}
  \end{mathpar}
  $ $
\end{definition}

Observe that the assumption-free, non-cyclic preproofs using the rules of $\CA$ and the
\emph{induction scheme} $(\forall x. (\forall y. y < x \to \varphi[y /
x]) \to \varphi) \to \forall x. \varphi$ prove
exactly the theorems of Peano arithmetic. Cyclic arithmetic also proves exactly the
theorems of Peano arithmetic, trading the induction scheme for a global trace
condition (see \cite[Theorem 6]{simpsonCyclicArithmeticEquivalent2017} for a
proof of this).

A term $t$ \emph{occurs} in a sequent $\Gamma \sdash
\Delta$ if it appears, possibly as a subterm of another term, in a formula in
$\Gamma$ or $\Delta$. Write $\term(\Gamma \sdash \Delta)$ for the set of
terms occurring in $\Gamma \sdash \Delta$.
Let $R \in \CA$ be such that $\rho(R) = (\Gamma \sdash \Delta, \Gamma_1 \sdash \Delta_1, \ldots, \Gamma_n
\sdash \Delta_n)$, i.e. with $\Gamma \sdash \Delta$ as its conclusion and
$\Gamma_i \sdash \Delta_i$ as one of its premises.
Fix $t \in \term(\Gamma \sdash \Delta)$ and $t' \in \term(\Gamma_i \sdash
\Delta_i)$. The term $t'$ is called a
\emph{precursor} of $t$, denoted $t' \leftarrow^i_R t$ if one of the following three
conditions holds:
\begin{itemize}
\item $R$ is an instance of $(\textsc{Sub})$ and $\Gamma = \Gamma'[s / x], \Delta =
  \Delta'[s / x]$ and $t = t'[s / x]$;
\item or $R$ is an instance of a rule \emph{other than} $(\RSub)$ and $t = t'$;
\item or \( R \) is an instance of \( (=\hspace{-0.25em}\text{L})\) and $\Gamma = \Gamma''[s / x,
  t / y], \Gamma' = \Gamma''[t / x, s / y]$ and analogously for the $\Delta$
  and there exists a term $t''$ such that $t = t''[s / x, t / y]$ and $t' =
  t''[t / x, s / y]$.
\end{itemize}
Recall that the booleans $\BB = \{0, 1\}$ with the usual join operation and
$\alpha \coloneq 1$ form an activation algebra. This is the most natural
activation algebra for the specification of the global trace condition of $\CA$.

\begin{definition}
  The \emph{trace interpretation} $\iota \colon \CA \to \TT_\BB$ is given by $\iota(\Gamma
  \sdash \Delta) \coloneq \term(\Gamma \sdash \Delta)$ and for any $R \in \CA$
  with $\rho(R) = (\Gamma \sdash \Delta, \Gamma_1 \sdash \Delta_1, \ldots,
  \Gamma_n \sdash \Delta_n)$ the trace map $r_i \colon \term(\Gamma \sdash \Delta) \to
  \term(\Gamma_i \sdash \Delta_i)$ is given by%
  \begin{align*}
    r_i \coloneq & \{(t, 0, t') ~|~ t \in \term(\Gamma \sdash \Delta), t' \in \term(\Gamma_i
                   \sdash \Delta_i) \text{ and } t' \leftarrow^i_R t\} \,\cup \\
                 & \{(t, 1, s) ~|~ t \in \term(\Gamma \sdash \Delta), t', s \in \term(\Gamma_i \sdash \Delta_i) \text{ and } t \leftarrow^i_R t' \text{ and } s < t' \in \Gamma_i\}
  \end{align*}
  This trace interpretation induces the soundness condition of $\CA$ as described in
  \Cref{def:tc-induced}.
\end{definition}

\subsubsection{Reset Arithmetic}
\label{sec:ra}

We present a cyclic proof system for Peano arithmetic called \emph{reset
  arithmetic} $\RA$. It is based on the reset system $\Rrr(\CA)$ induced by $\CA$
with some slight modifications.%

  Sequents of $\RA$ are expressions
  $\Theta ; \sigma \rsep \Gamma \sdash \Delta$
  where \( \Gamma \sdash \Delta \) is a \( \CA \)-sequent, $\Theta$ is a
  sequence of distinct characters called the \emph{control} and $\sigma$ is a finite set of \emph{assignments} $t \mapsto u$ where $t$ is a
  term in $\Gamma, \Delta$ and $u$ is a subsequence $u \sqsubseteq \Theta$.
  The set of $\RA$-sequents is denoted $\Seq_\RA$.
  For an assignment $t \mapsto u$,
  write $(t \mapsto u)[s / x]$ for $t[s / x] \mapsto u$. This notation extends
  to sets of assignments $\sigma[s / x]$.
  Sequents \( \varepsilon ; \emptyset \rsep \Gamma \sdash \Delta \) with empty control are identified with \( \CA \)-sequents \( \Gamma \sdash \Delta \).

\begin{definition}\label{def:ra-rules}
  The \emph{derivation rules} of reset arithmetic are listed below.
  In each rule, $\Theta' ;
  \sigma'$ denotes the result of first removing all assignments to terms not occurring
  in the premise from $\sigma$ and then removing all letters of $\Theta$ which
  are not assigned to at least one term.
  The rules of $\RA$ contain the rules of $\CA$ adjusted to `properly treat'
  the control $\Theta; \sigma$. Observe that the $\RWk$ also allows for the
  `weakening' of the assignments $\sigma$.
  \begin{mathpar}
    \inference[\RAx]{}{\Theta; \sigma \rsep \Gamma, \varphi \sdash \varphi, \Delta{}}

    \inference[$\to$L]{\Theta'; \sigma' \rsep \Gamma{}, \varphi{} \sdash{}
      \Delta{} \quad \Theta{}'; \sigma{}' \rsep{} \Gamma{} \sdash{}
      \psi{}, \Delta{}}{\Theta{}; \sigma{} \rsep{} \Gamma{}, \varphi{} \to \psi{} \sdash{} \Delta{}}

    \inference[$\to$R]{\Theta{}; \sigma{} \rsep{} \Gamma{}, \varphi{} \sdash{}
      \psi{}, \Delta{}}{\Theta{} ; \sigma{} \rsep{} \Gamma{} \sdash{} \varphi{} \to \psi{}, \Delta{}}

    \inference[$\wedge$L]{\Theta{} ; \sigma{} \rsep{} \Gamma{}, \varphi{},
      \psi{} \sdash{} \Delta{}}{\Theta{} ; \sigma{} \rsep{} \Gamma{},
      \varphi{} \wedge{} \psi{} \sdash \Delta{}}

    \inference[$\wedge$R]{\Theta{}' ; \sigma{}' \rsep{} \Gamma \sdash \varphi,
      \Delta \quad \Theta{}' ; \sigma{}' \rsep{} \Gamma \sdash \psi,
      \Delta}{\Theta{} ; \sigma{} \rsep{} \Gamma \sdash \varphi \wedge
      \psi, \Delta}

    \inference[$\vee$L]{\Theta{}'; \sigma{}' \rsep{} \Gamma, \varphi \sdash
      \Delta \quad \Theta{}' ; \sigma{}' \rsep{} \Gamma, \psi \sdash
      \Delta}{\Theta{} ; \sigma{} \rsep{} \Gamma, \varphi \vee{}
      \psi \sdash \Delta}

    \inference[$\vee$R]{\Theta{} ; \sigma{} \rsep{} \Gamma{} \sdash \varphi{},
      \psi{}, \Delta{}}{\Theta{} ; \sigma{} \rsep{} \Gamma{} \sdash
      \varphi{} \vee{} \psi{}, \Delta{}}

    \inference[$\forall$L]{\Theta ; \sigma \rsep \Gamma, \varphi[t / x] \sdash
      \Delta}{\Theta ; \sigma \rsep \Gamma, \forall x.\varphi \sdash \Delta}

    \inference[$\forall$R]{\Theta ; \sigma \rsep \Gamma \sdash \varphi, \Delta \quad x \not\in
      \FV(\Gamma, \Delta)}{\Theta ; \sigma \rsep \Gamma
      \sdash \forall x.\varphi, \Delta}

    \inference[$\exists$L]{\Theta ; \sigma \rsep \Gamma, \varphi \sdash \Delta \quad x \not\in
      \FV(\Gamma, \Delta)}{\Theta ; \sigma \rsep \Gamma, \exists x. \varphi \sdash \Delta}

    \inference[$\exists$R]{\Theta' ; \sigma' \rsep \Gamma \sdash \varphi[t /
      x], \Delta}{\Theta ; \sigma \rsep \Gamma \sdash \exists x. \varphi, \Delta}

    \inference[$\bot$L]{}{\Theta ; \sigma \rsep \Gamma, \bot \sdash \Delta}

    \inference[$=$L]{\Theta ; \sigma[t / x, s / y] \rsep \Gamma[t / x, s / y] \sdash
      \Delta[t / x, s / y] \quad x, y \not\in \FV(s, t)}{\Theta ; \sigma[s / x, t / y] \rsep \Gamma[s / x, t / y], s = t \sdash \Delta[s / x, t
      / y]}

    \inference[$=$R]{}{\Theta ; \sigma \rsep \Gamma \sdash t = t, \Delta}

    \inference[\RWk]{\Theta' ; \sigma \rsep \Gamma \sdash \Delta}{\Theta ; \sigma, \sigma^* \rsep \Gamma, \Gamma^* \sdash \Delta, \Delta^*}

    \inference[\RCut]{\Theta ; \sigma \rsep \Gamma, \varphi \sdash \Delta \quad \Theta ; \sigma \rsep \Gamma \sdash \varphi,
      \Delta}{\Theta ; \sigma \rsep \Gamma \sdash \Delta}

    \inference[\RSub]{\Theta ; \sigma[s / x] \rsep \Gamma \sdash \Delta}{\Theta ; \sigma \rsep \Gamma[s / x] \sdash \Delta[s / x]}

    \inference[$S$]{\Theta ; \sigma \rsep \Gamma, t = Sx \sdash \Delta \quad x
      \text{ fresh}}{\Theta ; \sigma \rsep \Gamma, 0 < t \sdash \Delta}
  \end{mathpar}
  The axioms of $\RA$ are the arithmetical \emph{axioms} of $\CA$ listed below.
  This means $\Theta; \sigma \rsep \Gamma
  \sdash \Delta$ for any $\CA$-sequent $\Gamma \sdash \Delta$ below and any control
  $\Theta; \sigma$ is an axiom of $\RA$.
  \begin{align*}
    s < t, t < u & \sdash s < u &
                                  s < t & \sdash Ss < St &
    & \sdash s + St = \Sss(s + t) \\
    s < t, t < s & \sdash &
                                        & \sdash s < t, s = t, t < s &
    & \sdash t \cdot 0 = 0 \\
    s < t, t < Ss & \sdash &
                                        & \sdash t < St &
    & \sdash s \cdot St = (s \cdot t) + s \\
    t < 0 & \sdash &
                                        & \sdash t + 0 = t
  \end{align*}
  Lastly, $\RA$ features three derivation rules which have no corresponding rule
  in $\CA$.
  \begin{mathpar}
    \inference[\RFocus]{\Theta ; \sigma, (t \mapsto \varepsilon) \rsep \Gamma
      \sdash \Delta \quad t \in \term(\Gamma, \Delta)}{\Theta ; \sigma \rsep
      \Gamma \sdash \Delta}

    \inference[$\RReset_a$]{\Theta' ; \sigma, (t_1 \mapsto ua), \ldots, (t_n
      \mapsto ua) \rsep \Gamma \sdash \Delta \quad a \text{ does not occur in }
    \sigma}{\Theta ; \sigma, (t_1 \mapsto uau_1), \ldots, (t_n
      \mapsto uau_n) \rsep \Gamma \sdash \Delta}

    \inference[$<$L]{\Theta a ; \sigma, (s \mapsto ua) \rsep \Gamma \sdash
      \Delta \quad a \text{ fresh}}{\Theta ; \sigma, (t \mapsto u) \rsep \Gamma, s < t \sdash \Delta}
  \end{mathpar}

  An $\RA$-preproof satisfies the soundness condition of $\RA$ if
  every pair of bud $t
  \in \dom(\beta)$ and companion $\beta(t)$ has an \emph{invariant}\emph{:} there
  exists a letter $a$ such that $a$ occurs in all of the controls
  between $t$ and $\beta(t)$, the prefix of $a$ is
  constant across these controls, and the $\textsc{Reset}_a$ rule is applied between
  $t$ and $\beta(t)$.
  An $\RA$-proof is a \emph{proof of $\CA$-sequent $\Gamma \sdash \Delta$} if its root is labeled
  $\varepsilon ; \emptyset \rsep \Gamma \sdash \Delta$.
  If there exists an $\RA$ proof of $\Gamma \sdash \Delta$ write \emph{$\RA \vdash
  \Gamma \sdash \Delta$}.
\end{definition}

The proof system $\RA$ features one `ergonomic adjustment' differentiating it
from the `na\"ive' reset system $\Rrr(\CA)$. In $\Rrr(\CA)$, the rules corresponding
to $\CA$-rules add new chips to the control $(\Theta, \sigma)$ whenever progress
takes place, i.e. whenever there are inequalities $s < t$ in $\Gamma$ of the
assumption. This can quickly get out hand, making the handling of the control
quite unwieldy. In $\RA$, the $\CA$-correspondents never add chips to the
control, only remove them if they are no longer used. Instead, the $<$L rule
allows the prover to add chips corresponding to the progress embodied by an
inequality $s < t$ in $\Gamma$.

We prove soundness of $\RA$ relative to $\PA$ by constructing a proof morphism
$\embed \colon \RA \to \Rrr(\CA)$. Recall that $\Rrr(\CA)$ is the reset proof system

\begin{lemma}
  There is a function $\embed \colon \Seq_\RA \to \Rrr(\Seq_\CA)$ which is defined by
  \[\embed(\Theta ; \sigma \rsep \Gamma \sdash \Delta) \coloneq \Gamma \sdash
    \Delta ; (\ol{\Theta}, \ol{\sigma}
    )\]
  where $\ol{\Theta}$ is the set $\{a ~|~ a \in \Theta \}$ ordered
  according to the letters' positions in $\Theta$ and $\ol{\sigma}(t) \coloneq
  \{\{x ~|~ x \in u \} ~|~ t \mapsto u \in \sigma\}$. It can be extended into a
  proof morphism $\embed \colon \RA \to \Rrr(\CA)$.
\end{lemma}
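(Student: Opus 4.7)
The plan is to extend $\embed$ to derivation rules by providing, for each $\RA$-rule, a $\Rrr(\CA)$-preproof whose root matches the embedded conclusion and whose open leaves match the embedded premises, then verify that every $\RA$-proof translates into a $\Rrr(\CA)$-proof by induction on the tree structure. I would proceed by cases on the $\RA$-rule.

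For the rules of $\RA$ corresponding to $\CA$-rules (including $\RAx$, the logical rules, $\RSub$, $\RCut$, the arithmetic axioms, $S$ and $\RWk$), the embedding applies the analogous rule of $\Rrr(\CA)$, followed on each premise by a sequence of $\RWeak$-steps on the Safra board. These $\RWeak$-steps serve two purposes: to garbage-collect dead stacks and chips (matching the $\Theta'; \sigma'$ side conditions), and to discard extraneous chips introduced by the cover phase of the $\tau$-successor from inequalities $s' < t' \in \Gamma$ that are not explicitly activated in $\RA$. The $\RA$-structural rules $\RFocus$ and $\RReset_a$ translate to direct Safra-board counterparts, namely a $\RPop$-step adding an empty stack at $(t, 0)$, and a $\textsc{Reset}_{\ol{a}}$-step respectively; here the side condition ``$a$ does not occur in $\sigma$'', combined with the structure of the listed assignments $(t_i \mapsto uau_i)$, ensures the covering precondition of $\textsc{Reset}_{\ol{a}}$ whenever at least one $u_i$ is nonempty, with the trivial case handled by an identity preproof.

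The most delicate case is $<$L, which has no direct $\CA$-counterpart. For conclusion $\Theta; \sigma, (t \mapsto u) \rsep \Gamma, s < t \sdash \Delta$ and premise $\Theta a; \sigma, (s \mapsto ua) \rsep \Gamma \sdash \Delta$, I would simulate it by two successive $\RWk$-instances in $\Rrr(\CA)$. The first is a trivial $\RWk$ from $\Gamma, s < t \sdash \Delta$ to itself, whose trace map contains the progress triple $(t, 1, s)$ because $s < t$ occurs in the premise's antecedent; the $\tau$-successor thus moves $t$'s stack $\ol{u}$ onto $(s, 1)$, where the cover phase transfers it to $(s, 0)$ with a fresh chip—which we name $a$—placed on top, yielding the stack $\ol{ua}$. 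Any extraneous stacks arising from other inequalities in $\Gamma$ are cleaned up via an intermediate $\RWeak$-step. The second is a proper $\RWk$ removing $s < t$ from the sequent, which does not affect the Safra board beyond stripping unused entries, producing the embedded premise $\embed(\Theta a; \sigma, (s \mapsto ua) \rsep \Gamma \sdash \Delta)$.

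To lift soundness, given an $\RA$-cycle $\beta(t) < t$ with invariant letter $a$, I set $\theta \coloneq \{\gamma' \in \ol{\Theta_0} ~|~ \gamma' \leq \ol{a}\}$ for $\Theta_0$ the control at $\beta(t)$. By the constant-prefix condition in $\RA$, the embedded controls share $\theta$ as a prefix, and the $\textsc{Reset}_{\ol{a}}$-step translating the required $\RReset_a$-application resets $\max(\theta) = \ol{a}$, certifying $\theta$ as an invariant of the cycle in $\Rrr(\CA)$. The main obstacle will be the $<$L case: ensuring that the $\RWeak$-cleanups restore exactly the embedded Safra board of the $\RA$-premise without inadvertently removing any chip required to witness invariants of enclosing cycles. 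A careful accounting of which chips are created by the cover phase and which must survive for nested invariants will be needed to make this construction uniform.
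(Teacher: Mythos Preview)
Your proposal follows essentially the same route as the paper. Both translate $\CA$-corresponding rules to their $\Rrr(\CA)$ lift followed by $\RWeak$-cleanup of chips introduced by the cover phase, map $\RFocus$/$\RReset_a$ to $\RPop$/$\RReset$, and handle $<$L by exploiting a $\RWk$ instance whose trace map carries $(t,1,s)$ so that the $\tau$-successor manufactures the fresh chip $a$. Your soundness-preservation argument via the invariant $\theta = \{\gamma' \leq \ol a\}$ is also what the paper (tersely) invokes.

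The one substantive difference is in the $<$L case: you apply a \emph{vacuous} $\RWk$ first (keeping $s<t$ in the premise so that $(t,1,s)$ is guaranteed to lie in the trace map) and then a second $\RWk$ to drop $s<t$, whereas the paper uses a single $\RWk$ followed by $\RWeak$. Your version is arguably more robust when $s<t \notin \Gamma$: in that situation the paper's single $\RWk$ has a premise not containing $s<t$, so $(t,1,s)$ is absent from $r_1$ and no chip is produced---the paper's diagram tacitly assumes $s<t$ persists in $\Gamma$. Two minor corrections on your side: for $\RReset_a$, the covering precondition in $\Rrr(\CA)$ requires \emph{every} $u_i$ to be nonempty (so that $a$ is never topmost), not ``at least one''; the mixed case is neither trivial nor handled by an identity preproof, though the paper also glosses over it. And your ``second $\RWk$'' still triggers a $\tau$-successor, so it is not quite true that it ``does not affect the Safra board''---a further $\RWeak$-cleanup is needed afterwards.
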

\begin{proof}
  To translate rules corresponding to $\CA$-rules, we need to account for
  the
  difference in how $\RA$ and $\Rrr(\CA)$ treat inequalities left of $\sdash$
  explained above. Pick a rule $R \in \RA$ with a corresponding $\CA$-rule
  (i.e. \( R \) is not an instance of \( < \)L, \textsc{Wk}, \textsc{Focus} or
  \textsc{Reset}). It is translated as the $\Rrr(\CA)$-preproof as below. 
  Here, we denote the chips that were `erroneously' added by $R$ by $u_i$ and by
  $\sigma'_i$ the `erroneous' stacks. For this, we employ the notation
  $\sigma \cup \sigma'$ to denote the function $(x, a) \mapsto
  \sigma(x, a) \cup \sigma'(x, a)$. Observe that for every $s < t \in \Gamma_i$,
  the trace map $r_i$ dictating the Safra board transition $(\ol{\Theta},
  \ol{\sigma}) \step{r_i} (\ol{\Theta_i} \oplus u_i, \ol{\sigma_i} \cup \sigma'_i)$
  contains two transitions concerning the predecessor $t' \leftarrow^i_R t$ of $t$:
  $(t, 0, t')$ and $(t, 1, s)$. The latter is the cause of a chip being added
  `erroneously'. The former ensures that the stacks on $t$ are not removed from
  $\ol{\sigma_i}$ but simply `reassigned' to $t'$, just as is done in $\RA$.
  This guarantees that $\ol{\sigma_i}$ is part of the control in each of the
  premises.
  \begin{lrbox}{\mypt}
    \begin{varwidth}{\linewidth}
      \begin{comfproof}
        \AXC{$\Gamma_1 \sdash \Delta_1 ; (\ol{\Theta_1}, \ol{\sigma_1})$}
        \LSC{\textsc{Weak}}
        \UIC{$\Gamma_1 \sdash \Delta_1 ; (\ol{\Theta_1} \oplus u_1, \ol{\sigma_1} \cup \sigma'_1)$}
        \AXC{$\dotsm$}
        \AXC{$\Gamma_n \sdash \Delta_n ; (\ol{\Theta_n}, \ol{\sigma_n})$}
        \LSC{\textsc{Weak}}
        \UIC{$\Gamma_n \sdash \Delta_n ; (\ol{\Theta_n} \oplus u_n, \ol{\sigma_n} \cup \sigma'_n)$}
        \LSC{R}
        \TIC{$\Gamma \sdash \Delta ; (\ol{\Theta}, \ol{\sigma})$}
      \end{comfproof}
    \end{varwidth}
  \end{lrbox}
  \begin{multline*}
    \inference[R]{\Theta_1 ; \sigma_1 \rsep \Gamma_1 \sdash \Delta_1 \quad
    \dotsm \quad \Theta_1 ; \sigma_1 \rsep \Gamma_1 \sdash \Delta_1}{\Theta ;
    \sigma \rsep \Gamma \sdash \Delta} \quad \stackrel{\embed}{\leadsto} \\ \\
    \usebox{\mypt}
  \end{multline*}

  The \textsc{Reset} and \textsc{Focus} rules correspond to applications of the
  \textsc{Reset} and \textsc{Pop} rules in $\Rrr(\CA)$. The \textsc{Wk} is
  translated to a combination of the \textsc{Wk}-rule of $\CA$ to `weaken' in
  $\Gamma \sdash \Delta$ (taking care of
  `erroneous' chips as above) and the
  \textsc{Weak}-rule of $\Rrr(\CA)$ to `weaken' in $\sigma$.
  The only rule for which the translation via $\embed$ remain open is $<$L.
  This translation is achieved by a (possibly vacuous)
  application of the \textsc{Wk}-rule from $\CA$, `simulating' the removal of
  the inequality $s < t$, followed by the \textsc{Weak}-rule of $\Rrr(\CA)$ to
  remove all `erroneously' added chips and stacks, i.e. all except the one
  induced by the inequality $s < t$.
  \begin{lrbox}{\mypt}
    \begin{varwidth}{\linewidth}
      \begin{comfproof}
        \AXC{$\Gamma \sdash \Delta ; (\ol{\Theta a}, \ol{\sigma, (s, 0) \mapsto ua})$}
        \LSC{\textsc{Weak}}
        \UIC{$\Gamma \sdash \Delta ; (\ol{\Theta a v} , \ol{\sigma, (t \mapsto u)} \cup \ol{(s, 0) \mapsto ua} \cup \sigma')$}
        \LSC{\textsc{Wk}}
        \UIC{$\Gamma, s < t \sdash \Delta ; (\ol{\Theta} , \ol{\sigma, (t \mapsto u)})$}
      \end{comfproof}
    \end{varwidth}
  \end{lrbox}
  \begin{multline*}
    \inference[$<$L]{\Theta a ; \sigma, (s \mapsto ua) \rsep \Gamma \sdash
      \Delta \quad a \text{ fresh}}{\Theta ; \sigma, (t \mapsto u) \rsep \Gamma, s < t \sdash \Delta}
    \quad \stackrel{\embed}{\leadsto} \\
    \usebox{\mypt}
  \end{multline*}

  By comparing the soundness conditions of $\RA$ and $\Rrr(\CA)$, it is easily
  observed that $\embed$ maintains the soundness condition of $\RA$.
\end{proof}

\begin{corollary}[Soundness]\label{lem:ra-sound}
  If $\RA \vdash \Gamma \sdash \Delta$ then $PA \vdash \Gamma \sdash \Delta$.
\end{corollary}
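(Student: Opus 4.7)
The plan is to chain together the proof morphism $\embed \colon \RA \to \Rrr(\CA)$ just constructed with the $\strip \colon \Rrr(\CA) \to \CA$ morphism established in \Cref{lem:strip-cyclic-morph}, and then invoke Simpson's equivalence between cyclic arithmetic and Peano arithmetic to finish.

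More concretely, suppose there is an $\RA$-proof $\Pi$ of the $\CA$-sequent $\Gamma \sdash \Delta$, meaning its root is labelled $\varepsilon; \emptyset \rsep \Gamma \sdash \Delta$. Applying $\embed$ produces an $\Rrr(\CA)$-proof $\embed(\Pi)$ whose endsequent is $\embed(\varepsilon; \emptyset \rsep \Gamma \sdash \Delta) = \Gamma \sdash \Delta ; (\emptyset, (x, a) \mapsto \emptyset)$, since the translation of the empty control is the empty Safra board. The preceding lemma guarantees this is indeed a proof (i.e.\ satisfies $\Rrr(\SC)$). Next, applying \Cref{lem:strip-cyclic-morph} (more precisely \Cref{lem:sound}), we obtain $\strip(\embed(\Pi))$, a cyclic proof of $\Gamma \sdash \Delta$ in $\CA$. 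Finally, by \cite[Theorem 6]{simpsonCyclicArithmeticEquivalent2017}, every $\CA$-provable sequent is provable in Peano arithmetic, so $PA \vdash \Gamma \sdash \Delta$ as required.

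There is really no obstacle to speak of: the two main homomorphisms have already been constructed and verified, and the composition of proof morphisms is again a proof morphism by the definitions in \Cref{sec:abs-ps}. The only small thing to check is that the endsequent of $\embed(\Pi)$ is exactly the `empty-annotation' sequent demanded by \Cref{lem:sound} (and hence that its $\strip$-image is literally $\Gamma \sdash \Delta$), which is immediate from unfolding the definition of $\embed$ on $\varepsilon; \emptyset$.
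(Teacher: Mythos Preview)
Your proof is correct and follows exactly the same approach as the paper: apply $\embed$ to obtain an $\Rrr(\CA)$-proof of $\Gamma \sdash \Delta ; (\emptyset, (x,a) \mapsto \emptyset)$, then $\strip$ to get a $\CA$-proof of $\Gamma \sdash \Delta$, and finally invoke Simpson's equivalence to conclude $\PA \vdash \Gamma \sdash \Delta$.
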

\begin{proof}
  If $\Pi$ is a proof of $\varepsilon ; \emptyset \rsep \Gamma \sdash \Delta$
  in $\RA$ then $\embed(\Pi)$ is a proof of $\Gamma \sdash \Delta ; (\emptyset ,
  ((x, a) \mapsto \emptyset))$ in $\Rrr(\CA)$ and thus $\strip(\embed(\Pi))$ a
  proof of $\Gamma \sdash \Delta$ in $\CA$. As $\CA$ proves the same sequents as
  $\PA$ (see \cite[Theorem 6]{simpsonCyclicArithmeticEquivalent2017}) there must
  also be a proof of $\Gamma \sdash \Delta$ in $PA$.
\end{proof}

Let $\TF$ be a finite fragment of $\CA$. To conclude completeness of $\RA$
relative to $\PA$, we
construct a proof morphism $\search \colon \Sss(\TF) \to \RA$. For this, recall that
$\Sss(\TF)$ is the `proof search system' induced by the derivation rules in $\TF$
and the trace interpretation of $\CA$.

\begin{lemma}
  There is a function $\search \colon \Sss(\Seq_\TF) \to \Seq_\RA$
  with 
  \[\search(\Gamma \sdash \Delta ; (\Theta, \sigma) ) \coloneq \widehat{\Theta}
    ; \widehat{\sigma} \vdash \Gamma \sdash \Delta\]
  where $\widehat{S} \in \Theta^*$ for $S \subseteq \Theta$ is the duplicate-free sequence of length
  $\abs{\Theta}$ which is strictly sorted according to $\Theta$ and, if
  $(\Theta, \sigma)$ is a $K$-sparse Safra board on $X$, then
  \[\widehat{\sigma} \coloneq \{s \mapsto \widehat{S} ~|~ x \in X, s \in
    \term(\Gamma, \Delta) \text{ and } \sigma(s, 0)
    = \{S\}\} \]
  The function can be extended into a proof morphism $\search \colon \Sss(\TF) \to \RA$.
\end{lemma}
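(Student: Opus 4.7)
The plan is to extend $\search$ to a proof morphism in two stages: first, associating to every $\Sss(\TF)$-rule a corresponding $\RA$-preproof whose endsequent and open assumptions match under $\search$; second, verifying that the resulting rule translation preserves the soundness condition. Note first that the definition of $\widehat{\sigma}$ is well-posed precisely because $K$-sparseness guarantees at most one stack on every board position and no stacks on $\alpha$-positions, so $\widehat{\sigma}$ is a finite set of assignments whose domains lie among the terms of the underlying sequent.

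For the rule translation, I proceed by case analysis on the $\CA$-rule $R$ underlying $\Rrr(\Theta, \sigma)$. The greedy transition $(\Theta, \sigma) \gstep{r_i} (\Theta_i, \sigma_i)$ decomposes via \Cref{def:greedy-step} into four phases: resets of covered chips in descending order, population of empty $(x, 0)$-positions, the $r_i$-successor transition, and a final thinning. Each phase is simulated in $\RA$ by, respectively, a sequence of $\RReset_a$-applications, a sequence of $\RFocus$-applications, an application of the $\RA$-version of $R$ prefixed by $<$L-applications for every inequality in $\Gamma_i$ that contributes a progress point, and a final $\RWk$ to discard duplicate stacks. Unfolding these applications from the endsequent $\widehat{\Theta} ; \widehat{\sigma} \rsep \Gamma \sdash \Delta$ upwards yields assumptions labeled $\widehat{\Theta_i} ; \widehat{\sigma_i} \rsep \Gamma_i \sdash \Delta_i$, as required for $\search$ to commute with the rules.

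The main technical difficulty is aligning the chip-management of $\RA$ with that of $\Sss(\TF)$. In $\RA$, chips are introduced only by $<$L, which simultaneously consumes the inequality $s < t$ from the conclusion, whereas in $\Sss(\TF)$ the chips added by the covering phase of the $r_i$-transition leave $\Gamma_i$ unchanged. This is resolved by interleaving $\RWk$-applications which reintroduce in the premises the inequalities consumed by $<$L, and by picking the fresh letter produced by each $<$L to match the chip introduced by the corresponding covering step; since the ordering on $\widehat{\Theta_i}$ is inherited from $\Theta_i$, the letter placements can always be arranged so that $\widehat{\Theta_i}$ and the control reached by the $\RA$-applications coincide. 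A subtler point is the population phase: $\RFocus$ in $\RA$ assigns the empty sequence $\varepsilon$ to a term $t$, which under $\search$ corresponds precisely to placing an empty stack on $(t, 0)$, matching the intended effect of population.

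For preservation of the soundness condition, let $\Pi \in \Sss(\TF)$ be a proof and let $\search(\Pi)$ be its $\RA$-translation. Any simple cycle $\beta(t) < t$ in $\search(\Pi)$ corresponds to a cycle in $\Pi$ along which, by the $\Sss(\TF)$-soundness condition, some chip $\gamma$ eventually remains in every Safra board and is covered infinitely often. The rule translation ensures that the letter $a$ corresponding to $\gamma$ lies in every intermediate $\widehat{\Theta_i}$ along the $\RA$-cycle and that an $\RReset_a$-application occurs wherever the greedy transition performs a $\gamma$-reset. The prefix of the shared control up to and including $a$ thus witnesses the invariant of $\beta(t) < t$ in the sense required by $\RA$'s soundness condition, completing the verification that $\search$ is a proof morphism.
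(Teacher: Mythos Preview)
Your overall structure matches the paper's proof: decompose the greedy step into reset/populate/$\tau$-successor/thinning and simulate each phase with $\RReset_a$, $\RFocus$, the $\RA$-lift of $R$ together with $<$L, and $\RWk$, then defer soundness-preservation to the argument already given for $\elab$.

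There is, however, a genuine gap in your handling of $<$L. You propose to apply $<$L ``for every inequality in $\Gamma_i$ that contributes a progress point'' and to ``pick the fresh letter produced by each $<$L to match the chip introduced by the corresponding covering step.'' This breaks when $\Gamma_i$ contains two inequalities $s < t$ and $s < t'$ with $t \neq t'$: the \textsc{Cover} phase of the $r_i$-successor moves both the stack coming from $(t,0)$ and the one from $(t',0)$ onto $(s,\alpha)$ and places the \emph{same} fresh chip $\gamma$ on top of each (the new chip depends only on the target $s$, not on the source). The $<$L rule, on the other hand, demands a \emph{fresh} letter at every application, so two $<$L applications necessarily introduce two distinct chips and your chip-matching strategy cannot be carried out. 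The paper's resolution is to \emph{not} apply $<$L for every such inequality: since the thinning step leaves exactly one stack on $(s,0)$ in $(\Theta_i,\sigma_i)$, one applies only the single $<$L instance producing that surviving stack (with its correct chip) and then uses $\RWk$ at the top to strip away the annotations that are not part of $\widehat{\sigma_i}$.

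A smaller confusion: you say $<$L ``consumes the inequality $s < t$'' and propose $\RWk$ to ``reintroduce [it] in the premises.'' But $\RWk$ only removes material going from conclusion to premise; it cannot add anything. The right observation, which dissolves the worry, is that $\RA$-antecedents are sets: with $s < t \in \Gamma_i$ one can instantiate the $<$L schema with $\Gamma = \Gamma_i$, so the conclusion $\Gamma, s < t$ equals $\Gamma_i$ and the premise is again $\Gamma_i$. The same trick lets the annotation $(t \mapsto u)$ remain in $\sigma$ after $<$L, which you also need since the $r_i$-transition keeps a stack on $(t',0)$ as well as on $(s,0)$.
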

\begin{proof}
  Towards this claim, first pick some $\Rrr(\Theta, \sigma) \in \Sss(\TF)$ arranged as follows
  \[
    \inference[$\Rrr(\Theta, \sigma)$]{
      \Gamma_1 \sdash \Delta_1 ; (\Theta_1, \sigma_1) \quad
      \ldots \quad
      \Gamma_n \sdash \Delta_n ; (\Theta_n, \sigma_n)
    }
    {\Gamma \sdash \Delta ; (\Theta, \sigma)}
  \]
  Then there is $R \in \TF$ with $\rho(R) = (\Gamma \sdash \Delta, \Gamma_1
  \sdash \Delta_1, \ldots,
  \Gamma_n \sdash \Delta_n)$ and morphisms $r_i \colon \iota(\Gamma \sdash \Delta)
  \to \iota(\Gamma_i \sdash \Delta_i)$ given by
  the trace interpretation. Then for each $i \leq n$ there is $(\Theta, \sigma)
  \gstep{r_i} (\Theta_i, \sigma_i)$ with the expanded sequence
  the expanded sequence
  \[(\Theta, \sigma) \step{R_{\gamma_1}} (\Theta_r^1, \sigma_r^1) \ldots
    \step{R_{\gamma_k}} (\Theta_r^k, \sigma_r^k)
    \step{P} (\Theta_p, \sigma_p) \step{r_i} (\Theta^*_i,
    \sigma^*_i) \step{T} (\Theta_i, \sigma_i) \]
  in which the initial $R_\gamma$- and $P$-steps are shared between all $i \leq
  n$ (see \Cref{lem:greedy-ceil}). Similarly to \Cref{lem:elab}, we may derive
  the following in $\RA$:
  \begin{comfproof}
    \AXC{$\widehat{\Theta_1} ; \widehat{\sigma_1} \rsep \Gamma_1 \sdash \Delta_1$}
    \LSC{\textsc{Wk}}
    \UIC{$\widehat{\Theta_1'} ; \widehat{\sigma_1'} \rsep \Gamma_1 \sdash \Delta_1$}
    \DOC{}
    \LSC{$<$L}
    \UIC{$\widehat{\Theta_p} ; \widehat{\sigma_p} \rsep \Gamma_1 \sdash \Delta_1$}
    \AXC{$\ldots$}
    \AXC{$\widehat{\Theta_n} ; \widehat{\sigma_n} \rsep \Gamma_n \sdash \Delta_n$}
    \RSC{\textsc{Wk}}
    \UIC{$\widehat{\Theta_n'} ; \widehat{\sigma_n'} \rsep \Gamma_n \sdash \Delta_n$}
    \DOC{}
    \RSC{$<$L}
    \UIC{$\widehat{\Theta_p} ; \widehat{\sigma_p} \rsep \Gamma_n \sdash \Delta_n$}
    \LSC{R}
    \TIC{$\widehat{\Theta_p} ; \widehat{\sigma_p} \rsep \Gamma \sdash \Delta$}
    \DOC{}
    \LSC{$\textsc{Focus}$}
    \UIC{$\widehat{\Theta_r^k} ; \widehat{\sigma_r^k} \rsep \Gamma \sdash \Delta$}
    \DOC{}
    \LSC{$\textsc{Reset}_{a}$}
    \UIC{$\widehat{\Theta_0} ; \widehat{\sigma_0} \rsep \Gamma \sdash \Delta$}
  \end{comfproof}
  That is, first apply all possible $\textsc{Reset}_a$-rules, starting at the
  $\Theta_0$-greatest $a$. Then \textsc{Focus} on each $t \in \term(\Gamma \sdash
  \Delta)$ with no $t \mapsto u \in \widehat{\sigma_r^k}$. After applying the
  $\RA$-rule corresponding to $R \in \TF$, apply various instances of $<$L
  carefully, as described in the next paragraph. Close each branch of the
  preproof with an application of \textsc{Wk} which removes all superfluous
  annotations from the $\widehat{\sigma_i}$.

  The application of the $<$L-instances requires a little more consideration:
  If $s < t, s < t' \in \Gamma_i$ with $t \neq t'$ then $\sigma_i^*(s, 0)$ will
  contain the annotations $\sigma_p(t, 0), \sigma_p(t', 0)$ extended by the same
  $\gamma \in \Theta_i^*$. In $\RA$, on the other hand, the annotations of $t$
  and $t'$ can only be extended with separate applications of the $<$L-rule,
  meaning the annotations will be extended with two different chips $\gamma,
  \gamma'$. Observe, however, that after the thinning step $(\Theta^*_i,
  \sigma^*_i) \step{T} (\Theta_i, \sigma_i)$ only one annotation remains
  in $\sigma_1(s, 0)$. Thus, the preproof pictured above only applies the
  $<$L-instance yielding this `surviving' annotation of $s$ with the
  `correct' chip $\gamma$. As visible in the preproof above, this results in
  a sequent $\widehat{\Theta_1'} ; \widehat{\sigma_1'} \rsep \Gamma_1 \sdash
  \Delta_1$ rather than the `na\"ive' sequent $\widehat{\Theta_1^*} ; \widehat{\sigma_1^*} \rsep \Gamma_1 \sdash
  \Delta_1$. However, the application of \textsc{Wk} then yields the desired
  sequent at the leaf.

  An argument analogous to that given for $\elab$ in \Cref{lem:elab} shows that
  $\search$ maintains the soundness condition.
\end{proof}

\begin{corollary}[Completeness]\label{lem:ra-complete}
  If $\PA \vdash \Gamma \sdash \Delta$ then $\RA \vdash \Gamma \sdash \Delta$.
\end{corollary}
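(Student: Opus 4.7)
The plan is to chain together Simpson's equivalence between \( \PA \) and \( \CA \) with the proof search construction for \( \Sss(\TF) \) and the newly established morphism \( \search \colon \Sss(\TF) \to \RA \). Suppose \( \PA \vdash \Gamma \sdash \Delta \). By \cite[Theorem 6]{simpsonCyclicArithmeticEquivalent2017}, there exists a cyclic proof \( \Pi \) of \( \Gamma \sdash \Delta \) in \( \CA \). Since \( \Pi \) is a finite graph, it only uses finitely many derivation rules; let \( \TF \) be the finite subsystem of \( \CA \) consisting of these rules, so that \( \Pi \) is a \( \TF \)-proof of \( \Gamma \sdash \Delta \).

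Next, I would invoke the proof search argument underlying \Cref{lem:complete}: applied to the finite system \( \TF \) and the \( \CA \)-proof \( \Pi \), it yields a \( \Sss(\TF) \)-proof \( \Pi^* \) of \( \Gamma \sdash \Delta ; (\emptyset, (x, a) \mapsto \emptyset) \) whose underlying tree is an unfolding of \( \Pi \). The key point is that \Cref{lem:complete} was established by constructing, from \( \Pi \) and the Safra automaton \( \SSB(\BB, \iota(\Gamma \sdash \Delta), M) \), a Rabin tree automaton whose language consists of the sparse-Safra-board annotated unfoldings of \( \Pi \), and then appealing to \Cref{lem:rabin-det} to extract a regular accepting run, i.e. a \( \Sss(\TF) \)-proof.

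Finally, applying the proof morphism \( \search \colon \Sss(\TF) \to \RA \) constructed in the preceding lemma to \( \Pi^* \) produces an \( \RA \)-preproof \( \search(\Pi^*) \). Since \( \search \) is a proof morphism, \( \search(\Pi^*) \) satisfies the soundness condition of \( \RA \). Its endsequent is \( \search(\Gamma \sdash \Delta ; (\emptyset, (x, a) \mapsto \emptyset)) = \varepsilon ; \emptyset \rsep \Gamma \sdash \Delta \), which under the identification of \( \RA \)-sequents with empty control with \( \CA \)-sequents is precisely \( \Gamma \sdash \Delta \). Hence \( \RA \vdash \Gamma \sdash \Delta \), as required.

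No step presents a serious obstacle, since the work has already been carried out in the preceding results; the proof is essentially an application of \( \text{Simpson's equivalence} \) followed by the composition \( \Pi \mapsto \Pi^* \mapsto \search(\Pi^*) \). The only thing to check carefully is that the endsequent translation \( \search \) indeed maps the all-empty annotation to the empty control \( \varepsilon ; \emptyset \), which is immediate from the definition of \( \widehat{(-)} \) applied to \( \Theta = \emptyset \) and \( \sigma = (x, a) \mapsto \emptyset \).
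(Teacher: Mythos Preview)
Your proposal is correct and follows essentially the same approach as the paper's proof: Simpson's equivalence gives a $\CA$-proof, which lives in a finite fragment $\TF$; the proof-search construction inside \Cref{lem:complete} yields an $\Sss(\TF)$-proof with empty initial annotation; applying the $\search$ morphism produces the desired $\RA$-proof. Your extra remark about verifying the endsequent under $\search$ is a harmless elaboration of a point the paper leaves implicit.
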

\begin{proof}
  Suppose there is a proof of $\Gamma \sdash \Delta$ in $\PA$. By \cite[Theorem
  6]{simpsonCyclicArithmeticEquivalent2017} there is a proof of the same sequent
  in $\CA$. Indeed,
  as cyclic proofs are finite trees, this proof is made in some finite fragment
  $\TF$ of $\CA$. By \Cref{lem:complete}, there is a proof
  $\Pi$ of $\Gamma \sdash \Delta ; (\emptyset, (x, a) \mapsto \emptyset)$ in
  $\Sss(\TF)$ and thus $\search(\Pi)$ is a proof of $\varepsilon ; \emptyset \rsep
  \Gamma \sdash \Delta$ in $\RA$.
\end{proof}

\subsection{Gödel's T}
\label{sec:godel-t}

Gödel's T~\cite{godelUberBisherNoch1958} is an extension of the simply
typed $\lambda$-calculus with a type $N$ of natural numbers and arbitrarily
typed primitive recursion.
Cyclic Gödel's T is cyclic variant of Gödel's T put first forward by
Das~\cite{dasCircularVersionGodel2021}.
This means Gödel's T as a derivation system does not
derive `proofs' but rather intrinsically typed terms (see \cite[Chapter
15]{reynoldsTheoriesProgrammingLanguages1998}).
Correspondingly, the preproofs of cyclic Gödel's T ($\CGT$) are intrinsically
typed coterms and the proofs are such coterms satisfying a certain
well-definedness condition. Nonetheless, we continue to refer to them as
preproofs and proofs of $\CGT$, respectively, to keep in line with the
terminology of the rest of the article.

The example of (reset) Gödel's T serves to illustrate the `happy path' of the
method proposed in this article: Only a minor adjustment is made to $\Rrr(\CGT)$ to
obtain the reset system $\RGT$. Thus, the soundness and completeness results
relating $\RGT$ with $\CGT$ can be proven with little effort.

\subsubsection{Cyclic Gödel's T}
\label{sec:cyclic-godel}

The \emph{types} of Gödel's T are given by the grammar
\[A \in \type \langeq N ~|~ A \to A\] The \emph{sequents} of $\CGT$ are
expressions $\Gamma \sdash A$ where $\Gamma$ is a finite sequences of types. The
set of sequents in Gödel's T is denoted \( \Seq_\CGT \).

\begin{definition}
  The \emph{derivation rules} of Cyclic Gödel's T $\CGT$ are:
  \begin{mathpar}
    \inference[\RId]{}{A \sdash A}

    \inference[$0$]{}{\sdash N}

    \inference[S]{}{N \sdash N}

    \inference[L]{\Gamma \sdash \rho \quad \Gamma, A \sdash B}{\Gamma, \rho \to A \sdash B}

    \inference[R]{\Gamma, A \sdash B}{\Gamma \sdash A \to B}

    \inference[\RCond]{\Gamma \sdash A \quad \Gamma, N \sdash A}{\Gamma, N \sdash A}

    \inference[\REx]{\Gamma_0, A, B, \Gamma_1 \sdash C}{\Gamma_0, B, A, \Gamma_1 \sdash C}

    \inference[\RWk]{\Gamma \sdash B}{\Gamma, A \sdash B}

    \inference[\RCtr]{\Gamma, A, A \sdash B}{\Gamma, A \sdash B}

    \inference[\RCut]{\Gamma \sdash B \quad \Gamma, B \sdash A}{\Gamma \sdash A}
  \end{mathpar}
\end{definition}

The presentation of $\CGT$ as a sequent calculus breaks with the tradition of
presenting typing derivations (or, equivalently, intrinsically typed terms) in a
natural deduction style. Observe that the assumption-free non-cyclic preproofs
of $\CGT$ which may also employ the following rule for primitive recursion
\[
  \inference[\RRec]{\Gamma \sdash A \quad \Gamma, A, \sdash A}{\Gamma, N \sdash A}
\]
correspond are precisely the intrinsically typed terms of Gödel's T (presented
in the sequent style).
One can prove that the proofs of $\CGT$ define precisely the
same functionals as ordinary Gödel's T (see \cite[Theorem
94]{dasCircularVersionGodel2021}).

It remains to give the soundness condition of $\CGT$.
Given a sequence $\Gamma$ of types, define $\abs{\Gamma}_N$ to be the number
of occurrences of the ground type \( N \) in \( \Gamma \):
\[\abs{\Gamma}_N \coloneq
  \begin{cases}
    0 & \Gamma = \varepsilon \\
    1 + \abs{\Gamma'} & \Gamma = N, \Gamma' \\
    \abs{\Gamma'} & \Gamma = A, \Gamma' \text{ with } \tau \neq N
  \end{cases}
\]

\begin{definition}
  The trace interpretation $\iota \colon \CGT \to \TT_\BB$ is given by $\iota(\Gamma
  \sdash \tau) \coloneq \{1, \ldots, \abs{\Gamma}_N\}$ and for any $R \in \RR$
  with $\rho(R) = (\Gamma \sdash \tau, \Gamma_1 \sdash \tau_1, \ldots,
  \Gamma_n \sdash \tau_n)$ the trace map $r_i \colon \iota(\Gamma \sdash \tau) \to
  \iota(\Gamma_i \sdash \tau_i)$ is defined as follows:
  \begin{itemize}
  \item
    if $R$ is an instance of $\REx$ exchanging two instances of $N$, meaning $\Gamma = \Gamma_0, N,
    N, \Gamma_1$, then
    \[
      r_i \coloneq \{(j, 0, j) ~|~ j \leq k \text{ or } k + 2 < j\} \cup \{(k + 1
      , 0, k + 2), (k + 2, 0, k + 1)\}
    \]
    where $k \coloneq \abs{\Gamma_0}_N$.
  \item if $\Gamma = \Gamma', N$ and $R$ is an instance of $\RWk$ or $R$ is an
    instance of $\RCond$ with $i = 1$ then
    \[
      r_i \coloneq \{(j, 0, j) ~|~ j < \abs{\Gamma}_N\}.
    \]
  \item if $R$ is an instance of $\RCond$ with $i = 2$ then
    \[
      r_i \coloneq \{(j, 0, j) ~|~ j < k\} \cup \{(k, 1, k)\}
    \]
    with $k \coloneq \abs{\Gamma}_N$.
  \item if $R$ is an instance of $\RCtr$ then
    \[
      r_i \coloneq \{(j, 0, j) ~|~ j < k\} \cup \{(k - 1, 0, k)\}
    \]
    with $k \coloneq \abs{\Gamma}_N$.
  \item otherwise $ r_i \coloneq \{(j, 0, j) ~|~ j \leq \abs{\Gamma}_N\} $.
  \end{itemize}
  This induces the soundness condition $C$ of Cyclic Gödel's T as described in
  \Cref{def:tc-induced}.
  If $\Gamma \sdash \tau$ is provable in $\CGT$, write $\CGT \vdash \Gamma
  \sdash \tau$.
\end{definition}

\subsubsection{Reset Gödel's T}
\label{sec:reset-godel}

We introduce a reset-based proof system corresponding to $\CGT$ called
\emph{reset Gödel's T} ($\RGT$). It is based on the reset system $\Rrr(\CGT)$
induce by $\CGT$ with some slight modifications.

The \emph{sequents} of $\RGT$ are expressions $\Theta \rsep \Gamma \sdash A$,
where the \emph{control} $\Theta$ is a sequence of distinct characters, $A$ is
an unannotated type and the \emph{context} $\Gamma$ is a list of types $B^u$
annotated with a subsequence $u \sqsubseteq \Theta$ which may only be non-empty
if $B = N$. The set of $\RGT$-sequents is denoted $\Seq_\RGT$.

\begin{definition}
  The derivation rules of reset Gödel's T follow. Denote by $\Theta'$ the
  control from which all letters not occurring in any annotation in the context
  are removed.
  \begin{mathpar}
    \inference[\RId]{}{\Theta \rsep A^u \sdash A}

    \inference[$0$]{}{u \rsep \sdash N}

    \inference[S]{}{\Theta \rsep N^u \sdash N}

    \inference[L]{\Theta' \rsep \Gamma \sdash A \quad \Theta \rsep \Gamma, B^u \sdash C}{\Theta \rsep \Gamma, (A \to B)^u \sdash C}

    \inference[R]{\Theta \rsep \Gamma, A^\varepsilon \sdash B}{\Theta \rsep \Gamma \sdash A \to B}

    \inference[\RCond]{a \text{ fresh in } \Theta \\ \Theta' \rsep \Gamma \sdash A \quad \Theta a \rsep \Gamma, N^{ua} \sdash A}{\Theta \rsep \Gamma, N^u \sdash A}

    \inference[\REx]{\Theta \rsep \Gamma, A^v, B^u, \Theta \rsep \Gamma' \sdash C}{\Theta \rsep \Gamma, B^u, A^v, \Theta \rsep \Gamma' \sdash C}

    \inference[\RWk]{\Theta' \rsep \Gamma \sdash B}{\Theta \rsep \Gamma, A^u \sdash B}

    \inference[\RCtr]{\Theta \rsep \Gamma, A^u, A^u \sdash B}{\Theta \rsep \Gamma, A^u \sdash B}

    \inference[\RCut]{\Theta \rsep \Gamma \sdash B \quad \Theta \rsep \Gamma, B^\varepsilon \sdash A}{\Theta \rsep \Gamma \sdash A}

    \inference[$\RReset_a$]{\Theta' \rsep \Gamma, N^{ua}, \ldots, N^{ua} \sdash
      B \quad a \text{ does not occur in } \Gamma}{\Theta \rsep \Gamma, N^{uau_1}, \ldots, N^{uau_n} \sdash B}
  \end{mathpar}

  An \( \RGT \)-preproof is a \emph{proof}
  every pair of bud $t
  \in \dom(\beta)$ and companion $\beta(t)$ has an \emph{invariant}\emph{:}
  there
  exists a letter $a$ such that $a$ occurs in all of the controls
  $\Theta$ between $t$ and $\beta(t)$, the prefix of $a$ in the controls
  $\Theta$ remains constant and the $\textsc{Reset}_a$ rule
  is applied between $t$ and $\beta(t)$.
  A proof $\Pi$ is a \emph{proof of}
  $\Gamma \sdash A$ if its root
  is labeled $\varepsilon \rsep \Gamma^\varepsilon \sdash A$.
  Write $\RGT \vdash \Gamma \sdash A$ if there is a proof of
  $\varepsilon ~|~ \Gamma^\varepsilon \sdash A$ in $\RGT$.
\end{definition}

The system $\RGT$ diverges form $\Rrr(\CGT)$ in one aspect: Every type in the
context is always annotated. In $\Rrr(\CGT)$, annotations need to manually be added
via the $\RPop$ rule. Furthermore, to make the system slightly easier to define,
$\RGT$ uses annotations on every type in $\Gamma$, rather than just instances of
$N$. In both systems, however, only instances of the type $N$ can
ever have non-empty annotations.

We prove soundness of $\RGT$ relative to $\CGT$ by constructing a proof morphism
$\embed \colon \RGT \to \Rrr(\CGT)$.

\begin{lemma}
  There is a function $\embed \colon \Seq_\RGT \to \Rrr(\Seq_\CGT)$ defined by
  \[\embed(\Theta \rsep \Gamma \sdash A) \coloneq \ol{\Gamma} \sdash A ; (\ol{\Theta},
    \sigma(i) \mapsto \{\{x \text{ occurs in } u\} ~|~ \Gamma @ i = N^u\}) \]
  where $\ol{\Theta}$ is the set $\{u \text{ occurs in } \Theta\}$ ordered
  according to the letters' positions in $\Theta$, $\ol{\Gamma}$ is $\Gamma$
  with all annotations removed from the types and $\Gamma @ i$ is the
  partial operation recursively defined by
  \begin{mathpar}
    \Gamma, N^u @ 0 \coloneq N^u

    \Gamma, N^u @ i + 1 \coloneq \Gamma @ i

    \Gamma, A^u @ i \coloneq \Gamma @ i \text{ (where $A \neq N$)}
  \end{mathpar}
  The function can be extended into a proof morphism $\embed \colon \RGT \to
  \Rrr(\CGT)$.
\end{lemma}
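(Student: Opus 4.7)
The plan is to follow the same overall strategy used for the corresponding lemma about \( \RA \) in the previous subsection: for each derivation rule \( R \in \RGT \), construct an \( \Rrr(\CGT) \)-preproof whose endsequent and assumptions match the \( \embed \)-images of the conclusion and premises of \( R \). The \( \RGT \) rules split naturally into three groups: (a)~rules corresponding directly to rules of \( \CGT \) (namely \( \RId \), \( 0 \), \( S \), \( L \), \( R \), \( \REx \), \( \RCtr \), \( \RCut \), and with some care \( \RWk \) and \( \RCond \)); (b)~the reset rule \( \RReset_a \); and (c)~implicit bookkeeping about annotations. For each rule in group (a) the plan is to apply the corresponding \( \CGT \)-annotated rule of \( \Rrr(\CGT) \) and sandwich it between applications of \( \textsc{Weak} \) and \( \RPop \) to reconcile the \( \RGT \)-annotations on the conclusion and premises with the Safra boards produced by the trace maps \( r_i \). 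The rule \( \RReset_a \) translates to the \( \textsc{Reset}_\gamma \)-rule of \( \Rrr(\CGT) \) where \( \gamma \) is the chip associated with the letter \( a \) under \( \embed \).

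The central step — and the main obstacle — is the translation of \( \RCond \). Its right premise introduces a fresh letter \( a \) and extends the annotation \( u \) of the distinguished \( N \) to \( ua \). Under \( \embed \), this annotation becomes a stack in the Safra board, topped by the chip corresponding to \( a \). On the other hand, the trace map \( r_2 \colon \iota(\Gamma, N \sdash A) \to \iota(\Gamma, N \sdash A) \) for the second premise of \( \RCond \) contains the triple \( (k, 1, k) \) with \( k = \abs{\Gamma, N}_N \), so the \( r_2 \)-successor of any Safra board \( (\Theta, \sigma) \) executes a \textsc{Cover}-step that deposits a fresh chip on top of the stack at the coordinate \( k \). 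The plan is to choose the chips introduced by \( \embed \) consistently with the letters \( a \) of \( \RGT \): by a suitable naming convention (or simply by fiat, since Safra boards are determined up to the choice of chip names) the chip added by the \textsc{Cover}-step can be identified with \( a \), so the induced board is exactly the \( \embed \)-image of the \( \RCond \)-premise.

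The remaining rules are routine. For \( \RWk \), the translation is essentially an application of \( \CGT \)-\textsc{Wk} followed by an application of \( \Rrr(\CGT) \)-\textsc{Weak} to discard the stacks and chips that are no longer supported, mirroring the mechanism for \( \RWk \) in the \( \RA \)-translation. The rules \( \RId \), \( 0 \), \( S \), and \( R \) require a single annotation adjustment (initialising or erasing an annotation on a non-\( N \) type), handled by a single \( \textsc{Weak} \) step. The \( \REx \) rule induces a permutation of indices in \( \iota(\Gamma \sdash A) \) when it exchanges two instances of \( N \); the corresponding \( \Rrr(\CGT) \)-rule permutes stacks accordingly, matching the annotation swap in \( \RGT \).

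Finally, preservation of the soundness condition is verified by comparing the two notions of invariant. An \( \RGT \)-invariant of a cycle is a letter \( a \) whose prefix in the control remains constant along the cycle and on which a \( \textsc{Reset}_a \) is applied. Under \( \embed \) the letters of \( \Theta \) become the chips of the control of the corresponding Safra board, the prefix-constancy condition translates directly to the invariant condition of \Cref{def:acr}, and the \( \RReset_a \) step translates to a \( \textsc{Reset}_\gamma \) on the associated chip. Hence any \( \RGT \)-proof is mapped to an \( \Rrr(\CGT) \)-preproof every simple cycle of which has an invariant, establishing that \( \embed \) is a proof morphism.
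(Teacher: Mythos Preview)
Your overall strategy matches the paper's, and your treatment of the soundness-preservation step is essentially the same. However, you invert the emphasis in a way that obscures which rules actually need work.

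The paper's proof is short precisely because \( \RCond \) is \emph{not} an obstacle: the \( r_2 \)-successor computation (Move followed by Cover, as in \Cref{def:t-succ}) already places a fresh chip on top of the stack at position \( k \), so the \( \Rrr(\CGT) \)-lift of \( \RCond \) is literally the \( \embed \)-image of the \( \RGT \) rule once the fresh chip is chosen to be \( a \). Your analysis of this case is correct, but it is a direct translation, not the ``main obstacle''.

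The genuine exceptions the paper singles out are \( R \) and \( \RCut \) in the case that the freshly introduced context type is \( N \). There the trace map \( r_1 \) sends nothing to the new position \( k+1 \), so the \( \tau \)-successor yields \( \sigma_1(k+1,0) = \emptyset \), whereas the \( \embed \)-image of the \( \RGT \) premise (which carries \( N^\varepsilon \)) has \( \sigma'(k+1,0) = \{\emptyset\} \). A single application of \( \RPop \), not \( \textsc{Weak} \), bridges this gap. Your claim that \( R \) needs ``a single \( \textsc{Weak} \) step'' to erase an annotation ``on a non-\( N \) type'' is therefore the wrong rule applied for the wrong reason: non-\( N \) types contribute nothing to the Safra board under \( \embed \), so no adjustment is ever needed for them; the adjustment is for the \( N \) case, and it is \( \RPop \). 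The same remark applies to \( L \) when the codomain of the implication is \( N \), and to \( \RCut \) when the cut type is \( N \). The axioms \( \RId \), \( 0 \), \( S \) need no adjustment at all, since zero-premise rules impose no constraint on the premise board.
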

\begin{proof}
  Most rules of reset Gödel's T are simply translated to their correspondent
  in the induced reset system for Cyclic Gödel's T. The only exception are the
  rules R and \RCut{} with $B = N$, which generates `unannotated
  instance' in $\Rrr(\CGT)$. These cases can be dealt with by an
  additional application of \RPop{}, as illustrated for the case of $R$ below:
  \begin{lrbox}{\mypt}
    \begin{varwidth}{\linewidth}
      \begin{comfproof}
        \AXC{$\Gamma, N \sdash A; (\ol{\Theta}, \ol{\sigma} \cup \{(k + 1,
          \{\emptyset\})\})$}
        \LSC{\RPop}
        \UIC{$\Gamma, N \sdash A; (\ol{\Theta}, \ol{\sigma} \cup \{(k + 1,
          \emptyset)\})$}
        \LSC{R}
        \UIC{$\Gamma \sdash N \to A; (\ol{\Theta}, \ol{\sigma})$}
      \end{comfproof}
    \end{varwidth}
  \end{lrbox}
  \[
    \inference[R]{\Theta \rsep \Gamma, N^\varepsilon \sdash A}{\Theta \rsep \Gamma \sdash N \to A}
    \quad
    \stackrel{\embed}{\leadsto}
    \quad
    \usebox{\mypt}
  \]

  By comparing the soundness conditions of $\RGT$ and $\Rrr(\CGT)$, it is easily
  observed that $\embed$ maintains the soundness condition of $\RGT$.
\end{proof}

Note that in the soundness theorem below is important that the $\CGT$ proof
corresponding to the $\RGT$ proof not only derives the same sequent $\Gamma
\sdash A$ but furthermore also has the same computational content. It is easily
observed that neither $\strip$ nor $\embed$ change the computational content of
the proofs involved.

\begin{corollary}[Soundness]\label{lem:rct-sound}
  If $\RGT \vdash \Gamma \sdash A$ via $\Pi$ then $\CGT \vdash \Gamma \sdash
  A$ via $\strip(\embed(\Pi))$.
\end{corollary}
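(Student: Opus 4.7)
The proof chains together the two morphisms already in hand. By the preceding lemma, $\embed \colon \RGT \to \Rrr(\CGT)$ is a proof morphism, so $\embed(\Pi)$ is an $\Rrr(\CGT)$-proof. Its root sequent is $\embed(\varepsilon \rsep \Gamma^\varepsilon \sdash A)$ which, unwinding the sequent translation, has the form $\Gamma \sdash A ; (\emptyset, \sigma_0)$ with $\sigma_0(i) = \{\emptyset\}$ whenever $\Gamma @ i = N$ and $\sigma_0(i) = \emptyset$ otherwise. Applying the soundness corollary (\Cref{lem:sound}), which is itself a consequence of \Cref{lem:strip-cyclic-morph}, we conclude that $\strip(\embed(\Pi))$ is a $\CGT$-proof whose root is $\strip(\Gamma \sdash A ; (\emptyset, \sigma_0)) = \Gamma \sdash A$. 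This establishes $\CGT \vdash \Gamma \sdash A$.

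The substantive obligation flagged in the remark preceding the statement is that $\strip(\embed(\Pi))$ must carry the \emph{same} computational content as $\Pi$, not merely derive the same sequent. I would verify this by a rule-by-rule inspection of the two morphisms: on rules, $\embed$ sends each $\RGT$-rule $R$ either directly to its $\Rrr(\CGT)$-analogue or to that analogue prefixed/suffixed by Safra-board bookkeeping rules (as in the $R$ and $\RCut$ cases where an auxiliary $\RPop$ is inserted), while $\strip$ sends every such bookkeeping rule to the identity preproof and every $\Rrr(\CGT)$-rule derived from a $\CGT$-rule back to the unannotated $\CGT$-rule. Composing, the auxiliary $\Rrr(\CGT)$-steps introduced by $\embed$ are exactly those killed by $\strip$, so the resulting $\CGT$-preproof is precisely $\Pi$ with all Safra-board annotations forgotten, arranged on the same underlying cyclic graph. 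In particular it realises the same intrinsically typed coterm.

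The main obstacle is thus not the sequent matching, which is essentially definitional, but verifying this cancellation of bookkeeping between $\embed$ and $\strip$. Once observed, satisfaction of the $\CGT$ soundness condition comes for free from \Cref{lem:sound}, and the corollary follows.
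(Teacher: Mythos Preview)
Your proposal is correct and matches the paper's approach: the corollary is stated without proof, relying on the preceding lemma that $\embed$ is a proof morphism together with \Cref{lem:sound}, and the preceding remark that ``neither $\strip$ nor $\embed$ change the computational content of the proofs involved.'' Your rule-by-rule cancellation argument for computational content simply spells out what the paper dismisses as ``easily observed.''
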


Let $\TF$ be a finite fragment of $\CGT$. To conclude completeness of $\RGT$
relative to $\CGT$, we
construct a proof morphism $\search \colon \Sss(\TF) \to \CGT$. For this, recall that
$\Sss(\TF)$ is the `proof search system' induced by the derivation rules in $\TF$
and the trace interpretation of $\CGT$.

\begin{lemma}
  The function $\search \colon \Sss(\Seq_\TF) \to \Seq_\RGT$ is defined as
  \[\search(\Gamma \sdash A; (\Theta, \sigma)) \coloneq \widehat{\Theta}
    \rsep \Gamma^\sigma \sdash A\]
  where for any $S \subseteq \Theta$, $\widehat{S} \in \Theta^*$ is the
  duplicate-free sequence of length
  $\abs{S}$, consisting of the elements of $S$ which is strictly sorted
  according to $\Theta$. Recalling that $(\Theta, \sigma)$ is $K$-sparse, the notation
  $\Gamma^\sigma \coloneq \Gamma^\sigma_1$ is recursively defined by
  \[
    \Gamma^\sigma_i \coloneq
    \begin{cases}
      A^\varepsilon, \Gamma'^\sigma_i & \text{ if } \Gamma = A, \Gamma'
      \text{ with } A \neq N \\
      N^{\hat{S}}, \Gamma'^\sigma_{i + 1} & \text{ if } \Gamma = N, \Gamma'
      \text{ with } \sigma(i) = \{S\} \\
      N^\varepsilon, \Gamma'^\sigma_i & \text{ if } \Gamma = N, \Gamma'
      \text{ with } \sigma(i) = \emptyset \\
      \varepsilon & \text{ if } \Gamma = \varepsilon
    \end{cases}
  \]
  The function can be extended into a proof morphism $\search \colon \Sss(\TF) \to \RGT$.
\end{lemma}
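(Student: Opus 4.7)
The plan is to mimic the construction given for the analogous $\search \colon \Sss(\TF) \to \RA$ morphism above, adapted to the simpler annotation mechanism of $\RGT$. Pick an $\Sss(\TF)$-rule $\Rrr(\Theta, \sigma)$ corresponding to some $R \in \TF$ with $\rho(R) = (\Gamma \sdash A, \Gamma_1 \sdash A_1, \ldots, \Gamma_n \sdash A_n)$. By \Cref{lem:greedy-ceil}, each greedy transition expands as
\[(\Theta, \sigma) \step{R_{\gamma_1}} \ldots \step{R_{\gamma_k}} (\Theta_r^k, \sigma_r^k) \step{P} (\Theta_p, \sigma_p) \step{r_i} (\Theta_i^*, \sigma_i^*) \step{T} (\Theta_i, \sigma_i)\]
with the initial reset and population phases shared across all premises $i \leq n$.

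We then build the witnessing $\RGT$-preproof in three phases, working from root to leaves. First, at the root apply $\RReset_{\gamma_j}$ rules in descending order of $\gamma_j$, simulating the shared reset phase. Second, apply the $\RGT$-correspondent of $R$; unlike the $\RA$ case, no explicit $\RFocus$ step is required because the annotations on $N$-typed positions are carried directly in the context of every $\RGT$-sequent (and in particular, $\Gamma^\sigma$ annotates non-populated positions with $\varepsilon$). The population step is thereby absorbed into the $\RGT$-rule itself, most visibly for $\RCond$, whose fresh letter $a$ precisely plays the role of the chip introduced on stacks at $(k, 0)$ when the trace step attains $\alpha = 1$. Third, at the top of each branch $i$, apply $\RWk$ (possibly vacuously) to reduce the annotation from the ``na\"ive'' $\Gamma_i^{\sigma_i^*}$ produced by the rule to the target $\Gamma_i^{\sigma_i}$ dictated by the thinning $(\Theta_i^*, \sigma_i^*) \step{T} (\Theta_i, \sigma_i)$; this corresponds to dropping all but the $<_{\Theta}$-least stack at each position.

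The main obstacle is the bookkeeping around annotations when the trace map $r_i$ sends two distinct $N$-positions of $\Gamma$ to the same $N$-position in $\Gamma_i$ (for instance, in $\REx$ or $\RCtr$). In that situation $\sigma_i^*$ acquires two stacks on that shared $(y, 0)$-square, each extended with the same chip if the trace step has attained $\alpha$, but thinning retains only the $<_\Theta$-least. As in the $\RA$ construction, one handles this by letting the $\RGT$-rule application track the ``correct'' surviving annotation as prescribed by $\widehat{\sigma_i}$, and using the $\RWk$ step at the top to discard the excess annotations. The rule schemas $\RCtr$, $\REx$ and $\RL$ admit annotations on both sides precisely to make this possible.

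Finally, preservation of the soundness condition is shown exactly as for $\elab$ in \Cref{lem:elab}. Given a bud $t' \in \dom(\beta')$ in the translated preproof, the corresponding cycle in the original $\Sss(\TF)$-proof has, by the $\Sss(\TF)$-soundness condition, a chip $\gamma$ that persists throughout and is covered on the cycle. By \Cref{lem:greedy-ceil} the same chip persists throughout the translated cycle, the covering corresponds to an application of $\RReset_\gamma$ on that cycle, and the prefix $\{\gamma' \in \Theta ~|~ \gamma' \leq \gamma\}$ remains an invariant because no chip below $\gamma$ is dropped without $\gamma$ itself first being dropped.
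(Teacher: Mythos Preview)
Your outline is correct and matches the paper's structure, but you have imported complications from the $\RA$ case that are unnecessary here, and in doing so you have made a factual error about the $\CGT$ trace maps. The paper's proof is shorter precisely because of a structural feature of $\iota \colon \CGT \to \TT_\BB$ that you overlook: no trace map $r_i$ ever sends two distinct $N$-positions of the conclusion to the same $N$-position of a premise. Checking each clause, one sees that every $r_i$ is injective into the premise positions; in particular, your claimed examples are wrong---$\REx$ is a bijection (a swap), and $\RCtr$ sends \emph{one} conclusion position to \emph{two} premise positions, not the other way round. Consequently the thinning step is always the identity, $(\Theta_i^*, \sigma_i^*) = (\Theta_i, \sigma_i)$, and the $\RGT$ rule applied after the reset phase already produces exactly $\widehat{\Theta_i} \rsep \Gamma_i^{\sigma_i}$ at each leaf. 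Your phase-three $\RWk$ steps are therefore always vacuous, so your argument still goes through, but the ``main obstacle'' you spend a paragraph on does not exist.

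A secondary point: had the obstacle been real, your proposed fix would not work. The $\RWk$ rule of $\RGT$ removes an entire annotated type $A^u$ from the context; it cannot alter the annotation on a type that remains, and $\RGT$ has no analogue of the $\RWeak$ rule of $\Rrr(\CGT)$. So ``using the $\RWk$ step at the top to discard the excess annotations'' is not something $\RGT$ supports. The paper's treatment of $\RGT$ as the ``happy path'' is exactly because this issue never arises.
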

\begin{proof}
  Towards this claim, first pick some $\Rrr(\Theta, \sigma) \in \Sss(\TF)$ arranged as follows
  \[
    \inference[$\Rrr(\Theta, \sigma)$]{
      \Gamma_1 \sdash B_1 ; (\Theta_1, \sigma_1) \quad
      \ldots \quad
      \Gamma_n \sdash B_n ; (\Theta_n, \sigma_n)
    }
    {\Gamma \sdash A ; (\Theta, \sigma)}
  \]
  Then there is $R \in \TF$ with $\rho(R) = (\Gamma \sdash A, \Gamma_1
  \sdash B_1, \ldots,
  \Gamma_n \sdash B_n)$ and morphisms $r_i \colon \iota(\Gamma \sdash A)
  \to \iota(\Gamma_i \sdash B_i)$ given by
  the trace interpretation. Then for each $i \leq n$ there is $(\Theta, \sigma)
  \gstep{r_i} (\Theta_i, \sigma_i)$ with the expanded sequence
  the expanded sequence
  \[(\Theta, \sigma) \step{R_{\gamma_1}} (\Theta_r^1, \sigma_r^1) \ldots
    \step{R_{\gamma_k}} (\Theta_r^k, \sigma_r^k)
    \step{P} (\Theta_p, \sigma_p) \step{r_i} (\Theta_i,
    \sigma_i) \step{T} (\Theta_i, \sigma_i) \]
  in which the initial $R_\gamma$- and $P$-steps are shared between all $i \leq
  n$ (see \Cref{lem:greedy-ceil}). Observe that because of the structure of the
  trace interpretation for Cyclic Gödel's T, there will never be two stacks on
  the same field of a Safra board in the expanded sequence, meaning the thinning
  does not change $\Theta_i$ and $\sigma_i$. Similarly to \Cref{lem:elab}, we may derive
  the following in reset Gödel's T:
  \begin{comfproof}
    \AXC{$\widehat{\Theta_1} \rsep \Gamma^{\sigma_1}_1 \sdash B_1$}
    \AXC{$\ldots$}
    \AXC{$\widehat{\Theta_n} \rsep \Gamma^{\sigma_n}_n \sdash B_n$}
    \LSC{R}
    \TIC{$\widehat{\Theta_r^k}  \rsep \Gamma^{\sigma_r^k} \sdash A$}
    \DOC{}
    \LSC{$\textsc{Reset}_{a}$}
    \UIC{$\widehat{\Theta_0}  \rsep \Gamma^{\sigma_0} \sdash A$}
  \end{comfproof}
  That is, first apply all possible $\textsc{Reset}_a$-rules, starting at the
  $\Theta_0$-greatest $a$. Because $\Gamma^{\sigma}$ annotates instances of $N$
  to which $\sigma$ `assigns' no stack with $N^\varepsilon$, the population
  step does not need to be replicated in the preproof as $\Gamma^{\sigma_r^k}
  = \Gamma^{\sigma_p}$. Complete the preproof by applying the rule corresponding
  to $R \in \TF$. Observe that the rules of reset Gödel's T again annotate
  instances of $N$ to which $\sigma_i$ does not assign a stack with
  $N^\varepsilon$, meaning the premises are indeed $\widehat{\Theta_i} \vdash
  \Gamma_i^{\sigma_i} \sdash B_i$. As the thinning transition does not change
  $\sigma_i$, it does not need to be replicated in the preproof. 

  An argument analogous to that given for $\elab$ in \Cref{lem:elab} shows that
$\search$ maintains the soundness condition.
\end{proof}

Similarly to soundness, for completeness it is again important that the
computational content of the proof remain unchanged.

\begin{theorem}[Completeness]
  If $\CGT \vdash \Gamma \sdash A$ via $\Pi$ then $\RGT \vdash
  \Gamma \sdash A$ via a proof $\Pi'$ such that
  $\strip(\embed(\Pi'))$ is an unfolding of $\Pi$.
\end{theorem}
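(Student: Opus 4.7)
The plan is to mirror the strategy used for reset arithmetic in \Cref{lem:ra-complete}, chaining together the abstract completeness result of \Cref{lem:complete} with the proof morphism $\search \colon \Sss(\TF) \to \RGT$ established in the previous lemma. First, since $\Pi$ is a finite cyclic tree, it employs only finitely many rule instances, so there is a finite subsystem $\TF$ of $\CGT$ containing every rule used in $\Pi$ and such that $\Pi$ is still a proof in $\TF$.

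Next, I apply \Cref{lem:complete} to $\TF$ and $\Pi$ to obtain a $\Sss(\TF)$-proof $\Pi_S$ of the sequent $\Gamma \sdash A ; (\emptyset, (x, a) \mapsto \emptyset)$. Recall from the proof of \Cref{lem:complete} that the witnessing $\Rrr(\CGT)$-proof is $\elab(\Pi_S)$ and that $\strip(\elab(\Pi_S))$ is an unfolding of $\Pi$. Now set $\Pi' \coloneq \search(\Pi_S)$. Since $\search$ is a proof morphism, $\Pi'$ is a proof in $\RGT$. Its endsequent is
\[
  \search(\Gamma \sdash A ; (\emptyset, (x, a) \mapsto \emptyset)) \;=\; \widehat{\emptyset} \rsep \Gamma^{(x,a) \mapsto \emptyset} \sdash A \;=\; \varepsilon \rsep \Gamma^{\varepsilon} \sdash A\text{,}
\]
so $\Pi'$ is a proof of $\Gamma \sdash A$ in the sense of the statement and $\RGT \vdash \Gamma \sdash A$.

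It then remains to verify that $\strip(\embed(\Pi'))$ is an unfolding of $\Pi$. For this I would argue that the two proof morphisms $\strip \circ \embed \circ \search$ and $\strip \circ \elab$, both going from $\Sss(\TF)$ to $\CGT$, produce the same underlying $\CGT$-preproof from $\Pi_S$. The reason is structural: on a rule $\Rrr(\Theta,\sigma) \in \Sss(\TF)$ corresponding to some $R \in \TF$, the morphism $\elab$ unpacks the greedy transition into a chain of \RReset{}, \RPop{}, $R$ and \RWeak{} steps in $\Rrr(\CGT)$, each of whose $\strip$-translation is either a single application of $R$ or an identity preproof. The composite $\embed \circ \search$ does the analogous unpacking through $\RGT$ and back, again wrapping a single genuine application of $R$ in bookkeeping structural rules that $\strip$ collapses to identity preproofs. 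Thus both translations yield the same skeleton of $\CGT$-rule applications, which is already known to be an unfolding of $\Pi$.

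The only step requiring genuine care will be this last bookkeeping comparison, in particular handling the R and \RCut{} rules with $B = N$ in the definition of $\embed$ (which insert an additional \RPop{}), and the empty-annotation conventions built into $\Gamma^\sigma$ in the definition of $\search$. Once one observes that $\strip$ erases precisely these extra structural steps, the coincidence of $\strip \circ \embed \circ \search$ and $\strip \circ \elab$ on $\Pi_S$ follows by a routine case analysis on the rules of $\TF$, completing the argument.
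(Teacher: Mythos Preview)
Your proposal is correct and follows exactly the approach the paper sets up: restrict to a finite fragment $\TF$, invoke \Cref{lem:complete} to obtain an $\Sss(\TF)$-proof $\Pi_S$, and apply the $\search$ morphism from the preceding lemma, mirroring \Cref{lem:ra-complete}. The paper leaves this theorem without an explicit proof, relying on the reader to assemble precisely the chain you describe; your additional observation that $\strip \circ \embed \circ \search$ and $\strip \circ \elab$ coincide on $\Pi_S$ (because both collapse all bookkeeping rules to identity preproofs around the single genuine $R$-step) is the right way to justify the unfolding claim, which the RA case did not need to track.
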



\subsection{Modal $\mu$-Calculus}
\label{sec:modal-mu}

The modal $\mu$-calculus ($\mML$) extends the classical modal logic $K$ with a
fixed-point quantifiers $\nu x. \varphi$ and $\mu x. \varphi$, denoting the
greatest and least fixed-point, respectively. The $\mu$-calculus has been
central to the field of cyclic proof theory: 
The first cyclic proof system was given for
$\mML$~\cite{niwinskiGamesMcalculus1996} and $\mML$ (and its variants such as
$\mu$MALL and higher-order $\mu$-calculi) have been studied the most in the
field of cyclic proof theory.

In this section we construct two reset proof systems for the modal
$\mu$-calculus, called $\FRmML$ (\Cref{sec:failure-mu}) and $\BRmML$
(\Cref{sec:boolean-mu}), which correspond to two different
natural formulations of the trace condition of $\mML$: one in terms of the
booleans $\BB$ and one in terms of the failure algebra $\FF$. This also
demonstrates that the same derivation system can induce multiple quite distinct
reset proof systems if there are multiple sensible trace interpretations for it.

The first reset proof system was given for satisfiability of $\mML$ by Jungteerapanich
in~\cite{jungteerapanichTableauSystemsModal2010} and later reformulated by
Stirling~\cite{stirlingProofSystemNames2013} into a reset proof system for
validity. The latter system is often called the Jungteerapanich-Stirling ($\JS$)
system in the literature. In \Cref{sec:js} we recall the system $\JS$ and
compare it to the systems $\FRmML$ and $\BRmML$. This comparison highlights
the likely biggest shortcoming of our approach: The reset proof systems in our
article are constructed solely based on the trace condition without deeper
insight into the semantics of the logic in question.

\subsubsection{Cyclic Modal $\mu$-Calculus}
\label{sec:cyclic-mu}

Our presentation of $\mML$ is based on the presentation given
in~\cite{afshariCutfreeCompletenessModal2017}.
For a set $\prop$ of propositional letters a countable set $\var$ of variables,
the $\mu$-formulas are given by the following grammar:
\[\varphi \in \form \langeq p ~|~ \neg p ~|~ x ~|~ \varphi \wedge \varphi
  ~|~ \varphi \vee \varphi ~|~ \Box \varphi ~|~ \Diamond \varphi ~|~ \mu
  x.\varphi ~|~ \nu x. \varphi
  \quad p \in \prop, x \in \var \]
If $x, y \in \var$ occur in $\varphi$, \emph{$x$ subsumes $y$},
writing $x <_\varphi y$, if $\sigma y.\psi$ occurs as a
subformula of $\varphi$ for some $\sigma \in \{\mu, \nu\}$ and $\psi$, and
furthermore $x$ is free in $\sigma y.\psi$.
If the relation $<_\varphi$ is a strict preorder, one calls $\varphi$
\emph{well-named}. Henceforth, we only consider sequents $\Gamma$ where $\Gamma$ is a set of
well-named formulas.

\begin{definition}
  The \emph{sequents} of $\mML$ are finite sets $\Gamma$ of well-named $\mu$-formulas.
  Write $\Gamma, \varphi$ to mean $\Gamma \cup \{\varphi\}$ and $\Gamma,
  \Gamma'$ to mean $\Gamma \cup \Gamma'$.
  The set of $\mML$-sequents is denoted $\Seq_{\mML}$.
  The \emph{derivation rules} of $\mML$ are the following:
  \begin{mathpar}
    \inference[\textsc{Ax}]{}{p, \neg p}

    \inference[\textsc{Wk}]{\Gamma}{\Gamma, \varphi}

    \inference[$\vee$]{\Gamma, \varphi, \psi}{\Gamma, \varphi \vee \psi}

    \inference[$\wedge$]{\Gamma, \varphi \qquad \Gamma, \psi}{\Gamma, \varphi \wedge \psi}

    \inference[\textsc{Mod}]{\Gamma, \varphi}{\Diamond \Gamma, \Box \varphi}

    \inference[$\mu$]{\Gamma, \varphi[\mu x. \varphi / x]}{\Gamma, \mu x. \varphi}

    \inference[$\nu$]{\Gamma, \varphi[\nu x. \varphi / x]}{\Gamma, \nu x. \varphi}
  \end{mathpar}
\end{definition}
In the rules above, $\varphi[\psi / x]$ denotes the formula resulting from
replacing all instances of the variable $x$ in $\varphi$ by the formula $\psi$.
This is a partial operation which is only defined if $x$ does is not bound in
$\varphi$ by some fixed-point quantifier $\nu x$ or $\mu x$. Writing
$\varphi[\psi / x]$ is to be understood as a tacit claim that this the operation
is defined on these arguments.

As noted previously, the modal $\mu$-calculus $\mML$ can be given at least two
natural trace interpretations, one in terms of the booleans $\BB$ and one in
terms of the failure algebra $\FF$. We distinguish the two trace interpretations
by denoting them $\iota_\Aa$ where $\Aa$ is the activation algebra over which
the interpretation in question is defined. The interpretation $\iota_\FF$
corresponds to the `usual' formulation of the global trace condition of the
$\mu$-calculus as a parity condition in terms of the subsumption hierarchy
$<_\varphi$. In fact, this global trace condition was the original motivation
for the concept of activation algebras~\cite{afshariAbstractCyclicProofs2022}.

For a derivation rule $R \in \mML$ with $\rho(R) = (\Gamma, \Delta_1, \ldots,
\Delta_n)$ call $\varphi' \in \Delta_i$ a
\emph{precursor of $\varphi \in \Gamma$}, writing $\varphi' \leftarrow_R^i
\varphi$, if either
$\varphi$ is principal in $R_i$, i.e. $\varphi$ is `altered by
$R_i$', and $\varphi'$ is one of the formula
occurrences resulting from $\varphi$ via $R$
or if $\varphi$ is not principal in $R_i$ and $\varphi = \varphi'$.

\begin{definition}\label{def:ff-mu-tc}
  Writing $V_\nu(\varphi) \coloneq \{x ~|~ x \text{ is bound by } \nu \text{ in
  } \varphi\}$, the trace interpretation $\iota_\FF \colon \mML \to \TT_\FF$ is given by
  \[
    \iota_\FF(\Gamma) \coloneq \{(\varphi, x) ~|~ \varphi \in \Gamma \text{ and
    } x \in V_\nu(\varphi)\}
  \]
  and for any $R \in \mML$ with $\rho(R) = (\Gamma, \Delta_1, \ldots, \Delta_n)$
  the trace maps $r_i \colon \iota_\FF(\Gamma) \to \iota_\FF(\Delta)$ are defined by
  \(
  r_i \coloneq \{((\varphi, x), a^*, (\varphi', x)) ~|~ \varphi' \leftarrow_R^i \varphi \}
  \) where $a^*$ is defined by
  \[
    a^* \coloneq
    \begin{cases}
      2, & \text{ if }R_i \text{ instance of } \mu, \varphi = \mu y.\theta, \varphi' = \theta[\mu y.\theta / y] \text{ and } y <_{\varphi} x,  \\
      1, & \text{ if } R_i \text{ instance of } \nu, \varphi = \nu x.\theta, \varphi' = \theta[\nu x. \theta / x], \\
      0, & \text{ otherwise.}
    \end{cases}
  \]
\end{definition}

The trace interpretation $\iota_\BB$ for $\mML$ is defined by `tracking'
individual fixed-point quantifier instances in the sequent and finding a
greatest fixed-point quantifier which is unfolded infinitely often and never
`erased' by the unfolding of a higher quantifier.
Call a sequence $a \in \BB^*$ a \emph{subformula address} and define a partial addressing
function $\varphi \addr a$ as follows:
\[
  \varphi \addr \varepsilon \coloneq \varphi
  \qquad \quad
  \varphi_0 \bullet \varphi_1 \addr ia \coloneq \varphi_i \addr a
  \qquad \quad
  \ocircle \varphi \addr 0a \coloneq \varphi \addr a
\]
where $\bullet \in \{\wedge, \vee\}$ and $\ocircle \in \{\neg, \Box,
\Diamond\} \cup \{\mu x, \nu x ~|~ x \in \var\}$. Define the set of
$\nu$-addresses of a formula as $N(\varphi) \coloneq \{a \in \BB^* ~|~ \varphi
\addr a = \nu x. \psi\}$. Given $x \in \var$, define the set of open $x$
addresses in $\varphi$ by $O_x(\varphi) \coloneq \{a \in \BB^* ~|~ \varphi
\addr a = x \text{ and } \forall a' < a, \psi.~\varphi \addr a' \neq \nu x.
\psi \wedge \varphi \addr a' \neq \mu x. \psi \}$.

\begin{definition}\label{def:bb-mu-tc}
  The trace interpretation $\iota_\BB \colon \mML \to \TT_\BB$ is defined by
  \(\iota_\BB(\Gamma) \coloneq \{(\varphi, a) ~|~ \varphi \in \Gamma \wedge a
    \in N(\varphi)\}\). For each rule $R \in \mML$ with $\rho(R) = (\Gamma,
    \Delta_1, \ldots, \Delta_n)$ which is not a fixed-point rule, the trace maps
    $r_i \colon \iota_\BB(\Gamma) \to \iota_\BB(\Delta_i)$ simply `track' the
    fixed-point instances. For instance, if $R$ is the following instance of the
    $\vee$-rule
    \[
      \inference[$\vee$]{\Gamma, \varphi_0, \varphi_1}{\Gamma, \varphi_0 \vee \varphi_1}
    \]
    then $r_1 \coloneq \{((\psi, a), 0, (\psi, a)) ~|~ (\psi, a) \in
    \iota_\BB(\Gamma)\} \cup \{((\varphi_0 \vee \varphi_1, ia), 0, (\varphi_i,
    a)) ~|~ ia \in N(\varphi_0 \vee \varphi_1)\}$. Now suppose $R$ was an
    instance of a fixed-point rule
    \[
      \inference[$\sigma$]{\Gamma, \varphi[\sigma x. \varphi / x]}{\Gamma, \sigma x. \varphi}
    \]
    then
    \begin{align*}
      r_1 \coloneq\,& \{(v, 0, v) ~|~ v \in \iota_\BB(\Gamma)\} ~\cup \\
                   & \{((\sigma x. \varphi, \varepsilon), b, (\varphi[\sigma x. \varphi / x], a) ~|~ a \in O_x(\varphi)\} ~\cup \\ 
                   & \{((\sigma x. \varphi, 0a), 0, (\varphi[\sigma x. \varphi / x], a) ~|~ a \in N(\varphi)\} ~\cup \\
                   & \{((\sigma x. \varphi, 0a), 0, (\varphi[\sigma x. \varphi / x], a'0a) ~|~ a \in N(\varphi), a' \in O_X(\varphi)\}
    \end{align*}
    where $b = 0$ iff $\sigma = \mu$. Spelling out all the details of this
    definition would quickly become unwieldy, so we rely on the reader's
    intuition. A detailed account of a very similar trace condition can be found
    in~\cite{koriCyclicProofSystem2021}.
\end{definition}

Henceforth, we write $\mML$ to denote the cyclic proof systems $\iota_\FF(\mML)$
and $\iota_\BB(\mML)$. There is no need to distinguish between the two as both
trace interpretations induce the same cyclic proof system: Precisely the same
preproofs satisfy the global trace condition specified via $\iota_\BB$ as that
specified via $\iota_\FF$. Nonetheless, they induces two different reset proof
systems.

\subsubsection{$\FF$-Reset Modal $\mu$-Calculus}
\label{sec:failure-mu}

We first present a reset proof system $\FRmML$ corresponding to
$\iota_\FF(\mML)$ (pronounced ``$\FF$-reset modal
$\mu$-calculus'').

The \emph{sequents} of $\FRmML$ are expressions of the form $\Theta
\rsep \Gamma$ where the control $\Theta$ is a sequence of distinct characters,
called the \emph{control},
and $\Gamma$ is a set of pairs $(\varphi, \sigma)$ of $\mu$-formulas $\varphi$
and an \emph{annotation}, a set of \emph{assignments} $x_1 \mapsto u_1, \ldots, x_n \mapsto u_n$ where $\{x_1,
\ldots, x_n\} = V_\nu(\varphi)$, $u_1, \ldots, u_n$ are subsequences of
$u_i \sqsubseteq \Theta$ and each variable $x \in V_\nu(\varphi)$ is part of
precisely one assignment. For simplicity of notation, we often treat such
sequences $\sigma$ simply as a functions mapping $V_\nu(\varphi)$ to
subsequences of $\Theta$. The set of sequents $\FRmML$ is denoted
$\Seq_{\FRmML}$.
Analogously to Safra boards, a letter $a \in \Theta$ is \emph{covered} in $\Gamma$ if for every
$(\varphi, \sigma) \in \Gamma$ and every $x \in V_\nu(\varphi)$, if $a \in
\sigma(x)$ then it is not at the last position of that sequence.

\begin{definition}
  The derivation rules of the $\FF$-reset modal $\mu$-calculus ($\FRmML$)
  follow. Denote by $\Theta'$ the control from which all letters not
  occurring in any annotation in the corresponding $\Gamma$ are removed. Similarly, $\sigma'$
  denotes the annotation from which all assignments $x \mapsto u$ with $x
  \not\in V_\nu(\varphi)$ in the corresponding $\mu$-formula $\varphi$ have been
  removed.
  \begin{mathpar}
    \inference[\textsc{Ax}]{}{\Theta \rsep (p, \varepsilon), (\neg p, \varepsilon)}

    \inference[\textsc{Wk}]{\Theta' \rsep \Gamma}{\Theta \rsep \Gamma, (\varphi, \varepsilon)}

    \inference[$\vee$]{\Theta \rsep \Gamma, (\varphi, \sigma'), (\psi, \sigma')}{\Theta \rsep \Gamma, (\varphi
      \vee \psi, \sigma)}

    \inference[$\wedge$]{\Theta' \rsep \Gamma, (\varphi, \sigma') \qquad
      \Theta' \rsep \Gamma, (\psi, \sigma')}{\Theta \rsep \Gamma, (\varphi
      \wedge \psi, \sigma)}

    \inference[\textsc{Mod}]{\Theta \rsep \Gamma, (\varphi, \sigma)}{\Theta \rsep \Diamond
      \Gamma, (\Box \varphi, \sigma)}

    \inference[$\mu$]{\Theta' \rsep \Gamma, (\varphi[\mu x. \varphi / x], \sigma
      \setminus x)}{\Theta \rsep \Gamma, (\mu x.
      \varphi, \sigma)}

    \inference[$\nu$]{\Theta a \rsep \Gamma, (\varphi[\nu x. \varphi / x],
      (\sigma, x \mapsto ua)) \quad a \not\in \Theta }{\Gamma, (\nu x.
      \varphi, (\sigma, x \mapsto u))}

    \inference[$\RReset_a$]
    {\Theta' \rsep \sreset{\Gamma}{a} \quad a \text{ covered in } \Gamma }
    {\Theta \rsep \Gamma}

    \inference[\(\RMerge\)]{\Theta' \rsep \Gamma, (\varphi,
      \merge_\Theta(\sigma, \sigma'))}{\Theta \rsep \Gamma, (\varphi, \sigma), (\varphi, \sigma')}
  \end{mathpar}
  Where $\Diamond \Gamma \coloneq \{(\Diamond \varphi, \sigma) ~|~ (\varphi,
  \sigma) \in \Gamma\}$ and $\sreset{\Gamma}{x} \coloneq \{(\varphi,
  \sreset{\sigma}{a}) ~|~ (\varphi, \sigma) \in \Gamma\}$ and the various
  annotations used above are defined below
  \begin{mathpar}
    (\sigma \setminus x)(y) \coloneq
    \begin{cases}
      \varepsilon & \text{ if } x <_{\nu x. \varphi} y \\
      \sigma(y) & \text{ otherwise}
    \end{cases}

    (\sreset{\sigma}{a})(x) \coloneq
    \begin{cases}
      u & \text{ if } \sigma(x) = uav \\
      \sigma(x) & \text{ otherwise}
    \end{cases}

    \merge_{\Theta}(\sigma, \sigma')(x) \coloneq {\min}_\Theta(\sigma(x), \sigma'(x))
  \end{mathpar}

  Write $\varepsilon$ for the annotation $x_1 \mapsto \varepsilon, \ldots,
  x_n \mapsto \varepsilon$.
  A $\FRmML$-preproof is a \emph{proof}
  every pair of bud $t
  \in \dom(\beta)$ and companion $\beta(t)$ has an \emph{invariant}, i.e. there
  exists a letter $a$ such that $a$ occurs in all of the controls
  $\Theta$ between $t$ and $\beta(t)$, the prefix of $a$ in the controls
  $\Theta$ remains constant and the $\textsc{Reset}_a$ rule
  is applied between $t$ and $\beta(t)$.
  A proof is a \emph{proof of a $\mu$-sequent} $\Gamma$, writing
  $\FRmML \vdash \Gamma$, if its root
  is labeled $\varepsilon \rsep \{(\varphi, \varepsilon) ~|~ \varphi \in \Gamma\}$.
\end{definition}

The $\RMerge{}$ rule is somewhat inelegant and in some cases subsumed by $\RWk$.
It is required to simulate the thinning step of greedy runs in the completeness
proof. We conjecture that, every instance of $\RMerge$ that could be
needed to construct $\FRmML$-proofs corresponding to $\mML$-proofs is subsumed
by $\RWk$. Indeed, the Jungteerapanich-Stirling
system~\cite{stirlingProofSystemNames2013} for the modal $\mu$-calculus features
an analogous $\RThin$-rule which always chooses one of the two
$\varphi$-instances and which is sufficient to prove completeness. However,
proving that $\RMerge$ is superfluous would likely be as involved as a direct
completeness proof for $\FRmML$ with regards to the semantics of the modal
$\mu$-calculus. We thus forgo this `ergonomic optimization' as the goal of our
article is to derive concrete reset proof systems without having to spend much
effort.

Soundness of $\FRmML$ with regards to $\mML$ is proven by constructing a proof
morphism $\embed \colon \FRmML \to \mML$.

\begin{lemma}\label{lem:frmml-embed}
  There exists a function $\embed \colon \Seq_{\FRmML} \to \Rrr(\Seq_{\mML})$ which is defined by
  \[\embed(\Theta \rsep \Gamma) \coloneq \ol{\Gamma} ; (\ol{\Theta}, \sigma_\Gamma) \]
  where $\ol{\Theta}$ is the set $\{u \in \Theta\}$ ordered
  according to the letters' positions in $\Theta$, $\ol{\Gamma} \coloneq
  \{\varphi ~|~ (\varphi, \sigma) \in \Gamma\}$ and $\sigma_\Gamma$ is defined as
  \[
    \sigma_\Gamma((\varphi, x), 0) \coloneq \{\{a \in \sigma(x)\} ~|~ (\varphi, \sigma) \in \Gamma \}
    \qquad \qquad
    \sigma_\Gamma((\varphi, x), e) \coloneq \emptyset \quad \text{if } e \in \{1, 2\}.
  \]
  The function can be extended into a proof morphism $\embed \colon \FRmML \to \Rrr(\mML)$.
\end{lemma}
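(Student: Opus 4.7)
The plan is to follow the template established by $\elab$ in \Cref{lem:elab} and by the embeddings $\embed$ defined for $\RA$ and $\RGT$ earlier in \Cref{sec:concrete}: for each rule of $\FRmML$, I would exhibit a small $\Rrr(\mML)$-preproof whose endsequent is the $\embed$-image of the rule's conclusion and whose open leaves are the $\embed$-images of its premises. By the shape of $\embed$ on sequents, these preproofs will in every case consist of the corresponding rule lifted to $\Rrr(\mML)$ via the lifting in \Cref{def:acr}, together with applications of $\textsc{Weak}$, $\textsc{Pop}$, or $\textsc{Reset}$ inserted to reconcile the terse annotation management of $\FRmML$ with the more explicit bookkeeping of $\Rrr(\mML)$.

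I would then proceed by case analysis on the rule, focusing on those where the Safra board transition in $\TT_\FF$ does not match the effect of the $\FRmML$-rule on the nose. The rules $\textsc{Ax}$, $\vee$, $\wedge$, and $\textsc{Mod}$ lift directly, with the $\Theta \to \Theta'$ cleanup absorbed by a prefixed $\textsc{Weak}$. The $\textsc{Wk}$-rule of $\FRmML$ combines $\mML$-style weakening with control pruning and so translates into $\mML$'s $\textsc{Wk}$ followed by a $\textsc{Weak}$ discarding the stacks carrying chips no longer mentioned. The $\nu$-rule lines up directly with the cover phase of the $\nu$-successor in $\TT_\FF$, which places a fresh chip on top of the stack that has landed at $(v, \alpha) = (v, 1)$. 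The $\mu$-rule is more delicate: in $\TT_\FF$ the $\mu$-successor sends traces for subsumed $\nu$-variables to column $2$, but $\sigma_\Gamma$ puts no stacks on columns $1$ or $2$, so an additional $\textsc{Weak}$ after the lifted $\mu$-rule is needed to discard those failure stacks, mirroring the $\sigma \setminus x$ operation. The $\RReset_a$ rule translates to $\Rrr(\mML)$'s $\textsc{Reset}_\gamma$ for $\gamma = a$ (possibly followed by a $\textsc{Weak}$ to drop stacks that become empty after the reset), and $\RMerge$ corresponds to the thinning aspect of $\Rrr(\mML)$'s $\textsc{Weak}$, selecting the $<_\Theta$-smaller stack at each position.

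For soundness preservation, fix an $\FRmML$-proof $\Pi$ and a bud $t \in \dom(\beta)$ of $\embed(\Pi)$ whose $\FRmML$-cycle has invariant letter $a \in \Theta$. By construction of $\embed$, the letters of the controls in $\Pi$ correspond pointwise to chips in the Safra board controls of $\embed(\Pi)$, ordered in agreement with their position in $\Theta$. Since $a$ appears in every control along the cycle and its $\Theta$-prefix remains constant, the corresponding chip $\gamma$ appears throughout the cycle's Safra boards with a constant prefix in the Safra board order. Because $\RReset_a$ is applied somewhere along the cycle and translates to $\textsc{Reset}_\gamma$, the prefix of $\gamma$ constitutes a valid invariant in the $\Rrr(\mML)$ sense.

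The main obstacle I anticipate is accounting precisely for the Safra board transitions that would naturally leave stacks on column $1$ or $2$: since $\embed$ is defined so that $\sigma_\Gamma$ is empty on both of these columns, every such situation must be identified and mended via an appropriate $\textsc{Weak}$ before the preproof reaches the translated premise. Cataloguing exactly when this remediation is required — especially in the $\mu$-case, where failure stacks appear on column $2$ and must be reconciled with the $\sigma \setminus x$ operation — is the most delicate part of the case analysis and the main place where extra care is needed.
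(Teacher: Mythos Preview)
Your proposal misses the central complication the paper identifies: the \emph{collapsing} of formula identities under $\embed$. Because $\ol{\Gamma} = \{\varphi \mid (\varphi,\sigma) \in \Gamma\}$, two distinct entries $(\varphi,\sigma),(\varphi,\sigma') \in \Gamma$ with $\sigma \neq \sigma'$ become a \emph{single} occurrence of $\varphi$ in $\ol{\Gamma}$, carrying two stacks at each position $((\varphi,x),0)$. This breaks your claim that $\vee$, $\wedge$, $\nu$, $\mu$ ``lift directly''. For instance, if $\Gamma$ contains both $(\varphi\vee\psi,\sigma)$ and $(\varphi\vee\psi,\sigma')$ and the $\vee$-rule is applied to the latter, then in $\ol{\Gamma}$ there is only one copy of $\varphi\vee\psi$; the lifted $\mML$-rule must treat $\varphi\vee\psi$ simultaneously as principal \emph{and} as side formula, and the $r_1$-successor transition produces extra stacks (from the side copy) that do not match $\sigma_{\Gamma'}$ for the premise. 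A subsequent $\textsc{Weak}$ is needed to prune these. Similarly, your translation of $\textsc{Wk}$ is wrong in the collapsed case: if another copy $(\varphi,\sigma')$ survives in $\Gamma$, then $\varphi$ is not removed from $\ol{\Gamma}$ at all, so no $\mML$-$\textsc{Wk}$ is applied---only a Safra-board $\textsc{Weak}$ removing the stacks coming from $\sigma$.

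A second, smaller gap: in the $\mu$-case you correctly observe that stacks landing on column $2$ must be discarded by $\textsc{Weak}$, but you omit the $\textsc{Pop}$ step. After $\sigma \setminus x$ assigns $\varepsilon$ to a subsumed variable $y$, the target board $\sigma_{\Gamma'}$ has the \emph{empty stack} $\emptyset$ sitting at $((\psi,y),0)$ (since $\{a \in \varepsilon\} = \emptyset \in \sigma_{\Gamma'}((\psi,y),0)$), whereas after $\textsc{Weak}$ the position is genuinely empty; a $\textsc{Pop}$ is required to reinstate $\{\emptyset\}$ there. Your closing paragraph correctly flags the column-$2$ bookkeeping as delicate, but the collapsing phenomenon---not the failure column---is what the paper treats as the main obstacle, and it is absent from your case analysis.
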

\begin{proof}
  The main complication in completing this morphism is that for $\Theta \rsep
  \Gamma$, there may be $(\varphi, \sigma), (\varphi, \sigma') \in \Gamma$ with
  $\sigma \neq \sigma'$, i.e. two instances of the same formula with
  different annotation. The $\embed$-function `collapses' the two annotated
  formulas into one instance of a formula in which each $\nu$-variable is
  annotated by two stacks.

  A simple example in which this causes complications is the $\RWk$-rule.
  Suppose the $\RWk$-instance removed $(\varphi, \sigma)$ from $\Gamma,
  (\varphi, \sigma'), (\varphi, \sigma)$
  with $\sigma \neq \sigma'$. Then $\ol{\Gamma,
  (\varphi, \sigma'), (\varphi, \sigma)} = \ol{\Gamma}, \varphi$. Thus, to
  simulate $\RWk$, the corresponding $\RWk$-rule of
  $\Rrr(\mML)$ need not be applied; only some annotations need to be removed from
  $\varphi$. Thus, the simulation is as given below.
  \begin{lrbox}{\mypt}
    \begin{varwidth}{\linewidth}
      \begin{comfproof}
        \AXC{$\Gamma, \psi ; (\ol{\Theta'}, \sigma_{\Gamma,
            (\varphi, \sigma')}) $}
        \LSC{\RWeak}
        \UIC{$\ol{\Gamma}, \ol{\psi} ; (\ol{\Theta}, \sigma_{\Gamma,
            (\varphi, \sigma'), (\varphi, \sigma)}) $}
      \end{comfproof}
    \end{varwidth}
  \end{lrbox}
  \[
    \inference[\textsc{Wk}]{\Theta' \rsep \Gamma, (\varphi, \sigma')}{\Theta \rsep \Gamma,
      (\varphi, \sigma'), (\varphi, \sigma)}
    \quad
    \stackrel{\embed}{\leadsto}
    \quad
    \usebox{\mypt}
  \]

  Similar issues with `collapsing' can arise in the rules $\vee$, $\wedge$,
  $\nu$ and $\mu$.
  For an example, consider a sequent $\Theta \rsep \Gamma_0$ with $\Gamma_0$ as below and $\sigma \neq
  \sigma'$. For simplicity, suppose there were no more `copies' of $\varphi \vee
  \psi$ in $\Gamma$. To embed an application of the $\vee$-rule $\Theta \rsep \Gamma_2$,
  one must first apply the $\Rrr(\mML)$-correspondent of $\vee$ with $\varphi \vee
  \psi$ both principal and part of the `context' $\ol{\Gamma}, \varphi \vee \psi$ to
  $\ol{\Gamma_0} ; (\ol{\Theta}, \sigma_{\Gamma_0})$.
  However, because of the `collapsing' of identical formulas, this only yields
  $\ol{\Gamma_1} ; (\ol{\Theta}, \sigma_{\Gamma_1})$ because the trace
  interpretation $r \colon \iota_\FF(\Gamma, \varphi \vee \psi, \varphi \vee \psi)
  \to \iota_\FF(\Gamma, \varphi \vee \psi, \varphi, \psi)$ contains
  $(\varphi \vee \psi, 0, \varphi \vee \psi)$ as well as $(\varphi \vee \psi, 0,
  \varphi)$ and $(\varphi \vee \psi, 0, \psi)$. Noting $\ol{\Gamma_1} =
  \ol{\Gamma_2}$, the desired premise $\ol{\Gamma_2} ; (\ol{\Theta},
  \sigma_{\Gamma_2})$ can be reached with an application of the $\RWeak$-rule of $\Rrr(\mML)$.
  \begin{align*}
    \Gamma_0 \coloneq\,& \Gamma, (\varphi \vee \psi, \sigma), (\varphi \vee \psi, \sigma') \\
    \Gamma_1 \coloneq\,& \Gamma, (\varphi \vee \psi, \sigma), (\varphi \vee \psi, \sigma'), (\varphi, \sigma), (\psi, \sigma), (\varphi, \sigma'), (\psi, \sigma') \\
    \Gamma_2 \coloneq\,& \Gamma, (\varphi \vee \psi, \sigma), (\varphi, \sigma'), (\psi, \sigma')
  \end{align*}
  In terms of preproofs, this yields:
  \begin{lrbox}{\mypt}
    \begin{varwidth}{\linewidth}
      \begin{comfproof}
        \AXC{$\ol{\Gamma}, \varphi \vee \psi, \varphi, \psi; (\ol{\Theta}, \sigma_{\Gamma_2})$}
        \LSC{\RWeak}
        \UIC{$\ol{\Gamma}, \varphi \vee \psi, \varphi, \psi; (\ol{\Theta}, \sigma_{\Gamma_1})$}
        \LSC{$\vee$}
        \UIC{$\ol{\Gamma}, \varphi \vee \psi; (\ol{\Theta}, \sigma_{\Gamma_0})$}
      \end{comfproof}
    \end{varwidth}
  \end{lrbox}
  \[
    \inference[$\vee$]{\Theta \rsep \Gamma, (\varphi \vee \psi, \sigma),
      (\varphi, \sigma'), (\psi, \sigma')}{\Theta \rsep \Gamma, (\varphi \vee \psi,
      \sigma), (\varphi \vee \psi, \sigma')}
    \quad
    \stackrel{\embed}{\leadsto}
    \quad
    \usebox{\mypt}
  \]
  If there were more `copies' $(\varphi \vee \psi, \sigma'') \in \Gamma$ with
  $\sigma \neq \sigma'' \neq \sigma'$ then the stacks corresponding to
  $(\varphi, \sigma'')$ and $(\psi, \sigma'')$ also need to be removed by the
  $\RWeak$-application. If there is only one `copy' of $\varphi \vee \psi$ in
  $\Gamma$, then $\varphi \vee \psi$ need not be part of the `context'
  $\ol{\Gamma}$ in the $\vee$-application and no application of $\RWeak$ is
  needed. The `collapsing'-related issues that can arise when embedding instances of the rules
  $\wedge$, $\nu$ and $\mu$ are analogous and can be dealt with in an analogous manner.

  If `collapsing'-related issues are dealt with as described in the previous
  paragraph, the $\nu$-rule of $\FRmML$ can simply be translated as the
  corresponding instance of the $\nu$-rule in $\Rrr(\mML)$.
  While the operation $\sigma\,\uh\,a$ involved in the
  $\nu$-rule of $\FRmML$ is paralleled by the corresponding $\nu$-rule in
  $\Rrr(\mML)$, this is not the case for the operation $\sigma \setminus x$ of the $\mu$-rule.
  Consider the sequent
  $b \rsep \Gamma_0$ with 
  $\Gamma_0 \coloneq (\mu x. \nu y. x, y \mapsto b)$. The corresponding
  $\Rrr(\mML)$-sequent is 
  $\ol{\Gamma_0} ; (\ol{b}, \sigma_{\Gamma_0})$ with
  $\sigma_{\Gamma_0}((\mu x. \nu y. x, y), a) \coloneq \{\{b\}\}$ for $a = 0$
  and $\sigma_{\Gamma_0}((\mu x. \nu y. x, y), a) \coloneq \emptyset$ otherwise.
  Applying the $\mu$-rule to this sequent yields $\nu y. \mu x. \nu y. x;
  (\ol{b}, \sigma')$ with $\sigma'((\nu y. \mu x. \nu y. x, y), a) \coloneq
  \{\{b\}\}$ for $a = 2$ and $\sigma'((\nu y. \mu x. \nu y. x, y), a) \coloneq
  \emptyset$ otherwise. On the other hand, applying the $\mu$-rule to $b \rsep
  \Gamma_0$ yields $\rsep \Gamma_1$ with $\Gamma_1 \coloneq (\nu y. \mu x. \nu y. x, y \mapsto \varepsilon)$.
  Thus, an application of the $\RWeak$-rule removing the stack $\{b\}$ from
  $((\nu y. \mu x. \nu y. x, y), 2)$, yielding $\sigma''$ with $\sigma'((\nu y. \mu x. \nu y. x, y), a) \coloneq
  \emptyset$ on all $a \in \FF$, is needed. Furthermore, an application of the
  $\RPop$-rule is required to add an empty stack to $((\nu y. \mu x. \nu y. x, y), 0)$.
  \begin{lrbox}{\mypt}
    \begin{varwidth}{\linewidth}
      \begin{comfproof}
        \AXC{$\nu y. \mu x. \nu y. x; (\emptyset, \sigma_{\Gamma_1})$}
        \LSC{\RPop}
        \UIC{$\nu y. \mu x. \nu y. x; (\emptyset, \sigma'')$}
        \LSC{\RWeak}
        \UIC{$\nu y. \mu x. \nu y. x; (\ol{b}, \sigma')$}
        \LSC{$\mu$}
        \UIC{$\ol{\Gamma_0} ; (\ol{b}, \sigma_{\Gamma_0})$}
      \end{comfproof}
    \end{varwidth}
  \end{lrbox}
  \[
    \inference[$\mu$]{\rsep (\nu y. \mu x. \nu y. x, y \mapsto \varepsilon)}{b \rsep (\mu x. \nu y. x, y \mapsto b)}
    \quad
    \stackrel{\embed}{\leadsto}
    \quad
    \usebox{\mypt}
  \]
  More complicated cases with more $\nu$-variables can be dealt with in an
  analogous manner.

  The $\RMod$-rule simply corresponds to the $\RMod$-rule of $\Rrr(\mML)$. No
  `collapsing'-related complications can arise in its translation as all
  formulas are principal in applications of the $\RMod$-rule. The
  $\RReset_a$-rule directly corresponds to the $\RReset_a$-rule of $\Rrr(\mML)$.
  It is easily observed that \RMerge{} is always simulated by a suitable
  instance of \RWeak{}.
\end{proof}

\begin{corollary}[Soundness]
  If $\FRmML \vdash \Gamma$ then $\mML \vdash \ol{\Gamma}$.
\end{corollary}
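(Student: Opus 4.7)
The plan is to obtain the corollary directly from \Cref{lem:frmml-embed} combined with the general soundness theorem \Cref{lem:strip-cyclic-morph}. Both $\embed \colon \FRmML \to \Rrr(\mML)$ and $\strip \colon \Rrr(\mML) \to \mML$ are proof morphisms, so their composition $\strip \circ \embed \colon \FRmML \to \mML$ is itself a proof morphism. This mirrors the soundness arguments for $\RA$ (\Cref{lem:ra-sound}) and $\RGT$ (\Cref{lem:rct-sound}).

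Concretely, suppose $\Pi$ is an $\FRmML$-proof of the $\mu$-sequent $\Gamma$, meaning its root is labelled $\varepsilon \rsep \{(\varphi, \varepsilon) ~|~ \varphi \in \Gamma\}$. First I would apply $\embed$ from \Cref{lem:frmml-embed} and observe that $\embed(\varepsilon \rsep \{(\varphi, \varepsilon) ~|~ \varphi \in \Gamma\}) = \ol{\Gamma} ; (\emptyset, (v, a) \mapsto \emptyset)$, so $\embed(\Pi)$ is an $\Rrr(\mML)$-proof of this sequent. Then I would apply \Cref{lem:strip-cyclic-morph} to conclude that $\strip(\embed(\Pi))$ is an $\mML$-proof whose endsequent is $\strip(\ol{\Gamma} ; (\emptyset, (v, a) \mapsto \emptyset)) = \ol{\Gamma}$.

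There is essentially no obstacle in this proof, since all the work has already been done: the preservation of derivation rules was handled in \Cref{lem:frmml-embed}, the preservation of the reset soundness condition is checked there as well (by matching up invariants of simple cycles in $\FRmML$ with invariants of simple cycles in $\Rrr(\mML)$), and the preservation of soundness conditions under $\strip$ is precisely the content of \Cref{lem:strip-cyclic-morph}. The only point worth making explicit in the writeup is that the endsequent translation $\embed$ sends the initial control $\varepsilon$ to the empty Safra board, which is the canonical `starting state' at which $\strip$ produces an unannotated $\mML$-sequent. The corollary then follows by a one-line composition.
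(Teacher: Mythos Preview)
Your proposal is correct and follows the paper's intended (but unwritten) argument: compose the proof morphisms $\embed$ from \Cref{lem:frmml-embed} and $\strip$ from \Cref{lem:strip-cyclic-morph}, exactly parallel to \Cref{lem:ra-sound} and \Cref{lem:rct-sound}. One cosmetic correction: the root Safra board under $\embed$ is not $(v,a)\mapsto\emptyset$ but carries an empty stack $\{\emptyset\}$ at each position $((\varphi,x),0)$; this is immaterial since $\strip$ discards the board regardless, so your emphasis on that point is unnecessary.
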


Let $\TF$ be a finite fragment of $\mML$. To prove completeness, one constructs
a proof morphism $\search \colon \Sss(\TF) \to \FRmML$ embedding the proof search system
for $\TF$ into $\FRmML$.

\begin{lemma}
  There exists a function $\search \colon \Sss(\Seq_\TF) \to \Seq_{\FRmML}$ with
  \[\search(\Gamma; (\Theta, \sigma)) \coloneq \widehat{\Theta}
    \rsep \Gamma^\sigma\]
  where for any $S \subseteq \Theta$, $\widehat{S} \in \Theta^*$ is the
  duplicate-free sequence of length
  $\abs{S}$, consisting of the elements of $S$ which is strictly sorted
  according to $\Theta$. The notation
  $\Gamma^\sigma \coloneq \{(\varphi, \sigma \uh \varphi) ~|~ \varphi
  \in \Gamma\}$ with $(\sigma \uh \varphi)(x) \coloneq \widehat{\sigma((\varphi, x), 0)}$.

  The function can be extended to a proof morphism $\search \colon \Sss(\TF) \to \FRmML$.
\end{lemma}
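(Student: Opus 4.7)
The plan is to mimic the structure of the proof morphism arguments given earlier for $\RA$ and $\RGT$, adapting them to the idiosyncrasies of $\FRmML$ (the failure algebra $\FF$, multiple annotated copies of formulas, and the \RMerge{} rule). Given a rule $\Rrr(\Theta, \sigma) \in \Sss(\TF)$ arising from some $R \in \TF$ with $\rho(R) = (\Gamma, \Gamma_1, \ldots, \Gamma_n)$ and trace maps $r_i \colon \iota_\FF(\Gamma) \to \iota_\FF(\Gamma_i)$, I will appeal to \Cref{lem:greedy-ceil} to expand each $(\Theta, \sigma) \gstep{r_i} (\Theta_i, \sigma_i)$ as a shared reset/population prefix followed by an $r_i$-successor and a thinning, and then construct a matching $\FRmML$-derivation of $\search(\Gamma; (\Theta, \sigma))$ from $\search(\Gamma_i; (\Theta_i, \sigma_i))$.

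First I would handle the common prefix: apply $\RReset_{a_j}$ in descending $\Theta$-order for each chip covered in $(\Theta, \sigma)$. The $\FRmML$-reset operation $\sreset{\Gamma}{a}$ matches the $\gamma$-reset on Safra boards by construction, and because the set of covered chips on a $K$-sparse board coincides with those covered in the $\FRmML$-translation $\Gamma^\sigma$, these applications are legal in the same order. The $\RPop$ step need not be replicated explicitly since in $\FRmML$ an empty assignment $x \mapsto \varepsilon$ is already available for any formula in $\Gamma^\sigma$: the canonical encoding $\sigma \uh \varphi$ assigns $\varepsilon$ to any $\nu$-variable $x$ for which $\sigma((\varphi,x),0) = \emptyset$.

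Next I would apply the $\FRmML$-rule corresponding to $R$. For logical rules ($\vee$, $\wedge$, $\RMod$, $\nu$), the principal-formula annotation and the subsequence manipulation on $\Theta$ are in direct correspondence with the action of the trace map $r_i$ on the single stack under each $((\varphi,x),0)$; the fresh-letter condition of the $\nu$-rule matches the cover-phase introduction of a fresh chip in \Cref{def:t-succ}. The $\mu$-rule is the subtle case: the $\sigma \setminus x$ operation exactly captures the fact that the trace triples for $\nu$-variables $y$ with $y <_{\nu x.\varphi} x$ carry activation $2$ in $\iota_\FF$, which in a greedy run leaves their stacks trapped on the $a=2$ column and discarded by the subsequent thinning; so the $\mu$-rule of $\FRmML$ agrees with the net effect in $\Sss(\TF)$. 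The \textsc{Ax} and \textsc{Wk} cases are immediate.

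Finally, the thinning step of the greedy transition is where $\RMerge$ enters: if on a board position $((\varphi,x),0)$ the $r_i$-successor carries two stacks $S \neq S'$ arising from different source positions, thinning keeps only $\min_{<_\Theta} \{S,S'\}$. In $\FRmML$ this manifests as two annotated copies $(\varphi,\sigma), (\varphi,\sigma')$ which we collapse using $\RMerge$ to $(\varphi, \merge_\Theta(\sigma,\sigma'))$; this is precisely the same $\Theta$-minimum choice by definition of $\merge_\Theta$. Iterating this for each doubled formula produces the desired premise $\search(\Gamma_i;(\Theta_i,\sigma_i))$. The soundness-condition preservation argument then runs exactly as in \Cref{lem:elab}: an invariant chip $a$ in an $\FRmML$-cycle produced by $\search$ corresponds to a chip which survives every greedy $r_i$-successor along the underlying $\Sss(\TF)$-cycle and is reset somewhere along it, so the $\Sss(\TF)$-acceptance condition transfers to the $\FRmML$-invariant condition. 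The main obstacle is bookkeeping in the $\mu$-case, to verify that the $\sigma \setminus x$ operation, the thinning, and any residual $\RMerge$ applications collectively yield exactly the target annotation $\sigma \uh \varphi[\mu x.\varphi/x]$ produced by $\search$.
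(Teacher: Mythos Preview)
Your overall strategy matches the paper's proof closely: expand the greedy transition, simulate resets, skip the population (since $\search$ already encodes empty stacks as $\varepsilon$), apply the $\FRmML$-version of $R$, and use $\RMerge$ to replicate the thinning. However, there are two related points where your argument goes astray, both concerning the failure value $2 \in \FF$.

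First, your explanation of the $\mu$-case is incorrect. You write that stacks carrying activation $2$ are ``trapped on the $a=2$ column and discarded by the subsequent thinning''. Thinning does not discard such stacks: by \Cref{def:weakening} it retains the $<_\Theta$-least stack on \emph{each} position, so a lone stack on $((\varphi',x),2)$ survives. The actual reason the $\FRmML$ $\mu$-rule agrees with $\search$ on the premise is simply that $\search$ reads only column~$0$: since $(\sigma \uh \varphi)(x) = \widehat{\sigma((\varphi,x),0)}$, any stack that the $r_i$-step sends to column~$2$ is ignored by the translation, which is exactly the effect of $\sigma \setminus x$ in $\FRmML$. (Also note your subsumption inequality is written the wrong way round; compare the direction in the definition of $\sigma \setminus x$.)

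Second, and more importantly, this dropping of column-$2$ stacks means the soundness-preservation step does \emph{not} run ``exactly as in \Cref{lem:elab}''. In $\Sss(\TF)$ the witnessing chip $\gamma$ lives in the full Safra-board control $\Theta$, which may include chips that appear only in stacks on column~$2$; such chips are absent from the $\FRmML$ annotations and hence may be stripped from the $\FRmML$ control by the $\Theta'$-convention of the rules. The paper closes this gap with an additional argument specific to~$\FF$: a chip that resides only on column~$2$ can never again be covered (stacks on column~$2$ never receive new chips via the cover phase and never move off column~$2$ since $2 \vee a = 2$), so any accepting-run witness $\gamma$ must occur in some column-$0$ stack throughout the cycle. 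This is precisely the observation you need to justify that the $\Sss(\TF)$-witness survives the $\search$ translation and yields an $\FRmML$ invariant.
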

\begin{proof}
  Towards this claim, first pick some $\Rrr(\Theta, \sigma) \in \Sss(\TF)$ arranged as follows
  \[
    \inference[$\Rrr(\Theta, \sigma)$]{
      \Gamma_1; (\Theta_1, \sigma_1) \quad
      \ldots \quad
      \Gamma_n; (\Theta_n, \sigma_n)
    }
    {\Gamma; (\Theta, \sigma)}
  \]
  Then there is $R \in \TF$ with $\rho(R) = (\Gamma, \Gamma_1, \ldots,
  \Gamma_n)$ and morphisms $r_i \colon \iota_\FF(\Gamma)
  \to \iota_\FF(\Gamma_i)$ given by
  the trace interpretation. Then for each $i \leq n$ there is $(\Theta, \sigma)
  \gstep{r_i} (\Theta_i, \sigma_i)$ with the expanded sequence
  the expanded sequence
  \[(\Theta, \sigma) \step{R_{\gamma_1}} (\Theta_r^1, \sigma_r^1) \ldots
    \step{R_{\gamma_k}} (\Theta_r^k, \sigma_r^k)
    \step{P} (\Theta_p, \sigma_p) \step{r_i} (\Theta^*_i,
    \sigma^*_i) \step{T} (\Theta_i, \sigma_i) \]
  in which the initial $R_\gamma$- and $P$-steps are shared between all $i \leq
  n$ (see \Cref{lem:greedy-ceil}). Similarly to \Cref{lem:elab}, we may derive
  the following in $\FRmML$:
  \begin{comfproof}
    \AXC{$\widehat{\Theta_1}\rsep \Gamma_1^{\sigma_1}$}
    \DOC{}
    \LIC{\RMerge}
    \UIC{$\widehat{\Theta_1^*}\rsep \Gamma^0_1$}
    \AXC{$\ldots$}
    \AXC{$\widehat{\Theta_n}\rsep \Gamma_n^{\sigma_n}$}
    \DOC{}
    \RIC{\RMerge}
    \UIC{$\widehat{\Theta_n^*}\rsep \Gamma^0_n$}
    \LIC{R}
    \TIC{$\widehat{\Theta_r^k} \rsep \Gamma^{\sigma_r^k}$}
    \DOC{}
    \LIC{$\textsc{Reset}_{\gamma_1}$}
    \UIC{$\widehat{\Theta_0} \rsep \Gamma^{\sigma_0}$}
  \end{comfproof}
  That is, first apply all possible $\textsc{Reset}_{\gamma_i}$-rules, starting at the
  $\Theta_0$-greatest $\gamma_0$. Because $\Gamma^{\sigma}$ annotates $\nu$-variables
  to which $\sigma$ assigns no stack with the empty stack $\varepsilon$, the population
  step does not need to be replicated in the preproof as $\Gamma^{\sigma_r^k}
  = \Gamma^{\sigma_p}$. Continue by applying the rule corresponding
  to $R \in M$. Observe that while the controls resulting from this application
  matches $\Theta_i^*$, the sequents will be some sequent $\Gamma^0_i$ which
  might contain multiple copies of the same formula with different
  annotations (analogously to the `collapsing' issues in
  \Cref{lem:frmml-embed}). In such cases, there is some $(\varphi, x)$ to which
  $\sigma^*_i$ assigns two or more stacks. The greedy run uses a thinning to reestablish
  the property that each quantifier is assigned at most one stack. In $\FRmML$,
  this can be replicated by applying the $\RMerge$-rule to all formulas
  $(\varphi, \sigma), (\varphi, \sigma') \in \Gamma_i^0$. Note that the
  order of applications and choice of which two `$\varphi$-instances' to
  pick for $\RMerge$-applications does not matter as the resulting sequent will
  always be $\widehat{\Theta_i} \rsep \Gamma_i^{\sigma_i}$.

  An argument analogous to that given for $\elab$ in \Cref{lem:elab} shows that
  $\search$ maintains the soundness condition. Crucially, this relies on the
  fact that an accepting Safra board run through a proof in $\Sss(\TF)$ can
  never `enter some state' $((\varphi, x), 2)$. If it did, from that point onwards, the run can
  never reach any $((\psi, x), a)$ with $a < 2$ anymore, meaning only finitely
  $\RReset$s could take place from that point onwards. Thus, `dropping' the
  stacks on $((\varphi, x), 2)$ as the function $\search \colon \Sss(\TF) \to \FRmML$
  does not hinder the maintainance of the soundness condition.
\end{proof}

\begin{corollary}[Completeness]\label{lem:frmml-complete}
  If $\mML \vdash \Gamma$ then $\FRmML \vdash
  \Gamma$.
\end{corollary}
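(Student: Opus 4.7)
The plan is to mirror the proof of \Cref{lem:ra-complete}, following the standard pattern for deriving completeness of a bespoke reset system from the abstract completeness theorem \Cref{lem:complete} via the proof morphism $\search$ constructed just above.

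First I would assume $\mML \vdash \Gamma$ and fix a witnessing cyclic proof $\Pi$. Because $\Pi$ is a finite cyclic tree, only finitely many derivation rule instances of $\mML$ occur in it; these form a finite subsystem $\TF \subseteq \mML$ whose derivation rules are finite and whose trace interpretation is the restriction of $\iota_\FF \colon \mML \to \TT_\FF$. The sequent $\Gamma$ is thus provable in $\TF$ as well.

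Next I would invoke \Cref{lem:complete} applied to the cyclic proof system $\TF$: this yields a proof $\Pi^*$ of $\Gamma ; (\emptyset, (x, a) \mapsto \emptyset)$ in the proof search system $\Sss(\TF)$. Applying the proof morphism $\search \colon \Sss(\TF) \to \FRmML$ constructed in the preceding lemma then produces a $\FRmML$-preproof $\search(\Pi^*)$ whose endsequent, by the definition of $\search$ on the sequent $\Gamma ; (\emptyset, (x, a) \mapsto \emptyset)$, is exactly $\varepsilon \rsep \{(\varphi, \varepsilon) ~|~ \varphi \in \Gamma\}$, i.e., a proof of $\Gamma$ in $\FRmML$. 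Since $\search$ is a proof morphism, soundness of $\search(\Pi^*)$ as an $\FRmML$-preproof is automatic, yielding $\FRmML \vdash \Gamma$.

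There is essentially no obstacle left: all the non-trivial work has been discharged by the preceding lemma establishing that $\search$ is a proof morphism (in particular, its care in simulating the thinning steps via $\RMerge$ and its handling of failure-valued trace steps) and by \Cref{lem:complete}. The corollary is thus a direct composition of these two facts, exactly parallel to how \Cref{lem:ra-complete} follows from \Cref{lem:complete} together with the morphism $\search \colon \Sss(\TF) \to \RA$.
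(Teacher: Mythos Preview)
Your proposal is correct and follows exactly the pattern the paper uses for the analogous \Cref{lem:ra-complete}: restrict to a finite fragment $\TF$, extract the $\Sss(\TF)$-proof produced inside the proof of \Cref{lem:complete}, and push it through the proof morphism $\search$ just established. The paper leaves this corollary without an explicit proof precisely because it is the routine composition you describe.
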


As can be observed, most issues in establishing both $\embed$ and $\search$ as
proof morphisms surround the `collapsing' of formula identities. A simple
solution to circumvent all issues of this kind is be to take the sequents of
$\mML$ to be lists of formulas in which there can be distinct occurrences of the
same formula (and defining $\FRmML$ with similar list-like sequents). We chose
to not do this as this would mark a departure from the presentation of $\mML$
in~\cite{afshariCutfreeCompletenessModal2017}. A central goal of this article is
to give `recipes' for generating reset system which do not require any
modifications to the original cyclic proof system. Doing so for this example
would thus have violated this goal. Nonetheless, we recommend readers who may be
running into similar problems when generating reset systems for their cyclic
proof systems consider making such a modification in order to lighten their
proving load.

\subsubsection{$\BB$-Reset Modal $\mu$-Calulus}
\label{sec:boolean-mu}

We next present a reset proof system $\BRmML$ corresponding to $\iota_\BB(\mML)$
(pronounced ``$\BB$-reset modal $\mu$-calculus'').

The system uses $\mu$-formulas with annotated quantifiers. That is, each
$\nu$-quantifier $\nu^u x. \varphi$ is annotated with a sequence of distinct
characters $u$:
\[\varphi \in \form_\BB \langeq p ~|~ \neg p ~|~ x ~|~ \varphi \wedge \varphi
  ~|~ \varphi \vee \varphi ~|~ \Box \varphi ~|~ \Diamond \varphi ~|~ \mu
  x.\varphi ~|~ \nu^u x. \varphi
  \quad p \in \prop, x \in \var \]
Given $\varphi \in \form_\BB$, write $\ol{\varphi} \in \form$ for the
$\mu$-formula obtained by removing the annotations in $\varphi$. Given a
sequence $\Theta$ of distinct letters, define a
partial function $\merge_\Theta \colon \form_\BB \times \form_\BB \to \form_\BB$ such that
$\merge_\Theta(\varphi, \psi)$ is defined on $\varphi,
\psi \in \form_\BB$ iff $\ol{\varphi} = \ol{\psi}$ and the annotations in
$\varphi, \psi$ are subsequences of $\Theta$. The definition is given
below:
\begin{mathpar}
  \merge_\Theta(a, a) \coloneq a

  \merge_\Theta(\varphi_0 \bullet \varphi_1, \psi_0 \bullet \psi_1) \coloneq
  \merge_\Theta(\varphi_0, \psi_0) \bullet \merge_\Theta(\varphi_1, \psi_1)

  \merge(\ocircle \varphi, \ocircle \psi) \coloneq \ocircle \merge(\varphi, \psi)

  \merge(\nu^u x. \varphi, \nu^v x. \psi) \coloneq \nu^{\min_\Theta(u, v)} x.
  \merge(\varphi, \psi)
\end{mathpar}
where $a \in \prop \cup \var$, $\bullet \in \{\wedge, \vee\}$ and $\ocircle \in \{\neg, \Box,
\Diamond\} \cup \{\mu x ~|~ x \in \var\}$ and $\min_\Theta(u, v)$ is the
minimal sequence according to the ordering $<_\Theta$ defined in \Cref{def:safra-board}.

A letter $a$ is \emph{covered} in $\varphi$ if in every annotation $u$ in
$\varphi$ such that $a$ appears in $u$, $a$ is not at the last position of $u$.
This notion extends to sets $\Gamma$ of annotated formulas. The \emph{reset
  operation} $\sreset{\varphi}{a}$ is defined below:
\begin{mathpar}
  \sreset{p}{a} \coloneq p

  \sreset{\varphi \bullet \psi}{a} \coloneq (\sreset{\varphi}{a}) \bullet (\sreset{\psi}{a})

  \sreset{\ocircle \varphi}{a} \coloneq \ocircle (\sreset{\varphi}{a})

  \sreset{\nu^u x. \varphi}{a} \coloneq \nu^{u'} x. \sreset{\varphi}{a}
  \qquad \text{ where } u' \coloneq
  \begin{cases}
    u & \text{ if } \sigma(x) = uav \\
    \sigma(x) & \text{ otherwise}
  \end{cases}
\end{mathpar}
where $p \in \prop \cup \var$, $\bullet \in \{\wedge, \vee\}$ and $\ocircle \in \{\neg, \Box,
\Diamond\} \cup \{\mu x ~|~ x \in \var\}$.

The \emph{sequents} of $\BRmML$ are expressions $\Theta \rsep \Gamma$,
where the \emph{control} $\Theta$ is a sequence of distinct characters and
$\Gamma$ is a finite set of formulas from $\form_\BB$ such that each quantifier
$\sigma^u x. \varphi$ occurring in $\Gamma$ is annotated with a subsequence of
$\Theta$. 
Write $\varphi^\epsilon$ for a formula $\varphi$ in which all
quantifiers are annotated with the empty sequence $\varepsilon$, extending
this notation to sets of formulas $\Gamma$.
The set of $\BRmML$-sequents is denoted by $\Seq_{\BRmML}$.

\begin{definition}\label{def:brmu}
  The \emph{derivation rules} of $\BRmML$ are given below. Denote by $\Theta'$ the
  control from which all letters not occurring in any annotation in $\Gamma$
  are removed.
  \begin{mathpar}
    \inference[\textsc{Ax}]{}{\Theta \rsep p, \neg p}

    \inference[\textsc{Wk}]{\Theta' \rsep \Gamma}{\Theta \rsep \Gamma, \varphi}

    \inference[$\vee$]{\Theta \rsep \Gamma, \varphi, \psi}{\Theta \rsep \Gamma, \varphi \vee \psi}

    \inference[$\wedge$]{\Theta' \rsep \Gamma, \varphi \qquad \Theta' \rsep
      \Gamma, \psi}{\Theta \rsep \Gamma, \varphi \wedge \psi}

    \inference[\textsc{Mod}]{\Theta \rsep \Gamma, \varphi}{\Theta \rsep \Diamond \Gamma, \Box \varphi}

    \inference[$\mu$]{\Theta \rsep \Gamma, \varphi[\mu x. \varphi / x]}{\Theta \rsep \Gamma, \mu x. \varphi}

    \inference[$\nu$]{\Theta a \rsep \Gamma, \varphi[\nu^{ua} x. \varphi / x]
      \quad a \not\in \Theta}{\Theta \rsep \Gamma, \nu^u x. \varphi}

    \inference[$\RReset_a$]
    {\Theta' \rsep \sreset{\Gamma}{a} \quad a \text{ covered in } \Gamma }
    {\Theta \rsep \Gamma}

    \inference[\(\RMerge\)]{\Theta' \rsep \Gamma, \xi \quad \merge_\Theta(\varphi,
      \psi) = \xi}{\Theta \rsep \Gamma, \varphi, \psi}
  \end{mathpar}
  A $\BRmML$-preproof is a \emph{proof} if
  every pair of bud $t
  \in \dom(\beta)$ and companion $\beta(t)$ has an \emph{invariant}, i.e. there
  exists a letter $a$ such that $a$ occurs in all of the controls
  $\Theta$ between $t$ and $\beta(t)$, the prefix of $a$ in the controls
  $\Theta$ remains constant and the $\textsc{Reset}_a$ rule
  is applied between $t$ and $\beta(t)$.
  A $\BRmML$-proof is \emph{a proof 
  of $\Gamma$} if its root
  is labeled $\varepsilon \rsep \Gamma^\varepsilon$.
  Write $\BRmML \vdash \Gamma $ if there is a proof of $\Gamma$ in $\BRmML$.
\end{definition}

Soundness of $\BRmML$ with regards to $\mML$ is proven by constructing a proof
morphism $\embed \colon \BRmML \to \mML$.

\begin{lemma}
  There exists a function $\embed \colon \Seq_{\BRmML} \to \Rrr(\Seq_{\mML})$ which is defined by
  \[\embed(\Theta \rsep \Gamma) \coloneq \ol{\Gamma} ; (\ol{\Theta}, \sigma_\Gamma) \]
  where $\ol{\Theta}$ is the set $\{u \text{ occurs in } \Theta\}$ ordered
  according to the letters' positions in $\Theta$, $\ol{\Gamma}$ is $\Gamma$
  with all annotations removed and $\sigma_\Gamma$ is defined as below:
  \[
    (\ol{\varphi}, i) \mapsto \{\{a \text{ occurs in } u\} ~|~ \psi \in \Gamma
    \text{ and } \ol{\psi} = \ol{\varphi} \text{ and } \psi \addr i = \nu^u x.
    \xi\} \text{ for each } \varphi \in \Gamma \text{ and } i \in N(\varphi)
  \]
  The function can be extended into a proof morphism $\embed \colon \BRmML \to \mML$.
\end{lemma}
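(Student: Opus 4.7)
The plan is to mirror the proof of \Cref{lem:frmml-embed} very closely, adapting it to the different annotation mechanism of $\BRmML$: annotations are attached to $\nu$-quantifiers inside formulas rather than being assignments paired with formula occurrences. The key observation making the translation work is that $\sigma_\Gamma$ aggregates annotation sequences across all $\psi \in \Gamma$ with $\ol{\psi} = \ol{\varphi}$, so two $\BRmML$-formulas with the same shape but different annotations are ``collapsed'' into a single entry in $\ol{\Gamma}$ carrying multiple stacks.

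I would handle each derivation rule of $\BRmML$ as follows. For $\RAx$ and $\RMod$, the translation is immediate by the corresponding $\Rrr(\mML)$-rule, since no collapsing can occur. For $\vee$, $\wedge$, $\nu$ and $\mu$, the same collapsing issue as in \Cref{lem:frmml-embed} can arise: applying the $\Rrr(\mML)$-correspondent to the collapsed $\ol{\Gamma}$ may introduce additional stacks that arise from ``phantom'' copies of the principal formula present in $\Gamma$. These are removed by a subsequent $\RWeak$ in $\Rrr(\mML)$, exactly as in \Cref{lem:frmml-embed}. For $\RWk$ in $\BRmML$, weakening a formula $\varphi$ when another $\psi$ with $\ol{\psi} = \ol{\varphi}$ remains in $\Gamma$ does not require a $\RWeak$ rule in $\Rrr(\mML)$ at all — only some stacks disappear from $\sigma_\Gamma$; when the weakened formula is the sole $\ol{\varphi}$-occurrence we compose with an $\Rrr(\mML)$ weakening. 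The $\RReset_a$ rule translates to the $\Rrr(\mML)$-rule $\RReset_{a}$ directly, as the definitions of coveredness and reset on annotations of $\nu$-quantifiers match those on Safra board stacks. The $\RMerge$ rule is simulated by $\RWeak$ of $\Rrr(\mML)$: since $\merge_\Theta(\varphi, \psi)$ keeps the $<_\Theta$-minimum of each pair of annotations, and $\sigma_\Gamma$ already aggregates both stacks, the $\Rrr(\mML)$-sequent $\embed(\Theta \rsep \Gamma, \xi)$ is obtained from $\embed(\Theta \rsep \Gamma, \varphi, \psi)$ by dropping the non-minimal stack, which is exactly what $\RWeak$ permits.

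The delicate case is the $\mu$-rule. Analogously to \Cref{lem:frmml-embed}, unfolding $\mu x.\varphi$ in $\BRmML$ produces $\varphi[\mu x.\varphi / x]$ whose $\nu$-quantifiers bound by $x$ (in the subsumption order of the original formula) reappear with fresh empty annotations, whereas the corresponding $\Rrr(\mML)$-rule routes stacks through the activation value $2$. To reconcile, I compose the $\Rrr(\mML)$ $\mu$-rule with a $\RWeak$ that discards the stacks newly placed at activation $2$ (those corresponding to quantifiers that $\mu x$ subsumes), followed by a $\RPop$ that inserts fresh empty stacks at activation $0$ so that the result matches $\sigma_{\Gamma'}$ for the $\BRmML$-premise $\Gamma'$. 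For the $\nu$-rule, the fresh letter $a$ being appended to the annotation on the new copy of $\nu^{ua} x.\varphi$ corresponds precisely to the chip covering that occurs in the $\Rrr(\mML)$ $\nu$-rule, so no additional bookkeeping is needed.

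Preservation of the soundness condition is essentially routine once the translations are in place. Every invariant letter $a$ of a cycle $\beta(t) < t$ in a $\BRmML$-proof appears throughout the corresponding cycle in $\embed(\Pi)$ as a chip present in all intermediate controls $\ol{\Theta_i}$; the $\RReset_a$ step along the $\BRmML$ cycle translates to a $\Rrr(\mML)$ $\RReset_a$ step; and coveredness of $a$ in $\BRmML$-sequents corresponds exactly to coveredness of the chip on the Safra boards, by construction of $\sigma_\Gamma$. Thus the invariant prefix in the $\Rrr(\mML)$-cycle witnessed by $a$ exists, establishing that $\embed$ sends proofs to proofs. I expect the main obstacle to be the careful bookkeeping for the $\mu$-case, where one must check that the $\RWeak$/$\RPop$ combination reaches exactly $\embed$ of the $\BRmML$-premise and not merely a Safra-board-equivalent one; this requires tracking the subsumption-based ``drop'' of $\nu$-variables bound by $x$ on both sides of the translation.
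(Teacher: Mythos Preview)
Your proposal tracks the paper's approach closely in most respects---the handling of collapsing via $\RWeak$, the simulation of $\RMerge$ by $\RWeak$, the direct translation of $\RReset_a$, and the soundness-preservation argument all match the paper's (brief) proof. There is, however, one genuine error.

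Your treatment of the $\mu$-rule is based on a confusion between $\iota_\BB$ and $\iota_\FF$. You write that ``the corresponding $\Rrr(\mML)$-rule routes stacks through the activation value $2$'' and propose a $\RWeak$/$\RPop$ combination by analogy with \Cref{lem:frmml-embed}. But $\BRmML$ is built over the trace interpretation $\iota_\BB \colon \mML \to \TT_\BB$, and the Boolean algebra $\BB = \{0,1\}$ has no element $2$. Inspecting \Cref{def:bb-mu-tc}, the trace map for the $\mu$-rule involves only activation value $0$: the clause producing activation $b$ is vacuous for $\sigma = \mu$ since $\varepsilon \notin N(\mu x.\varphi)$, and the remaining clauses simply route each $\nu$-address $0a$ in $\mu x.\varphi$ to the address $a$ and to each $a'0a$ (for $a' \in O_x(\varphi)$) in the unfolding, all at activation $0$. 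Correspondingly, the $\BRmML$ $\mu$-rule does \emph{not} clear any annotations---there is no analogue of the $\sigma \setminus x$ operation from $\FRmML$; unfolding $\mu x.\varphi$ merely substitutes the already-annotated formula $\mu x.\varphi$ for $x$, so every $\nu$-annotation is carried along unchanged into both the outer copy of $\varphi$ and each substituted copy. This matches the $\tau$-successor computation exactly (no Cover step fires), and hence the $\mu$-rule of $\BRmML$ translates directly to its $\Rrr(\mML)$-counterpart, modulo the collapsing issue you already handle. No $\RWeak$/$\RPop$ bookkeeping is required; the paper accordingly says only that ``each $\BRmML$ rule is translated to its $\Rrr(\mML)$-counterpart''.
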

\begin{proof}
  As with $\BRmML$, the main complication in constructing this morphism is that
  for $\Theta \rsep \Gamma \in \Seq'$ there may be two formulas $\varphi, \psi \in
  \Gamma$ with $\ol{\varphi} = \ol{\psi}$, i.e. two instances of the same formula
  with different annotation. The $\embed$-function `collapses' the two annotated
  formulas into one instance of a formula in which each $\nu$-instance is
  annotated by two stacks. Both the kinds of complications which can arise and
  their treatment using the $\mML$-rule $\RWk$ and the $\Rrr(\mML)$-rule $\RWeak$
  are completely analogous to those in $\FRmML$. Hence, we refer the reader to for a
  more exhaustive treatment of these issues \Cref{lem:frmml-embed}.

  Each $\BRmML$ rule is translated to its $\Rrr(\mML)$-counterpart. The rule
  \RMerge{} is always simulated by a suitable instance of \RWeak{}.
\end{proof}

\begin{corollary}[Soundness]
  If $\BRmML \vdash \Gamma$ then $\mML \vdash \ol{\Gamma}$.
\end{corollary}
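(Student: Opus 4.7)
The plan is to mimic the soundness arguments already given for \( \RA \) (\Cref{lem:ra-sound}) and for \( \RGT \) (\Cref{lem:rct-sound}), namely to compose the embedding \( \embed \colon \BRmML \to \Rrr(\mML) \) established in the preceding lemma with the strip morphism \( \strip \colon \Rrr(\mML) \to \mML \) from \Cref{lem:strip-cyclic-morph}. Concretely, suppose \( \Pi \) is a \( \BRmML \)-proof of \( \Gamma \), so that its root is labelled \( \varepsilon \rsep \Gamma^\varepsilon \). Applying the proof morphism \( \embed \) yields an \( \Rrr(\mML) \)-proof \( \embed(\Pi) \) whose endsequent is \( \ol{\Gamma^\varepsilon}; (\emptyset, (x, a) \mapsto \emptyset) \); observing that stripping the (all empty) annotations from \( \Gamma^\varepsilon \) gives back \( \ol{\Gamma} \), the endsequent is in fact \( \ol{\Gamma}; (\emptyset, (x, a) \mapsto \emptyset) \).

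Then by \Cref{lem:strip-cyclic-morph} (equivalently, the soundness corollary \Cref{lem:sound}), \( \strip(\embed(\Pi)) \) is an \( \mML \)-proof whose endsequent is \( \ol{\Gamma} \), giving \( \mML \vdash \ol{\Gamma} \) as required. Since both \( \embed \) and \( \strip \) are already established as proof morphisms -- i.e.\ preproof morphisms that preserve the soundness condition -- no further work is needed; all the heavy lifting has been done in the two preceding results. This proof is therefore essentially a one-line composition and I expect no real obstacle: the only thing to double check is that the root annotation of an \( \BRmML \)-proof of \( \Gamma \), namely \( \Gamma^\varepsilon \), is sent by \( \embed \) to the initial Safra board annotation \( (\emptyset, (x, a) \mapsto \emptyset) \) demanded by the hypothesis of \Cref{lem:sound}, which is immediate from the definition of \( \embed \) and \( \sigma_\Gamma \).
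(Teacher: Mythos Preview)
Your proposal is correct and follows exactly the approach the paper intends: compose the proof morphism \( \embed \colon \BRmML \to \Rrr(\mML) \) from the preceding lemma with \( \strip \colon \Rrr(\mML) \to \mML \) from \Cref{lem:strip-cyclic-morph}, precisely as in the explicit argument for \Cref{lem:ra-sound}. The paper in fact states this corollary without proof, treating it as immediate from those two results, and your verification that the root sequent \( \varepsilon \rsep \Gamma^\varepsilon \) is mapped by \( \embed \) to \( \ol{\Gamma}; (\emptyset, (x,a) \mapsto \emptyset) \) is the only detail worth noting.
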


Let $\TF$ be a finite fragment of $\mML$. To prove completeness, one constructs
a proof morphism $\search \colon \Sss(\TF) \to \BRmML$ embedding the proof search system
for $\TF$ into $\BRmML$.

\begin{lemma}
  There exists a function $\search \colon \Sss(\Seq_{\TF}) \to \Seq_{\BRmML}$ with 
  \[\search(\Gamma; (\Theta, \sigma)) \coloneq \widehat{\Theta}
    \rsep \Gamma^\sigma\]
  where for any $S \subseteq \Theta$, $\widehat{S} \in \Theta^*$ is the
  duplicate-free sequence of length
  $\abs{S}$, consisting of the elements of $S$ which is strictly sorted
  according to $\Theta$. The notation
  $\Gamma^\sigma \coloneq \{\varphi^{\sigma \uh \varphi}_\varepsilon ~|~ \varphi
  \in \Gamma\}$ with $(\sigma \uh \varphi)(a) \coloneq \sigma(\varphi, a)$ and
  $\varphi^{\widehat{\sigma}}_a$ is recursively defined by
  \[
    c^{\widehat{\sigma}}_a \coloneq c
    \qquad
    (\varphi \bullet \psi)^{\widehat{\sigma}}_a \coloneq
    \varphi^{\widehat{\sigma}}_{a0} \bullet \psi^{\widehat{\sigma}}_{a1}
    \qquad
    (\ocircle \varphi)_a^{\widehat{\sigma}} \coloneq \ocircle \varphi_{a0}^{\widehat{\sigma}}
    \qquad
    (\nu x. \varphi)_a^{\widehat{\sigma}} \coloneq
    \begin{cases}
      \nu^{\widehat{S}} x. \varphi^{\widehat{\sigma}}_{a0} & \text{ if } \widehat{\sigma}(i) \coloneq \{S\} \\
      \nu^\varepsilon x. \varphi^{\widehat{\sigma}}_{a0} & \text{ otherwise}
    \end{cases}
  \]
  where $c \in \prop \cup \var$, $\bullet \in \{\wedge, \vee\}$ and $\ocircle \in \{\neg, \Box,
  \Diamond\} \cup \{\mu x ~|~ x \in \var\}$.

  The function can be extended to a proof morphism $\search \colon \Sss(\TF) \to \BRmML$.
\end{lemma}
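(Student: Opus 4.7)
The plan is to follow the same template used for the preceding morphisms $\elab \colon \Sss(\TF) \to \Rrr(\RR)$ and $\search \colon \Sss(\TF) \to \FRmML$. I pick an arbitrary rule $\Rrr(\Theta, \sigma) \in \Sss(\TF)$ with conclusion $\Gamma ; (\Theta,\sigma)$ and premises $\Gamma_i ; (\Theta_i,\sigma_i)$, coming from a rule $R \in \TF$ with $\rho(R) = (\Gamma, \Gamma_1,\ldots,\Gamma_n)$ and trace maps $r_i \colon \iota_\BB(\Gamma) \to \iota_\BB(\Gamma_i)$. By the choice of $\gstep{r_i}$ made via \Cref{lem:greedy-ceil}, each $(\Theta,\sigma) \gstep{r_i} (\Theta_i,\sigma_i)$ expands into a shared prefix of reset steps and a population step, followed by the $r_i$-successor and a thinning, and my task is to realise each of these phases inside $\BRmML$ as a block of derivation steps on top of $\search(\Gamma_i;(\Theta_i,\sigma_i)) = \widehat{\Theta_i} \rsep \Gamma_i^{\sigma_i}$.

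First, I would translate the thinning $\step{T}$: whenever $r_i$ creates multiple $\nu^u x$-annotations at the same address of the same underlying formula (i.e.\ multiple stacks landing on some $((\varphi,x),0)$ in the Safra board picture), I apply the $\BRmML$-rule $\RMerge$ one or more times to fuse them, using the $\min_\Theta$ convention in $\merge_\Theta$ that precisely mirrors the choice of the $<_\Theta$-least stack in the thinning. Next, I translate the $r_i$-successor by applying the $\BRmML$ rule corresponding to $R$; as in the other cases in \Cref{sec:concrete}, the definition of $\varphi^{\widehat\sigma}_a$ is set up exactly so that fixed-point unfoldings, modalities and boolean connectives propagate the annotations in the way that $r_i$ prescribes for $\iota_\BB$ (see \Cref{def:bb-mu-tc}). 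Because $\Gamma^\sigma$ already annotates every $\nu$-quantifier whose address $\sigma$ assigns no stack to with the empty sequence, the population step $\step{P}$ needs no translation: $\Gamma^{\sigma_r^k} = \Gamma^{\sigma_p}$. Finally, the leading chain of reset steps $\step{R_{\gamma_1}} \ldots \step{R_{\gamma_k}}$ is reproduced by applying the $\BRmML$-rule $\RReset_a$ at each $\gamma_i$ in descending order, which is legal because the greedy procedure applies resets exactly on covered chips and $\sreset{(\cdot)}{a}$ on annotated formulas agrees with the Safra-board reset on the encoded stacks.

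Preservation of the soundness condition is the standard adaptation of the argument in \Cref{lem:elab}. Given a bud in $\search(\Pi)$ and the induced cycle in $\Pi$, the $\Sss(\TF)$ soundness condition provides some $\gamma$ that persists in all the $\Theta_j$ along the cycle and is covered somewhere; by \Cref{lem:greedy-ceil} this chip also persists along the images $\widehat{\Theta_j}$ in $\search(\Pi)$ and an $\RReset_\gamma$ occurs on the cycle, while the fact that $\gamma$ is never removed ensures that the prefix $\theta = \{a \in \widehat{\Theta_j} \mid a \le \gamma\}$ remains constant, providing the invariant required by \Cref{def:brmu}.

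The main obstacle, as in \Cref{lem:frmml-embed} and the $\search$ morphism for $\FRmML$, is handling the correspondence between multiple annotated copies of the same underlying formula in $\Gamma^{\sigma_i^*}$ and their collapsed single representation in $\Gamma_i^{\sigma_i}$: I have to argue that the $\RMerge$ steps can be applied in some order with the right choice of pairs so that the end result is exactly $\widehat{\Theta_i} \rsep \Gamma_i^{\sigma_i}$, irrespective of the ordering, which amounts to observing that $\merge_\Theta$ is associative and commutative on annotated formulas sharing an underlying $\ol{\varphi}$ and that its behaviour on the $\nu^u$ annotations is exactly the pointwise $\min_\Theta$ used in the thinning. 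Once this combinatorial bookkeeping is in place, everything else is routine.
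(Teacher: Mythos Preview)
Your proposal is correct and follows essentially the same approach as the paper's proof: expand each $\Sss(\TF)$-rule into the greedy transition sequence, simulate the reset phase by $\RReset_a$ applications in descending order, skip the population phase because $\Gamma^{\sigma_r^k} = \Gamma^{\sigma_p}$, apply the $\BRmML$-correspondent of $R$, and then use $\RMerge$ to collapse the resulting duplicate annotated copies of the same underlying formula, noting that the order of merges is immaterial; soundness preservation is the standard adaptation of the argument for $\elab$ in \Cref{lem:elab}. The paper makes precisely these moves and even illustrates the duplicate-copy phenomenon with the same kind of example you allude to.
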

\begin{proof}
  Towards this claim, first pick some $\Rrr(\Theta, \sigma) \in \Sss(\TF)$ arranged as follows
  \[
    \inference[$\Rrr(\Theta, \sigma)$]{
      \Gamma_1; (\Theta_1, \sigma_1) \quad
      \ldots \quad
      \Gamma_n; (\Theta_n, \sigma_n)
    }
    {\Gamma; (\Theta, \sigma)}
  \]
  Then there is $R \in \TF$ with $\rho(R) = (\Gamma, \Gamma_1, \ldots,
  \Gamma_n)$ and morphisms $r_i \colon \iota_\BB(\Gamma)
  \to \iota_\BB(\Gamma_i)$ given by
  the trace interpretation. Then for each $i \leq n$ there is $(\Theta, \sigma)
  \gstep{r_i} (\Theta_i, \sigma_i)$ with the expanded sequence
  the expanded sequence
  \[(\Theta, \sigma) \step{R_{\gamma_1}} (\Theta_r^1, \sigma_r^1) \ldots
    \step{R_{\gamma_k}} (\Theta_r^k, \sigma_r^k)
    \step{P} (\Theta_p, \sigma_p) \step{r_i} (\Theta^*_i,
    \sigma^*_i) \step{T} (\Theta_i, \sigma_i) \]
  in which the initial $R_\gamma$- and $P$-steps are shared between all $i \leq
  n$ (see \Cref{lem:greedy-ceil}). Similarly to \Cref{lem:elab}, we may derive
  the following in $\BRmML$:
  \begin{comfproof}
    \AXC{$\widehat{\Theta_1}\rsep \Gamma_1^{\sigma_1}$}
    \DOC{}
    \LIC{\RMerge}
    \UIC{$\widehat{\Theta_1^*}\rsep \Gamma^0_1$}
    \AXC{$\ldots$}
    \AXC{$\widehat{\Theta_n}\rsep \Gamma_n^{\sigma_n}$}
    \DOC{}
    \RIC{\RMerge}
    \UIC{$\widehat{\Theta_n^*}\rsep \Gamma^0_n$}
    \LIC{R}
    \TIC{$\widehat{\Theta_r^k} \rsep \Gamma^{\sigma_r^k}$}
    \DOC{}
    \LIC{$\textsc{Reset}_{\gamma_1}$}
    \UIC{$\widehat{\Theta_0} \rsep \Gamma^{\sigma_0}$}
  \end{comfproof}
  That is, first apply all possible $\textsc{Reset}_{\gamma_i}$-rules, starting at the
  $\Theta_0$-greatest $\gamma_0$. Because $\Gamma^{\sigma}$ annotates fixed-points
  to which $\sigma$ `assigns' no stack with $\nu^\varepsilon$, the population
  step does not need to be replicated in the preproof as $\Gamma^{\sigma_r^k}
  = \Gamma^{\sigma_p}$. Continue by applying the rule corresponding
  to $R \in M$. Observe that while the controls resulting from this application
  matches $\Theta_i^*$, the sequents will be some sequent $\Gamma^0_i$ which
  might contain multiple copies of the `same' formula with different
  annotation (similarly to $\sigma_i^*$ assigning multiple stacks to some of its
  arguments). A simple example in which this occurs is if $\Gamma = \varphi,
  \psi \vee \xi$ with $\ol{\varphi} = \ol{\psi}$ but $\varphi \neq \psi$. If the
  $\vee$-rule is applied to this sequent, the resulting sequent $\Gamma_1^0 =
  \varphi, \psi, \xi$ contains two `copies' of $\ol{\varphi}$ (corresponding
  to $\sigma_1^*$ assigning multiple stacks to some of the quantifiers in
  $\ol{\varphi}$). In such cases, the greedy run uses a thinning to reestablish
  the property that each quantifier is assigned at most one stack. In $\BRmML$,
  this can be replicated by applying the $\RMerge$-rule to all formulas
  $\varphi, \psi \in \Gamma_i^0$ with $\ol{\varphi} = \ol{\psi}$. Note that the
  order of applications and choice of which two `$\ol{\varphi}$-instances' to
  pick for $\RMerge$-applications does not matter as the resulting sequent will
  always be $\widehat{\Theta_i} \rsep \Gamma_i^{\sigma_i}$.

  An argument analogous to that given for $\elab$ in \Cref{lem:elab} shows that
$\search$ maintains the soundness condition.
\end{proof}

\begin{corollary}[Completeness]\label{lem:ra-complete}
  If $\mML \vdash \Gamma$ then $\BRmML \vdash
  \Gamma$.
\end{corollary}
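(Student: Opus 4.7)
The plan is to follow exactly the template used for the earlier completeness corollaries (\Cref{lem:ra-complete} for $\RA$ and the analogous statement for $\FRmML$), composing the general completeness result for $\Sss(\TF)$ with the proof morphism $\search \colon \Sss(\TF) \to \BRmML$ that we have just established.

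First, suppose $\mML \vdash \Gamma$ via some proof $\Pi$. Since $\Pi$ is a finite cyclic tree, it uses only finitely many derivation rules. Let $\TF$ be the finite subsystem of $\mML$ consisting of exactly the rules appearing in $\Pi$; then $\Pi$ is in fact a $\TF$-proof of $\Gamma$. By \Cref{lem:complete} (applied to the trace interpretation $\iota_\BB$), there is an $\Sss(\TF)$-proof $\Pi^\star$ of the annotated sequent $\Gamma ; (\emptyset, (x,a)\mapsto\emptyset)$, and moreover $\strip(\Pi^\star)$ is an unfolding of $\Pi$.

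Next, apply the proof morphism $\search$ of the preceding lemma. This yields a $\BRmML$-preproof $\search(\Pi^\star)$ which, since $\search$ preserves the soundness condition, is actually a $\BRmML$-proof. It remains to identify its endsequent. By definition, $\search(\Gamma; (\emptyset, (x,a)\mapsto\emptyset)) = \widehat{\emptyset} \rsep \Gamma^{(x,a)\mapsto\emptyset} = \varepsilon \rsep \Gamma^\varepsilon$, since with the empty Safra board every quantifier $\nu x.\varphi$ in $\Gamma$ gets annotated with the empty sequence $\varepsilon$ in the recursive translation $\varphi^{\widehat{\sigma}}_a$. This is precisely the endsequent required by \Cref{def:brmu} for a proof of $\Gamma$ in $\BRmML$, so $\BRmML \vdash \Gamma$.

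The only genuinely delicate point in this argument is verifying that the endsequent lines up correctly, i.e.\ that $\search$ sends the initial annotated sequent of $\Sss(\TF)$ to the canonical starting sequent $\varepsilon \rsep \Gamma^\varepsilon$ of $\BRmML$; this is a direct unfolding of the definitions of $\search$ and of $\Gamma^\varepsilon$. All the genuinely hard work, namely the construction of the deterministic Rabin-run producing $\Pi^\star$ and the verification that $\search$ is a proof morphism, has already been carried out in \Cref{lem:complete} and in the preceding lemma, so the corollary reduces to their composition.
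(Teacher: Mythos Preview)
Your proposal is correct and follows exactly the template the paper uses for the analogous completeness corollaries (e.g.\ for $\RA$): restrict to a finite fragment $\TF$, invoke the $\Sss(\TF)$-proof constructed inside the proof of \Cref{lem:complete}, and then apply the $\search$ morphism from the preceding lemma. The paper does not spell out a proof for this particular corollary at all, so your argument is in fact more detailed than the paper's, including the explicit check that $\search$ maps the initial $\Sss(\TF)$-sequent to $\varepsilon \rsep \Gamma^\varepsilon$.
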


\subsubsection{The Jungteerapanich-Stirling System}
\label{sec:js}

The first reset proof system was put forward by
Jungteerapanich~\cite{jungteerapanichTableauSystemsModal2010} for the modal
$\mu$-calculus. The system is a tableaux system which induces a decision
algorithm for satisfiability of $\mu$-sequents. Later, the system was converted
to a regular validity proof system by
Stirling~\cite{stirlingProofSystemNames2013}. The latter system is usually
called the Jungteerapanich-Stirling ($\JS$) in the literature. While the system
is also inspired by the Safra construction (see \cite[Section
4.3.5]{jungteerapanichTableauSystemsModal2010} for details) the final system
is quite bespoke, as we point out later. Soundness and completeness are
proven directly with regards to the semantics of the modal $\mu$-calculus,
rather than the arguments relying on automata theory we employ in this article.
As $\JS$ is well-known in the field of cyclic proof theory, it is of interest to
compare our systems $\FRmML$ and $\BRmML$ to it. Our presentation of $\JS$
slightly differs from that given in~\cite{stirlingProofSystemNames2013} to
better fit with the notation style of this article.

The system $\JS$ assumes some fixed linear ordering $<$ on the variables which
are denoted in capital letters $X, Y, Z$. For the remainder of this section, we
only consider formulas $\varphi$ which are well-named and in which the
subsumption order $<_\varphi$ coincides with the variable ordering $<$. A finite
collection of well-named formulas can always be $\alpha$-renamed such that this
property is fulfilled.
For each variable $X$ there is an infinite supply of
\emph{names} $x_1, x_2, \ldots$ associated with $X$. The names are distinct
between distinct variables.

\emph{Sequents} of $\JS$ are expressions $\Theta \rsep \Gamma$ where $\Gamma$ is
a finite set of annotated $\mu$-formulas $\varphi^u$ and the \emph{control} $\Theta$ is a finite,
repetition-free sequence of names of variables occurring in the formulas
$\varphi^u \in \Gamma$. The
annotations $u$ of $\varphi^u \in \Gamma$ are subsequences $u \sqsubseteq
\Theta$ of $\Theta$.
Furthermore, they must be ordered according to the
ordering $<$ of variables, i.e. if a name $x$ of $X$ and a name $y$ of $Y$
appear in $u$ and $X < Y$ then $x$ must occur before $y$ in $u$.

For $u \sqsubseteq \Theta$ denote by $u \uh X$ the
subsequence of $u$ from which all names corresponding to variables $Y > X$ have
been removed. For two names $x, y \in \Theta$ write $x \sqsubset_\Theta y$
if either $x$ is a name for $X$ and $y$ is a name for $Y$ with $X < Y$ or if $x$
and $y$ are names for the same variable $X$ and $x$ occurs before $y$ in
$\Theta$. This extends to sequences $u, v \sqsubseteq \Theta$, writing $u
\sqsubset_\Theta v$ if $u$ contains the $\sqsubset_\Theta$-least name which
occurs in only one of the two sequences.

\begin{definition}
  The \emph{derivation rules} of $\JS$ are given below. Write $\Theta'$ to
  denote the control $\Theta$ from which all names not occurring in annotations
  in the corresponding $\Gamma$ were removed.
  \begin{mathpar}
    \inference[\RAx]{}{\Theta \rsep \Gamma, p^u, \neg p^v}

    \inference[$\vee$]{\Theta \rsep \Gamma, \varphi^u, \psi^u}{\Theta \rsep \Gamma, \varphi \vee \psi^u}

    \inference[$\wedge$]{\Theta \rsep \Gamma, \varphi^u \quad \Theta \rsep \Gamma, \psi^u}{\Theta \rsep \Gamma, \varphi \wedge \psi^u}

    \inference[$\Box$]{\Theta \rsep \Gamma, \varphi^u}{\Theta \rsep \Diamond \Gamma, \Box \varphi^u}

    \inference[$\mu$]{\Theta' \rsep \Gamma, \varphi[\mu X. \varphi / X]^{u \uh X}}{\Theta \rsep \Gamma, \mu X. \varphi^u}

    \inference[$\nu$]{\Theta' x \rsep \Gamma, \varphi[\nu X. \varphi /
      X]^{(u \uh X)x} \quad x \text{ fresh } X\text{ name}}{\Theta \rsep \Gamma, \nu X. \varphi^u}

    \inference[\RThin]{\Theta' \rsep \Gamma, \varphi^u \quad u \sqsubset_\Theta v}{\Theta \rsep \Gamma, \varphi^u, \varphi^v}

    \inference[$\RReset_{x}$]{\Theta' \rsep \Gamma, \varphi_1^{ux},
      \ldots, \varphi_n^{ux} \quad x \text{ not in } \Gamma}{\Theta \rsep \Gamma, \varphi_1^{uxx_1u_1},
      \ldots, \varphi_n^{uxx_nu_n}}
  \end{mathpar}

  A $\JS$-preproof is a \emph{proof} if
  every pair of bud $t
  \in \dom(\beta)$ and companion $\beta(t)$ has an \emph{invariant}, i.e. there
  exists a name $x$ such that $x$ occurs in all of the controls
  $\Theta$ between $t$ and $\beta(t)$ and the $\textsc{Reset}_x$ rule
  is applied between $t$ and $\beta(t)$.
  A $\JS$-proof is \emph{a proof 
    of $\Gamma$} if its root
  is labeled $\varepsilon \rsep \Gamma^\varepsilon$.
\end{definition}

A $\mu$-formula is \emph{guarded} if a $\Box$ occurs on the paths between a
binder $\nu X$ or $\mu X$ and each of its bound variables $X$. The following is
proven in \cite[Theorem 4]{stirlingProofSystemNames2013}.

\begin{proposition}
  If $\gamma$ is closed and guarded then $\JS \vdash \gamma$ iff $\gamma$ is valid.
\end{proposition}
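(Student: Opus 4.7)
The plan is straightforward: the result is already established as \cite[Theorem~4]{stirlingProofSystemNames2013}, so the direct path is to invoke Stirling's theorem. However, it is illuminating to sketch how this proposition could alternatively be derived from the abstract machinery developed in this article, via a pair of proof morphisms relating \(\JS\) to \(\BRmML\) (or equally well \(\FRmML\)).

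For the soundness direction (\(\JS \vdash \gamma\) implies \(\gamma\) valid), the plan is to construct a proof morphism \(f \colon \JS \to \BRmML\). Sequent-wise, \(f\) would send each \(\JS\)-sequent \(\Theta \rsep \Gamma\) to the \(\BRmML\)-sequent obtained by pushing each annotation \(\varphi^u\) down onto the \(\nu\)-quantifiers of \(\varphi\) in the appropriate way --- concretely, a formula \(\varphi^u\) in \(\JS\) is translated by annotating each bound variable \(X\) with the prefix \(u \uh X\). The rules \(\vee\), \(\wedge\), \(\Box\), \(\mu\), \(\nu\), \(\RAx\) and \(\RReset_x\) translate essentially one-to-one to their \(\BRmML\) counterparts, while \(\RThin\) is simulated via \(\RMerge\) followed by \(\RWeak\) in \(\BRmML\) (choosing the \(\sqsubset_\Theta\)-smaller annotation corresponds exactly to the \(\merge_\Theta\) operation using the induced linear order). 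Verifying that the invariant condition of \(\JS\) entails that of \(\BRmML\) is immediate because the respective notions of invariant coincide on the translated sequents. Composing with \(\embed \colon \BRmML \to \Rrr(\mML)\) and \(\strip \colon \Rrr(\mML) \to \mML\) yields an \(\mML\)-proof of \(\gamma\), and soundness of \(\mML\) with respect to the usual Kripke semantics of the modal \(\mu\)-calculus then gives validity.

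For the completeness direction (\(\gamma\) valid implies \(\JS \vdash \gamma\)), the plan is to use completeness of \(\mML\) (say via Afshari--Leigh~\cite{afshariCutfreeCompletenessModal2017}) to obtain a cyclic proof \(\Pi\) of \(\gamma\) in some finite fragment \(\TF \subseteq \mML\). Then proof search in \(\Sss(\TF)\) as in \Cref{lem:complete} produces a proof whose labels are \(\iota_\BB\)-annotated \(\mML\)-sequents; this proof can be translated into \(\JS\) by a function \(\search' \colon \Sss(\TF) \to \JS\) that reads off the chips on each Safra board and encodes them as \(\JS\)-style annotations on the formulas, introducing fresh names whenever the \(\nu\)-rule is applied (corresponding to the new chip introduced on the Safra board) and applying \(\RReset_x\) in \(\JS\) precisely where the greedy run carries out a covered-chip reset. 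Structural differences --- notably that \(\JS\) names are tied to variables and must respect the variable ordering \(<\) --- can be accommodated by fixing, at the outset, a bijection between chips used by \(\Sss(\TF)\) and fresh \(\JS\)-names, constrained so that the order on chips matches \(\sqsubset_\Theta\).

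The main obstacle, which is why the direct citation is the cleaner route, is that \(\JS\)'s annotation discipline is quite bespoke: each name is permanently associated with a particular fixed-point variable and the ordering \(\sqsubset_\Theta\) is a hybrid of the variable ordering \(<\) and the positional ordering on \(\Theta\). Ensuring that the translations above respect these constraints in both directions --- especially that \(\RThin\)'s side-condition \(u \sqsubset_\Theta v\) aligns with the \(<_\Theta\)-based thinning implicit in the greedy Safra board runs used to produce \(\Sss(\TF)\)-proofs --- requires careful bookkeeping. Since Stirling has already carried out this verification semantically, we take the shortcut of citing his result rather than reproving it through our framework; the systems \(\FRmML\) and \(\BRmML\) presented above are intended as parallel, rather than dependent, constructions.
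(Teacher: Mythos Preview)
Your proposal is correct and matches the paper's approach exactly: the paper simply cites \cite[Theorem~4]{stirlingProofSystemNames2013} without further argument. Your additional sketch of an alternative route via proof morphisms between $\JS$ and $\BRmML$ (or $\FRmML$) is extra material not present in the paper; indeed, the paper's own discussion in the section comparing these systems emphasises that such translations would be intricate precisely because of the bespoke annotation discipline of $\JS$, so your caveat about taking the citation shortcut is well placed.
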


It is clear at a glance that the systems $\BRmML$ and $\JS$ are quite
different: While $\BRmML$ annotates quantifier instances, the annotations of
$\JS$ concern formulas in $\Gamma$ and their $\nu$-variables (it is easily
observed that the annotations can never be extended with $\mu$-variables). Their
commonality ends at both systems being reset proof systems for the modal
$\mu$-calculus. The 
comparison between $\FRmML$ and $\JS$ will turn out much more revealing.

In a way, the annotations of $\FRmML$ and $\JS$ `track' the same trace
values: The $\nu$-variables of each formula in $\Gamma$ on a per-formula basis.
In $\FRmML$ this is obvious: Each formula $\varphi$ in $\Gamma$ comes with an assignment
$\sigma$ which assigns each of variable $X$ bound by $\nu X$ in $\varphi$ a
subsequence of the control $\Theta$. In $\JS$ this is a more subtle observation:
At first glance, annotations are only per formula $\varphi$. However, these annotations
consist only\footnote{This is not quite accurate. Rather, the names in the
  annotations are for variables which occurred `hereditarily' in the formula.
  For example, the formula $\nu X. \nu Y. Y$, after two applications of the
  $\nu$-rule, will be unfolded to $\nu Y. Y$ annotated by a sequence $xy$, with the obvious variable
  correspondences, even though $X$ does not occur in $\nu Y. Y$ `anymore'.
  However, these `anomalies' will only ever occur as prefixes of annotations
  which `eventually' do not impact proof search anymore.} of variables which
occur in $\varphi$. The $\mu$- and $\nu$-rules ensure that the names in the
annotation are ordered by the global ordering $<$ on variables. In this sense,
the annotation $u$ of $\varphi^u$ can be separated into subsequences of names
$x_1 \ldots x_n$, for each variable $X$ in $\varphi$, tracking the progress  of
each trace value $(\varphi, X)$ (in the sense of the $\iota_\FF$ GTC for
$\mML$).
In that light, it can be observed that the $\mu$-rule of the two
systems is essentially the same: It `cancels' all progress made by
$\nu$-variables subsumed by the unfolded $\mu$-formula by removing their
associated annotations.

The ordering $\sqsubset_\Theta$ is the lexicographic ordering comparing first
according to the global ordering $<$ on variables and subsequently according to
`age ordering' given by positions in $\Theta$. This illustrates the most
significant point of departure between $\FRmML$ and $\JS$: The design of $\JS$
takes into account some deeper insights into the semantics of the
modal $\mu$-calculus, specifically the role of the subsumption ordering
$<_\varphi$ in the validity condition.
Another instance of these insights comes
into play is the $\nu$-rule: When unfolding the $\nu$-quantifier of a formula
$\nu X. \varphi^u$ in $\JS$, the resulting annotation is $(u \uh X)x$, `clearing
off' the annotations of variables subsumed by $X$. This is necessary to ensure
that the names in the annotation remain ordered according to the global variable
order $<$ (otherwise the name $x$ would likely be appended after names of
subsumed variables, disturbing this ordering property). The completeness of this
rule hinges on a semantic insight: If there is a successful trace on a
$\nu$-variable in a formula, all `higher' $\nu$-variables subsuming it must also
have a successful trace. Thus, forgetting the progress of `lower'
$\nu$-variables upon progress in `higher' $\nu$-variables does not endanger
completeness. Partly, Jungteerapanich and Stirling can `get away with this' as
they prove soundness and completeness directly with regards to the semantics of
the modal $\mu$-calculus, `skipping' the automata theoretic considerations we
make in this article. Proving a similar result in the more generic setting of
trace categories with subsumption orders on their trace objects is highly
intricate. The system $\FRmML$ does thus not employ this kind of
`optimization' as this would have marked a departure from our original goal of
generating reset proof systems in a simple, `effortless' manner.

Another such point of difference between $\JS$ and $\FRmML$ is the $\RThin$-rule
of $\JS$. It should be noted that the only reason that $\JS$ only features this
strict thinning rule instead of a more general weakening rule is that it was
originally designed as a tableaux system for proof search in the modal
$\mu$-calculus. The $\RThin$-rule is the weakest rule which yields completeness
for the system (its purpose being analogous to the thinning steps of Safra
boards). However, the addition of a general weakening rule which allowed the
discarding of arbitrary formulas from $\Gamma$ would leave the soundness of
$\JS$ unchanged. The interesting point of comparison between $\JS$ and $\FRmML$
in this regard lies in $\RThin$ of $\JS$ and $\RMerge$ of $\FRmML$ recalled
below.
\[
  \inference[\RThin]{\Theta' \rsep \Gamma, \varphi^u \quad u \sqsubset_\Theta v}{\Theta \rsep \Gamma, \varphi^u, \varphi^v}
  \qquad \qquad
  \inference[\(\RMerge\)]{\Theta' \rsep \Gamma, (\varphi,
    \merge_\Theta(\sigma, \sigma'))}{\Theta \rsep \Gamma, (\varphi, \sigma), (\varphi, \sigma')}
\]
Both rules are included for essentially the same reason: To mirror the thinning
step in the greedy Safra board runs required for completeness. The $\RThin$-rule
once again embodies a semantic insight: It is sufficient to `keep' the formula
which has the `best progress' on the highest variable. To illustrate this point,
suppose $\Theta$ was $x_1y_1x_2$, corresponding to variables $X$ and $Y$ in
$\varphi$, and that the sequent contained two copies of $\varphi$:
$\varphi^{x_1x_2}$ and $\varphi^{x_1y_1}$. In this case, an application of
$\RThin$ would discard $\varphi^{x_1y_1}$, essentially because $x_1x_2$ `has
better $X$-progress'. This is `complete' as `success' in `lower' variables
always entails `success' in `higher' variables as elaborated in the previous
paragraph. On the other hand, the $\RMerge$-rule of $\FRmML$ does once more not
embody this insight. Consider the analogous case: A control $\Theta$ of form $abc$ and
two annotated copies of $\varphi$ given by $(\varphi, (X \mapsto ac, Y \mapsto
\varepsilon))$ and $(\varphi, (X \mapsto a, Y \mapsto b))$. In this case, the
$\RMerge$-rule `keeps' the annotation $X \mapsto ac$ \emph{and} the annotation
$Y \mapsto b$ in the resulting merged annotation of $\varphi$, the `best'
annotation for each separate variable, respectively. This is required to mirror
the thinning steps of greedy Safra board runs, which do the same. While it would
likely also be sound and complete for $\RMerge$ in $\FRmML$ to instead always
`keep' the annotation $\sigma$ which has the `best' annotation for the `highest'
variable, proving this would once again be extremely intricate.

\section{Conclusion}
\label{sec:conclusion}

We have shown that to each cyclic proof system $\RR$ with a soundness condition
specified in terms of an activation algebra $\Aa$, there is an associated cyclic
proof system $\Rrr(\RR)$ with a $\RReset$-based soundness condition. The
construction of $\Rrr(\RR)$ is fully independent of the underlying logic of $\RR$,
only relying on the specification of the global trace condition in terms of
$\Aa$. The equivalence of $\RR$ and $\Rrr(\RR)$ is proven via cyclic proof
system homomorphisms and a proof system $\Sss(\RR)$ tailored to easing proof search.
The method of cyclic proof system homomorphisms allows the
equivalence between $\RR$ and $\Rrr(\RR)$ to be extended to bespoke
$\RReset$-based proof systems $\TS$, as
demonstrated in \Cref{sec:concrete}. This strategy is applied to present equivalent
reset systems for cyclic arithmetic, Gödel's
T and the modal $\mu$-calculus.

\paragraph{Discussion}

Our approach comes with some shortcomings. First, while broad, the scope of
applicability of our results is not universal. We have only demonstrated
how to give corresponding reset systems to cyclic proof systems with
$\TT_\Aa$-specifiable global trace conditions. There are some global trace
conditions which likely cannot be specified this way, for example that given by Hazard
for transfinite expressions~\cite{hazardCyclicProofsTransfinite2022}.
Furthermore, the soundness of the original system $\RR$ must be a global trace
condition for our method to apply. We did not consider other kinds of soundness conditions, such as
induction orders~\cite{sprengerStructureInductiveReasoning2003a}, bouncing
threads~\cite{baeldeBouncingThreadsCircular2022} or trace
manifolds~\cite{brotherstonSequentCalculusProof2006}.
Another shortcoming is that the na\"ive cyclic system $\Rrr(\RR)$ generated from a suitable cyclic
proof system $\RR$ can be `unwieldy'. For each concrete reset system we give in
\Cref{sec:concrete}, some modifications were necessary to make the resulting
system pleasant for human use, specifically in finding a good `syntax' for
sequents of the concrete reset system. More generally, we believe that to
turn $\Rrr(\RR)$ into a `pleasant' system, some amount of human
creativity is still required.
One of the biggest strengths of the method we have described, its independence
of semantic considerations about the logic, is also one of its biggest
drawbacks: The systems generated by our method do not take advantage of semantic
insights into the logic in question and proving the equivalence of our systems
and other reset systems from the literature may take considerable effort. 
Jungteerapanich's reset system for the modal \( \mu
\)-calculus~\cite{jungteerapanichTableauSystemsModal2010} provides an example of
a $\RReset$-system designed using deep semantic insight in a manner $\Rrr(\mML)$
for both trace interpretations of $\mML$ does not (this is discussed in greater
detail in \Cref{sec:js}).

It should be noted that $\Rrr(\RR)$ describes merely \textit{one way} of
designing reset proof systems.
The reset proof system given in~\cite{afshariCyclicProofsFirstorder2022a} provides an
example of the potential for variation.
The reset rule $\text{RS}(\kappa)$ utilised in that system corresponds
 to the following transition on Safra boards: Fix
a board $(\Theta, \sigma) \in \Sb(\Aa, X)$ and pick a covered $\kappa \in
\Theta$. Define the set of $C(\kappa)$ of children of $\kappa$ as
\[
  C(\kappa)
  \coloneq \bigl\{\text{min}_\Theta \{\gamma \in S ~|~ \kappa < \gamma\} \bigm| x \in X, a \in
  \Aa, S \in \sigma(a, x) \text{ and } \kappa \in S\bigr\}.
\]
If every $\gamma \in
C(\kappa)$ is also covered, one may perform a `reset operation' yielding the
board $(\Theta \setminus C(\kappa), \sigma \setminus C(\kappa))$ where $(\sigma
\setminus C(\kappa))(x, a) \coloneq \{S \setminus C(\kappa) \mid S \in \sigma(x,
a)\}$. Replacing the reset transitions of \Cref{def:reset} with this variant of
the reset condition would yield an abstract reset proof system
enjoying the same soundness and completeness properties as our chosen form of $\Rrr(\RR)$, albeit requiring
slight modifications to their proofs. Most likely there are multiple
permissible alternatives to the `reset machinery' we
present in this article. We chose to only
cover one, namely the one the closest to the traditional Safra construction for
Rabin automata.

\paragraph{Related work}

We are aware of three articles designing reset systems for cyclic proof systems:
Jungteerapanich-Stirling~\cite{jungteerapanichTableauSystemsModal2010,stirlingProofSystemNames2013},
Afshari et~al.~\cite{afshariCyclicProofsFirstorder2022a} and Afshari
et~al.~\cite{afshariCyclicProofSystem2023}.

Jungteerapanich~\cite{jungteerapanichTableauSystemsModal2010} and
Stirling~\cite{stirlingProofSystemNames2013} propose reset proof systems for the
modal $\mu$-calculus, respectively for satisfiability and validity. These are
the first reset systems in the cyclic proof theory literature. A comparison
between their validity system and the systems we derive for validity of the
$\mu$-calculus in this article can be found in \Cref{sec:js}. The upshot is that
while also inspired by the Safra construction, their system also incorporates
multiple insights into the semantics of the $\mu$-calculus which our systems
neglect.

Afshari et~al.~\cite{afshariCyclicProofsFirstorder2022a} give a reset proof system for the
first-order $\mu$-calculus. It is based on the cyclic proof system
for the first-order $\mu$-calculus with ordinal approximations put forward by
Sprenger and
Dam~\cite{sprengerGlobalInductionMechanisms2003,sprengerStructureInductiveReasoning2003a}.
The crucial insight underpinning its design is that the mechanism of ordering
ordinal variables in the Sprenger-Dam system is already very similar to the control of
the Jungteerapanich-Stirling system. Thus, the Sprenger-Dam system is
extended into a reset system in a very natural manner.

Afshari et~al.~\cite{afshariCyclicProofSystem2023} give a reset proof system for
full computation tree logic ($\mathrm{CTL}^*$). Their system is a hypersequent
calculus and thus requires a more intricate trace condition. The annotations
used in their reset condition are either empty or one letter. In this, their system falls
in between the `full' reset proof systems, such as $\JS$, that for the
first-order $\mu$-calculus or ours, and the `mere' path condition systems
discussed in a subsequent paragraph.

It should be noted that
all of the aforementioned reset systems were designed by combining automata
theoretic considerations, specifically the Safra construction, and semantic
insights about the logic for which the systems were constructed. In both cases,
this resulted in systems which are more elegant than the systems we generate in
this article. It should however be noted that designing such elegant systems
requires considerable effort. Furthermore, we believe that for many technical
purposes of cyclic proof theory, the na\"ive reset proof systems derived in this
article shall prove sufficient.

The literature also contains articles on cyclic proof systems with path
conditions, i.e. whose trace conditions allow each simple cycle of the preproof
to be considered separately, which are not strictly reset proof systems
because their soundness condition is implemented in a simpler manner. Specifically,
Marti and Venema~\cite{martiFocusSystemAlternationFree2021} demonstrate that for
the alternation-free fragment of the $\mu$-calculus, the
Jungteerapanich-Stirling system can be simplified to a system in which formulas
are annotated with one ``one bit of information'' (said to be \emph{in focus} or
\emph{out of focus}, respectively) and which does not need a $\RReset$-rule. The
resulting system still possesses a path condition.
Rooduijn~\cite{rooduijnCyclicHypersequentCalculi2021} gives a very similar path
condition for cyclic proof systems of modal logics with the master modality.
It should be noted that all positive properties of reset proof systems we have
mentioned in this article, such as their suitability for proof theoretic
investigations and proof search, extend to all cyclic proof systems with path
conditions.

Cyclic proof systems with path conditions, such as reset proof systems, have
proven well-suited to proof theoretic investigations. So far, the results which
employ them are in the areas of
interpolation~\cite{afshariLyndonInterpolationModal2022,afshariUniformInterpolationCyclic2021,martiFocusSystemAlternationFree2021}
and the translation of proof of cyclic proof systems into proofs in non-cyclic
proof systems with suitable induction
axioms~\cite{afshariCutfreeCompletenessModal2017}. However, there seems to be no
reason to assume that cyclic proof systems with path conditions might not also
prove useful in proving other properties, such as $\RCut$-elimination, or
investigations of the computational contents of cyclic proofs.


\paragraph{Future work}

The results of this article open up many avenues of future research. As noted
previously, reset proof systems have proven to be valuable tools in the arsenal
of cyclic proof theory. With reset systems for many more cyclic proof systems
now available `off the shelf', we hope to see more proof theoretic
investigations using reset proof systems in the future. This could proof
especially valuable to the proof theories of logics with features particularly
well-suited to cyclic proof systems, such as fixed-points and inductive
definitions.

This article elaborated on the relationship between the global trace condition
and the reset path condition using the abstract notion of trace put forward
in~\cite{afshariAbstractCyclicProofs2022}. There are further soundness
conditions for cyclic proofs, such as induction
orders~\cite{sprengerStructureInductiveReasoning2003a}, bouncing
threads~\cite{baeldeBouncingThreadsCircular2022} and trace
manifolds~\cite{brotherstonSequentCalculusProof2006}. We hope to `complete the
picture' in the future by investigating these other soundness conditions and
their relationships in this abstract setting. An interesting aspect to explore
in this direction is the fact in some reset proof systems, an induction order
can essentially be `read off' the system's proofs. An example of such a system
is that for Gödel's T given in \Cref{sec:reset-godel}. However, this is not true
for all reset proof systems. For example, the system for Peano arithmetic in
\Cref{sec:ra} does not possess this property.

\printbibliography{}
\end{document}